\makeatletter\AtBeginDocument{\let\@elt\relax}\makeatother 
\newcommand{\myaddress}[1]{\def\theaddress{#1}}
\newcommand{\mythanks}[1]{\def\thethanks{#1}}
\newcommand{\mymathclass}[1]{\def\themathclass{#1}}
\newcommand{\mykeywords}[1]{\def\thekeywords{#1}}
\newcommand{\myabstract}[1]{\long\def\theabstract{#1}}
\title{Tensor Products of Convex Cones}
\author{Josse van Dobben de Bruyn}
\newcommand{\hair}{\ifmmode\mskip1mu\else\kern0.08em\fi}
\let\mycheckmark\checkmark
\newcommand{\footnotehack}[1]{\footnote{#1}\addtocounter{footnote}{-1}\addtocounter{Hfootnote}{-1}} 
\newcommand*{\minwidthbox}[2]{%
	\makebox[{\ifdim#1<\width\width\else#1\fi}][l]{#2}%
}
\def\linkcolor{blue!60!black}
\def\citecolor{green!40!black}
\def\urlcolor{blue!70!black}
\def\myautoref{\@ifstar\@myautoref\@@myautoref}
\def\@@myautoref#1#2{\hyperref[#2]{\autoref*{#1}\ref*{#2}}}  
\def\@myautoref#1#2{\hyperref[#1]{\autoref*{#1}(#2)}}        
\newcommand{\myref}[1]{\textit{\ref{#1}}}
\def\@mysecref#1\stophere{\hyperref[#1]{\S\ref*{#1}}}
\def\@myappref#1\stophere{\hyperref[#1]{Appendix \ref*{#1}}}
\newcommand*{\mysecref}[1]{\@ifnextchar a\@myappref\@mysecref#1\stophere}
\newcommand{\mychpref}[1]{\autoref{#1}} 
\newcommand{\exampleqed}{\unskip\kern5pt\ensuremath{\scriptstyle\triangle}}
\numberwithin{table}{chapter}
\numberwithin{equation}{chapter}
\let\c@table=\c@equation 
\declaretheorem[style=definition,sibling=equation]{definition}
\declaretheorem[style=definition,qed=\exampleqed,sibling=definition]{example}
\declaretheorem[style=definition,sibling=definition]{remark}
\declaretheorem[style=plain,sibling=definition]{theorem}
\declaretheorem[style=plain,sibling=definition]{lemma}
\declaretheorem[style=plain,sibling=definition]{proposition}
\declaretheorem[style=plain,sibling=definition]{corollary}
\declaretheorem[style=plain,sibling=definition]{question}
\declaretheorem[style=plain,sibling=definition]{conjecture}
\declaretheorem[style=plain,title={Theorem}]{AlphTheorem}
\declaretheorem[style=plain,sibling=AlphTheorem,title={Corollary}]{AlphCorollary}
\declaretheorem[style=definition,sibling=AlphTheorem,title={Remark}]{AlphRemark}
\newcommand{\N}{\ensuremath{\mathbb{N}}}
\newcommand{\R}{\ensuremath{\mathbb{R}}}
\providecommand{\C}{}
\renewcommand{\C}{\ensuremath{\mathbb{C}}}
\def\tensor{\mathbin{\otimes}}
\DeclareMathOperator{\rank}{rank}
\DeclareMathOperator{\spn}{span}
\DeclareMathOperator{\aff}{aff}
\DeclareMathOperator{\conv}{conv}
\DeclareMathOperator{\ran}{ran}
\DeclareMathOperator{\id}{id}
\DeclareMathOperator{\tr}{tr}
\DeclareMathOperator{\vol}{vol}
\newcommand{\boundary}{\partial}
\DeclareMathOperator{\interior}{int}
\newcommand{\algdual}{^*}
\newcommand{\algdualdual}{^{**}}
\newcommand{\topdual}{'}
\newcommand{\topdualdual}{''}
\newcommand{\polar}{^\circ}
\newcommand{\dualface}{{\hspace*{.3pt}\diamond}}
\newcommand{\predualface}{{\diamond\hspace*{-.6pt}}}
\newcommand{\weak}{{w}}
\newcommand{\weakstar}{{w*}}
\newcommand{\hattensor}{\mathbin{\tilde\otimes}}
\newcommand{\settensor}{\mathbin{\otimes_s}}
\newcommand{\Biltensor}{\mathbin{\boxtimes}}
\newcommand{\SCBiltensor}{\mathbin{\circledast}}
\newcommand{\ind}{{i}}
\newcommand{\epsprod}{\mathbin{\varepsilon}}
\newcommand{\minwedge}[2]{{{#1} \mathbin{\otimes^\pi} \, {#2}}}
\newcommand{\maxwedge}[2]{{{#1} \mathbin{\otimes^\varepsilon} \, {#2}}}
\newcommand{\topminwedge}[3][]{{{#2} \mathbin{\otimes_{#1}^\pi} \, {#3}}}
\newcommand{\hatminwedge}[3][]{{{#2} \mathbin{\tilde\otimes{}_{#1}^\pi} \, {#3}}}
\newcommand{\hatmaxwedge}[3][]{{{#2} \mathbin{\tilde\otimes{}_{#1}^\varepsilon} \, {#3}}}
\let\minwedgeFIN=\minwedge
\let\maxwedgeFIN=\maxwedge
\newcommand{\tensorwedge}[3][\alpha]{{{#2} \mathbin{\otimes^{#1}} \, {#3}}}
\newcommand{\minopsys}[2][]{#2^{\hspace*{.2pt}\textup{min}}_{#1}}
\newcommand{\maxopsys}[2][]{#2^{\hspace*{.2pt}\textup{max}}_{#1}}
\newcommand{\convtensor}[2]{\minwedge{#1}{#2}}
\newcommand{\homogen}[1]{\mathscr C(#1)}
\newcommand{\symm}[1]{\mathcal{S}^{{#1}}}
\newcommand{\herm}[1]{\mathcal{H}^{{#1}}}
\newcommand{\Lorentz}[1]{\mathcal{L}^{{#1}}}
\newcommand{\orface}[2]{{#1} \mathbin{\ovee^\pi} {#2}}
\newcommand{\andface}[2]{{#1} \mathbin{\owedge^\pi} {#2}}
\newcommand{\SCorface}[2]{{#1} \mathbin{\ovee^\varepsilon} {#2}}
\newcommand{\SCandface}[2]{{#1} \mathbin{\owedge^\varepsilon} {#2}}
\newcommand{\SCorideal}[2]{{#1} \mathbin{\ovee} {#2}}
\newcommand{\SCandideal}[2]{{#1} \mathbin{\owedge} {#2}}
\newcommand{\MsetNface}[2]{{#1} \mathbin{\ltimes} {#2}}
\newcommand{\MfaceNset}[2]{{#1} \mathbin{\rtimes} {#2}}
\newcommand{\dualfacelattice}[2]{\mathscr F_{\langle #1 , #2 \rangle}}
\newcommand{\lintop}{\mathfrak{T}}
\newcommand{\mywedge}{\mathcal{K}}
\DeclareMathOperator{\lineal}{lin}
\DeclareMathOperator{\rext}{rext}
\DeclareMathOperator{\ext}{ext}
\DeclareMathOperator{\LL}{L} 
\DeclareMathOperator{\CL}{\mathfrak{L}}
\DeclareMathOperator{\CK}{\mathfrak{K}}
\DeclareMathOperator{\CN}{\mathfrak{N}}
\DeclareMathOperator{\Bil}{Bil}
\DeclareMathOperator{\CBil}{\mathscr{B}\hspace*{-1.2pt}\textit{i}\ell} 
\DeclareMathOperator{\SCBil}{\mathfrak{Bil}}
\DeclareMathOperator{\myGL}{GL}
\newcommand{\GL}[1]{\myGL_{#1}(\R)}
\DeclareMathOperator{\pred}{prop}
\newcommand{\thetitlepage}{%
	\clearpage
	\pagestyle{plain}
	\begin{center}
		\Large
		\textsc{\LARGE\thetitle}
		
		\bigskip
		\medskip
		\theauthor
		
		\bigskip
		\Large
		6 December 2022
	\end{center}
	
	\bigskip
	\bigskip
	\begin{abstract}
		\theabstract
	\end{abstract}
	
	\bigskip
	\bigskip
	\emph{Financial support.} \thethanks
	
	\bigskip
	\emph{2020 Mathematics Subject Classification:} \themathclass
	
	\emph{Key words and phrases:} \thekeywords
	
	\pagebreak
	\noindent
	\theauthor\\
	\theaddress
	\cleardoublepage
	\pagestyle{mypagestyle}
}
\def\myglossaryname{Glossary of notation}
\newlength{\mytmpdimenA}
\newlength{\mytmpdimenB}
\newlength{\myglsWidestName}
\newlength{\myglsWidestPagenum}
\newlength{\myglsdescwidth}
\newcommand{\myglsFindWidestNameAndPagenum}[1][\@glo@types]{%
	\mytmpdimenA=0pt\relax%
	\mytmpdimenB=0pt\relax%
	\myglsWidestName=0pt\relax%
	\myglsWidestPagenum=0pt\relax%
	\forallglossaries[#1]{\@gls@type}{%
		\forglsentries[\@gls@type]{\@glo@label}{%
			\ifglshasparent{\@glo@label}{}{%
				\settowidth{\mytmpdimenA}{\glossentryname{\@glo@label}}%
				\ifdim\mytmpdimenA>\myglsWidestName%
					\myglsWidestName=\mytmpdimenA%
				\fi%
				\letcs{\@gls@loclist}{glo@\glsdetoklabel{\@glo@label}@loclist}%
				\ifdef\@gls@loclist{%
					\settowidth{\mytmpdimenB}{\glossaryentrynumbers{\glsnoidxloclist{\@gls@loclist}}}%
					\ifdim\mytmpdimenB>\myglsWidestPagenum%
						\myglsWidestPagenum=\mytmpdimenB%
					\fi%
				}{%
					\@latex@warning{\noexpand\myglsFindWidestNameAndPagenum: missing location list for entry `\@glo@label'}
				}%
			}%
		}%
	}%
	\setlength{\myglsdescwidth}{\dimexpr\textwidth - \myglsWidestName - \myglsWidestPagenum - 6\tabcolsep} 
}
\newglossaryentry{CBil}{
	name={\ensuremath{\CBil(E \times F)}},
	description={the space of continuous bilinear forms $E \times F \to \R$},
	sort={Bil10EF}
}
\newglossaryentry{SCBil}{
	name={\ensuremath{\SCBil(E \times F)}},
	description={the space of separately continuous bilinear forms $E \times F \to \R$},
	sort={Bil20EF}
}
\newglossaryentry{Bil}{
	name={\ensuremath{\Bil(E \times F)}},
	description={the space of all bilinear forms $E \times F \to \R$},
	sort={Bil00EF}
}
\newglossaryentry{bMcdot}{
	name={\ensuremath{b(M , \,\cdot\,)}},
	description={the set $\{b(x, \,\cdot\,) \, : \, x \in M\}$},
	sort={bMcdot}
}
\newglossaryentry{bcdotN}{
	name={\ensuremath{b(\,\cdot\, , N)}},
	description={the set $\{b(\,\cdot\, , y) \, : \, y \in N\}$},
	sort={bNcdot}
}
\newglossaryentry{bxcdot}{
	name={\ensuremath{b(x_0 , \,\cdot\,)}},
	description={the linear functional $y \mapsto b(x_0,y)$},
	sort={bxcdot}
}
\newglossaryentry{bcdoty}{
	name={\ensuremath{b(\,\cdot\, , y_0)}},
	description={the linear functional $x \mapsto b(x,y_0)$},
	sort={bycdot}
}
\newglossaryentry{homogenization}{
	name={\ensuremath{\homogen{C}}},
	description={the homogenization of $C$},
	sort={CC}
}
\newglossaryentry{polar}{
	name={\ensuremath{C\polar}},
	description={the one-sided polar of $C$},
	sort={Cpolar}
}
\newglossaryentry{algebraic dual space}{
	name={\ensuremath{E\algdual}},
	description={the algebraic dual space of $E$},
	sort={E10}
}
\newglossaryentry{topological dual space}{
	name={\ensuremath{E\topdual}},
	description={the topological dual space of $E$},
	sort={E20}
}
\newglossaryentry{dual pair}{
	name={\ensuremath{\langle E,E'\rangle}},
	description={dual pair},
	sort={E30}
}
\newglossaryentry{tensor product}{
	name={\ensuremath{E \tensor F}},
	description={the tensor product of $E$ and $F$},
	sort={E40}
}
\newglossaryentry{completed tensor product}{
	name={\ensuremath{E \hattensor_\alpha F}},
	description={the completion of $E \tensor F$ with respect to a compatible locally convex topology $\alpha$ on $E \tensor F$},
	sort={E50}
}
\newglossaryentry{SCBiltensor}{
	name={\ensuremath{E \SCBiltensor F}},
	description={the space of separately weak-$*$ continuous bilinear forms ${E' \times F' \to \R}$},
	sort={E60}
}
\newglossaryentry{primal cone}{
	name={\ensuremath{E_+}},
	description={the positive cone of a preordered vector space $E$},
	sort={Eplus00}
}
\newglossaryentry{bipolar cone}{
	name={\ensuremath{\overline{E_+}^{\,\weak}}},
	description={the weak closure of $E_+$ ($=$ the bipolar cone with respect to $\langle E,E'\rangle$)},
	sort={Eplus05}
}
\newglossaryentry{algebraic dual cone}{
	name={\ensuremath{E_+\algdual}},
	description={the algebraic dual cone of $E_+$},
	sort={Eplus10}
}
\newglossaryentry{topological dual cone}{
	name={\ensuremath{E_+\topdual}},
	description={the topological dual cone of $E_+$},
	sort={Eplus20}
}
\newglossaryentry{maxwedge}{
	name={\ensuremath{\maxwedge{E_+}{F_+}}},
	description={the injective cone in the algebraic tensor product $E \tensor F$},
	sort={Eplustensor10Fplus}
}
\newglossaryentry{hatmaxwedge}{
	name={\ensuremath{\hatmaxwedge[\alpha]{E_+}{F_+}}},
	description={the injective cone in the completed locally convex tensor product $E \hattensor_\alpha F$},
	sort={Eplustensor11Fplus}
}
\newglossaryentry{minwedge}{
	name={\ensuremath{\minwedge{E_+}{F_+}}},
	description={the projective cone in the algebraic tensor product $E \tensor F$},
	sort={Eplustensor20Fplus}
}
\newglossaryentry{hatminwedge}{
	name={\ensuremath{\hatminwedge[\alpha]{E_+}{F_+}}},
	description={the projective cone in the completed locally convex tensor product $E \hattensor_\alpha F$},
	sort={Eplustensor21Fplus}
}
\newglossaryentry{E_weak}{
	name={\ensuremath{E_\weak}},
	description={the space $E$ equipped with the $\sigma(E,E')$-topology},
	sort={Eweak00}
}
\newglossaryentry{E_weakstar}{
	name={\ensuremath{E_\weakstar'}},
	description={the space $E'$ equipped with the $\sigma(E',E)$-topology},
	sort={Eweak10star}
}
\newglossaryentry{herm}{
	name={\ensuremath{\herm{n}}},
	description={the real vector space of complex $n \times n$ hermitian matrices},
	sort={Hn10}
}
\newglossaryentry{herm+}{
	name={\ensuremath{\herm{n}_+}},
	description={the cone of complex $n \times n$ positive semidefinite matrices},
	sort={Hn20}
}
\newglossaryentry{SCorideal}{
	name={\ensuremath{\SCorideal{I}{J}}},
	description={the upper ideal of $E \SCBiltensor F$ defined by the ideals $I$ and $J$},
	sort={I50tensorJ10}
}
\newglossaryentry{SCandideal}{
	name={\ensuremath{\SCandideal{I}{J}}},
	description={the lower ideal of $E \SCBiltensor F$ defined by the ideals $I$ and $J$},
	sort={I50tensorJ20}
}
\newglossaryentry{L}{
	name={\ensuremath{\LL(E,F)}},
	description={the space of linear maps $E \to F$},
	sort={LEF10}
}
\newglossaryentry{CL}{
	name={\ensuremath{\CL(E,F)}},
	description={the space of continuous linear maps $E \to F$},
	sort={LEF20}
}
\newglossaryentry{lineality space}{
	name={\ensuremath{\lineal(E_+)}},
	description={the lineality space $\lineal(E_+) = E_+ \cap -E_+$ of the convex cone $E_+$},
	sort={linealEplus}
}
\newglossaryentry{Lorentz}{
	name={\ensuremath{\Lorentz{n}}},
	description={the second-order ($=$ Lorentz) cone $\Lorentz{n} \subseteq \R^n$},
	sort={Ln}
}
\newglossaryentry{weak closure}{
	name={\ensuremath{\overline{M}^{\,\weak}}},
	description={the weak closure of the set $M$},
	sort={M10w}
}
\newglossaryentry{Mperp}{
	name={\ensuremath{M^\perp}},
	description={the set $\{\varphi \in E' \, : \, \langle x , \varphi\rangle = 0\ \text{for all $x \in M$}\}$},
	sort={M14perp}
}
\newglossaryentry{dualface}{
	name={\ensuremath{M^\dualface}},
	description={the dual face of $M$},
	sort={M16dualface}
}
\newglossaryentry{settensor}{
	name={\ensuremath{M \settensor N}},
	description={the entry-wise tensor product of the sets $M$ and $N$},
	sort={M20tensorsN}
}
\newglossaryentry{SCorface}{
	name={\ensuremath{\SCorface{M}{N}}},
	description={the upper face of $(E \SCBiltensor F)_+$ defined by the faces $M$ and $N$},
	sort={M30tensorN10}
}
\newglossaryentry{orface}{
	name={\ensuremath{\orface{M}{N}}},
	description={the upper face of $\minwedge{E_+}{F_+}$ defined by the faces $M$ and $N$},
	sort={M30tensorN11}
}
\newglossaryentry{SCandface}{
	name={\ensuremath{\SCandface{M}{N}}},
	description={the lower face of $(E \SCBiltensor F)_+$ defined by the faces $M$ and $N$},
	sort={M30tensorN20}
}
\newglossaryentry{andface}{
	name={\ensuremath{\andface{M}{N}}},
	description={the lower face of $\minwedge{E_+}{F_+}$ defined by the faces $M$ and $N$},
	sort={M30tensorN21}
}
\newglossaryentry{MsetNface}{
	name={\ensuremath{\MsetNface{M'}{N}}},
	description={the set $\{ b \in E \SCBiltensor F \, : \, b(M' , \,\cdot\,) \subseteq N\}$},
	sort={M40facesetN10}
}
\newglossaryentry{MfaceNset}{
	name={\ensuremath{\MfaceNset{M}{N'}}},
	description={the set $\{ b \in E \SCBiltensor F \, : \, b(\,\cdot\, , N') \subseteq M\}$},
	sort={M40facesetN20}
}
\newglossaryentry{weak-* closure}{
	name={\ensuremath{\overline{N}^{\,\weakstar}}},
	description={the weak\nobreakdash-$*$ closure of the set $N$},
	sort={N10wstar}
}
\newglossaryentry{perpN}{
	name={\ensuremath{{}^\perp N}},
	description={the set $\{x \in E \, : \, \langle x , \varphi\rangle = 0\ \text{for all $\varphi \in N$}\}$},
	sort={N14perp}
}
\newglossaryentry{predualface}{
	name={\ensuremath{{}^\predualface N}},
	description={the predual face of $N$},
	sort={N16perp}
}
\newglossaryentry{rext}{
	name={\ensuremath{\rext(E_+)}},
	description={the set of extremal directions of $E_+$},
	sort={rextEplus}
}
\newglossaryentry{weak topology}{
	name={\ensuremath{\sigma(E,E')}},
	description={the weak topology on $E$},
	sort={SigmaE1E2}
}
\newglossaryentry{weak-* topology}{
	name={\ensuremath{\sigma(E',E)}},
	description={the weak\nobreakdash-$*$ topology on $E'$},
	sort={SigmaE2E1}
}
\newglossaryentry{symm}{
	name={\ensuremath{\symm{n}}},
	description={the space of real $n \times n$ symmetric matrices},
	sort={Sn10}
}
\newglossaryentry{symm+}{
	name={\ensuremath{\symm{n}_+}},
	description={the cone of real $n \times n$ positive semidefinite matrices},
	sort={Sn20}
}
\newglossaryentry{algebraic adjoint}{
	name={\ensuremath{T\algdual}},
	description={the algebraic adjoint $F\algdual \to E\algdual$ of a linear map $T : E \to F$},
	sort={T10}
}
\newglossaryentry{topological adjoint}{
	name={\ensuremath{T\topdual}},
	description={the topological adjoint $F\topdual \to E\topdual$ of a continuous linear map $T : E \to F$},
	sort={T20}
}
\newglossaryentry{T tensor S}{
	name={\ensuremath{T \tensor S}},
	description={the linear map $E \tensor F \to G \tensor H$ determined by the linear maps $T : E \to G$ and $S : F \to H$},
	sort={TtensorS10}
}
\newglossaryentry{T SCBiltensor S}{
	name={\ensuremath{T \SCBiltensor S}},
	description={the linear map $E \SCBiltensor F \to G \SCBiltensor H$ determined by the linear maps $T \in \CL(E_\weak , G_\weak)$ and $S \in \CL(F_\weak , H_\weak)$},
	sort={TtensorS20}
}
\newglossaryentry{T Biltensor S}{
	name={\ensuremath{T \Biltensor S}},
	description={the linear map $\Bil(E' \times F') \to \Bil(G' \times H')$ determined by the linear maps $T \in \CL(E_\weak , G_\weak)$ and $S \in \CL(F_\weak , H_\weak)$},
	sort={TtensorS30}
}
\begin{document}

\frontmatter

\thetitlepage

\microtypesetup{protrusion=false} 
\tableofcontents
\microtypesetup{protrusion=true}

\mainmatter

\makeoddhead{mypagestyle}{\rightmark}{}{\thepage}

\chapter{Introduction}
\label{chp:introduction}

\section{Background}
\label{sec:intro:background}

Convex cones have applications in almost all branches of mathematics, from algebra and geometry to analysis and optimization.
Consequently, convex cones have been studied extensively in their own right, and there is a vast body of work on all kinds of geometrical, analytical, and combinatorial properties of convex cones.

In the study of convex cones, just as in any other area of mathematics, it is important to have good ways of creating new objects from old.
One such problem which has attracted a lot of attention is the following.
Suppose that we are given convex cones $E_+$ and $F_+$ in the vector spaces $E$ and $F$, respectively.
Can we then use the data of $E_+$ and $F_+$ to somehow construct a natural cone in the tensor product $E \tensor F$?
As it turns out, there are multiple ways to do so \cite{Merklen,Peressini-Sherbert}, just as there are multiple ways to define a norm on the tensor product of two normed spaces \cite{Ryan}.

Among all ``reasonable'' cones in the tensor product $E \tensor F$, there is a smallest and a largest one, which we denote by $\minwedge{E_+}{F_+}$ and $\maxwedge{E_+}{F_+}$, respectively.
These have come up many times in the literature, motivated by problems in a variety of different fields.
We outline a few of these applications:
\begin{itemize}
	\item In functional analysis, one is often interested in tensor products of various types of spaces (e.g.{} Banach spaces, $C^*$-algebras, operator spaces, etc.).
	Often the two factors come with natural order structure, in which case it is desirable to find a compatible order structure in the tensor product.
	This is equivalent to finding a tensor product of the positive cones, and so tensor products of convex cones are closely linked to tensor products of ordered (topological) vector spaces.
	
	\item In operator theory, the minimal and maximal tensor product of a positive semidefinite cone with an arbitrary cone $C$ correspond to the smallest and largest operator system with $C$ at its ground level.
	Question surrounding this minimal and maximal operator system have been studied by several authors; for instance, \cite{Paulsen-Todorov-Tomforde,Fritz-Netzer-Thom,Huber-Netzer}.
	Furthermore, these questions turn out to be closely related to questions about matrix convex sets \cite[\S 7]{Passer-Shalit-Solel}, \cite[Thm.{} 9.11]{Shalit-dilation} and free spectrahedra \cite{Fritz-Netzer-Thom}, topics which have been studied by authors in geometry, optimization, and quantum information theory.
	
	\item In theoretical physics, the theory of ``general probabilistic theories'' (GPTs) forms a new framework which generalizes both classical and quantum probability \cite{Lami-dissertation,Muller-lecture-notes,Plavala-lecture-notes,Aubrun-et-al-ii}.
	A GPT derives probability from an arbitrary finite-dimensional Archimedean cone with an order unit.
	Classical (resp.{} quantum) probability can then be recovered as a special case by taking a simplex (resp.{} positive semidefinite) cone.
	
	Given GPTs $E$ and $F$, the tensor product $E \tensor F$ corresponds to the composite system $(E,F)$.
	In this setting, the elements of the smallest cone $\minwedge{E_+}{F_+}$ correspond to the separable states, whereas the elements of $\maxwedge{E_+}{F_+} \setminus \minwedge{E_+}{F_+}$ correspond to the entangled states \cite[Def.{} 5.8]{Plavala-lecture-notes}.
	Thus, understanding tensor products of convex cones is crucial to understanding entanglement in GPTs.
	
	\item In polyhedral geometry, the minimal and maximal tensor product of two polyhedral cones are closely related to the tensor product and Hom-polytope of the underlying polytopes.
	Since Hom and tensor are fundamental constructions in the category of polytopes, their properties have been studied in detail in the literature; see for instance \cite{Bogart-Contois-Gubeladze}.
	
	\item In approximation theory, tensor products of convex cones come up naturally in the context of multivariate shape preserving interpolation with cone constraints.
	For a precise description of this problem and its relation to tensor products of convex cones, see \cite{Mulansky}.
\end{itemize}

In each of these settings, the underlying construction is just a tensor product of convex cones.
This underpins the importance of a systematic study of tensor products of convex cones, and indeed many papers have already been written about this.
However, most of the existing literature only focuses on one of two particular cases: lattice cones and finite-dimensional cones.
As a result, the literature is divided into two separate lines of investigation, neither of which addresses the problem in full generality.

The first line of investigation comes from functional analysis.
In this setting, the focus has mostly been on Riesz spaces and Banach lattices.
Although most classical Banach spaces are lattice-ordered, many other interesting classes of ordered vector spaces are not.
For example, the self-adjoint part of a $C^*$-algebra $\mathcal A$ is an ordered vector space with a closed, proper and generating cone $\mathcal A_+$, but by Sherman's theorem it is lattice-ordered if and only if $\mathcal A$ is commutative.
This shows that, in a way, restricting one's attention to lattice-ordered spaces is akin to restricting one's attention to commutative $C^*$-algebras.

The second line of investigation comes from linear algebra, and encompasses the remaining applications from the preceding list.
In this setting, research has dealt exclusively with closed, proper and generating cones in finite-dimensional spaces.
This is once again a severe limitation, at least from the perspective of analysis, as finite-dimensional spaces are often of limited use there.
Furthermore, even in the finite-dimensional case one occasionally encounters cones which are not closed or not proper.
For example, lexicographical cones in a space of dimension at least $2$ are never closed, and quotients/projections of closed, proper cones are not guaranteed to be closed either (see e.g.{} \cite[Example 6.3]{Dobben-semisimplicity}).

There has been very little cross-pollination between these two lines of investigation, and very little has been done beyond these two specific cases.
In particular, almost nothing is known about tensor products of infinite-dimensional ordered vector spaces which are not lattice-ordered, or about tensor products of cones which are not closed and/or not proper.
This disqualifies many cones from consideration, including even standard cones such as the positive semidefinite cone over an infinite-dimensional Hilbert space.

Furthermore, even in the cases that have been studied, many basic properties have not been noted or proved in the existing literature.
For instance, whereas mapping properties play an important role in the similar theory of \emph{normed} tensor products, we are not aware of prior papers which establish the mapping properties of the minimal/maximal tensor product of convex cones.
Likewise, only partial results are known about properness of the minimal/maximal tensor product of convex cones, or whether the minimal/maximal tensor product preserves faces of the base cones.

This memoir aims to develop a general theory of tensor products of convex cones, without any restrictions on the cones or the ambient spaces.
By using ideas from both lines of investigation and borrowing additional techniques from the similar theory of \emph{normed} tensor products, we are able to extend known results to the general setting and prove many completely new results.

In the next section, we give a very brief overview of the existing literature.
After that, the remainder of this chapter gives a comprehensive overview of the main results of this memoir.

\section{Brief literature overview}

The study of tensor products of ordered topological vector spaces was initiated in the 1960s by Merklen \cite{Merklen},%
	\hair\footnote{It appears that Merklen was the first to study tensor products of ordered vector spaces, but his article is very hard to find, and contains several errors. For instance, \cite[Teorema~5]{Merklen} states that the weak closure of the projective cone $\minwedge{E_+}{F_+}$ is a proper cone if at least one of $E_+$ and $F_+$ is proper, provided that $E_+$ and $F_+$ are weakly closed. Likewise, \cite[Teorema~9]{Merklen} states the same for the injective cone. Both of these statements are incorrect, as can be seen by taking $E_+ = \R_{\geq 0}$ and $F_+ = \R$. The correct statement is that both $E_+$ and $F_+$ should be proper; see \autoref{thm:intro:semisimplicity} and \autoref{thm:intro:proper-cone} below.}
Hulanicki and Phelps \cite{Hulanicki-Phelps}, Popa \cite{Popa-I,Popa-II}, and Peressini and Sherbert \cite{Peressini-Sherbert}.
From the 1970s onwards, the focus has mostly been on Riesz spaces (\cite{Schaefer-tensor,Fremlin,Fremlin-Banach,Wittstock,Schaefer-pos-ops-book,Birnbaum,Fremlin-decomposition,Nielsen-operators,Grobler-Labuschagne,Nielsen-approximation,Grobler-Labuschagne-f-algebra,Blanco,Azouzi-BenAmor-Jaber,Buskes-Thorn,BenAmor-Gok-Yaman}) and, in a separate line of investigation, on closed cones in finite\-/dimensional spaces (\cite{Barker-Loewy,Barker-monotone,Haynsworth-Fiedler-Ptak,Barker-duality,Barker-perfect,Barker-survey,projectionally-exposed-faces,projectionally-exposed-faces-ii,Tam-faces,Tam-survey,Mulansky,Hildebrand-descriptions,Huber-Netzer,Aubrun-et-al-ii}).
For general ordered vector spaces, some of the basic questions remain unanswered (and, on one occasion, escaped from collective memory, as we point out below).

The most comprehensive paper on tensor products of general ordered vector spaces is the article of Peressini and Sherbert \cite{Peressini-Sherbert}.
It contains an in-depth study of the properties of the \emph{projective} (minimal) and \emph{injective} (maximal) cone in the tensor product.%
	\hair\footnote{A note on terminology: several authors refer to the maximal cone as the \emph{biprojective cone}. We aim to show that it is in many ways analogous to the injective norm, and as such deserves the name \emph{injective cone}. This term has also occasionally been used before, for instance by Wittstock \cite{Wittstock} and Mulansky \cite{Mulansky}.}
It answers various topological and order-theoretic questions about these cones, for instance relating to normality and order units.
Furthermore, it establishes a few sufficient conditions for the projective/injective cone to be proper, but it does not provide precise necessary and sufficient conditions.

Conditions for the projective cone to be proper were quickly provided by Dermenjian and Saint-Raymond \cite{Dermenjian-Saint-Raymond}, but their result seems to have been unknown to later generations of mathematicians.
Only recently was this question answered (again) by Wortel \cite{Wortel}; until then only special cases were assumed to be known in the literature.
For the injective cone, no precise necessary and sufficient conditions for properness are known in the literature.

The situation is much better in the setting of lattice-ordered or finite-dimensional spaces.
For lattice-ordered spaces, a lot has been said about the problem of turning the tensor product (or its completion) into a lattice-ordered space as well \cite{Schaefer-tensor,Fremlin,Fremlin-Banach,Wittstock}, and connections between such lattice tensor products and lattices of operators are well-known \cite[\S IV.7]{Schaefer-pos-ops-book}.
However, such results are rather specific to lattice-ordered spaces, and have little hope of being generalized to general (non-lattice-ordered) spaces.

Likewise, in the finite-dimensional setting, much more is known.
Here research has focused on cones that are closed, proper and generating.
This is sufficient to guarantee that the projective and injective tensor product are closed, proper and generating as well \cite{Tam-projective-closed}, so finding criteria for properness is not an issue here.
More advanced results have been obtained as well; see for instance \cite{Barker-Loewy,Barker-monotone,Barker-survey,Tam-faces}.
In particular, in the context of cones of positive operators, Tam gave a construction which can be used to obtain faces in the injective cone from faces of the base cones \cite[\S 4]{Tam-faces}.
We will extend this result; see \mysecref{sec:intro:faces}.

There is very little overlap between the lattice-ordered and the finite-dimensional theory, because they deal with very different questions.
After all, the only finite-dimensional closed lattice cones are the simplex cones (i.e.{} the ones isomorphic to $\R_{\geq 0}^n$), which are not very interesting from either perspective.
However, one problem that has been studied in both settings is the question whether or not the projective cone is dense in the injective cone.
Birnbaum \cite{Birnbaum} showed that this is true whenever $E$ and $F$ are locally convex lattices and gave an example which shows that it is not true in general.
However, in the infinite-dimensional setting, this problem does not appear to be well understood.

More can be said in the finite-dimensional case, where results in this direction have been proved by various authors over the years \cite{Barker-Loewy,Poole-dissertation,Tam-projective-closed,Fritz-Netzer-Thom,Huber-Netzer,Aubrun-et-al-i,Aubrun-et-al-ii}.
We will extend previous results by showing that the projective cone is closed and properly contained in the injective cone for nearly all closed, proper, and generating base cones (in finite-dimensional spaces), including most standard cones.
However, as this manuscript was being written, our results were superseded by simultaneous work of Aubrun, Lami, Palazuelos and Pl\'avala \cite{Aubrun-et-al-ii}, who independently managed to solve this problem completely in the finite-dimensional case.
We recover their result for a large class of cones, using completely different techniques.
Our contribution will be discussed in more detail in \mysecref{sec:intro:many-examples}.

\section{Scope and notation}

We now outline the scope of this memoir, and we cover the basic notation needed to state our main results in the upcoming sections.

In this memoir, we study the projective and injective tensor product of two convex cones $E_+ \subseteq E$ and $F_+ \subseteq F$, where $E$ and $F$ are either real vector spaces or real \emph{topological} vector spaces.

Topological considerations will not matter too much for our investigation, but to build a satisfactory duality theory we need to at least keep track of the duals of all spaces involved.
Hence, instead of remembering the topology of a vector space $E$ (or the fact that $E$ has no topology), we only remember the dual pair $\langle E, E'\rangle$ to which $E$ belongs.
The advantage of this approach is twofold: it allows us to treat the topological and non-topological cases simultaneously (if $E$ has no topology, let $E' := E\algdual$ be the algebraic dual), and it allows us to completely ignore any topological issues in the tensor product, thereby sidestepping the notoriously difficult theory of topological tensor products.
One downside of this approach is the following: since we have no topology on $E$, we must occasionally refer to the weak closure $\overline{E_+}^{\,\weak}$ of $E_+$, instead of the ordinary closure.
However, we remind the reader that in every locally convex space, the weak closure of a convex set coincides with its original closure.

We now recall some basic notation.
A \index{convex cone}\emph{convex cone} (otherwise known as a \emph{wedge}) in a real vector space $E$ is a non-empty subset $\mywedge \subseteq E$ satisfying $\mywedge + \mywedge \subseteq \mywedge$ and $\lambda \mywedge \subseteq \mywedge$ for all $\lambda \in \R_{\geq 0}$.
The \index{lineality space}\emph{lineality space} of a convex cone $\mywedge$ is the linear subspace \glsadd{lineality space}$\lineal(\mywedge) := \mywedge \cap -\mywedge$.
We say that a convex cone $\mywedge$ is \index{convex cone!proper}\emph{proper} if $\lineal(\mywedge) = \{0\}$ and \index{convex cone!semisimple}\emph{semisimple} if its weak closure $\overline{\mywedge}^{\,\weak}$ is proper.

Let $E$ and $F$ be vector spaces, and let $E_+ \subseteq E$, $F_+ \subseteq F$ be convex cones.
The \index{projective cone}\emph{projective cone} in $E \tensor F$ is given by
\[ \glsadd{minwedge}\minwedge{E_+}{F_+} := \left\{\sum_{i=1}^k x_i \tensor y_i \ : \ k\in\N,\ x_1,\ldots,x_k\in E_+,\ y_1,\ldots,y_k\in F_+\right\}. \]

Furthermore, if $E$ and $F$ belong to the dual pairs $\langle E, E' \rangle$ and $\langle F, F' \rangle$, then the \index{injective cone}\emph{injective cone} in $E \tensor F$ is given by
\[ \glsadd{maxwedge}\maxwedge{E_+}{F_+} := \big\{u \in E \tensor F \, : \, \langle u , \varphi \tensor \psi \rangle \geq 0\ \text{for all $\varphi \in E_+'$, $\psi \in F_+'$}\big\}. \]
For additional notation, see \mychpref{chp:preliminaries}, or refer to the glossary of notation on \autopageref{symbols}.

\subsection{A note about cones in the completed tensor product}
So far, the study of tensor products of convex cones has mostly been limited to cones in the algebraic tensor product, with the exception of some results on tensor products of Banach lattices.
However, the algebraic tensor product is often of limited use in analysis; instead, one is usually interested in its completion with respect to some suitable topology.
For this reason, we also aim to initiate a study of the projective and injective cones in completed locally convex tensor products.

When dealing with \emph{topological} tensor products, one has to define the topology \emph{before} taking the completion, for obviously the completion depends on the chosen topology.
On the other hand, the cone is unrelated to the topology, and can therefore be defined directly on the completion.
This gives rise to a natural extension of the injective cone to the completed tensor product, which we will also study in this memoir.
On the other hand, the projective cone in the completed tensor product $E \hattensor_\alpha F$ is merely the same cone embedded in a larger ambient space%
	\hair\footnote{We define the projective cone algebraically, without taking its closure. This is the prevalent definition in the literature, but might not be appropriate for all applications. We do prove a few results about its closure; see \autoref{cor:intro:reasonable-extremal-rays} and \autoref{thm:intro:semisimplicity}.},
so there is little reason to study this cone separately.

An overview of the cones under consideration, their notation, and their domains of definition, is given in \autoref{tbl:tensor-cones-context}.
(In all cases, $E_+ \subseteq E$ and $F_+ \subseteq F$ are convex cones in the primal spaces.)

\begin{table}[h!t]
	\centering
	\renewcommand{\arraystretch}{1.25}
	\caption{The domain of definition of the projective/injective cones studied in this memoir.}
	\begin{tabular}{lllp{55mm}}
		\toprule
		Cone & Ambient space & Notation & Domain of definition \\
		\midrule
		Projective & $E \tensor F$            & \glsadd{minwedge}$\minwedge{E_+}{F_+}$                 & $E$ and $F$ vector spaces \\
		Injective  & $E \tensor F$            & \glsadd{maxwedge}$\maxwedge{E_+}{F_+}$                 & $\langle E, E'\rangle$ and $\langle F, F'\rangle$ dual pairs\\
		Injective  & $E \hattensor_\alpha F$  & \glsadd{hatmaxwedge}$\hatmaxwedge[\alpha]{E_+}{F_+}$   & $E$ and $F$ complete lcs;\par $\alpha$ a compatible lc topology on $E \tensor F$ \\
		\bottomrule
	\end{tabular}
	\label{tbl:tensor-cones-context}
\end{table}

In the remainder of this chapter, we state our main results only for cones in the algebraic tensor product.
Similar results hold for the injective cone in completed locally convex tensor products, but these are harder to state, as they often require additional (topological) assumptions.
Precise statements can be found in \mychpref{chp:injective} on the injective cone.

\section{Mapping properties}
\label{sec:intro:mapping-properties}
In the theory of normed tensor products, it is well-known that the projective norm preserves metric surjections (quotients) and the injective norm preserves metric injections (isometries), and these simple mapping properties play an important role in the theory.
By looking at the corresponding types of positive linear maps, we show that the projective and injective cones have analogous mapping properties.

Let $E$ and $F$ be vector spaces, and let $E_+ \subseteq E$, $F_+ \subseteq F$ be convex cones.
We say that a linear map $T \in \LL(E,F)$ is \index{positive linear map}\emph{positive} if $T[E_+] \subseteq F_+$, a \index{pullback}\emph{pullback} (or \emph{bipositive operator}) if $E_+ = T^{-1}[F_+]$, and a \index{pushforward}\emph{pushforward} if $T[E_+] = F_+$.
Furthermore, if $E$ and $F$ belong to dual pairs $\langle E,E'\rangle$ and $\langle F,F'\rangle$, then we say that an operator $T \in \CL(E_\weak, F_\weak)$ is an \index{approximate pullback}\emph{approximate pullback} (or \emph{approximately bipositive}) if $\overline{E_+}^{\,\weak} = T^{-1}[\overline{F_+}^{\,\weak}]$, and an \index{approximate pushforward}\emph{approximate pushforward} if $\overline{T[E_+]}^{\,\weak} = \overline{F_+}^{\,\weak}$.
(Recall that in a locally convex space, the weak closure of a convex set coincides with its original closure.)
Every pushforward is also an approximate pushforward, but a pullback is not necessarily an approximate pullback (see \mysecref{sec:convex-cones}).

A typical example of a pullback is an order embedding: if $(E,E_+)$ is order isomorphic to a subspace of $(F,F_+)$, then the embedding $E \hookrightarrow F$ is a pullback.
A typical example of a pushforward is a quotient or projection.

In the normed theory, the projective norm preserves metric surjections (quotients) and the injective norm preserves metric injections (isometries).
We prove a similar result for cones:

\begin{AlphTheorem}
	\label{thm:intro:mapping-properties}
	The projective cone preserves positive linear maps, \textup(approximate\textup) pushforwards, and order retracts, but not \textup(approximate\textup) pullbacks.
	
	The injective cone preserves weakly continuous positive linear maps, approximate pullbacks, and topological order retracts, but not pullbacks or \textup(approximate\textup) pushforwards.
\end{AlphTheorem}

In particular, the injective cone preserves order embeddings when the cones are weakly closed.
We believe this result to be new, even in the finite-dimensional setting.

The proof of \autoref{thm:intro:mapping-properties} will be given in \mysecref{sec:projective-positive-linear-maps} (projective cone) and \mysecref{sec:injective-positive-linear-maps} (injective cone).
An overview of these mapping properties is given in \autoref{tbl:mapping-properties}.

\begin{table}[!htbp]
	\centering
	\renewcommand{\arraystretch}{1.25}
	\caption{Types of maps preserved by the projective/injective cone.}
	\begin{tabular}{lcc}
		\toprule
		Type of map                   & \multicolumn{2}{c}{Preserved by} \\
		\cmidrule{2-3}
		                              & Projective cone  & Injective cone \\
		\midrule
		Positive map                  & \mycheckmark     & \mycheckmark \\
		Pushforward                   & \mycheckmark     & \\
		Approximate pushforward       & \mycheckmark     & \\
		Pullback                      &                  & \\
		Approximate pullback          &                  & \mycheckmark \\
		Retract (positive projection) & \mycheckmark     & \mycheckmark \\
		\bottomrule
	\end{tabular}
	\label{tbl:mapping-properties}
\end{table}

Note that the injective cone only preserves \emph{approximate} pullbacks.
It is not so strange that it does not preserve all pullbacks: the injective cone does not see the difference between $E_+$ and $\overline{E_+}^{\,\weak}$, and a pullback for $E_+$ is not necessarily a pullback for $\overline{E_+}^{\,\weak}$ (for details, see \mysecref{sec:injective-positive-linear-maps}).
In general, the properties of the injective cone depend on those of $\overline{E_+}^{\,\weak}$ and $\overline{F_+}^{\,\weak}$ rather than $E_+$ and $F_+$.
By contrast, the \emph{projective} cone does see the difference between $E_+$ and $\overline{E_+}^{\,\weak}$, so it preserves both pushforwards and approximate pushforwards.

\section{Criteria for properness, the lineality space, and semisimplicity}
\label{sec:intro:properness-lineality-space}
Another basic question about tensor products of convex cones is to determine when the projective or injective cone is proper.
Peressini and Sherbert \cite{Peressini-Sherbert} found a few sufficient conditions, but their paper does not specify precise necessary and sufficient criteria.
For the projective cone, precise conditions were found by Dermenjian and Saint-Raymond \cite{Dermenjian-Saint-Raymond}, and rediscovered in recent years by Wortel \cite{Wortel}.%
	\hair\footnote{The result of Dermenjian and Saint-Raymond seems to have been unknown to later generations of mathematicians, and until recently only special cases were assumed to be known in the literature.}
For the injective cone, no such result is known, except in the finite-dimensional case.
In this memoir, we give a simpler proof for the projective cone, and we also settle the problem for the injective cone.

\begin{AlphTheorem}
	\label{thm:intro:proper-cone}
	The projective cone $\minwedge{E_+}{F_+}$ is proper if and only if $E_+ = \{0\}$, or $F_+ = \{0\}$, or both $E_+$ and $F_+$ are proper cones \textup(cf.~\cite[Th\'eor\`eme 2]{Dermenjian-Saint-Raymond}\textup).
	
	The injective cone $\maxwedge{E_+}{F_+}$ is proper if and only if $E = \{0\}$, or $F = \{0\}$, or both $\overline{E_+}^{\,\weak}$ and $\overline{F_+}^{\,\weak}$ are proper cones.
\end{AlphTheorem}

The proof of \autoref{thm:intro:proper-cone} will be given in \mysecref{sec:projective-proper-cone} (projective cone) and \mysecref{sec:injective-proper-cone} (injective cone).
Note that there is a subtle difference between the corner cases in \autoref{thm:intro:proper-cone}: the corner case for the projective cone is when one of the \emph{cones} is trivial, whereas the corner case for the injective cone is when one of the \emph{spaces} is trivial.
In partial explanation of this discrepancy, we establish direct formulas for the lineality spaces, from which the criteria of \autoref{thm:intro:proper-cone} can easily be recovered.

\begin{AlphTheorem}
	\label{thm:intro:lineality-space}
	The lineality space of the projective/injective cone is
	\begin{align*}
		\lineal(\minwedge{E_+}{F_+}) &= (\lineal(E_+) \tensor \spn(F_+)) + (\spn(E_+) \tensor \lineal(F_+));\\\noalign{\smallskip}
		\lineal(\maxwedge{E_+}{F_+}) &= (\lineal(\overline{E_+}^{\,\weak}) \tensor F) + (E \tensor \lineal(\overline{F_+}^{\,\weak})).
	\end{align*}
\end{AlphTheorem}

The proof of \autoref{thm:intro:lineality-space} will be given in \autoref{cor:projective-lineality-space} (projective cone) and \autoref{cor:tensor-subspace} (injective cone).

We also address the related question of finding precise necessary and sufficient conditions for the \emph{closure} of the projective cone to be proper.
(For the injective cone, this is already addressed by \autoref{thm:intro:proper-cone}, because the injective cone is always closed.)
Recall that we say that $E_+$ is \emph{semisimple} if its weak closure is proper.
If $E$ is locally convex, then this is the same as saying that its ordinary closure is proper, because the weak and original closure of a convex set in a locally convex space coincide.
We prove the following semisimplicity version of \autoref{thm:intro:proper-cone}.

\begin{AlphTheorem}
	\label{thm:intro:semisimplicity}
	The projective cone $\minwedge{E_+}{F_+}$ is semisimple if and only if $E_+ = \{0\}$, or $F_+ = \{0\}$, or both $E_+$ and $F_+$ are semisimple.
\end{AlphTheorem}

A parallel result was proved by van Gaans and Kalauch \cite{vanGaans-Kalauch}: if $E_+$ and $F_+$ are Archimedean, then their projective tensor product is contained in an Archimedean proper cone.
Neither of these two results implies the other.

The proof of \autoref{thm:intro:semisimplicity} will be given in \mysecref{sec:semisimple-algebraic}.

We also address the question of semisimplicity in completed locally convex tensor products.
In \mysecref{sec:semisimple-completed}, we prove that the injective cone remains semisimple in the completed injective tensor product, and more generally, in every completion $E \hattensor_\alpha F$ for which the natural map $E \hattensor_\alpha F \to E \hattensor_\varepsilon F$ is injective.
However, we do not know whether the projective cone remains semisimple in the completed projective tensor product $E \hattensor_\pi F$; see \autoref{q:completed-semisimple}.
This question is related to the approximation property, as we will explain in \mysecref{sec:semisimple-completed}.

\section{Faces and extremal rays}
\label{sec:intro:faces}

Next, we turn our attention to another permanence property: preservation of faces.
In the normed theory, it follows from a result of Tseitlin \cite{Tseitlin} that the projective norm sometimes preserves extreme points of the closed unit ball, provided that certain topological requirements are met.
Further results in this direction are known for stronger notions of extreme points, such as denting points \cite{Ruess-Stegall-weak*-denting,Werner}.
This leads us to ask to which extent the projective and injective cones preserve extremal rays, or more generally, faces.

In the infinite-dimensional setting, we are not aware of prior results in this direction.
For closed cones in finite-dimensional spaces, some constructions are known in the literature.
The injective cone $\maxwedge{E_+}{F_+}$ can be interpreted as a cone of positive operators $E\algdual \to F$, whose faces have already been studied by many authors.
However, some information is lost in passing from $E$ to $E\algdual$, so instead we give a different construction which we believe to be more natural, and we extend this to the general setting.
For the projective cone, Tam \cite[\S{} 4]{Tam-faces} pointed out one way to construct faces (without proof).
We extend this construction to the general setting, give a full proof, and show that a pair of faces of the base cones give rise to not one but four natural faces of the projective tensor product.

\subsection{Faces of the projective cone}

By combining the mapping properties with the properness criteria, we can show that the projective cone preserves faces.

\begin{AlphTheorem}
	\label{thm:intro:projective-faces}
	If $M \subseteq E_+$ and $N \subseteq F_+$ are faces, then $(\minwedge{M}{F_+}) + (\minwedge{E_+}{N})$ and $(\minwedge{M}{N}) + \lineal(\minwedge{E_+}{F_+})$ are faces of the projective cone $\minwedge{E_+}{F_+}$.
	
	In particular, if $E_+$ and $F_+$ are proper cones, then $\minwedge{M}{N}$ is a face of $\minwedge{E_+}{F_+}$.
\end{AlphTheorem}

For closed, proper and generating cones in finite-dimensional spaces, this last property was already noted (without proof) by Tam in \cite[p.{} 53]{Tam-dissertation} and \cite[p.{} 71]{Tam-faces}.
We have recovered his simple proof for this special case (see \autoref{rmk:Tams-proof}), but a different technique is needed to prove the general case.
In the full generality stated here, \autoref{thm:intro:projective-faces} is a non-trivial result which contains \autoref{thm:intro:proper-cone} as a special case (by setting $M = N = \{0\}$).
Remarkably, it is true without any niceness assumptions on the cones $E_+$ and $F_+$ or the faces $M$ and $N$.

The proof of \autoref{thm:intro:projective-faces} will be given in \mysecref{sec:projective-faces}.
There we also mention two other faces induced by $M$ and $N$, showing that a pair of faces $M \subseteq E_+$ and $N \subseteq F_+$ gives rise to not one but four natural faces of the projective cone.
This is a new result, even in the finite-dimensional case.

As an application of \autoref{thm:intro:projective-faces}, we prove that the tensor product of symmetric convex sets preserves proper faces.

\begin{AlphTheorem}
	\label{thm:intro:absolutely-convex-faces}
	Let $E$ and $F$ be real vector spaces, let $C \subseteq E$, $D \subseteq F$ be absolutely convex, and let $M \subset C$, $N \subset D$ be proper faces. Then $\conv(M \settensor N)$ is a face of $\conv(C \settensor D)$.
\end{AlphTheorem}

For extreme points, results in this direction were known in the setting of normed tensor products (where $C$ and $D$ are the closed unit balls of the norms of $E$ and $F$).
However, in that setting, stronger assumptions are needed ($E$ and $F$ must be Banach spaces such that at least one of $E$ and $F$ has the approximation property and at least one of $E$ and $F$ has the Radon--Nikodym property), and a stronger conclusion is obtained ($x \tensor y$ is an extreme point of the \emph{closure} of $\conv(C \settensor D)$ in the \emph{completed} projective Banach space tensor product); see our remarks following \autoref{cor:projective-norm-faces}.
To our knowledge, no such results are known for higher faces, and we are not aware of a general statement like \autoref{thm:intro:absolutely-convex-faces} in the literature.

Note that \autoref{thm:intro:absolutely-convex-faces} is a purely algebraic statement, as we do not take closures.
We do not know whether it remains true after taking closures, but we suspect it does not (see \autoref{rmk:projective-norm-extreme-points}).
However, if $C$ and $D$ are compact, then $\conv(C \settensor D)$ is compact as well, so in particular it follows from \autoref{thm:intro:absolutely-convex-faces} that the projective norm preserves proper faces of the closed unit ball of finite-dimensional spaces (\autoref{cor:projective-norm-faces}).
As far as we know, this had only been known for extreme points.

The proof of \autoref{thm:intro:absolutely-convex-faces} will be given in \mysecref{sec:application-to-convex-sets}.

\subsection{Ideals for the injective cone}

The injective cone $\maxwedge{E_+}{F_+}$ can be interpreted as a subcone of the cone of positive operators $E\topdual \to F$.
Since there has been a lot of research into the properties of such cones, a lot has already been said about their faces.
For every set $M\topdual \subseteq E_+\topdual$ and every face $N \subseteq F_+$, it is trivially easy to show that the set of positive operators $T \in \CL(E\topdual, F)$ satisfying $T[M\topdual] \subseteq N$ forms a face (see \autoref{lem:injective-faces}), so this gives us a plethora of faces in the injective cone.
On the other hand, finding all extremal rays of the cone of positive operators is a notoriously difficult problem, so the face structure of the injective cone is still far from fully understood.

Although it is not so hard to construct faces of the injective cone from faces of the base cones, it is unclear what the ``right'' way of doing so is.
Only interpreting $\maxwedge{E_+}{F_+}$ as a cone of positive operators $E\topdual \to F$ is a bit unsatisfactory; we might just as well have interpreted it as a cone of positive operators $F\topdual \to E$.
Apart from the fact that this is not a symmetric formulation, this poses a bigger problem: in both interpretations, we have one primal and one dual space, but faces are not well-behaved under duality (not every face of $E_+\topdual$ is the dual of a face of $E_+$ and vice versa).
As far as we know, this problem has not been addressed in the literature, where the focus has been on cones of positive operators $E \to F$ instead of injective tensor products.

In this memoir, we set out to prove a satisfactory injective counterpart of \autoref{thm:intro:projective-faces}.
To do so, we believe we should change perspective from faces to ideals.
An \emph{\textup(order\textup) ideal} in a preordered vector space $(E,E_+)$ is a subspace $I \subseteq E$ for which the quotient cone $(E/I)_+$ is proper (for other equivalent definitions, see \autoref{prop:ideal-definitions}).
There is a close relationship between faces and ideals: the map $I \mapsto I_+$ ($= I \cap E_+$) defines a surjective many-to-one correspondence between the order ideals of the preordered vector space $(E,E_+)$ and the faces of $E_+$ (see \mysecref{app:faces-ideals}).

The benefit of working with ideals instead of faces is twofold.
First, in the infinite-dimensional (topological) setting, it is often important to work with \emph{closed} ideals so as to have a useable quotient, but it is not always easy (or even possible) to tell from a face whether or not it occurs as the positive part of a closed ideal.
Second, whereas faces of the injective cone can only be given by implicit formulas (all positive operators mapping certain sets into certain faces), for ideals we get the following very simple explicit formulas.

\begin{AlphTheorem}
	\label{thm:intro:injective-ideals}
	If $I \subseteq E$ and $J \subseteq F$ are ideals with respect to $\overline{E_+}^{\,\weak}$ and $\overline{F_+}^{\,\weak}$, then $(I \tensor J) + \lineal(\maxwedge{E_+}{F_+})$ is an ideal with respect to the injective cone $\maxwedge{E_+}{F_+}$.
	
	Additionally, if $I$ is weakly closed and $(E/I)_+$ is semisimple, or if $J$ is weakly closed and $(F/J)_+$ is semisimple, then $(I \tensor F) + (E \tensor J)$ is also an ideal with respect to the injective cone.
\end{AlphTheorem}

We believe \autoref{thm:intro:injective-ideals} to be new, even in the finite-dimensional case.
Note that the first formula simplifies to $I \tensor J$ whenever $\overline{E_+}^{\,\weak}$ and $\overline{F_+}^{\,\weak}$ are proper (by \autoref{thm:intro:proper-cone} or \autoref{thm:intro:lineality-space}).

The proof of \autoref{thm:intro:injective-ideals} will be given in \mysecref{sec:injective-ideals}.
There we will also show that the extra assumption that $(E/I)_+$ or $(F/J)_+$ is semisimple cannot be omitted from the second part of \autoref{thm:intro:injective-ideals} (see \autoref{xmpl:SCorface-ideal-fail}).
\hyphenation{Further-more}
Furthermore, we will extend \autoref{thm:intro:injective-ideals} to completed locally convex tensor products, though this requires additional topological assumptions (see \autoref{thm:injective-ideals-completed-i} and \autoref{thm:injective-ideals-completed-ii}).

Contrary to the projective case, the preceding results do not have an application to tensor products of symmetric convex sets.
The injective analogue of \autoref{thm:intro:absolutely-convex-faces} is simply not true, because the injective norm does not preserve extreme points of the unit balls (see \autoref{rmk:injective-norm-extreme-points}).
This makes it all the more remarkable that the injective cone preserves faces and extremal rays.

\subsection{Extremal rays}

As a special case of \autoref{thm:intro:projective-faces} and \autoref{thm:intro:injective-ideals}, we show that the projective and injective cones preserve extremal rays.

\begin{AlphTheorem}
	\label{thm:intro:extremal-rays}
	A vector $u \in E \tensor F$ is an extremal direction of the projective cone $\minwedge{E_+}{F_+}$ if and only if $u$ can be written as $u = x \tensor y$, where $x$ and $y$ are extremal directions of $E_+$ and $F_+$.
	
	If $x$ and $y$ are extremal directions of $\overline{E_+}^{\,\weak}$ and $\overline{F_+}^{\,\weak}$, then $x \tensor y$ is an extremal direction of the injective cone $\maxwedge{E_+}{F_+}$. All extremal directions of \textup(tensor\textup) rank one are of this form, but there may also be extremal directions of larger rank.
\end{AlphTheorem}

For closed, proper and generating cones in finite-dimensional spaces, this was already known; see for instance \cite[Thm.{} 3.4(2)]{Haynsworth-Fiedler-Ptak}%
	\hair\footnote{Their result is formulated only for polyhedral cones, but the proof also works for other closed, proper and generating cones.}
or \cite[Thm.{} 3.1]{Tam-survey}.
We extend it to arbitrary cones, and also to cones in completed locally convex tensor products (\autoref{cor:injective-completed-extremal-rays}).

The proof of \autoref{thm:intro:extremal-rays} will be given in \mysecref{sec:projective-extremal-rays} (projective cone) and \mysecref{sec:injective-extremal-rays}/\mysecref{sec:rank-one} (injective cone).
An immediate consequence is that every ``reasonable crosscone'' (see \mychpref{chp:reasonable}) preserves extremal rays whenever $E_+$ and $F_+$ are weakly closed.

\begin{AlphCorollary}
	\label{cor:intro:reasonable-extremal-rays}
	If $E_+$ and $F_+$ are weakly closed, and if $x$ and $y$ are extremal directions of $E_+$ and $F_+$, then $x \tensor y$ is an extremal direction of every convex cone $\mywedge \subseteq E \tensor F$ with $\minwedge{E_+}{F_+} \subseteq \mywedge \subseteq \maxwedge{E_+}{F_+}$.
\end{AlphCorollary}

In particular, the closure of the projective cone also preserves extremal rays.
We do not know if it also preserves higher faces (as in \autoref{thm:intro:projective-faces}).
For further discussion of this problem, see \mychpref{chp:open-problems}.
For more on reasonable crosscones, see \mychpref{chp:reasonable}.

\section{Special properties in the finite-dimensional case}
\label{sec:intro:finite-dimensional}

In the linear algebra literature, all papers on tensor products of convex cones have focused on closed, proper and generating cones in finite\-/dimensional spaces.
On the other hand, in the functional analysis literature, most of the focus has been on tensor products of Archimedean lattice cones.
Therefore the only cones which are covered by both regimes are the ones isomorphic to the standard cone $\R_{\geq 0}^n$ (all Archimedean lattice cones in $\R^n$ are isomorphic to $\R_{\geq 0}^n$), which are not very interesting from either perspective.
Consequently, these two lines of investigation have focused on completely different problems.

Even if one is primarily interested in tensor products of infinite\-/dimensional convex cones, it is good to be aware of the finite\-/dimensional theory, as various fundamental phenomena can already be observed here.
For this reason, in \mychpref{chp:finite-dimensional}, we give an overview of the most important additional properties in the finite-dimensional setting (with closed cones).

The main results of \mychpref{chp:finite-dimensional} are threefold.
First, for closed cones $E_+$ and $F_+$ in finite-dimensional spaces, we show that the projective cone $\minwedge{E_+}{F_+}$ can be interpreted as the cone of positive operators $E\algdual \to F$ that factor positively through some finite-dimensional Archimedean Riesz space (i.e.{} though some $\R^n$ with the standard cone $\R_{\geq 0}^n$).
Second, we show that the closure of the projective cone $\minwedge{E_+}{F_+}$ is equal to the projective cone $\minwedge{\overline{E_+}}{\overline{F_+}}$, thereby extending a result of Tam \cite{Tam-projective-closed}, who proved this in the case that $E_+$ and $F_+$ are closed, proper and generating.
Third, we study the basic properties of order retracts of finite-dimensional cones, and give many examples of retracts occurring in standard cones.

\section{Many examples where the projective and injective cone differ}
\label{sec:intro:many-examples}

Another question which has attracted a lot of attention is to determine under which circumstances the projective cone $\minwedge{E_+}{F_+}$ is dense in the injective cone $\maxwedge{E_+}{F_+}$.
For locally convex lattices $E$ and $F$, Birnbaum \cite[Prop.{} 3]{Birnbaum} proved that $\minwedge{E_+}{F_+}$ is dense in $\maxwedge{E_+}{F_+}$ in the projective topology (and therefore in every coarser topology), and followed this by an example showing that this is not true for all ordered locally convex spaces (not necessarily lattice ordered).
In general, however, the infinite-dimensional version of this problem does not appear to be well understood.

A lot more is known in the finite-dimensional setting (with closed, proper and generating cones), where various results in this direction have been obtained since the 1970s.
Here $\minwedge{E_+}{F_+}$ is automatically closed whenever $E_+$ and $F_+$ are closed (by the results from \mysecref{sec:intro:finite-dimensional}), so the question is whether or not the projective and injective cones are equal.

Let $E$ and $F$ be finite-dimensional spaces, and let $E_+ \subseteq E$, $F_+ \subseteq F$ be closed, proper and generating convex cones.
We say that $E_+$ is a \emph{simplex cone} (or \emph{Yudin cone}) if it is generated by a basis (or equivalently, if there is a linear isomorphism $E \cong \R^n$ that identifies $E_+$ with the standard cone $\R_{\geq 0}^n$).

In the 1970s, Barker showed that $\minwedge{E_+}{F_+} = \maxwedge{E_+}{F_+}$ whenever $E_+$ or $F_+$ is a simplex cone \cite{Barker-monotone}, and conversely conjectured that $E_+$ or $F_+$ must be a simplex cone whenever $\minwedge{E_+}{F_+} = \maxwedge{E_+}{F_+}$ \cite[p.{} 277]{Barker-survey}.
His conjecture remained open for a very long time, but partial results were obtained by Barker and Loewy \cite[Prop.{} 3.1]{Barker-Loewy}, who proved the conjecture when $F_+ = E_+\algdual$, and by Poole \cite[Thm.{} 5.15]{Poole-dissertation}, who proved it when $E_+$ and $F_+$ are polyhedral.
More recently, Huber and Netzer \cite{Huber-Netzer} proved the conjecture when $E_+$ is a positive semidefinite cone and $F_+$ is a polyhedral cone (or vice versa).

In \mychpref{chp:many-examples}, we prove Barker's conjecture for nearly all%
	\hair\footnotehack{The term `nearly all' has a precise meaning (namely, up to a $\sigma$-porous set); see for instance \cite{Zamfirescu-smooth-strictly-convex}.}
pairs $(E_+,F_+)$ of closed, proper and generating cones in finite-dimensional spaces.
Recall that a closed, proper and generating convex cone is called \emph{strictly convex} if every non-zero boundary point is an extremal direction, and \emph{smooth} if every non-zero boundary point has exactly one supporting hyperplane. It is well-known that $E_+$ is strictly convex if and only if $E_+\algdual$ is smooth, and vice versa.
We prove Barker's conjecture in the case that $\dim(E) \geq \dim(F)$ and $E_+$ is smooth or strictly convex.

\begin{AlphTheorem}
	\label{thm:intro:many-examples-i}
	Let $E$, $F$ be finite\-/dimensional real vector spaces, and let $E_+ \subseteq E$, $F_+ \subseteq F$ be closed, proper, and generating convex cones. If $\dim(E) \geq \dim(F)$, and if $E_+$ is strictly convex or smooth, then one has $\minwedgeFIN{E_+}{F_+} = \maxwedgeFIN{E_+}{F_+}$ if and only if $F_+$ is a simplex cone.
\end{AlphTheorem}

The set of convex bodies in $\R^n$ which are not smooth or strictly convex is meagre in the Hausdorff metric \cite{Klee-smoothness-rotundity}, and even satisfies the stronger notion of ``$\sigma$-porosity'' \cite{Zamfirescu-smooth-strictly-convex}.
As such, \autoref{thm:intro:many-examples-i} shows that the projective and injective cone differ for nearly all\hair\footnotemark{} pairs of closed, proper, and generating cones $(E_+,F_+)$.

The proof of \autoref{thm:intro:many-examples-i} will be given in \mysecref{sec:smooth-strictly-convex}.

Although \autoref{thm:intro:many-examples-i} covers nearly all cones, it does not cover most standard cones.
For instance, polyhedral cones and positive semidefinite cones (and their duals) have many non-trivial faces, so they are not smooth or strictly convex.
We complement \autoref{thm:intro:many-examples-i} with a similar result for combinations of standard cones.

\begin{AlphTheorem}
	\label{thm:intro:many-examples-ii}
	Let $E$, $F$ be finite\-/dimensional real vector spaces, and let $E_+ \subseteq E$, $F_+ \subseteq F$ be closed, proper, and generating convex cones. Assume that each of $E_+$ and $F_+$ is one of the following \textup(all combinations allowed\textup):
	\begin{enumerate}[label=(\roman*)]
		\item a polyhedral cone;
		\item a second-order cone;
		\item a \textup(real or complex\textup) positive semidefinite cone.
	\end{enumerate}
	Then one has $\minwedgeFIN{E_+}{F_+} = \maxwedgeFIN{E_+}{F_+}$ if and only if at least one of $E_+$ and $F_+$ is a simplex cone.
\end{AlphTheorem}

Using retracts, \autoref{thm:intro:many-examples-ii} could already be deduced from the aforementioned known results of Poole \cite[Thm.{} 5.15]{Poole-dissertation} and Huber and Netzer \cite{Huber-Netzer}.
We give a new proof of \autoref{thm:intro:many-examples-ii}, thereby also providing new proofs of the results of Poole and of Huber and Netzer.

The proof of \autoref{thm:intro:many-examples-ii} will be given in \mysecref{sec:standard-cones}.

\begin{AlphRemark}
	\label{rmk:intro:Aubrun-et-al}
	As this manuscript was being written, the preceding results were superseded by independent work of Aubrun, Lami, Palazuelos and Pl\'avala \cite{Aubrun-et-al-ii}.
	Motivated by questions in theoretical physics, they proved that $\minwedgeFIN{E_+}{F_+} = \maxwedgeFIN{E_+}{F_+}$ if and only if at least one of $E_+$ and $F_+$ is a simplex cone (provided that $E_+$ and $F_+$ are closed, proper, and generating).
	Both \autoref{thm:intro:many-examples-i} and \autoref{thm:intro:many-examples-ii} are special cases of this result.
	We had not been aware of these results until shortly before publication of the first version of this manuscript on arXiv, and our proofs differ significantly from the proof in \cite{Aubrun-et-al-ii}.
	Although we recover their result for nearly all cones, we have not been able to recover it in full generality.
\end{AlphRemark}

\subsection{Applications to operator systems}

The recent resurgence of interest in the question of whether or not $\minwedge{E_+}{F_+} = \maxwedge{E_+}{F_+}$ is due in part to recent developments in the study of operator systems \cite{Fritz-Netzer-Thom,Huber-Netzer}.
Reformulated in terms of operator systems (using notation from \cite{Fritz-Netzer-Thom}), our results prove the following.

\begin{AlphCorollary}
	\label{cor:intro:operator-systems}
	Let $C \subseteq \R^d$ be a closed, proper, and generating convex cone. If $d \leq 4$, or if $C$ is strictly convex, or smooth, or polyhedral, or \textup(real or complex\textup) positive semidefinite, then the following are equivalent:
	\begin{enumerate}[label=(\roman*)]
		\item $C$ is a simplex cone;
		\item the minimal and maximal operator systems $\minopsys{C}$ and $\maxopsys{C}$ are equal;
		\item there exists $n \geq 2$ for which $\minopsys[n]{C} = \maxopsys[n]{C}$;
		\item one has $\minopsys[2]{C} = \maxopsys[2]{C}$.
	\end{enumerate}
\end{AlphCorollary}

Again, \autoref{cor:intro:operator-systems} was superseded by the work of Aubrun, Lami, Palazuelos and Pl\'avala \cite{Aubrun-et-al-ii}, who removed the additional assumptions on the cone $C$ (see \autoref{rmk:intro:Aubrun-et-al}).

The proof of \autoref{cor:intro:operator-systems} will be given in \mysecref{sec:standard-cones}.

\section{Appendix: faces and ideals}
The main body of this memoir is complemented by an appendix on faces and ideals of convex cones in infinite\-/dimensional spaces.
This material is not directly related to tensor products, but will be used extensively in the proofs.

Although faces and ideals have each received a lot of attention in the literature, the link between these concepts does not appear to be well-known.
The relationship is very simple: the map $I \mapsto I_+$ defines a surjective many-to-one correspondence between ideals and faces (\mysecref{app:faces-ideals}).
Going back and forth between faces and ideals is crucial in our study of the faces of the projective/injective cone.

In \mychpref{app:faces}, we study the basic properties of faces and ideals and the connection between the two (\mysecref{app:faces-ideals}), we outline to which extent the homomorphism and isomorphism theorems hold (\mysecref{app:isomorphism-theorems}), and we study dual and exposed faces in infinite-dimensional cones (\mysecref{app:dual-and-exposed}).
A more detailed outline will be given at the beginning of \mychpref{app:faces}.

\section{Organization}

In \mychpref{chp:preliminaries}, we recall all required notation and terminology.
This is complemented by a glossary notation and an index, both of which can be found at the end of this memoir.

In \mychpref{chp:projective}, we study the properties of the projective cone.
Here we prove all of the main results for the projective cone (see \mysecref{sec:intro:mapping-properties}--\ref{sec:intro:faces}), with the exception of \autoref{thm:intro:semisimplicity}, whose proof is deferred until \mychpref{chp:reasonable}.

Likewise, in \mychpref{chp:injective}, we study the properties of the injective cone.
Here we prove all of the main results for the injective cone (see \mysecref{sec:intro:mapping-properties}--\ref{sec:intro:faces}).

In \mychpref{chp:reasonable}, we study the basic properties of the so-called `reasonable crosscones' (that is, arbitrary cones which lie somewhere between the projective and injective cone).
We show that all reasonable crosscones have the same rank $1$ tensors whenever $E_+$ and $F_+$ are weakly closed and proper, and we briefly look at ideals and extremal rays of reasonable crosscones.
Furthermore, we study semisimplicity of reasonable crosscones, and we give a proof of \autoref{thm:intro:semisimplicity} (on the semisimplicity of the projective cone).
We also look at questions surrounding semisimplicity of cones in \emph{completed} locally convex tensor products, and we discuss how these questions are related to topological issues and the approximation property.

In \mychpref{chp:finite-dimensional}, we give an overview of the most important additional properties in the finite-dimensional setting (see \mysecref{sec:intro:finite-dimensional}).
Building on this, in \mychpref{chp:many-examples}, we give many (finite-dimensional) examples where the projective and injective cone are different (see \mysecref{sec:intro:many-examples}).

In \mychpref{chp:open-problems}, we discuss a few open problems.

Finally, in \mychpref{app:faces}, we discuss the relation between faces and order ideals.
These results are not very well known, and will be used extensively in the main body of this memoir, so we have included them for completeness.

\chapter{Preliminaries}
\label{chp:preliminaries}

In this memoir, we study tensor products of convex cones $E_+,F_+$ in real vector spaces $E,F$.
Occasionally, $E$ and $F$ will be \emph{topological} vector spaces, but usually they are only assumed to be the primal spaces of the dual pairs $\langle E, E' \rangle$, $\langle F , F' \rangle$.
In this chapter, we cover the necessary terminology and notation.
This is complemented by a glossary of notation and an index, both of which can be found at the end of this manuscript.

\section{Topological vector spaces}
\label{sec:tvs}
Throughout this memoir, all vector spaces are over $\R$.

If $E$ is a vector space, then a linear map $E \to \R$ is called a \emph{linear functional}.
The \index{algebraic dual space}\emph{algebraic dual space} \glsadd{algebraic dual space}$E\algdual$ of $E$ is the space of all linear functionals $E \to \R$.

A \index{topological vector space}\emph{topological vector space} is a vector space $E$ equipped with a topology $\lintop$ such that the map $E \times E \to E$, $(x,y) \mapsto x + y$ and the map $\R \times E \to E$, $(\lambda,x) \mapsto \lambda x$ are (jointly) continuous with respect to $\lintop$.
If $E$ is a topological vector space, then its \index{topological dual space}\emph{topological dual space} \glsadd{topological dual space}$E\topdual$ is the space of all continuous linear functionals $E \to \R$.
It is a subspace of the algebraic dual, but usually the two are different.

A topological vector space is \index{topological vector space!locally convex}\emph{locally convex} if it has a neighbourhood base at $0$ consisting of convex sets.
Locally convex spaces are more well-behaved than general topological vector spaces, and almost all important spaces in functional analysis are locally convex.
For more on locally convex spaces, the reader is referred to a graduate level textbook on functional analysis, for instance \cite{Schaefer,Rudin,Conway}.

\subsection{Dual pairs and weak topologies}

Let $E$ and $F$ be vector spaces, and let $b : E \times F \to \R$ be a bilinear form.
We say that a subset $N \subseteq F$ \emph{separates points on $E$ \textup(via $b$\textup)} if for every $x \in E$ there is some $y' \in N$ such that $b(x,y') \neq 0$.
Likewise, a subset $M \subseteq E$ \emph{separates points on $F$ \textup(via $b$\textup)} if for every $y \in F$ there is some $x' \in M$ such that $b(x',y) \neq 0$.
If $F$ separates points on $E$ and $E$ separates points on $F$, then $b$ is called a \emph{dual pairing} (or \emph{non-degenerate bilinear form}), and we denote it by the shorthand notation $\langle x,y \rangle := b(x,y)$.
A \index{dual pair}\emph{dual pair} is a tuple $(E,F,\langle\:\cdot\:,\:\cdot\:\rangle)$, where $E$ and $F$ are vector spaces and $\langle\:\cdot\:,\:\cdot\:\rangle : E \times F \to \R$ is a dual pairing.
We usually use the shorthand notation \glsadd{dual pair}$\langle E, F\rangle$ to denote the dual pair $(E,F,\langle\:\cdot\:,\:\cdot\:\rangle)$.

Let $\langle E,F\rangle$ be a dual pair.
The \emph{$\sigma(E,F)$-topology} on $E$ is the initial topology induced by the family of linear functionals $\{ x \mapsto \langle x , y \rangle \, : \, y \in F\}$, and the \emph{$\sigma(F,E)$-topology} on $F$ is the initial topology induced by the family of linear functionals $\{ y \mapsto \langle x , y \rangle \, : \, x \in E\}$.
If $F = E\topdual$ is the topological dual of $E$, then the $\sigma(E,E')$-topology on $E$ is called the \index{topology!weak}\glsadd{weak topology}\emph{weak topology}, and the $\sigma(E',E)$-topology on $E'$ is called the \index{topology!weak-*@weak\nobreakdash-$*$}\glsadd{weak-* topology}\emph{weak\nobreakdash-$*$ topology}.
In this case, we denote the resulting topological vector spaces by \glsadd{E_weak}$E_\weak$ and \glsadd{E_weakstar}$E_\weakstar'$, respectively.
Likewise, the weak closure of a subset $M \subseteq E$ will be denoted \glsadd{weak closure}$\overline{M}^{\,\weak}$, and the weak\nobreakdash-$*$ closure of a subset $N \subseteq E'$ will be denoted \glsadd{weak-* closure}$\overline{N}^{\,\weakstar}$.

Throughout this memoir, we tacitly assume that all dual pairs consist of a topological vector space and its topological dual space (or algebraic dual space, if the primal space has no topology).
Consequently, by a slight abuse of notation, we denote our dual pairs as $\langle E,E'\rangle$, $\langle F,F'\rangle$, etc., and we say that $E$ \emph{belongs to the dual pair} $\langle E,E'\rangle$.
To keep the topological prerequisites to a minimum, we will forget about the original topology of $E$, and only remember the dual pair $\langle E,E'\rangle$ to which $E$ belongs.
When $E$ has no topology, we tacitly assume that $E' := E\algdual$ is the algebraic dual.

\subsection{Linear maps}
If $E$ and $F$ are vector spaces, then the space of linear maps $E \to F$ is denoted by \glsadd{L}$\LL(E,F)$.
If $E$ and $F$ are topological vector spaces, then the space of \emph{continuous} linear maps $E \to F$ is denoted by \glsadd{CL}$\CL(E,F)$.
If the (topological) duals separate points, then every continuous map $E \to F$ is also weakly continuous (see e.g.~\cite[\S 20.4.(5)]{Kothe-I}), so we have
\[ \CL(E,F) \subseteq \CL(E_\weak, F_\weak) \subseteq \LL(E,F). \]
If $E$ and $F$ are vector spaces without topologies, then every linear map $T : E \to F$ is $\sigma(E,E\algdual)$-$\sigma(F,F\algdual)$-continuous (since $\psi \circ T$ is $\sigma(E,E\algdual)$-continuous for every $\psi \in F\algdual$), so we have
\[ \qquad \CL(E_\weak,F_\weak) = \LL(E,F) \qquad\qquad \text{(if $E' = E\algdual$, $F' = F\algdual$)}. \]
The adjoint of a (continuous) linear map $T : E \to F$ is denoted \glsadd{algebraic adjoint}$T\algdual : F\algdual \to E\algdual$ (algebraic adjoint) or \glsadd{topological adjoint}$T\topdual : F\topdual \to E\topdual$ (topological adjoint).

\subsection{Bilinear maps}
Let $E,F,G$ be topological vector spaces.
A bilinear map $b : E \times F \to G$ is \emph{\textup(jointly\textup) continuous} if it is continuous with respect to the product topology on $E \times F$, and \emph{separately continuous} if for all fixed $x_0 \in E$ and $y_0 \in F$ the maps $y \mapsto b(x_0,y)$ and $x \mapsto b(x,y_0)$ are continuous.

From left to right, let
\[ \glsadd{CBil}\CBil(E \times F) \subseteq \glsadd{SCBil}\SCBil(E \times F) \subseteq \glsadd{Bil}\Bil(E \times F) \]
denote the spaces of continuous, separately continuous, and all bilinear forms $E \times F \to \R$.%
	\hair\footnote{Note: with this notation it is possible to confuse $\CBil(E \times F)$ with $\SCBil(E \times F)$, but notation like this appears to be at least moderately common (e.g.~\cite[p.~91]{Schaefer}, \cite[p.~154]{Kothe-II}).}

For our purposes, the most important of these is $\SCBil(E \times F)$, the space of separately continuous bilinear forms. It will be used extensively in the study of the injective cone in \mychpref{chp:injective}.

Given a bilinear form $b \in \Bil(E \times F)$ and fixed vectors $x_0 \in E$, $y_0\in F$, we let \glsadd{bxcdot}$b(x_0 , \,\cdot\, ) \in F\algdual$ and \glsadd{bcdoty}$b(\,\cdot\, , y_0) \in E\algdual$ denote the linear functionals
\begin{align*}
	b(x_0 , \,\cdot\, ) := \big(y \mapsto b(x_0,y)\big);\\\noalign{\smallskip}
	b(\,\cdot\, , y_0 ) := \big(x \mapsto b(x,y_0)\big).
\end{align*}
Using this notation, we see that $b$ is separately continuous if and only if one has $b(x_0 , \,\cdot\, ) \in F\topdual$ for all $x_0 \in E$ and $b(\,\cdot\, , y_0) \in E\topdual$ for all $y_0 \in F$. In particular, it follows that $\SCBil(E \times F)$ does not depend on the topologies of $E$ and $F$, but only on the dual pairs $\langle E,E\topdual\rangle$, $\langle F,F\topdual\rangle$. Likewise, it follows that $\SCBil(E \times F) = \Bil(E \times F)$ whenever $E\topdual = E\algdual$ and $F\topdual = F\algdual$.

It follows from \cite[\S 40.1.(2')]{Kothe-II} and the preceding remarks that the maps $b \mapsto (x \mapsto b(x , \,\cdot\, ))$ and $b \mapsto (y \mapsto b(\,\cdot\, , y))$ define linear isomorphisms
\[ \SCBil(E \times F) = \SCBil(E_\weak \times F_\weak) \cong \CL(E_\weak , F_\weakstar\topdual) \cong \CL(F_\weak , E_\weakstar\topdual ). \]
(The isomorphism $\CL(E_\weak , F_\weakstar\topdual) \cong \CL(F_\weak , E_\weakstar\topdual )$ is simply $T \mapsto T'$.)

\subsection{Tensor products}
We assume the reader to be familiar with the basics of the (algebraic) theory of tensor products.
We will need very little on the side of topological tensor products (but many results are inspired by the theory of normed tensor products).

For clarity, we shall occasionally use the following notation: if $E$ and $F$ are vector spaces and $M \subseteq E$ and $N \subseteq F$ are subsets, then we define the ``set-wise'' tensor product
\[ M \glsadd{settensor}\settensor N := \{x \tensor y \, : \, x\in M,\ y\in N\} \subseteq E \tensor F. \]

\section{Subspaces, quotients, and tensor products of dual pairs}
\label{sec:dual-pairs}
Many of the properties of a convex cone $E_+$ in topological vector space $E$ depend only on the geometry of $E_+$ and on the dual pair $\langle E,E\topdual\rangle$, not on the precise topology of $E$.
In particular, we don't need to know the exact topology of $E \tensor F$, because for our purposes it suffices to know what its dual space is.
This enables us to ignore topological issues in the tensor product, thereby circumventing the notoriously complicated theory of locally convex tensor products.
Instead, we formulate our results for a wide range of \emph{reasonable} duals of $E \tensor F$ (see below).

Throughout this memoir, we encode the ``input spaces'' $E$ and $F$ and the ``output space'' $E \tensor F$ by the dual pairs to which they belong; that is, by only remembering what the appropriate (algebraic or topological) dual space is, without remembering the exact topology.
In this section, we briefly discuss how to handle subspaces, quotients, and tensor products of dual pairs.

Questions about the projective/injective cone that depend not only on the dual pair, but also on a specific topology on $E \tensor F$, will not be treated in this memoir.
In particular, for questions about normality of the projective/injective cone, we refer the reader to \cite{Peressini-Sherbert}.

\begin{remark}
	Because we choose to forget about the topology of $E$ and only formulate results in terms of the dual pair $\langle E , E' \rangle$, we occasionally have to make use of the weak topology.
	In particular, we often refer to the \emph{weak closure} of a convex cone and to \emph{weakly closed} subspaces.
	We should point out that the adjective ``weak'' can be omitted here if $E$ is a locally convex space, because in this setting the weak and original closure of a convex set (in particular, a convex cone or a subspace) coincide, by \cite[Theorem 3.12]{Rudin}.
	
	If $E$ is a topological vector space which is not locally convex, then the adjective ``weak'' cannot be omitted.
\end{remark}

\subsection{Subspaces}
If $\langle E,E'\rangle$ is a dual pair and if $I \subseteq E$ is a subspace, then we will understand $I$ to belong to the dual pair $\langle I , E'/I^\perp \rangle$.

We show that this is usually, but not always, the natural dual pair for $I$.
To that end, assume that $E$ a topological vector space, $E'$ is its (topological) dual, and $I$ carries the subspace topology. Let $T : I \hookrightarrow E$ denote the inclusion and $T' : E' \to I'$ its adjoint.

If $E$ is locally convex, then every continuous linear functional on $I$ can be extended to $E$, so $T'$ is surjective. Clearly $\ker(T') = I^\perp$, so $T'$ restricts to a linear isomorphism $E\topdual/I^\perp \to I\topdual$.
Furthermore, the relative $\sigma(E,E\topdual)$-topology on $I$ coincides with the $\sigma(I,E\topdual/I^\perp)$-topology (even if $I$ is not closed), so we may unambiguously refer to this as the weak topology on $I$. On the other hand, the $\sigma(E\topdual/I^\perp,I)$-topology on $E\topdual/I^\perp = I\topdual$ coincides with the quotient topology $E_\weakstar\topdual/I^\perp$ if and only if $I$ is closed (see e.g.~\cite[\S IV.4.1, Corollary 1]{Schaefer}).

If $E$ is not locally convex, then $I$ may have continuous linear functionals that cannot be extended. In this case one still has $\ker(T') = I^\perp$, but $T'$ is not surjective, so $I' \neq E'/I^\perp$. Nevertheless, $E'/I^\perp$ is the dual of $I$ with respect to the relative $\sigma(E,E')$-topology on $I$.

\subsection{Quotients}
If $E$ is a topological vector space and if $I \subseteq E$ is a closed subspace, then $E/I$ is a Hausdorff topological vector space. Every continuous linear functional $E/I \to \R$ extends to a continuous linear functional $E \to \R$ that vanishes on $I$. Conversely, if $\varphi : E \to \R$ is a continuous linear functional that vanishes on $I$, then $\varphi$ factors through $E/I$, by the universal property of quotients. Therefore: $(E/I)\topdual \cong I^\perp$ as vector spaces.

Thus, if $\langle E,E'\rangle$ is a dual pair and if $I \subseteq E$ is a weakly closed subspace, then we can understand $E/I$ to belong to the dual pair $\langle E/I , I^\perp \rangle$. The quotient topology on $E_\weak/I$ coincides with the $\sigma(E/I,I^\perp)$-topology, and the subspace topology on $I^\perp \subseteq E_\weakstar\topdual$ coincides with the $\sigma(I^\perp , E/I)$-topology (see e.g.~\cite[\S IV.4.1, Corollary 1]{Schaefer}), so we may unambiguously refer to these as the weak topology on $E/I$ and the weak\nobreakdash-$*$ topology on $I^\perp$, respectively.

The only downside to this approach is that we cannot ``see'' all quotients of $E$.
If $E$ is locally convex, then every closed subspace is also weakly closed, but this is not true for general topological vector spaces (see e.g.~\cite{Kalton-quotients}).
However, if $I$ is closed but not weakly closed, then the quotient $E/I$ is Hausdorff, but its topological dual $(E/I)' = I^\perp$ does not separate points.
Throughout this memoir, we assume that all duals separate points, so we only consider quotients $E/I$ where $I$ is weakly closed.

\subsection{Tensor products}
\label{subsec:reasonable-dual}
Let $\langle E,E'\rangle$ and $\langle F,F'\rangle$ be dual pairs.
Recall from \mysecref{sec:tvs} that the space $\SCBil(E \times F)$ of separately continuous bilinear forms $E \times F \to \R$ can be defined without specifying topologies on $E$ and $F$, since this space depends only on the dual pairs $\langle E,E'\rangle$ and $\langle F,F'\rangle$.
Since the algebraic dual of $E \tensor F$ is isomorphic with $\Bil(E \times F)$, we can identify $\SCBil(E \times F)$ with a subspace of $(E \tensor F)\algdual$.
We say that a subspace $G \subseteq (E \tensor F)\algdual$ is a \index{reasonable dual space}\emph{reasonable dual of $E \tensor F$} (with respect to the dual pairs $\langle E,E'\rangle$, $\langle F,F'\rangle$) if
\[ E' \tensor F' \subseteq G \subseteq \SCBil(E \times F). \]
This definition will allow us to treat (duality of) convex cones in topological tensor products without having to deal with the specifics of topological tensor products.

We show that this definition covers all important cases.
First, if $E,F$ are locally convex and ${E \tensor F}$ carries a \emph{compatible} topology $\alpha$ (in the sense of Grothendieck \cite[p.~89]{Grothendieck-top}; see also \cite[\S 44.1]{Kothe-II}), then we claim that the topological dual \glsadd{completed tensor product}$(E \tensor_\alpha F)\topdual$ is a reasonable dual of $E \tensor F$.
Indeed, one of the requirements for $\alpha$ to be compatible is $E\topdual \tensor F\topdual \subseteq (E \tensor_\alpha F)\topdual$.
Moreover, every compatible topology is coarser than the \index{topology!inductive}inductive topology, whose dual is $\SCBil({E \times F})$ (see e.g.~\cite[\S 44.1.(5)]{Kothe-II}), so one has $(E \tensor_\alpha F)\topdual \subseteq \SCBil(E \times F)$. This shows that $(E \tensor_\alpha F)\topdual$ is a reasonable dual.

Second, if $E$ and $F$ originate from spaces without topologies, then we understand these to belong to the dual pairs $\langle E,E\algdual\rangle$, $\langle F,F\algdual\rangle$. In this case we have $\SCBil(E \times F) = \Bil(E \times F)$ (see \mysecref{sec:tvs}), so we find that $(E \tensor F)\algdual = \Bil(E \times F)$ is a reasonable dual of $E \tensor F$. This is useful when applying topological results in the non-topological setting (for instance, see \autoref{cor:tensor-products-of-algebraic-approximate-pushforwards}).

\section{Ordered vector spaces}
\label{sec:convex-cones}
\subsection{Convex cones and their duals}
Let $E$ be a (real) vector space. A \index{convex cone}\emph{convex cone}%
	\hair\footnote{A note about terminology: some authors call this a \index{wedge|see {convex cone}}\emph{wedge}, and reserve the term \emph{cone} for what we call a \emph{proper cone} (e.g.~\cite{Day}, \cite{Peressini}, \cite{Aliprantis-Tourky}).}
is a non-empty subset $\mywedge \subseteq E$ satisfying $\mywedge + \mywedge \subseteq \mywedge$ and $\lambda \mywedge \subseteq \mywedge$ for all $\lambda \in \R_{\geq 0}$. If $\mywedge$ is a convex cone, then \glsadd{lineality space}$\lineal(\mywedge) := \mywedge \cap -\mywedge$ is a linear subspace of $E$, called the \index{lineality space}\emph{lineality space} of $\mywedge$. We say that $\mywedge$ is \index{convex cone!proper}\emph{proper}%
	\hair\footnote{Some authors call this \emph{pointed} or \emph{salient}.}
if $\lineal(\mywedge) = \{0\}$, and \index{convex cone!generating}\emph{generating} if $\mywedge - \mywedge = E$.

If $\mywedge \subseteq E$ is a convex cone, then its \index{dual cone}\index{dual cone!algebraic}\glsadd{algebraic dual cone}\emph{algebraic dual cone} $\mywedge\algdual$ is the set of all positive linear functionals:
\[ \mywedge\algdual := \big\{\varphi \in E\algdual \, : \, \varphi(x) \geq 0 \ \text{for all $x \in \mywedge$}\big\}. \]
If $\langle E,E'\rangle$ is a dual pair, then we define $\mywedge' := \mywedge\algdual \cap E'$ (the \index{dual cone!with respect to a dual pair}\glsadd{topological dual cone}\emph{dual cone} for the dual pair $\langle E,E'\rangle$).
The dual cone of $\mywedge' \subseteq E'$ with respect to the dual pair $\langle E',E\rangle$ is the \index{bipolar cone}\emph{bipolar cone}
\[ \mywedge'' := \big\{x \in E \, : \, \langle x,\varphi\rangle \geq 0 \ \text{for all $\varphi \in \mywedge'$}\big\} = (\mywedge')'.\hair\footnote{There is some chance of confusion here, because $\mywedge''$ could also refer to the positive cone of the second dual $E''$ of $E$. To avoid this confusion, and in light of the bipolar theorem, we will henceforth refer to the bipolar cone as $\overline{\mywedge}^{\,\weak}$ instead of $\mywedge''$.} \]
Using the (one-sided) \index{bipolar theorem}bipolar theorem, one easily shows that \glsadd{bipolar cone}$\mywedge'' = \overline{\mywedge}^{\,\weak}$.
It follows that ${}^\perp (\mywedge') = \mywedge'' \cap -\mywedge'' = \lineal(\overline{\mywedge}^{\,\weak})$.
In particular, $\overline{\mywedge}^{\,\weak}$ is a proper cone if and only if $\mywedge'$ separates the points of $E$.
If this is the case, then we say that $\mywedge$ is \index{convex cone!semisimple}\emph{semisimple}.
(For an equivalent definition of semisimplicity in terms of representations, see \cite{Dobben-semisimplicity}.)

\subsection{Ordered vector spaces}
Let $E$ be a vector space.
A \index{vector preorder}\emph{vector preorder} is a preorder $\leq$ on $E$ such that for all $x,y,z \in E$ and $\lambda \in \R_{> 0}$ one has $x \leq y$ if and only if $x + z \leq y + z$ if and only if $\lambda x \leq \lambda y$.

There is a natural bijective correspondence between vector preorders on $E$ and convex cones in $E$, which identifies the preorder $\leq$ with the convex cone \glsadd{primal cone}$E_+ := \{x \in E \, : \, x \geq 0\}$ of positive elements of $E$.
In the reverse direction, a convex cone $\mywedge \subseteq E$ is identified with the vector preorder $\leq_{\mywedge}$ given by $x \leq_{\mywedge} y$ if and only if $y - x \in \mywedge$ (for all $x,y\in E$).

A \index{preordered (topological) vector space}\emph{preordered vector space} is a tuple $(E,E_+)$ where $E$ is a vector space and $E_+ \subseteq E$ is a convex cone.
We understand $E$ to be preordered by the vector preorder associated with $E_+$.
Likewise, a \emph{preordered topological vector space} is a tuple $(E,E_+)$, where $E$ is a topological vector space and $E_+ \subseteq E$ is a convex cone.
Note that we do not assume any kind of compatibility between the topology and the cone $E_+$.

The positive cone $E_+$ of a preordered (topological) vector space $E$ is proper if and only if the associated vector preorder is antisymmetric (so it is a partial order).
If this is the case, then $(E,E_+)$ is called an \index{ordered (topological) vector space}\emph{ordered \textup(topological\textup) vector space}.

Whenever we have a (topological) vector space $E$ and a convex cone $E_+ \subseteq E$, we will implicitly assume that $E$ is a preordered (topological) vector space with positive cone $E_+$.
Furthermore, the preorder of $E$ will be denoted by $\leq$.

\subsection{Positive linear maps}
Let $(E,E_+)$ and $(F,F_+)$ be preordered vector spaces.
We say that a linear map $T \in \LL(E,F)$ is \index{positive linear map}\emph{positive} if $T[E_+] \subseteq F_+$, a \index{pullback}\emph{pullback} (or \index{bipositive linear map|see {pullback}}\emph{bipositive operator}) if $E_+ = T^{-1}[F_+]$, and a \index{pushforward}\emph{pushforward} if $T[E_+] = F_+$.

Furthermore, if $E$ and $F$ belong to dual pairs $\langle E,E'\rangle$ and $\langle F,F'\rangle$, then we say that an operator $T \in \CL(E_\weak, F_\weak)$ is \index{approximately positive linear map}\emph{approximately positive} if $T[\overline{E_+}^{\,\weak}] \subseteq \overline{F_+}^{\,\weak}$, an \index{approximate pullback}\emph{approximate pullback} (or \index{approximately bipositive linear map|see {approximate pullback}}\emph{approximately bipositive}) if $\overline{E_+}^{\,\weak} = T^{-1}[\overline{F_+}^{\,\weak}]$, and an \index{approximate pushforward}\emph{approximate pushforward} if $\overline{T[E_+]}^{\,\weak} = \overline{F_+}^{\,\weak}$.
A continuous positive map (resp.~pushforward) is also approximately positive (resp.~an approximate pushforward), but a pullback is not necessarily an approximate pullback.
(Concrete example: let $F = \R^2$ with $F_+ = \{(x,y) \, : \, x > 0\} \cup \{(0,0)\}$, let $E := \spn\{(0,1)\} \subseteq F$ with $F_+ := F_+ \cap E$, and let $T$ be the inclusion $E \hookrightarrow F$.)

These approximate type operators are not particularly natural from the perspective of ordered vector spaces, but they come into play as soon as one starts to make use of duality.
Given $T \in \CL(E_\weak , F_\weak)$, it is not hard to show that the adjoint $T' \in \CL(F_\weakstar' , E_\weakstar')$ is positive if and only if $T$ is approximately positive.
\label{p:approximate-duality}
In addition, using that $(T[C])\polar = (T')^{-1}[C\polar]$ (e.g.{} \cite[Proposition IV.2.3(a)]{Schaefer}), it is easy to show that $T$ is an approximate pullback if and only if $T'$ is a weak\nobreakdash-$*$ approximate pushforward, and vice versa.
This is no longer true if the adjective ``approximate'' is omitted.

We shall treat pullbacks and pushforwards as the natural ordered analogues of metric injections (isometries) and metric surjections (quotients); see \autoref{tbl:dictionary}.
As soon as duality comes into play, it will be helpful to pass to the corresponding approximate versions.
In particular, we show that the injective cone preserves approximate pullbacks, but not all pullbacks.

Note that every linear map $E \to F$ can be made a pullback/pushforward by choosing appropriate cones. In particular, a pullback is not necessarily injective, and a pushforward is not necessarily surjective. However, if $E_+$ is a proper cone, then every pullback $T : E \to F$ is injective (since $\ker(T) \subseteq T^{-1}[F_+] = E_+$), and if $F_+$ is generating then every pushforward $E \to F$ is surjective.

\begin{table}[h!t]
	\centering
	\renewcommand{\arraystretch}{1.25}
	\caption{Ordered analogues of common concepts in the normed theory.}
	\begin{tabular}{ll}
		\toprule
		Normed theory & Ordered theory \\
		\midrule
		Continuous operator & Positive operator \\
		Metric injection (isometry) & Pullback (bipositive operator) \\
		Metric surjection (quotient) & Pushforward (quotient) \\
		Projection (complemented subspace) & Positive projection (order retract) \\
		\bottomrule
	\end{tabular}
	\label{tbl:dictionary}
\end{table}

\subsection{Retracts}
Let $(E,E_+)$ be a preordered vector space.
A subspace $F \subseteq E$ is an \index{order retract}\emph{order retract} if there exists a positive projection $E \to F$.
If $E$ is furthermore a topological vector space, then we say that $F$ is a \index{topological order retract}\emph{topological order retract} if there exists a \emph{continuous} positive projection $E \to F$.

For simplicity, we shall speak of \index{retract|see {order retract}}\emph{retracts} and \index{top-retract|see {topological order retract}}\emph{top-retracts}, as there is minimal chance of confusion with other types of retracts (e.g.~from topology).

Note that a retract provides at the same time an injective pullback (i.e.~bipositive map) $F \hookrightarrow E$ and a surjective pushforward (``quotient'') $E \twoheadrightarrow F$. We will show that, although the projective tensor product does not preserve bipositive maps and the injective tensor product does not preserve quotients, retracts are sufficiently rigid to be preserved by both.

To illustrate their place in the theory, note that every top-retract is a complemented subspace (after all, it admits a continuous projection%
	\hair\footnote{Some authors require a complemented subspace to be closed, but this is automatic: if $P : E \to E$ is a continuous projection with range $F$, then $F = \ker(\id_E - P)$, so $F$ is closed.}).
If $E_+ = \{0\}$, then the top-retracts are precisely the complemented subspaces.

As far as we know, order retracts are not a very common notion, and have not received much attention. However, some special cases already play a role in the theory, such as \emph{projection bands} in Riesz spaces (see e.g.~\cite[\S 11]{Zaanen}) and \emph{projectionally exposed faces} in finite\-/dimensional cones (see e.g.~\cite{projectionally-exposed-faces}, \cite{projectionally-exposed-faces-ii}).

\subsection{Positive bilinear maps}
If $(E,E_+)$, $(F,F_+)$, $(G,G_+)$ are preordered vector spaces, then a bilinear map $b : E \times F \to G$ is called \index{positive bilinear map}\emph{positive} if $b(E_+,F_+) \subseteq G_+$.

In terms of the isomorphism $\SCBil(E_\weak \times F_\weak) \cong \CL(E_\weak , F_\weakstar')$ (see \mysecref{sec:tvs}), we note that a bilinear form $b \in \SCBil(E_\weak \times F_\weak)$ is positive if and only if $b(x, \,\cdot\,)$ defines a positive linear functional on $F$ for every $x \in E_+$, or equivalently, if and only if the corresponding map $E_\weak \to F_\weakstar'$ is positive. Thus, contrary to the topological setting, there is no difference between positive and ``separately positive'' bilinear forms.

\subsection{Faces and extremal rays}
Let $E$ be a vector space and let $E_+ \subseteq E$ be a convex cone.
A \index{face}\emph{face} (or \index{extremal set|see {face}}\emph{extremal set}) of $E_+$ is a (possibly empty) convex subset $M \subseteq E_+$ such that, if $M$ contains a point in the relative interior of a line segment in $E_+$, then $M$ also contains the endpoints of that segment. If $\varphi$ is a continuous positive linear functional, then $\ker(\varphi) \cap E_+$ is a face. Faces of this type are called \index{face!exposed}\emph{exposed}.

Every convex cone has a unique \index{face!minimal}minimal non-empty face (the lineality space $\lineal(E_+)$, contained in every face) and a unique \index{face!maximal}maximal face (the cone itself, containing every face). Note that $E_+$ is a proper cone if and only if $\{0\}$ is a face.

Let $x_0 \in E_+ \setminus \{0\}$. If $M := \{\lambda x_0 \, : \, \lambda \in \R_{\geq 0}\}$ is a face, then we say that $x_0$ is an \index{extremal direction}\emph{extremal direction}, and $M$ is an \index{extremal ray}\emph{extremal ray}. If $x_0$ is an extremal direction, then so is $\mu x_0$ for every $\mu > 0$. We let \glsadd{rext}$\rext(E_+) \subseteq E_+ \setminus \{0\}$ denote the set of all extremal directions of $E_+$.

If $M \subseteq E_+$ is a non-empty subset, then $E_+' \cap M^\perp$ defines a face of $E_+'$. Faces of this type are called \index{face!dual}\emph{dual faces}. In the finite\-/dimensional case, dual faces are precisely the exposed faces, but this is not true in locally convex spaces. For more on dual and exposed faces, see \mysecref{app:dual-and-exposed}.

\subsection{Order ideals}
Let $(E,E_+)$ be a preordered vector space.
A subspace $I \subseteq E$ is called an \index{order ideal}\emph{order ideal} if the pushforward of $E_+$ along the quotient map $E \to E/I$ is a proper cone. If no ambiguity can arise (i.e.~if the space does not carry a multiplication), then we call $I$ simply an \index{ideal|see {order ideal}}\emph{ideal}.

A subspace $I \subseteq E$ is an ideal if and only if $I \cap E_+$ is a face of $E_+$ (see \autoref{prop:ideal-definitions}).
Conversely, if $M \subseteq E_+$ is a face, then $\spn(M)$ is an ideal satisfying $\spn(M) \cap E_+ = M$ (see \autoref{prop:ideals-obtained}).
Thus, $I \mapsto I_+$ defines a many-to-one correspondence between ideals and faces. We shall draw heavily upon this correspondence.

If $\mywedge \subseteq E_+$ is a subcone, then every ideal $I \subseteq E$ with respect to $E_+$ is also an ideal with respect to $\mywedge$.
More generally, if $T : E \to F$ is a positive linear map and if $J \subseteq F$ is an ideal, then $T^{-1}[J] \subseteq E$ is also an ideal (see \autoref{prop:ideals-obtained}).
In particular, if $F_+$ is a proper cone, then $\{0\} \subseteq F_+$ is a face, so $\ker(T) \cap E_+$ is a face of $E_+$. It can be shown that all faces can be written in this form (see \myautoref{prop:ideal-face-kernel}{itm:face-kernel}).

We will show in \autoref{cor:maximal-ideals} that the maximal order ideals are precisely the kernels of non-zero positive linear functionals, or in other words, the supporting hyperplanes of $E_+$.
In particular, not every maximal ideal in a preordered topological vector space is closed. (Example: the kernel of a discontinuous positive linear functional.)

For more about ideals and faces, see \mysecref{app:faces} and \cite{Bonsall}.

\chapter{The projective cone}
\label{chp:projective}

In this chapter, we carry out an in-depth study of the properties of the projective cone.
This cone does not depend on any topological data, so we will mostly ignore topological issues in this chapter.
A few questions about the \emph{closure} of the projective cone will be discussed in \mychpref{chp:reasonable}.

Let $E,F$ be (real) vector spaces and let $E_+ \subseteq E$, $F_+ \subseteq F$ be convex cones. The simplest way to define a cone in ${E \tensor F}$ is to consider the \index{projective cone}\emph{projective cone}
\[ \glsadd{minwedge}\minwedge{E_+}{F_+} := \left\{\sum_{i=1}^k x_i \tensor y_i \ : \ k\in\N,\ x_1,\ldots,x_k\in E_+,\ y_1,\ldots,y_k\in F_+\right\}. \]
If $E,F$ are locally convex and if $\alpha$ is a compatible locally convex topology on $E \tensor F$, then we denote by $\topminwedge[\alpha]{E_+}{F_+}$ and \glsadd{hatminwedge}$\hatminwedge[\alpha]{E_+}{F_+}$ the same cone, but embedded in the topological vector spaces $E \tensor_\alpha F$ and $E \hattensor_\alpha F$, respectively. (The topology is denoted in the subscript; the cone in the superscript.)

It is easy to see that $\minwedge{E_+}{F_+}$ is indeed a convex cone. This cone has received a lot of attention in the literature; see for instance \cite{Merklen,Peressini-Sherbert,Grobler-Labuschagne,vanGaans-Kalauch,Wortel}.

In the subsequent sections, we will study the basic properties of the projective cone. We point out a characteristic property of the projective cone (\mysecref{sec:projective-characteristic-property}), study its mapping properties (\mysecref{sec:projective-positive-linear-maps}), prove precise necessary and sufficient conditions for $\minwedge{E_+}{F_+}$ to be proper (\mysecref{sec:projective-proper-cone}), and show that the projective tensor product of two faces is again a face (\mysecref{sec:projective-faces}, \mysecref{sec:projective-extremal-rays}).
Finally, as an application of the results from this section, we prove that the tensor product of absolutely convex sets also preserves faces (\mysecref{sec:application-to-convex-sets}).

\section{The characteristic property of the projective cone}
\label{sec:projective-characteristic-property}
Let $E,F,G$ be vector spaces equipped with convex cones $E_+ \subseteq E$, $F_+ \subseteq F$, $G_+ \subseteq G$. There is a natural isomorphism $\Bil(E \times F , G) \cong L(E \tensor F,G)$, which identifies a bilinear map $\Phi : E \times F \to G$ with its linearization $\Phi^L : E \tensor F \to G$, $\Phi^L(\sum_{i=1}^k x_i \tensor y_i) = \sum_{i=1}^k \Phi(x_i,y_i)$.
\begin{proposition}
	\label{prop:projective-property}
	If $E \tensor F$ is equipped with the projective cone $\minwedge{E_+}{F_+}$, then a linear map $\Phi^L : E \tensor F \to G$ is positive if and only if its corresponding bilinear map $\Phi : E \times F \to G$ is positive.
\end{proposition}
\begin{proof}
	A bilinear map $\Phi : E \times F \to G$ is positive if and only if $\Phi^L(x \tensor y) = \Phi(x,y) \geq 0$ for all $x \in E_+$, $y \in F_+$. On the other hand, since $\minwedge{E_+}{F_+}$ is generated by $E_+ \tensor F_+$, we also find that a linear map $\Phi^L : E \tensor F \to G$ is positive if and only if $\Phi^L(x \tensor y) \geq 0$ for all $x \in E_+$, $y \in F_+$.
\end{proof}

This is the ordered analogue of the characteristic property of the projective topology. It follows that
\begin{align*}
	(\minwedge{E_+}{F_+})\algdual &= \Bil(E \times F)_+  &&\text{($E,F$ vector spaces)};\\\noalign{\smallskip}
	(\topminwedge[\pi]{E_+}{F_+})\topdual &= (\hatminwedge[\pi]{E_+}{F_+})\topdual = \CBil(E \times F)_+  &&\text{($E,F$ locally convex)}.
\end{align*}

\section{Mapping properties of the projective cone}
\label{sec:projective-positive-linear-maps}
The projective norm preserves continuous linear maps, quotients, and complemented subspaces (see e.g.~\cite[Propositions 3.2, 3.8, and 3.9(1)]{Defant-Floret}, or \cite[\S 41.5]{Kothe-II} for the more general locally convex setting). The projective cone has analogous mapping properties.

\begin{proposition}
	\label{prop:projective-positive-linear-maps}
	Let $T \in \LL(E,G)$ and $S \in \LL(F,H)$.
	\begin{enumerate}[label=(\alph*)]
		\item\label{itm:projective-positive-maps} If $T[E_+] \subseteq G_+$ and $S[F_+] \subseteq H_+$, then \glsadd{T tensor S}$({T \tensor S})[\minwedge{E_+}{F_+}] \subseteq \minwedge{G_+}{H_+}$.
		\item\label{itm:projective-pushforwards} If $T[E_+] = G_+$ and $S[F_+] = H_+$, then $({T \tensor S})[\minwedge{E_+}{F_+}] = \minwedge{G_+}{H_+}$.
		\item\label{itm:projective-retracts} If $(E,E_+)$ and $(F,F_+)$ are retracts of $(G,G_+)$ and $(H,H_+)$, then $(E \tensor F,\minwedge{E_+}{F_+})$ is a retract of $(G \tensor H,\minwedge{E_+}{F_+})$.
	\end{enumerate}
	In summary: the projective cone preserves positive linear maps, pushforwards, and retracts.
\end{proposition}
It follows immediately that the same statements hold for maps between the completions (in the locally convex case), for the projective cone is contained in the algebraic tensor product.
\begin{proof}\ \par
	\begin{enumerate}[label=(\alph*)]
		\item Let $z \in \minwedge{E_+}{F_+}$ be given. Then we may write $z = \sum_{i=1}^k x_i \tensor y_i$ with $x_1,\ldots,x_k \in E_+$, $y_1,\ldots,y_k \in F_+$. Consequently, we have $({T \tensor S})(z) = \sum_{i=1}^k T(x_i) \tensor S(y_i)$, which lies in $\minwedge{G_+}{H_+}$ since $T(x_1),\ldots,T(x_k) \in G_+$, $S(y_1),\ldots,S(y_k) \in H_+$.

		\item By \ref{itm:projective-positive-maps}, ${T \tensor S}$ is positive. Now let $u \in \minwedge{G_+}{H_+}$ be given, and write $u = \sum_{i=1}^k v_i \tensor w_i$ with $v_1,\ldots,v_k \in G_+$ and $w_1,\ldots,w_k \in H_+$. By assumption there are $x_1,\ldots,x_k \in E_+$, $y_1,\ldots,y_k \in F_+$ such that $v_i = T(x_i)$ and $w_i = S(y_i)$, for all $i$. Consequently, we have $z := \sum_{i=1}^k x_i \tensor y_i \in \minwedge{E_+}{F_+}$, and $u = ({T \tensor S})(z)$.
		
		\item There are positive linear maps $T_1,T_2,S_1,S_2$ so that the following two diagrams commute:
		\begin{center}
			\begin{tikzcd}
				& G\arrow[dr, twoheadrightarrow, "T_2"] & && & H\arrow[dr, twoheadrightarrow, "S_2"] & \\
				E\arrow[ur, hook, "T_1"]\arrow[rr, "\id_E"] &  & E, && F\arrow[ur, hook, "S_1"]\arrow[rr, "\id_F"] &  & F.
			\end{tikzcd}
		\end{center}
		Consequently, the following diagram commutes:
		\begin{center}
			\begin{tikzcd}[row sep=large]
				& G \tensor H \arrow[dr, twoheadrightarrow, "T_2 \tensor S_2"] & \\
				E \tensor F \arrow[ur, hook, "T_1 \tensor S_1"]\arrow[rr, "\id_E \tensor \id_F"] & & E \tensor F.
			\end{tikzcd}
		\end{center}
		By \ref{itm:projective-positive-maps}, the maps in the preceding diagram are all positive for the projective cone, so it follows that $(E \tensor F, \minwedge{E_+}{F_+})$ is a retract of $(G \tensor H, \minwedge{G_+}{H_+})$. \qedhere
	\end{enumerate}
\end{proof}

Next, we prove that the projective tensor product also preserves approximate pushforwards: if $T$ and $S$ are maps whose adjoints are bipositive, then the same is true of $T \tensor S$.

\begin{lemma}
	\label{lem:tensor-product-of-approximate-pushforwards}
	Let $\langle E,E'\rangle$, $\langle F,F'\rangle$, $\langle G,G'\rangle$, $\langle H,H'\rangle$ be dual pairs, and let $E_+,F_+$, $G_+,H_+$ be convex cones in the primal spaces.
	If $T \in \CL(E_\weak , G_\weak)$ and $S \in \CL(F_\weak , H_\weak)$ are approximate pushforwards, then the map $(T \tensor S)' : \SCBil(G_\weak \times H_\weak) \to \SCBil(E_\weak \times F_\weak)$, $((T \tensor S)'b)(x,y) = b(Tx,Sy)$ is bipositive.
\end{lemma}
Here $(T \tensor S)'$ denotes the adjoint of $T \tensor S : E \tensor F \to G \tensor H$, assuming that $E \tensor F$ and $G \tensor H$ are equipped with the largest reasonable duals (see \mysecref{sec:dual-pairs}).
\begin{proof}
	Note that $(T \tensor S)'b$ is a positive bilinear functional on $E \times F$ if and only if $b$ is positive on $T[E_+] \times S[F_+]$, so if $b$ is separately weakly continuous, then this is the case if and only if $b$ is positive on $\overline{T[E_+]}^{\,\weak} \times \overline{S[F_+]}^{\,\weak}$. (First use weak continuity in the second variable to pass from $T[E_+] \times S[F_+]$ to $T[E_+] \times \overline{S[F_+]}^{\,\weak}$, then use weak continuity in the first variable to proceed to $\overline{T[E_+]}^{\,\weak} \times \overline{S[F_+]}^{\,\weak}$.) Analogously, $b$ itself is a positive bilinear functional on $G \times H$ if and only if $b$ is positive on $\overline{G_+}^{\,\weak} \times \overline{H_+}^{\,\weak}$.
	By assumption, we have $\overline{T[E_+]}^{\,\weak} = \overline{G_+}^{\,\weak}$ and $\overline{S[F_+]}^{\,\weak} = \overline{H_+}^{\,\weak}$, so it follows that $b$ is positive if and only if $(T \tensor S)'b$ is positive.
\end{proof}

The preceding lemma has immediate applications to algebraic tensor products (of vector spaces without topologies) and to completed locally convex topologies. It will also be used to prove one of the fundamental properties of the injective cone (see \myautoref{lem:injective-positive-linear-maps}{itm:injective-bipositive-maps}).

\begin{corollary}
	\label{cor:tensor-products-of-algebraic-approximate-pushforwards}
	Let $E,F,G,H$ be preordered vector spaces, and let $T \in \LL(E,G)$, $S \in \LL(F,H)$ be linear maps such that $T\algdual$ and $S\algdual$ are bipositive. Then $(T \tensor S)\algdual$ is bipositive with respect to the dual cones $(\minwedge{E_+}{F_+})\algdual \subseteq (E \tensor F)\algdual,(\minwedge{G_+}{H_+})\algdual \subseteq (G \tensor H)\algdual$.
\end{corollary}
\begin{proof}
	If we understand the primal spaces to belong to the dual pairs $\langle E, E\algdual\rangle$, \ldots, $\langle H,H\algdual\rangle$, then every linear map is weakly continuous. Furthermore, $(E \tensor F)\algdual = \Bil(E \times F) = \SCBil(E_\weak \times F_\weak)$, and the positive cone of $\SCBil(E_\weak \times F_\weak)_+$ coincides with the dual cone $(\minwedge{E_+}{F_+})\algdual \subseteq (E \tensor F)\algdual$, by \autoref{prop:projective-property}. Hence the result is a special case of \autoref{lem:tensor-product-of-approximate-pushforwards}.
\end{proof}

\begin{corollary}
	Let $E,F,G,H$ be locally convex preordered topological vector spaces and let $T \in \CL(E,G)$ and $S \in \CL(F,H)$ be approximate pushforwards. If $T \tensor_{\alpha\to\beta} S : E \tensor_\alpha F \to G \tensor_\beta H$ is continuous \textup($\alpha$ and $\beta$ compatible locally convex topologies\textup), then $T \tensor_{\alpha\to\beta} S$ and $T \hattensor_{\alpha\to\beta} S$ are approximate pushforwards.
\end{corollary}
\begin{proof}
	Every continuous linear map is also weakly continuous (see \cite[\S 20.4.(5)]{Kothe-I}), so we have $T \in \CL(E_\weak , G_\weak)$ and $S \in \CL(F_\weak , H_\weak)$. Furthermore, since $\alpha$ and $\beta$ are compatible topologies, we have $(E \tensor_\alpha F)\topdual \subseteq \SCBil(E \times F) = \SCBil(E_\weak \times F_\weak)$ and $(G \tensor_\beta H)\topdual \subseteq \SCBil(F \times H) = \SCBil(F_\weak \times H_\weak)$. It follows that $(T \tensor_{\alpha\to\beta} S)\topdual$ is a restriction of the map $(T \tensor S)'$ from \autoref{lem:tensor-product-of-approximate-pushforwards}, and therefore it is also bipositive. For the completion, note that $(T \hattensor_{\alpha\to\beta} S)\topdual = (T \tensor_{\alpha\to\beta} S)\topdual$.
\end{proof}

Interestingly, \autoref{cor:tensor-products-of-algebraic-approximate-pushforwards} uses topological techniques to prove a purely algebraic result.
We don't know a purely algebraic proof of \autoref{cor:tensor-products-of-algebraic-approximate-pushforwards}.

Finally, we show that the projective tensor product does not preserve bipositive maps, even if all spaces are finite\-/dimensional and all cones are closed and generating (\autoref{xmpl:projective-bipositive-fail-1}), or even closed, generating and proper (\autoref{xmpl:projective-bipositive-fail-2}).

\begin{example}
	\label{xmpl:projective-bipositive-fail-1}
	As a very simple example, let $F = G = \R^2$ with $F_+ = \R^2$ and $G_+ = \R_{\geq 0}^2$. Furthermore, let $E = \spn\{(1,-1)\} \subseteq G$, and write $E_+ := E \cap G_+ = \{0\}$. Then the inclusion $T : E \hookrightarrow G$ is bipositive, but $\minwedge{E_+}{F_+} = \{0\}$ whereas $\minwedge{G_+}{F_+} = G \tensor F$. Since $E \tensor F \neq \{0\}$, we have $(\minwedge{G_+}{F_+}) \cap (E \tensor F) \neq \minwedge{E_+}{F_+}$, which shows that $T \tensor \id_F$ is not bipositive.
\end{example}

\begin{example}[{Compare \cite[Situation 4]{Dobben-extensions}}]
	\label{xmpl:projective-bipositive-fail-2}
	For a more advanced example, let $E$ be a finite\-/dimensional space equipped with a proper, generating, polyhedral cone $E_+$ which is \emph{not} a simplex cone. Choose $\varphi_1,\ldots,\varphi_m \in E\algdual$ such that $E_+ = \bigcap_{i=1}^m \{x \in E \, : \, \varphi_i(x)\geq 0\}$, and let $\R^m$ be equipped with the standard cone $\R_{\geq 0}^m$. Then the map $T : E \to \R^m$, $x \mapsto (\varphi_1(x),\ldots,\varphi_m(x))$ is bipositive.
	
	Since $E_+$ is not a simplex cone, it follows from \cite[Proposition 3.1]{Barker-Loewy} (see also \autoref{thm:min-equals-max} below) that $\minwedge{E_+}{E_+\algdual} \neq \maxwedge{E_+}{E_+\algdual}$. On the other hand, it is well-known that $\minwedge{\R_{\geq 0}^m}{E_+\algdual} = \maxwedge{\R_{\geq 0}^m}{E_+\algdual}$, and it follows from \myautoref{thm:injective-positive-linear-maps:algebraic}{itm:algebraic-injective-bipositive-maps} below that $T \tensor \id_{E\algdual}$ is bipositive for the injective cone. Therefore:
	\[ (T \tensor \id_{E\algdual})^{-1}[\minwedge{\R_{\geq 0}^m}{E_+\algdual}] = (T \tensor \id_{E\algdual})^{-1}[\maxwedge{\R_{\geq 0}^m}{E_+\algdual}] = \maxwedge{E_+}{E_+\algdual} \neq \minwedge{E_+}{E_+\algdual}. \]
	This shows that $T \tensor \id_{E\algdual}$ is not bipositive for the projective cone.
	
	Note that all cones in this example are polyhedral, and therefore closed. In particular, the situation is not resolved by taking closures.
\end{example}

The finite\-/dimensional techniques used in \autoref{xmpl:projective-bipositive-fail-2} will be discussed in more detail in \mychpref{chp:finite-dimensional} and \mychpref{chp:many-examples}.

Despite the preceding counterexamples, bipositivity can be preserved under certain additional conditions. First, if $E \subseteq G$ and $F \subseteq H$ are retracts, then $E \tensor F \subseteq G \tensor H$ is also a retract (by \myautoref{prop:projective-positive-linear-maps}{itm:projective-retracts}), so in particular the inclusion $E \tensor F \hookrightarrow G \tensor H$ is bipositive. Furthermore, we prove in \autoref{prop:projective-bipositive-maps} that the projective cone also preserves ideals of proper cones bipositively.

\section{When is the projective cone proper?}
\label{sec:projective-proper-cone}

There is a simple necessary and sufficient condition for $\minwedge{E_+}{F_+}$ to be proper, which we prove in \autoref{thm:projective-proper-cone} below. This result was first proved (in three different ways) by Dermenjian and Saint-Raymond \cite{Dermenjian-Saint-Raymond}, and recently rediscovered by Wortel \cite{Wortel}. (The original proof seems to have been forgotten, and before Wortel only special cases were known in the literature.)
The proof given here is different from each of the existing proofs. Further methods of proof will be discussed in \autoref{rmk:alternative-proofs}.

We proceed via reduction to the finite\-/dimensional case, using the following lemmas.

\begin{lemma}
	\label{lem:generating-proper}
	A convex cone $E_+ \subseteq E$ is generating if and only if its algebraic dual cone $E_+\algdual$ is proper.
\end{lemma}
\begin{proof}
	Note that $E_+\algdual$ is \emph{not} proper if and only if there is some $\varphi \in E\algdual \setminus \{0\}$ such that both $\varphi$ and $-\varphi$ are positive linear functionals, or equivalently, $\varphi(x) = 0$ for all $x \in E_+$. This is in turn equivalent to $E_+$ being contained in a (linear) hyperplane, which happens if and only if $E_+$ is \emph{not} generating.
\end{proof}

\begin{corollary}
	\label{cor:proper-generating}
	If $E$ is finite\-/dimensional, then a closed convex cone $E_+ \subseteq E$ is proper if and only if its dual cone $E_+\algdual$ is generating.
\end{corollary}
\begin{proof}
	Set $F := E\algdual$ and $F_+ := E_+\algdual$. Under the canonical isomorphism $E \cong E\algdualdual$, we have $F_+\algdual = E_+$, by the bipolar theorem (here we use that $E_+$ is closed). The result follows from \autoref{lem:generating-proper}, applied to the cone $F_+ \subseteq F$.
\end{proof}

We are now ready to state and prove the main result of this section.
\begin{theorem}[{cf.~\cite{Dermenjian-Saint-Raymond}}]
	\label{thm:projective-proper-cone}
	Let $E$ and $F$ be vector spaces with convex cones $E_+ \subseteq E$, $F_+ \subseteq F$. Then the projective cone $\minwedge{E_+}{F_+}$ is proper if and only if $E_+ = \{0\}$, or $F_+ = \{0\}$, or both $E_+$ and $F_+$ are proper.
\end{theorem}
\begin{proof}
	Suppose first that $E_+,F_+ \neq \{0\}$ and $E_+$ is \emph{not} proper. Then we may choose $x \in E \setminus \{0\}$ such that $x,-x \in E_+$, and $y \in F_+ \setminus \{0\}$. Both $x \tensor y$ and $-x \tensor y$ belong to $\minwedge{E_+}{F_+}$, but we have $x \tensor y \neq 0$, so we see that $\minwedge{E_+}{F_+}$ is not a proper cone.
	
	For the converse, if $E_+ = \{0\}$, then $\minwedge{E_+}{F_+} = \{0\}$ regardless of any properties of $F_+$ (and similarly if $F_+ = \{0\}$). So assume now that both $E_+$ and $F_+$ are proper (not necessarily $\neq \{0\}$). Let $z \in E \tensor F$ be given such that $z,-z \in \minwedge{E_+}{F_+}$. Then we may choose integers $n \geq k \geq 0$ and vectors $x_1,\ldots,x_n \in E_+$, $y_1,\ldots,y_n \in F_+$ such that $z = \sum_{i=1}^k x_i \tensor y_i$ and $-z = \sum_{i=k+1}^n x_i \tensor y_i$. Consequently, we have $\sum_{i=1}^n x_i \tensor y_i = 0$.
	
	Now set $X := \spn(x_1,\ldots,x_n) \subseteq E$ and $Y := \spn(y_1,\ldots,y_n) \subseteq F$, and let $X_+ \subseteq X$ and $Y_+ \subseteq Y$ be the convex cones generated by $x_1,\ldots,x_n$ and $y_1,\ldots,y_n$, respectively. Note that $X_+$ is a closed proper cone in the finite\-/dimensional vector space $X$, since it is finitely generated (hence closed; see \cite[Lemma 3.19]{Aliprantis-Tourky}) and contained in the proper cone $X \cap E_+$ (hence also proper). Similarly, $Y_+$ is a closed proper cone in $Y$.
	
	It follows from \autoref{cor:proper-generating} that $X_+\algdual$ and $Y_+\algdual$ are generating cones in $X\algdual$ and $Y\algdual$, respectively. Therefore clearly $\minwedge{X_+\algdual}{Y_+\algdual}$ is generating in $X\algdual \tensor Y\algdual$.
	Since $\langle x \tensor y , \varphi \tensor \psi \rangle = \langle x , \varphi\rangle \langle y , \psi \rangle \geq 0$ for all $x\in X_+$, $y \in Y_+$, $\varphi \in X_+\algdual$, $\psi \in Y_+\algdual$, we have $\minwedge{X_+\algdual}{Y_+\algdual} \subseteq (\minwedge{X_+}{Y_+})\algdual$.
	It follows that $(\minwedge{X_+}{Y_+})\algdual$ is also generating, and therefore $(\minwedge{X_+}{Y_+})\algdualdual = \overline{\minwedge{X_+}{Y_+}}$ is a proper cone, by \autoref{lem:generating-proper}. Since $z,-z \in \minwedge{X_+}{Y_+} \subseteq (\minwedge{X_+}{Y_+})\algdualdual$, it follows that $z = 0$.
\end{proof}

\begin{remark}
	The final steps in the proof of \autoref{thm:projective-proper-cone} can be simplified with well-known results from the finite\-/dimensional theory, but we didn't need that. The dual of the projective cone $\minwedge{X_+}{Y_+}$ is the injective cone $\maxwedge{X_+\algdual}{Y_+\algdual}$, and $\minwedge{X_+}{Y_+}$ is automatically closed, by \cite{Tam-projective-closed} (see also \autoref{thm:projective-cone-closed} below).
\end{remark}

\begin{remark}
	\label{rmk:alternative-proofs}
	In the proof of \autoref{thm:projective-proper-cone}, we reduced the problem to finitely generated proper cones. There are many ways to prove this special case. Apart from the method used here and the proofs given in \cite{Dermenjian-Saint-Raymond} and \cite{Wortel}, we could also have applied either one of the sufficient criteria from \cite[Proposition 2.4]{Peressini-Sherbert}. Yet another method is mentioned in \autoref{rmk:another-way}.
\end{remark}

\autoref{thm:projective-proper-cone} will be extended in \autoref{cor:projective-lineality-space} below, where we determine the lineality space of $\minwedge{E_+}{F_+}$ for arbitrary convex cones $E_+$, $F_+$. Furthermore, a result very similar to \autoref{thm:projective-proper-cone}, giving criteria for $\minwedge{E_+}{F_+}$ to be semisimple (i.e.~contained in a weakly closed proper cone), will be given in \autoref{cor:projective-semisimple}.

\section{Faces of the projective cone}
\label{sec:projective-faces}
As a simple application of the theory developed so far, we develop two ways to combine faces of $E_+$ and $F_+$ to form a face of $\minwedge{E_+}{F_+}$.
For closed, proper and generating cones in finite-dimensional spaces, one of these constructions was already pointed out (without proof) by Tam in \cite[p.{} 53]{Tam-dissertation} and \cite[p.{} 71]{Tam-faces}.
He likely had a different proof in mind which does not work in general; see \autoref{rmk:Tams-proof}.

First we carry out the following very general construction; more convenient formulas and special cases will be studied afterwards.

\begin{theorem}
	\label{thm:projective-faces}
	Let $E,F$ be vector spaces, let $E_+ \subseteq E$, $F_+ \subseteq F$ be convex cones, and let $M \subseteq E_+$, $N \subseteq F_+$ be non-empty faces. Define
	\begin{align*}
		\glsadd{orface}\orface{M}{N} &:= (\minwedge{M}{F_+}) + (\minwedge{E_+}{N});\\\noalign{\smallskip}
		\glsadd{andface}\andface{M}{N} &:= (\minwedge{M}{N}) + (\minwedge{\lineal(E_+)}{F_+}) + (\minwedge{E_+}{\lineal(F_+)}).
	\end{align*}
	Then:
	\begin{enumerate}[label=(\alph*),series=projective-faces]
		\item\label{itm:projective-faces} $\orface{M}{N}$ and $\andface{M}{N}$ are faces of $\minwedge{E_+}{F_+}$.
		
		\item\label{itm:projective-sublattice} The face lattice of $\minwedge{E_+}{F_+}$ contains the following sublattice:
		\begin{center}
			\begin{tikzcd}
				& \orface{M}{N} \arrow[dl, no head] \arrow[dr, no head] & \\
				\orface{M}{\lineal(F_+)} = \andface{M}{F_+} \arrow[dr, no head] & & \orface{\lineal(E_+)}{N} = \andface{E_+}{N} \arrow[dl, no head] \\
				& \andface{M}{N} &
			\end{tikzcd}
		\end{center}
		Furthermore, $\orface{M}{N}$ is not just the face generated by $\andface{M}{F_+}$ and $\andface{E_+}{N}$, but even the sum of these faces, so we have
		\begin{align*}
			\orface{M}{N} &= (\orface{M}{\lineal(F_+)}) + (\orface{\lineal(E_+)}{N}) = (\andface{M}{F_+}) + (\andface{E_+}{N});\\\noalign{\smallskip}
			\andface{M}{N} &= (\orface{M}{\lineal(F_+)}) \cap (\orface{\lineal(E_+)}{N}) = (\andface{M}{F_+}) \cap (\andface{E_+}{N}).
		\end{align*}
	\end{enumerate}
	Assume furthermore that $E$, $F$, and $E \tensor F$ belong to the dual pairs $\langle E,E'\rangle$, $\langle F,F'\rangle$, and $\langle E \tensor F , G \rangle$, where $G$ is a reasonable dual \textup(i.e.~$E' \tensor F' \subseteq G \subseteq \SCBil(E \times F)$\textup). Then:
	\begin{enumerate}[resume*=projective-faces]
		\item\label{itm:projective-exposed-or} If $M$ and $N$ are dual \textup(resp.~exposed\textup) faces, then $\orface{M}{N}$ is a dual \textup(resp.~exposed\textup) face of $\minwedge{E_+}{F_+}$.
		\item\label{itm:projective-exposed-and} If $M$ and $N$ as well as $\lineal(E_+)$ and $\lineal(F_+)$ are dual \textup(resp.~exposed\textup) faces, then $\andface{M}{N}$ is a dual \textup(resp.~exposed\textup) face of $\minwedge{E_+}{F_+}$.
	\end{enumerate}
\end{theorem}

\noindent
A mnemonic for the chosen notation: $\orface{M}{N}$ is generated by the elements $x \tensor y \in E_+ \settensor F_+$ with $x \in M$ \underline{or} $y \in N$, whereas $\andface{M}{N}$ is generated by the elements $x \tensor y \in E_+ \settensor F_+$ with $x \in M$ \underline{and} $y \in N$, together with what turns out to be the lineality space of $\minwedge{E_+}{F_+}$ (see \autoref{cor:projective-faces} below).

\begin{proof}[Proof of \autoref{thm:projective-faces}]\ \par
	\begin{enumerate}[label=(\alph*)]
		\item Let $I \subseteq E$ be an order ideal such that $M = I \cap E_+$ (e.g.~$I = \spn(M)$; see \myautoref{prop:ideals-obtained}{itm:ideal-face}). Then the quotient cone $(E/I)_+ \subseteq E/I$ is proper, the natural map $\pi_I : E \to E/I$ is positive, and $M = \ker(\pi_I) \cap E_+$. Similarly, let $J \subseteq F$ be an ideal such that $N = J \cap F_+$; then $\pi_J : F \to F/J$ is a positive map to a space with a proper cone, and $N = \ker(\pi_J) \cap F_+$.
		
		Now consider the linear map $\pi_I \tensor \pi_J : {E \tensor F} \to {E/I \tensor F/J}$. It follows from \autoref{prop:projective-positive-linear-maps} that $\pi_I \tensor \pi_J$ is positive, and it follows from \autoref{thm:projective-proper-cone} that $\minwedge{(E/I)_+}{(F/J)_+}$ is a proper cone in ${E/I \tensor F/J}$, so $\ker({\pi_I \tensor \pi_J}) \cap (\minwedge{E_+}{F_+})$ is a face of $\minwedge{E_+}{F_+}$ (see \myautoref{prop:ideal-face-kernel}{itm:face-kernel}). We claim that
		\begin{equation}
			\ker({\pi_I \tensor \pi_J}) \cap (\minwedge{E_+}{F_+}) = \orface{M}{N}.\label{eqn:projective-face-claim}
		\end{equation}
		Indeed, if $z = \sum_{i=1}^k x_i \tensor y_i$ with $x_1,\ldots,x_k \in E_+$, $y_1,\ldots,y_k \in F_+$ is such that $(\pi_I \tensor \pi_J)(z) = 0$, then we must have $(\pi_I \tensor \pi_J)(x_i \tensor y_i) = 0$ for all $i$ (since $\minwedge{(E/I)_+}{(F/J)_+}$ is proper). As such, for each $i$ we must have $x_i \in \ker(\pi_I) = I$ or $y_i \in \ker(\pi_J) = J$, or possibly both. Equivalently: $x_i \in I \cap E_+ = M$ or $y_i \in J \cap F_+ = N$. This proves our claim \eqref{eqn:projective-face-claim}, and we conclude that $\orface{M}{N}$ is a face of $\minwedge{E_+}{F_+}$.
		
		To see that $\andface{M}{N}$ is a face, we proceed analogously, where the linear map $\pi_I \tensor \pi_J$ is replaced by the linear map
		\begin{align*}
			Q_{I,J} : E \tensor F &\to (E/I \tensor F/\lineal(F_+)) \oplus (E/\lineal(E_+) \tensor F/J),\\\noalign{\smallskip}
			x \tensor y &\mapsto (\pi_I(x) \tensor \pi_{\lineal(F_+)}(y)) \oplus (\pi_{\lineal(E_+)}(x) \tensor \pi_J(y)).
		\end{align*}
		If $z = \sum_{i=1}^k x_i \tensor y_i$ with $x_1,\ldots,x_k \in E_+$, $y_1,\ldots,y_k \in F_+$ and $Q_{I,J}(z) = 0$, then again we must have $Q_{I,J}(x_i \tensor y_i) = 0$ for all $i$ (since $Q_{I,J}$ is positive and the cone in the codomain is proper). Then either $x_i \in \ell(E_+) \subseteq M$, or $y_i \in \ell(F_+) \subseteq N$, or $x_i \notin \ell(E_+)$ \underline{and} $y_i \notin \ell(F_+)$. In the latter case, we must have $x_i \in M$ \underline{and} $y_i \in N$. This way we find
		\[ \ker(Q_{I,J}) \cap (\minwedge{E_+}{F_+}) = \andface{M}{N}. \]
		It follows that $\andface{M}{N}$ is also a face of $\minwedge{E_+}{F_+}$.

		\item Using the notation from the proof of \ref{itm:projective-faces}, note that
		\[ \ker(Q_{I,J}) = \ker(\pi_I \tensor \pi_{\lineal(F_+)}) \cap \ker(\pi_{\lineal(E_+)} \tensor \pi_J). \]
		It follows that
		\[ \andface{M}{N} = (\orface{M}{\lineal(F_+)}) \cap (\orface{\lineal(E_+)}{N}). \]
		The other formulas follow straight from the definitions: we have
		\begin{align*}
			(\orface{M}{\lineal(F_+)}) + (\orface{\lineal(E_+)}{N}) =& \, (\minwedge{M}{F_+}) + (\minwedge{E_+}{\lineal(F_+)}) \\
			&+ (\minwedge{\lineal(E_+)}{F_+}) + (\minwedge{E_+}{N})\\\noalign{\smallskip}
			=& \, (\minwedge{M}{F_+}) + (\minwedge{E_+}{N})\\\noalign{\smallskip}
			=& \, \orface{M}{N},
		\end{align*}
		since $\lineal(E_+) \subseteq M$ and $\lineal(F_+) \subseteq N$.
		Likewise,
		\begin{align*}
			\andface{M}{F_+} &= (\minwedge{M}{F_+}) + (\minwedge{\lineal(E_+)}{F_+}) + (\minwedge{E_+}{\lineal(F_+)})\\\noalign{\smallskip}
			&= (\minwedge{M}{F_+}) + (\minwedge{E_+}{\lineal(F_+)})\\\noalign{\smallskip}
			&= \orface{M}{\lineal(F_+)},
		\end{align*}
		and the formula $\andface{E_+}{N} = \orface{\lineal(E_+)}{N}$ follows analogously.

		\item If $M = {}^\predualface M_1$ and $N = {}^\predualface N_1$, then it is routinely verified that $\orface{M}{N} = {}^\predualface (M_1 \settensor N_1)$. If $M$ and $N$ are furthermore exposed, then we may take $M_1$ and $N_1$ to be singletons; consequently, $M_1 \settensor N_1$ is also a singleton.

		\item This follows from \ref{itm:projective-exposed-or} and the intersection formula from \ref{itm:projective-sublattice}. \qedhere
	\end{enumerate}
\end{proof}

\begin{remark}
	\label{rmk:lineality-space-dual-exposed-i}
	In \myautoref{thm:projective-faces}{itm:projective-exposed-and}, it is required that $\lineal(E_+)$ and $\lineal(F_+)$ are exposed/dual faces. Sometimes this is automatically the case. If $E_+$ is weakly closed, then $\lineal(E_+) = \lineal(\overline{E_+}^{\,\weak}) = {}^\perp (E_+') = {}^\predualface (E_+')$, so in this case $\lineal(E_+)$ is always a dual face. Likewise, if $E$ is a separable normed space and $E_+$ is closed, then $\lineal(E_+)$ is automatically exposed; see \autoref{cor:closed-lineality-space-exposed}.
	
	To see that this assumption cannot be omitted, let $E := \R^2$ with the lexicographical cone, and let $F := \R$ with the standard cone. Then the unique one-dimensional face $M \subseteq E_+$ and the trivial face $N := \{0\} \subseteq \R$ are both exposed (hence dual), but $\andface{M}{N} = \{0\}$ is neither exposed nor dual in $\minwedge{E_+}{F_+} \cong E_+$.
\end{remark}

\begin{remark}
	\label{rmk:not-every-facet}
	By dualizing the example from \autoref{xmpl:injective-extremal-rays} below, one can show that not every facet of $\minwedge{E_+}{F_+}$ is necessarily of the form $\orface{M}{N}$ or $\andface{M}{N}$. In follows that, in general, not every face of $\minwedge{E_+}{F_+}$ can be written as an intersection of faces of the type $\orface{M}{N}$ or $\andface{M}{N}$.
\end{remark}

We proceed to point out the consequences of \autoref{thm:projective-faces}. First of all, it allows us to extend \autoref{thm:projective-proper-cone}, giving a direct formula for the lineality space of $\minwedge{E_+}{F_+}$.

\begin{corollary}[The lineality space of the projective cone]
	\label{cor:projective-lineality-space}
	Let $E$ and $F$ be vector spaces, and let $E_+ \subseteq E$ and $F_+ \subseteq F$ be convex cones. Then one has
	\begin{align*}
		\lineal(\minwedge{E_+}{F_+}) &= (\minwedge{\lineal(E_+)}{F_+}) + (\minwedge{E_+}{\lineal(F_+)})\\\noalign{\smallskip}
		&= (\lineal(E_+) \tensor \spn(F_+)) + (\spn(E_+) \tensor \lineal(F_+)).
	\end{align*}
\end{corollary}
\begin{proof}
	If $x \in \lineal(E_+)$ and $y \in F_+$, then $\pm x \tensor y \in \minwedge{E_+}{F_+}$, so $x \tensor y \in \lineal(\minwedge{E_+}{F_+})$. Similarly, if $x \in E_+$ and $y \in \lineal(F_+)$, then $x \tensor y \in \lineal(\minwedge{E_+}{F_+})$, so we have
	\[ (\minwedge{\lineal(E_+)}{F_+}) + (\minwedge{E_+}{\lineal(F_+)}) \subseteq \lineal(\minwedge{E_+}{F_+}). \]
	Conversely, it follows from \myautoref{thm:projective-faces}{itm:projective-faces} that $\orface{\lineal(E_+)}{\lineal(F_+)} = (\minwedge{\lineal(E_+)}{F_+}) + (\minwedge{E_+}{\lineal(F_+)})$ is a face of $\minwedge{E_+}{F_+}$, so it must contain the minimal face $\lineal(\minwedge{E_+}{F_+})$. The first equality follows.
	
	For the second equality, we claim that $\minwedge{\lineal(E_+)}{F_+} = {\lineal(E_+) \tensor \spn(F_+)}$. Indeed, for $x \in \lineal(E_+)$ and $y \in \spn(F_+)$ we may write $y = u - v$ (for some $u,v\in F_+$), so we have $x \tensor y = (x \tensor u) + ((-x) \tensor v) \in \minwedge{E_+}{F_+}$. Taking positive linear combinations proves our claim. Analogously, we have $\minwedge{E_+}{\lineal(F_+)} = \spn(E_+) \tensor \lineal(F_+)$, and the second equality follows.
\end{proof}

This direct formula for the lineality space also simplifies the formula for the lower face $\andface{M}{N}$.

\begin{corollary}
	\label{cor:projective-faces}
	Let $E,F$ be vector spaces, let $E_+ \subseteq E$, $F_+ \subseteq F$ be convex cones, and let $M \subseteq E_+$, $N \subseteq F_+$ be non-empty faces. Then one has
	\[ \andface{M}{N} = (\minwedge{M}{N}) + \lineal(\minwedge{E_+}{F_+}), \]
	and this defines a face of $\minwedge{E_+}{F_+}$.
	
	In particular, if $E_+$ and $F_+$ are proper cones, then $\minwedge{M}{N}$ is a face of $\minwedge{E_+}{F_+}$, and the sublattice from \myautoref{thm:projective-faces}{itm:projective-sublattice} reduces to
	\begin{center}
		\begin{tikzcd}
			& (\minwedge{M}{F_+}) + (\minwedge{E_+}{N}) \arrow[dl, no head] \arrow[dr, no head] & \\
			\minwedge{M}{F_+} \arrow[dr, no head] & & \minwedge{E_+}{N} \arrow[dl, no head] \\
			& \minwedge{M}{N} &
		\end{tikzcd}
	\end{center}
\end{corollary}

For closed, proper and generating cones in finite-dimensional spaces, the fact that $\minwedge{M}{N}$ is a face of $\minwedge{E_+}{F_+}$ was already pointed out (without proof) by Tam in \cite[p.{} 53]{Tam-dissertation} and \cite[p.{} 71]{Tam-faces}.
He likely had a different proof in mind, which we outline in \autoref{rmk:Tams-proof} below.

\begin{remark}
	In general, $\minwedge{M}{N}$ is not a face of $\minwedge{E_+}{F_+}$.
	If $E_+$ or $F_+$ is not proper, then the term $+ \lineal(\minwedge{E_+}{F_+})$ cannot be omitted in \autoref{cor:projective-faces}.
	Indeed, suppose that $E_+$ is not proper.
	Choose $x \in \lineal(E_+) \setminus \{0\}$ and $y \in \spn(F_+) \setminus \spn(N)$.
	Then $x \tensor y \in \lineal(\minwedge{E_+}{F_+})$, by \autoref{cor:projective-lineality-space}.
	However, $x \tensor y \notin \minwedge{M}{N}$, so $\minwedge{M}{N}$ is not a face, because every face must contain the lineality space.
\end{remark}

\begin{remark}
	\label{rmk:Tams-proof}
	If $E_+\algdual$ and $F_+\algdual$ separate points on $E$ and $F$,%
		\hair\footnote{In other words, $E_+$ and $F_+$ are semisimple with respect to the dual pairs $\langle E, E\algdual \rangle$ and $\langle F, F\algdual \rangle$; see \cite{Dobben-semisimplicity}. Schaefer \cite{Schaefer-I} called such cones \emph{regular}.}
	then there is a simpler way to show that $\minwedge{M}{N}$ is a face of $\minwedge{E_+}{F_+}$.
	Indeed, let $z,z' \in \minwedge{E_+}{F_+}$ be such that $z'' := z + z' \in \minwedge{M}{N}$, and write $z = \sum_{i=1}^k x_i \tensor y_i$, where $x_1,\ldots,x_k \in E_+$ and $y_1,\ldots,y_k \in F_+$ are all non-zero.
	For $i \in \{1,\ldots,k\}$, choose $\varphi_i \in E_+\algdual$ and $\psi_i \in F_+\algdual$ such that $\varphi_i(x_i),\psi_i(y_i) > 0$.
	Then we have $0 < \varphi_i(x_i)y_i \leq \sum_{j=1}^k \varphi_i(x_j)y_j = (\varphi_i \tensor \id_F)(z) \leq (\varphi_i \tensor \id_F)(z'') \in N$, hence $y_i \in N$.
	Likewise, $0 < \psi_i(y_i)x_i \leq (\id_E \tensor \psi_i)(z'') \in M$, hence $x_i \in M$.
	It follows that $z \in \minwedge{M}{N}$, which shows that $\minwedge{M}{N}$ is a face.
	
	In particular, this simple proof settles the case when $E$ and $F$ are finite-dimensional and $E_+$ and $F_+$ are closed, proper and generating.
	This special case was already pointed out (without proof) by Tam in \cite[p.{} 53]{Tam-dissertation} and \cite[p.{} 71]{Tam-faces}.
	The proof he had in mind is probably similar to short proof given here.
\end{remark}

As a final application, we note that \autoref{thm:projective-faces} is also a statement about preservation of bipositive maps.

\begin{proposition}
	\label{prop:projective-bipositive-maps}
	Let $E$ and $F$ be vector spaces, and let $E_+ \subseteq E$, $F_+ \subseteq F$ be convex cones. If $E_+$ and $F_+$ are proper and if $I \subseteq E$, $J \subseteq F$ are ideals, then the inclusion $I \tensor J \hookrightarrow E \tensor F$ is bipositive \textup(with respect to the projective cone\textup).
\end{proposition}
\begin{proof}
	Let $Q_{I,J} : E \tensor F \to (E/I \tensor F) \oplus (E \tensor F/J)$ be the map from the proof of \myautoref{thm:projective-faces}{itm:projective-faces}.
	It follows from said proof (and \autoref{cor:projective-faces}) that $\minwedge{I_+}{J_+} = \ker(Q_{I,J}) \cap (\minwedge{E_+}{F_+})$.
	To complete the proof, note that $\ker(Q_{I,J}) = I \tensor J$.
\end{proof}

\autoref{xmpl:projective-bipositive-fail-1} shows that this is not true if one of the cones is not proper.

\section{Extremal rays of the projective cone}
\label{sec:projective-extremal-rays}
The results from \mysecref{sec:projective-faces} show us how to construct faces in the projective tensor cone, even though not all faces are reached this way (see \autoref{rmk:not-every-facet}). Nevertheless, it turns out that all extremal rays of $\minwedge{E_+}{F_+}$ are obtained in this way.

Recall that $\rext(E_+) \subseteq E_+ \setminus \{0\}$ denotes the set of extremal directions, and $M \settensor N$ denotes the entry-wise tensor product $\{x \tensor y \, : \, x\in M,\ y\in N\}$.

\begin{theorem}[The extremal rays of the projective cone]
	\label{thm:projective-extremal-rays}
	Let $E$, $F$ be vector spaces equipped with convex cones $E_+ \subseteq E$, $F_+ \subseteq F$. Then
	\[ \rext(\minwedge{E_+}{F_+}) = \rext(E_+) \settensor \rext(F_+). \]
\end{theorem}
\begin{proof}
	``$\subseteq$''. Suppose that $z \in (\minwedge{E_+}{F_+}) \setminus \{0\}$ defines an extremal ray. Write $z = \sum_{i=1}^k x_i \tensor y_i$ with $x_1,\ldots,x_k \in E_+$, $y_1,\ldots,y_k \in F_+$, and $x_i \tensor y_i \neq 0$ for all $i \in [k]$. By extremality of $z$ there are $\lambda_1,\ldots,\lambda_k \in \R_{>0}$ such that $\lambda_i x_i \tensor y_i = z$ ($i \in [k]$). In particular, $z = \lambda_1 x_1 \tensor y_1$. Now suppose that $0 \leq v \leq x_1$, then $0 \leq \lambda_1 v \tensor y_1 \leq z$, so by extremality of $z$ we must have $\mu\lambda_1 v \tensor y_1 = z$ for some $\mu \in \R_{\geq 0}$. Since $y_1 \neq 0$ and $\lambda_1 \neq 0$, it follows that $\mu v = x_1$, so we see that $x_1$ defines an extremal ray of $E_+$. Analogously, $y_1$ defines an extremal ray of $F_+$. This proves the inclusion $\rext(\minwedge{E_+}{F_+}) \subseteq \rext(E_+) \settensor \rext(F_+)$.
	
	``$\supseteq$''. Let $x_0 \in E_+\setminus \{0\}$ and $y_0 \in F_+ \setminus \{0\}$ define extremal rays in $E_+$ and $F_+$, respectively. Then $M := \{\lambda x_0 \, : \, \lambda \geq 0\}$ defines a face of $E_+$. Every face contains the lineality space, but $M$ does not contain a non-zero subspace, so it follows that $E_+$ is a proper cone. Analogously, $N := \{\mu y_0 \, : \, \mu \geq 0\}$ defines a face of $F_+$, so $F_+$ is proper. Now it follows from \autoref{cor:projective-faces} that $\minwedge{M}{N}$ is a face of $\minwedge{E_+}{F_+}$. In other words: $x_0 \tensor y_0$ defines an extremal ray of $\minwedge{E_+}{F_+}$.
\end{proof}

\begin{remark}
	Remarkably, \autoref{thm:projective-extremal-rays} has no corner cases: it is true for every pair of convex cones. In particular, if $\rext(E_+)$ or $\rext(F_+)$ is empty, then $\rext(\minwedge{E_+}{F_+})$ is empty as well. Conversely, if each of $E_+$ and $F_+$ has an extremal ray, then so does $\minwedge{E_+}{F_+}$.%
		\hair\footnote{It should be noted that many standard cones in infinite\-/dimensional spaces do not have sufficiently many extremal rays to generate the cone. For instance, the positive cone of $C[0,1]$ has no extremal rays at all.}
	
	Again, in the case where $E$ and $F$ are finite-dimensional and $E_+$ and $F_+$ are closed, proper and generating, this was already pointed out (without proof) by Tam in \cite[p.{} 53]{Tam-dissertation} and \cite[p.{} 71]{Tam-faces}.
	See also \autoref{rmk:Tams-proof}.
\end{remark}

\section{An application to tensor products of absolutely convex sets}
\label{sec:application-to-convex-sets}
We conclude our study of the projective cone with an application in convex geometry. Using a slight modification of the construction from \mysecref{sec:projective-faces}, we show that faces of absolutely convex sets $M$ and $N$ determine faces of their tensor product $\convtensor{M}{N} := \conv\{x \tensor y \, : \, x \in M,\ y\in N\}$.%
	\hair\footnote{Some authors define the projective tensor product of convex sets to be the \emph{closed} convex hull of $M \settensor N$ (e.g.~\cite[\S 4.1.4]{Aubrun-Szarek}), but our methods are not equipped to deal with closures. See also \autoref{rmk:absolutely-convex-closed}.}

This application is based on the following general principle, giving sufficient conditions for the sum of faces $\andface{M_1}{N_1}$ and $\andface{M_2}{N_2}$ (see \mysecref{sec:projective-faces}) to be another face in the projective cone $\minwedge{E_+}{F_+}$.
(This is a vast generalization of the method of \cite[Example 3.7]{Bogart-Contois-Gubeladze}.)

\begin{proposition}
	\label{prop:projective-faces-combined}
	Let $E$, $F$ be vector spaces, let $E_+ \subseteq E$, $F_+ \subseteq F$ be convex cones, and let $M_1,M_2 \subseteq E_+$ and $N_1,N_2 \subseteq F_+$ be faces. If $M_1 \cap M_2 = \lineal(E_+)$ and $N_1 \cap N_2 = \lineal(F_+)$, then
	\[ (\andface{M_1}{N_1}) + (\andface{M_2}{N_2}) \, = \, (\orface{M_1}{N_2}) \cap (\orface{M_2}{N_1}). \]
	In particular, in this case $(\andface{M_1}{N_1}) + (\andface{M_2}{N_2})$ is a face of $\minwedge{E_+}{F_+}$.
\end{proposition}
\begin{proof}
	``$\subseteq$''. It follows from \myautoref{thm:projective-faces}{itm:projective-sublattice} that $\andface{M_1}{N_1} \subseteq \andface{M_1}{F_+} = \orface{M_1}{\lineal(F_+)} \subseteq \orface{M_1}{N_2}$. Three analogous inclusions prove the forward inclusion.
	
	``$\supseteq$''. Let $z \in (\orface{M_1}{N_2}) \cap (\orface{M_2}{N_1})$, and write $z = \sum_{i=1}^k x_i \tensor y_i$ with $x_1,\ldots,x_k \in E_+$ and $y_1,\ldots,y_k \in F_+$.
	Since $z \in \orface{M_1}{N_2}$, it follows from the proof of \myautoref{thm:projective-faces}{itm:projective-faces} that for all $i$ we have $x_i \in M_1$ or $y_i \in N_2$, or possibly both. Likewise, for all $i$ we have $x_i \in M_2$ or $y_i \in N_1$, or possibly both.
	
	If $x_i \in \lineal(E_+)$ or $y_i \in \lineal(F_+)$, then $x_i \tensor y_i \in \lineal(\minwedge{E_+}{F_+}) \subseteq (\andface{M_1}{N_1}) \cap (\andface{M_2}{N_2})$, since every face contains the lineality space. So assume $x_i \notin \lineal(E_+)$ and $y_i \notin \lineal(F_+)$.
	Then, by assumption, $x_i$ (resp.~$y_i$) is contained in at most one of $M_1$ and $M_2$ (resp.~$N_1$ and $N_2$).
	Combined with earlier constraints, this show that we must either have $x_i \in M_1 \setminus M_2$ and $y_i \in N_1 \setminus N_2$, or otherwise $x_i \in M_2 \setminus M_1$ and $y_i \in N_2 \setminus N_1$. Either way, $x_i \tensor y_i \in (\andface{M_1}{N_1}) + (\andface{M_2}{N_2})$.
\end{proof}

If $E$ is a vector space and $C \subseteq E$ is a convex subset, then the \index{homogenization}\glsadd{homogenization}\emph{homogenization} $\homogen{C}$ of $C$ is the convex cone generated by $C \oplus \{1\} \subseteq E \oplus \R$. Note that $\homogen{C}$ is always a proper cone, and that the faces of $C$ are in bijective correspondence with the faces of $\homogen{C}$.

Since we are working over the real numbers, a convex set $C \subseteq E$ is absolutely convex if and only if $C = -C$. For sets of this kind, there is a simple way to identify the projective tensor product of the homogenizations $\homogen{C}$ and $\homogen{D}$ with the homogenization of $\conv(C \settensor D)$:

\begin{proposition}
	\label{prop:absolutely-convex-tensor-product}
	Let $E$ and $F$ be \textup(real\textup) vector spaces and let $C \subseteq E$, $D \subseteq F$ be absolutely convex sets. Under the natural isomorphism $(E \oplus \R) \tensor (F \oplus \R) = (E \tensor F) \oplus E \oplus F \oplus \R$, one has
	\[ (\minwedge{\homogen{C}}{\homogen{D}}) \cap \big((E \tensor F) \oplus \{0\} \oplus \{0\} \oplus \{1\}\big) \, = \, \{(z,0,0,1) \, : z \in \conv(C \settensor D)\}. \]
\end{proposition}
\begin{proof}
	Under the aforementioned natural isomorphism, we have $(x,\lambda) \tensor (y,\mu) = (x \tensor y , \mu x , \lambda y , \lambda\mu)$.
	
	``$\subseteq$''. Let $(z,0,0,1) \in \minwedge{\homogen{C}}{\homogen{D}}$ be given, and write $(z,0,0,1) = \sum_{i=1}^k \lambda_i\cdot (x_i,1) \tensor (y_i,1)$ with $\lambda_1,\ldots,\lambda_k \geq 0$, $x_1,\ldots,x_k\in C$ and $y_1,\ldots,y_k \in D$. Then $(z,0,0,1) = \sum_{i=1}^k \lambda_i \cdot (x_i \tensor y_i,$ $x_i , y_i , 1)$, so we have $\sum_{i=1}^k \lambda_i = 1$ and $z = \sum_{i=1}^k \lambda_i x_i \tensor y_i \in \conv(C \settensor D)$.
	
	``$\supseteq$''. Let $z \in \conv(C \settensor D)$ be given, and write $z = \sum_{i=1}^k \lambda_i x_i \tensor y_i$ with $x_1,\ldots,x_k \in C$, $y_1,\ldots,y_k \in D$, $\lambda_1,\ldots,\lambda_k \geq 0$, and $\sum_{i=1}^k \lambda_i = 1$.
	Since $(x_i,1) \tensor (y_i,1) \, + \, (-x_i,1) \tensor (-y_i,1) \, = \, 2(x_i \tensor y_i , 0 , 0 , 1)$, we may write
	\begin{equation}
		(z,0,0,1) \, = \, \sum_{i=1}^k \tfrac{1}{2}\lambda_i \cdot \big( (x_i,1) \tensor (y_i,1) \, + \, (-x_i,1) \tensor (-y_i,1) \big). \label{eqn:absolutely-convex-tensor-product}
	\end{equation}
	Since $C$ and $D$ are absolutely convex, we have $(\pm x_i,1) \in \homogen{C}$ and $(\pm y_i,1) \in \homogen{D}$ for all $i \in \{1,\ldots,k\}$, hence $(z,0,0,1) \in \minwedge{\homogen{C}}{\homogen{D}}$.
\end{proof}

\begin{theorem}
	\label{thm:absolutely-convex-faces}
	Let $E$ and $F$ be \textup(real\textup) vector spaces, let $C \subseteq E$, $D \subseteq F$ be absolutely convex, and let $M \subset C$, $N \subset D$ be proper faces. Then $\conv(M \settensor N)$ is a face of $\conv(C \settensor D)$.
\end{theorem}
\begin{proof}
	By symmetry, $-M \subseteq C$ and $-N \subseteq D$ also define faces of $C$ and $D$. First we prove that $M \cap -M = \varnothing$. Suppose that $x \in M \cap -M$. Then also $-x \in M \cap -M$, so by convexity $0 \in M \cap -M$. But then for every $y \in C$ we must have $y,-y \in M$, since $0$ belongs to the relative interior of the line segment joining $y$ and $-y$. This contradicts our assumption that $M$ is a proper face, so we conclude that $M \cap -M = \varnothing$. Analogously, $N \cap -N = \varnothing$.
	
	Let $M_1 \subseteq \homogen{C}$ be the face of $\homogen{C}$ associated with $M$, and let $M_2 \subseteq \homogen{C}$ be the face associated with $-M$. Since $M \cap -M = \varnothing$, it follows that $M_1 \cap M_2 = \{0\}$. Similarly, let $N_1$ and $N_2$ be the faces of $\homogen{D}$ associated with $N$ and $-N$, respectively; then $N_1 \cap N_2 = \{0\}$.
	Hence it follows from \autoref{prop:projective-faces-combined} that $(\andface{M_1}{N_1}) + (\andface{M_2}{N_2})$ is a face of $\minwedge{\homogen{C}}{\homogen{D}}$. To complete the proof, we show that
	\begin{align*}
		\big((\andface{M_1}{N_1}) + (\andface{M_2}{N_2})\big) &\cap \big((E \tensor F) \oplus \{0\} \oplus \{0\} \oplus \{1\}\big) \\
		&\qquad = \, \{(z,0,0,1) \, : z \in \conv(M \settensor N)\}.
	\end{align*}
	We proceed analogously to the proof of \autoref{prop:absolutely-convex-tensor-product}.
	
	``$\subseteq$''. Let $(z,0,0,1) \in (\andface{M_1}{N_1}) + (\andface{M_2}{N_2})$ be given. Then we may choose integers $n \geq k \geq 0$, scalars $\lambda_1,\ldots,\lambda_n \geq 0$ and vectors $x_1,\ldots,x_n \in M$, $y_1,\ldots,y_n \in N$ such that $(z,0,0,1) = \sum_{i=1}^k \lambda_i \cdot (x_i,1) \tensor (y_i,1) + \sum_{i=k+1}^n \lambda_i \cdot (-x_i,1) \tensor (-y_i,1)$. Therefore $\sum_{i=1}^n \lambda_i = 1$ and $z = \sum_{i=1}^n \lambda_i x_i \tensor y_i$, which shows that $z \in \conv(M \settensor N)$.
	
	``$\supseteq$''. Let $z \in \conv(M \settensor N)$ be given, and write $z = \sum_{i=1}^k \lambda_i x_i \tensor y_i$ with $x_1,\ldots,x_k \in M$, $y_1,\ldots,y_k \in N$, $\lambda_1,\ldots,\lambda_k \geq 0$, and $\sum_{i=1}^k \lambda_i = 1$.
	Then it follows from \eqref{eqn:absolutely-convex-tensor-product} that $(z,0,0,1) \in (\andface{M_1}{N_1}) + (\andface{M_2}{N_2})$.
\end{proof}

\begin{corollary}
	\label{cor:absolutely-convex-extreme-points}
	Let $E$ and $F$ be \textup(real\textup) vector spaces, let $C \subseteq E$, $D \subseteq F$ be absolutely convex, and let $x_0 \in C$, $y_0 \in D$ be extreme points. Then $x_0 \tensor y_0$ is an extreme point of $\conv(C \settensor D)$.
\end{corollary}

\begin{remark}
	\autoref{thm:absolutely-convex-faces} fails if one of the faces is not proper. Indeed, if $M = C$, then $0 \in M \settensor N$, so now $\conv(M \settensor N)$ is a face only if $\conv(M \settensor N) = \conv(C \settensor D)$.
	
	Furthermore, \autoref{thm:absolutely-convex-faces} and \autoref{cor:absolutely-convex-extreme-points} do not hold for non-symmetric convex sets. (Example: $1 \tensor 2$ is not an extreme point of $\conv([-1,1] \settensor [2,3]) \subseteq \R \tensor \R = \R$.)
\end{remark}

\begin{remark}
	\label{rmk:absolutely-convex-closed}
	In many applications it is natural to start with \emph{closed} absolutely convex sets, and take the \emph{closed} convex hull of their tensor product (e.g.~\cite[Remark 3.19]{Paulsen-Todorov-Tomforde}, \cite[\S 4.1.4]{Aubrun-Szarek}, or when computing the closed unit ball of the projective norm). Our methods are not equipped to deal with closures.
	
	If $E$, $F$ are finite\-/dimensional and if $C$, $D$ are compact, then $\conv(C \settensor D)$ is automatically compact, so here taking closures is not necessary. In particular:
\end{remark}
	
\begin{corollary}
	\label{cor:projective-norm-faces}
	Let $E$ and $F$ be \textup(real\textup) finite\-/dimensional normed spaces. Then the closed unit ball of the projective norm preserves proper faces: if $M \subset B_E$, $N \subset B_F$ are proper faces, then $\conv(M \settensor N)$ is a face of $B_{E \tensor_\pi F}$.
\end{corollary}

This had already been known for extreme points. More generally, if $E$ and $F$ are Banach spaces, then it follows from a result of Tseitlin \cite{Tseitlin} (see also \cite{Ruess-Stegall-extreme}) that the closed unit ball of the \emph{completed} projective tensor product $E\topdual \hattensor_\pi F\topdual$ preserves extreme points, provided that $E\topdual$ or $F\topdual$ has the approximation property and $E\topdual$ or $F\topdual$ has the Radon--Nikodym property.%
	\hair\footnote{The cited results relate to extreme points in duals of operator spaces. Our assumptions on $E\topdual$ and $F\topdual$ ensure that $E\topdual \hattensor_\pi F\topdual \cong (E \hattensor_\varepsilon F)\topdual$ isometrically; see \cite[Theorem 16.6]{Defant-Floret}.}
In particular, this settles the finite\-/dimensional case, proving \autoref{cor:projective-norm-faces} for extreme points.

\begin{remark}
	\label{rmk:projective-norm-extreme-points}
	We do not know whether the closed unit ball of the projective norm always preserves extreme points, even in the algebraic tensor product.
	This does not follow from \autoref{cor:absolutely-convex-extreme-points}, because the closed unit ball of $E \tensor_\pi F$ is the \emph{closure} of $\conv(B_E \settensor B_F)$.
	Known results in this direction usually start with something stronger than an extreme point, such as a \emph{denting point} (see \cite[Theorem~5]{Ruess-Stegall-weak*-denting}, \cite[Corollary 4]{Werner}).
	
	We suspect that there are Banach spaces $E$ and $F$ such that the projective norm does not preserve all extreme points of their closed unit balls, but we have not been able to construct such an example.
	To our knowledge, no such examples are known in the literature either.
	
	Finally, we should point out that the injective norm does not preserve extreme points; see \autoref{rmk:injective-norm-extreme-points}.
\end{remark}

\chapter{The injective cone}
\label{chp:injective}

In this chapter, we carry out an in-depth study of the properties of the injective cone.
This cone depends not only on the vector spaces $E,F$ and the cones $E_+,F_+$, but also on the dual spaces $E',F'$, so we will work with dual pairs.

Let $\langle E,E'\rangle$, $\langle F,F'\rangle$ be dual pairs of (real) vector spaces, and let $E_+ \subseteq E$, $F_+ \subseteq F$ be convex cones in the primal spaces. The \index{injective cone}\emph{injective cone}%
	\footnote{A note about terminology: in the literature, $\maxwedge{E_+}{F_+}$ is usually called the \emph{biprojective cone} (see e.g.~\cite{Merklen,Peressini-Sherbert,Birnbaum}). The results in this section show that this cone is in many ways analogous to the injective topology, and as such deserves the name \emph{injective cone}. The only prior use of this name (that we are aware of) is in \cite{Wittstock} and \cite{Mulansky}.}
in $E \tensor F$ is defined as
\[ \glsadd{maxwedge}\maxwedge{E_+}{F_+} := \big\{u \in E \tensor F \, : \, \langle u , \varphi \tensor \psi \rangle \geq 0\ \text{for all $\varphi \in E_+'$, $\psi \in F_+'$}\big\}. \]
The notation causes some ambiguity, because $\maxwedge{E_+}{F_+}$ does not only depend on $E_+$ and $F_+$, but also on the dual pairs $\langle E,E'\rangle$ and $\langle F,F'\rangle$. To be fully precise, the injective cone should be denoted as something like $\maxwedge{(\langle E,E'\rangle,E_+)}{(\langle F,F'\rangle,F_+)}$. We forego this cumbersome notation for the sake of clarity; it will always be clear what is meant.

If $E$ and $F$ are locally convex and if $E \tensor F$ is equipped with a compatible topology $\alpha$ (in the sense of Grothendieck \cite[p.~89]{Grothendieck-top}, see also \cite[\S 44.1]{Kothe-II}), then for every $\varphi \in E\topdual$, $\psi \in F\topdual$ the tensor product $\varphi \tensor \psi : E \tensor_\alpha F \to \R$ is continuous, and as such has a unique extension to $E \hattensor_\alpha F$. In this setting we may likewise define the injective cone as
\[ \glsadd{hatmaxwedge}\hatmaxwedge[\alpha]{E_+}{F_+} := \big\{ u \in E \hattensor_\alpha F \, : \, (\varphi \hattensor_\alpha \psi)(u) \geq 0\ \text{for all $\varphi \in E_+\topdual$, $\psi \in F_+\topdual$}\big\}. \]
Clearly $\maxwedge{E_+}{F_+} = (\hatmaxwedge[\alpha]{E_+}{F_+}) \cap (E \tensor F)$. Note that, unlike the projective cone, the injective cone typically becomes larger when passing from the algebraic tensor product $E \tensor F$ to the completion $E \hattensor_\alpha F$.

\begin{remark}
	\label{rmk:max-is-dual-of-min}
	Let $G$ be any reasonable dual of $E \tensor F$ (cf.~\autopageref{subsec:reasonable-dual}).
	It is clear from the definition that $\maxwedge{E_+}{F_+}$ is the predual cone of $\minwedge{E_+'}{F_+'}$ under the dual pairing $\langle E \tensor F , G \rangle$.
	Likewise, $\hatmaxwedge[\alpha]{E_+}{F_+} \subseteq E \hattensor_\alpha F$ is the predual cone of $\minwedge{E_+\topdual}{F_+\topdual} \subseteq (E \hattensor_\alpha F)\topdual$.
	
	An immediate consequence is that the injective cone is always weakly closed. Furthermore, by the bipolar theorem, the dual cone of $\maxwedge{E_+}{F_+}$ with respect to the dual pair $\langle E \tensor F , G\rangle$ is the $\sigma(G , E \tensor F)$-closure of $\minwedge{E_+'}{F_+'}$. (Note that this need not be contained in $E' \tensor F'$.) Similarly, in the locally convex setting, the dual cone of $\hatmaxwedge[\alpha]{E_+}{F_+}$ is the weak\nobreakdash-$*$ closure of $\minwedge{E_+\topdual}{F_+\topdual} \subseteq (E \hattensor_\alpha F)\topdual$.
\end{remark}

In this chapter, we give a detailed study of the properties of the injective cone.
In \mysecref{sec:injective-characteristic-property}, we establish the characteristic property of the injective cone.
In \mysecref{sec:injective-positive-linear-maps}, we show that the injective cone preserves positive maps, bipositive maps, and retracts, but not pushforwards.
In \mysecref{sec:injective-proper-cone} we determine necessary and sufficient conditions for the injective cone to be proper.
Finally, in \mysecref{sec:injective-faces}--\mysecref{sec:injective-extremal-rays} we show how faces in $E_+$ and $F_+$ determine faces of $\maxwedge{E_+}{F_+}$.

\section{The characteristic property of the injective cone}
\label{sec:injective-characteristic-property}

We show that the injective cone can be identified with a cone of positive bilinear forms. Let $E \SCBiltensor F$ denote the space of separately weak\nobreakdash-$*$ continuous bilinear forms on $E' \times F'$:
\[ \glsadd{SCBiltensor}E \SCBiltensor F \, := \, \SCBil\big(E_\weakstar' \times F_\weakstar'). \]
(K\"othe \cite[\S 44.4]{Kothe-II} uses the symbol $\boxtimes$ instead of $\SCBiltensor$.)

We shall understand $E \SCBiltensor F$ to be equipped with the cone it inherits from $\Bil(E' \times F')$. In other words, $b \in E \SCBiltensor F$ is positive if and only if $b(\varphi,\psi) \geq 0$ for all $\varphi \in E_+'$, $\psi \in F_+'$.

The characteristic property of the injective cone is that it is given by a bipositive map to $E \SCBiltensor F$ (algebraic case) or $\tilde E \SCBiltensor \tilde F$ (completed locally convex case).

\begin{remark}
	\label{rmk:injective-property}
	Statements about positive bilinear forms can be turned into equivalent statements about positive linear operators in the following way. Recall that $\SCBil({E_\weakstar' \times F_\weakstar'})$ is naturally isomorphic to $\CL(E_\weakstar' , F_\weak)$. Under this correspondence, the positive cone of $\SCBil(E_\weakstar' \times F_\weakstar')$ is the cone of \index{approximately positive linear map}\emph{approximately positive} operators $E_\weakstar' \to F_\weak$, i.e.~those operators $T$ that satisfy $T[E_+'] \subseteq \overline{F_+}^{\,\weak}$. In particular, if $F_+$ is weakly closed, then this is just the cone of positive operators $E_\weakstar' \to F_\weak$. Similarly, $\SCBil(E_\weakstar' \times F_\weakstar') \cong \CL(F_\weakstar' , E_\weak)$, and the positive cone of $\SCBil(E_\weakstar' \times F_\weakstar')$ corresponds with the approximately positive cone of $\CL(F_\weakstar' , E_\weak)$.

	The advantage of sticking to bilinear forms is twofold: it keeps the theory symmetric in $E$ and $F$, and it avoids the nuisance of having to take the weak closure of $F_+$ (or $E_+$).
\end{remark}

We proceed to prove the characteristic property in three settings: the algebraic tensor product, the completed injective tensor product, and arbitrary completed tensor products.

\subsection{Situation I: the algebraic tensor product}
Let $\langle E,E'\rangle$ and $\langle F,F'\rangle$ be dual pairs. Equip $E'$ and $F'$ with their respective weak\nobreakdash-$*$ topologies, and denote these spaces as $E_\weakstar'$ and $F_\weakstar'$. The dual pairing $\langle E \tensor F , E' \tensor F'\rangle$ yields a natural map $E \tensor F \hookrightarrow ({E' \tensor F'})\algdual \cong \Bil({E' \times F'})$. Note that the elements of ${E \tensor F}$ give rise to jointly continuous bilinear maps ${E_\weakstar' \times F_\weakstar'} \to \R$. Indeed, an elementary tensor $x_0 \tensor y_0 \in E \tensor F$ defines the bilinear map $(\varphi , \psi) \mapsto \langle x_0,\varphi\rangle\langle y_0,\psi\rangle$, which is easily seen to be jointly continuous (use that $\varphi \mapsto \langle x_0,\varphi\rangle$ and $\psi \mapsto \langle y_0,\psi\rangle$ are continuous). Consequently, finite sums of elementary tensors also define jointly continuous bilinear maps, and the claim follows. This gives us natural inclusions
\begin{equation}
	E \tensor F \subseteq \CBil(E_\weakstar' \times F_\weakstar') \subseteq E \SCBiltensor F \subseteq \Bil(E' \times F'). \label{eqn:bilinear-inclusions:algebraic}
\end{equation}
From left to right, these are the spaces of (continuous) finite rank, jointly continuous, separately continuous, and all bilinear forms on $E_\weakstar' \times F_\weakstar'$.

\begin{proposition}
	\label{prop:injective-property:algebraic}
	The elements of $\maxwedge{E_+}{F_+}$ are precisely those elements in $E \tensor F$ which define a positive bilinear map $E' \times F' \to \R$; that is:
	\[ \maxwedge{E_+}{F_+} = \CBil(E_\weakstar' \times F_\weakstar')_+ \cap (E \tensor F). \]
\end{proposition}
\begin{proof}
	By \autoref{rmk:max-is-dual-of-min}, $\maxwedge{E_+}{F_+}$ is the dual cone of $\minwedge{E_+'}{F_+'}$ with respect to the dual pair $\langle E \tensor F , E' \tensor F' \rangle$, so we have $\maxwedge{E_+}{F_+} = (E' \tensor F')_+\algdual \cap (E \tensor F)$. It follows from \autoref{prop:projective-property} that $u \in E \tensor F$ belongs to $\maxwedge{E_+}{F_+}$ if and only if $u$ defines a positive bilinear map $E' \times F' \to \R$.
\end{proof}
\begin{corollary}
	\label{cor:injective-property:algebraic}
	All inclusions in \eqref{eqn:bilinear-inclusions:algebraic} are bipositive.
\end{corollary}

\subsection{Situation II: injective topology, completed}
Let $E$ and $F$ be locally convex. Let $E \SCBiltensor_\varepsilon F$ denote the space $E \SCBiltensor F$ ($= \SCBil(E_\weakstar\topdual \times F_\weakstar\topdual)$) equipped with the \index{topology!bi-equicontinuous|see {injective}}\emph{bi-equicontinuous} (or \index{topology!injective}\emph{injective}) topology $\varepsilon$, that is, the locally convex topology given by the family of seminorms
\[ \big.p\big._{M,N}(b) = \sup_{\varphi\in M,\psi\in N} |b(\varphi,\psi)|,\qquad\text{($M \subseteq E\topdual$ and $N \subseteq F\topdual$ equicontinuous)}. \]
If $E$ and $F$ are complete, then $E \SCBiltensor_\varepsilon F$ is also complete (see \cite[\S 40.4.(5)]{Kothe-II}), so in this case we may identify $E \hattensor_\varepsilon F$ with the closure of $E \tensor_\varepsilon F$ in $E \SCBiltensor_\varepsilon F$, and we have the following inclusions of vector spaces:
\begin{equation}
	E \tensor F \subseteq E \hattensor_\varepsilon F \subseteq E \SCBiltensor_\varepsilon F \subseteq \Bil(E\topdual \times F\topdual),\qquad\text{($E$ and $F$ complete)}. \label{eqn:bilinear-inclusions:completed-injective-i}
\end{equation}
This may fail if $E$ or $F$ is not complete. (In particular, $E \tensor \R = E \SCBiltensor \R = E$, but $E \hattensor_\varepsilon \R = \tilde E$.) However, in general we have $E \hattensor_\varepsilon F = \tilde E \hattensor_\varepsilon \tilde F$ (see \cite[\S 44.5.(1)]{Kothe-II}), hence
\begin{equation}
	E \tensor F \subseteq E \hattensor_\varepsilon F = \tilde E \hattensor_\varepsilon \tilde F \subseteq \tilde E \SCBiltensor_\varepsilon \tilde F \subseteq \Bil(E\topdual \times F\topdual). \label{eqn:bilinear-inclusions:completed-injective-ii}
\end{equation}

\begin{proposition}
	\label{prop:injective-property:completed-injective}
	Let $E$, $F$ be locally convex. Then the natural inclusion $E \hattensor_\varepsilon F \hookrightarrow \tilde E \SCBiltensor_\varepsilon \tilde F$ is bipositive; that is:
	\[ \hatmaxwedge[\varepsilon]{E_+}{F_+} = \SCBil\left(E_{\sigma(E\topdual, \tilde E)}\topdual \times F_{\sigma(F\topdual , \tilde F)}\topdual\right)_+ \cap (E \hattensor_\varepsilon F). \]
\end{proposition}
\begin{proof}
	Continuous linear functionals $\varphi \in E\topdual$ and $\psi \in F\topdual$ define a functional on $E \hattensor_\varepsilon F$ in two different ways: either as the (unique) extension of $\varphi \tensor \psi$ to the completion $E \hattensor_\varepsilon F$, or as the restriction of the evaluation functional $f_{\varphi,\psi} : \Bil(E\topdual \times F\topdual) \to \R$, $b \mapsto b(\varphi,\psi)$ to the subspace $E \hattensor_\varepsilon F$. We claim that these two functionals coincide on $E \hattensor_\varepsilon F$. The inclusion $E \tensor F \hookrightarrow \Bil(E\topdual \times F\topdual)$ is such that $(\varphi \tensor \psi)(u) = u(\varphi , \psi)$, so the functionals coincide on $E \tensor F$. Furthermore, the functional $f_{\varphi,\psi}$ is easily seen to be continuous on $\tilde E \SCBiltensor_\varepsilon \tilde F$ (use that the sets $\{\varphi\} \subseteq E\topdual$, $\{\psi\} \subseteq F\topdual$ are equicontinuous). Hence $\varphi \tensor \psi = f_{\varphi,\psi}$ on $E \tensor F$, and by continuity also on $E \hattensor_\varepsilon F$, which proves our claim.
	
	It follows from the claim and the definition of $\hatmaxwedge[\varepsilon]{E_+}{F_+}$ that an element $u \in E \hattensor_\varepsilon F$ belongs to $\hatmaxwedge[\varepsilon]{E_+}{F_+}$ if and only if it defines a positive bilinear form $E' \times F' \to \R$.
\end{proof}
\begin{corollary}
	\label{cor:injective-property:completed-injective}
	All inclusions in \eqref{eqn:bilinear-inclusions:completed-injective-i} and \eqref{eqn:bilinear-inclusions:completed-injective-ii} are bipositive.
\end{corollary}

We only needed the bi-equicontinuous topology on $E \SCBiltensor F$ for the proof of \autoref{prop:injective-property:completed-injective}. From here on out we can forget about it.

\subsection{Situation III: arbitrary compatible topology, completed}
Now let $\alpha$ be an arbitrary compatible topology on $E \tensor F$ ($E$ and $F$ locally convex). Since the injective topology is the weakest compatible topology, we have a natural map $E \hattensor_\alpha F \to E \hattensor_\varepsilon F$, so here the picture is as follows:
\begin{equation}
	E \tensor F \: \hookrightarrow \: E \hattensor_\alpha F \: \to \: E \hattensor_\varepsilon F \: \hookrightarrow \: \tilde E \SCBiltensor \tilde F \: \hookrightarrow \: \Bil(E\topdual \times F\topdual). \label{eqn:bilinear-inclusions:completed-compatible}
\end{equation}
The map $E \hattensor_\alpha F \to E \hattensor_\varepsilon F$ need not be injective (this is related to the approximation property; see e.g.~\cite[Theorem 5.6]{Defant-Floret}). However, it remains bipositive.

\begin{proposition}
	\label{prop:injective-property:completed-compatible}
	Let $E$, $F$ be locally convex, and let $\alpha$ be a compatible topology on $E \tensor F$. Then the natural map $\Phi_{\alpha \to \varepsilon} : E \hattensor_\alpha F \to E \hattensor_\varepsilon F$ is bipositive; that is:
	\[ \hatmaxwedge[\alpha]{E_+}{F_+} = \Phi_{\alpha \to \varepsilon}^{-1}[\hatmaxwedge[\varepsilon]{E_+}{F_+}]. \]
\end{proposition}
\begin{proof}
	Note that $\varphi \hattensor_\alpha \psi = (\varphi \hattensor_\varepsilon \psi) \circ \Phi_{\alpha \to \varepsilon}$, as they coincide on $E \tensor F$. Hence: $u \in \hatmaxwedge[\alpha]{E_+}{F_+}$ if and only if $\Phi_{\alpha \to \varepsilon}(u) \in \hatmaxwedge[\varepsilon]{E_+}{F_+}$.
\end{proof}
\begin{corollary}
	\label{cor:injective-property:completed-compatible}
	All maps in \eqref{eqn:bilinear-inclusions:completed-compatible} are bipositive.
\end{corollary}

\section{Mapping properties of the injective cone}
\label{sec:injective-positive-linear-maps}
We show that the injective cone preserves all positive maps, bipositive maps (provided the cones are closed), and retracts, and show that it fails to preserve quotients, pushforwards, and approximate pushforwards.

Let $\langle E,E'\rangle$, $\langle F,F'\rangle$, $\langle G,G'\rangle$, $\langle H,H'\rangle$ be dual pairs, equipped with convex cones $E_+,F_+,G_+,H_+$ in the primal spaces. Given $T \in \CL(E_\weak , G_\weak)$ and $S \in \CL(F_\weak , H_\weak)$, we define \glsadd{T Biltensor S}$T \Biltensor S : \Bil({E' \times F'}) \to \Bil({G' \times H'})$ by
\[ b \mapsto \Big( (\varphi,\psi) \mapsto b(T'\varphi , S'\psi)\Big), \]
where $T' \in \CL(G_\weakstar' , E_\weakstar')$, $S' \in \CL(H_\weakstar' , F_\weakstar')$ denote the respective adjoints.

Note that $(T \Biltensor S)b$ is separately weak\nobreakdash-$*$ continuous whenever $b$ is, so $T \Biltensor S$ restricts to a map \glsadd{T SCBiltensor S}$T \SCBiltensor S : E \SCBiltensor F \to G \SCBiltensor H$.
\begin{proposition}
	\label{prop:linear-maps:algebraic-diagram}
	\glsadd{T tensor S}The following diagram commutes:
	\begin{center}
		\begin{tikzcd}[row sep=large]
			E \tensor F \arrow[r, hook] \arrow[d, "T \tensor S"] & E \SCBiltensor F \arrow[r, hook] \arrow[d, "T \SCBiltensor S"] & \Bil(E' \times F') \arrow[d, "T \Biltensor S"] \\
			G \tensor H \arrow[r, hook] & G \SCBiltensor H \arrow[r, hook] & \Bil(G' \times H').
		\end{tikzcd}
	\end{center}
\end{proposition}
\begin{proof}
	The rightmost square commutes by definition ($T \SCBiltensor S$ is the restriction of $T \Biltensor S$).
	For the leftmost square, note that $x \tensor y \in E \tensor F$ defines the bilinear map $(\varphi,\psi) \mapsto \langle x , \varphi \rangle \langle y , \psi \rangle$, and $Tx \tensor Sy$ defines the bilinear map $(\varphi,\psi) \mapsto \langle Tx , \varphi \rangle \langle Sy , \psi \rangle = \langle x , T'\varphi \rangle \langle y , S'\varphi \rangle$.
\end{proof}
\begin{proposition}
	\label{prop:linear-maps:completions-diagram}
	If $E$, $F$, $G$, $H$ are locally convex, if $T \in \CL(E,G)$, $S \in \CL(F,H)$, and if $\alpha$ and $\beta$ are compatible topologies on $E \tensor F$ and $G \tensor H$ for which the map $T \tensor_{\alpha \to \beta} S : E \tensor_\alpha F \to G \tensor_\beta H$ is continuous, then the following diagram commutes:
	\begin{center}
		\begin{tikzcd}[row sep=large]
			E \tensor F \arrow[r, hook]\arrow[rr, hook, bend left=25] \arrow[d, "T \tensor S"] & E \hattensor_\alpha F \arrow[r] \arrow[d, "T \hattensor_{\alpha \to \beta} S"] & E \hattensor_\varepsilon F \arrow[r, hook] \arrow[d, "T \hattensor_\varepsilon S"] & \tilde E \SCBiltensor \tilde F \arrow[r, hook] \arrow[d, "\tilde T \SCBiltensor \tilde S"] & \Bil(E\topdual \times F\topdual) \arrow[d, "T \Biltensor S"] \\
			G \tensor H \arrow[r, hook]\arrow[rr, hook, bend right=25] & G \hattensor_\beta H \arrow[r] & G \hattensor_\varepsilon H \arrow[r, hook] & \tilde G \SCBiltensor \tilde H \arrow[r, hook] & \Bil(G\topdual \times H\topdual).
		\end{tikzcd}
	\end{center}
\end{proposition}
Here the horizontal maps are the ones from \eqref{eqn:bilinear-inclusions:completed-compatible}, which are bipositive by \autoref{cor:injective-property:completed-compatible}.
\begin{proof}
	The rightmost square commutes since $T \Biltensor S = \tilde T \Biltensor \tilde S$ (use that $T : E \to G$ and its completion $\tilde T : \tilde E \to \tilde G$ have the same adjoint $T\topdual = \tilde T\topdual : G\topdual \to E\topdual$), and $\tilde T \SCBiltensor \tilde S$ is a restriction of $\tilde T \Biltensor \tilde S$. (However, $\tilde T \SCBiltensor \tilde S \neq T \SCBiltensor S$, as the domain and codomain are different!)
	
	The other squares (and the triangles) commute because the respective compositions agree on the dense subspace $E \tensor F$ (or $G \tensor H$).
\end{proof}

\begin{lemma}
	\label{lem:injective-positive-linear-maps}
	Let $\langle E,E'\rangle$, $\langle F,F'\rangle$, $\langle G,G'\rangle$, $\langle H,H'\rangle$ be dual pairs, and let $T \in \CL(E_\weak , G_\weak)$ and $S \in \CL(F_\weak , H_\weak)$.
	\begin{enumerate}[label=(\alph*)]
		\item\label{itm:injective-positive-maps} If $T$ and $S$ are positive, then $T \Biltensor S$ is positive.
		\item\label{itm:injective-bipositive-maps}
		If $\overline{E_+}^{\,\weak} = T^{-1}[\overline{G_+}^{\,\weak}]$ and $\overline{F_+}^{\,\weak} = S^{-1}[\overline{H_+}^{\,\weak}]$ \textup(i.e.{} $T$ and $S$ are approximately bipositive\textup), then $T \SCBiltensor S$ is bipositive.
	\end{enumerate}
\end{lemma}
\begin{proof}\ \par
	\begin{enumerate}[label=(\alph*)]
		\item Let $b \in \Bil(E' \times F')$ be positive. If $\varphi \in G_+'$ and $\psi \in H_+'$, then $\varphi \circ T \in E_+'$ and $\psi \circ S \in F_+'$ (the composition of positive linear maps is positive), so $(T \Biltensor S)(b)(\varphi, \psi) \geq 0$. It follows that $(T \Biltensor S)(b)$ is a positive bilinear map on $G' \times H'$, so $T \Biltensor S$ is positive.
		
		\item By the duality between approximate pushforwards and approximate pullbacks (see \autopageref{p:approximate-duality}), the adjoints $T' \in \CL(G_\weakstar' , E_\weakstar')$ and $S' \in \CL(H_\weakstar' , F_\weakstar')$ are weak\nobreakdash-$*$ approximate pushforwards.
		Since $T \SCBiltensor S$ is precisely the map $(T' \tensor S')'$ from \autoref{lem:tensor-product-of-approximate-pushforwards}, it follows from said lemma that $T \SCBiltensor S$ is bipositive. \qedhere
	\end{enumerate}
\end{proof}
\begin{theorem}
	\label{thm:injective-positive-linear-maps:algebraic}
	Let $T \in \CL(E_\weak , G_\weak)$ and $S \in \CL(F_\weak , H_\weak)$.
	\begin{enumerate}[label=(\alph*)]
		\item\label{itm:algebraic-injective-positive-maps} If $T$ and $S$ are positive, then $(T \tensor S)[\maxwedge{E_+}{F_+}] \subseteq \maxwedge{G_+}{H_+}$.
		
		\item\label{itm:algebraic-injective-bipositive-maps}
		If $\overline{E_+}^{\,\weak} = T^{-1}[\overline{G_+}^{\,\weak}]$ and $\overline{F_+}^{\,\weak} = S^{-1}[\overline{H_+}^{\,\weak}]$ \textup(i.e.{} $T$ and $S$ are approximately bipositive\textup), then $\maxwedge{E_+}{F_+} = (T \tensor S)^{-1}[\maxwedge{G_+}{H_+}]$.
	\end{enumerate}
	In summary: the algebraic injective cone preserves continuous positive maps and \textup(continuous\footnotehack{By our definition, approximately bipositive maps are already required to be continuous.}\textup) approximately bipositive maps.
\end{theorem}
\begin{proof}
	All horizontal arrows in the diagram from \autoref{prop:linear-maps:algebraic-diagram} are bipositive (by \autoref{cor:injective-property:algebraic}), so \ref{itm:algebraic-injective-positive-maps} and \ref{itm:algebraic-injective-bipositive-maps} follow easily from \autoref{lem:injective-positive-linear-maps}. For the summary, recall from \autoref{rmk:max-is-dual-of-min} that $\maxwedge{E_+}{F_+}$ and $\maxwedge{G_+}{H_+}$ are weakly closed, so in \ref{itm:algebraic-injective-bipositive-maps} we find that $T \tensor S$ is approximately bipositive (in addition to being bipositive).
\end{proof}

\begin{theorem}
	\label{thm:injective-positive-linear-maps:completed}
	Let $E$, $F$, $G$, $H$ be locally convex, let $T \in \CL(E, G)$ and $S \in \CL(F, H)$, and let $\alpha$ and $\beta$ be compatible topologies on respectively $E \tensor F$ and $G \tensor H$ for which the map $T \tensor_{\alpha \to \beta} S : {E \tensor_\alpha F} \to {G \tensor_\beta H}$ is continuous.
	\begin{enumerate}[label=(\alph*)]
		\item\label{itm:completed-injective-positive-maps} If $T$ and $S$ are positive, then $(T \hattensor_{\alpha\to\beta} S)[\hatmaxwedge[\alpha]{E_+}{F_+}] \subseteq \hatmaxwedge[\beta]{G_+}{H_+}$.
		
		\item\label{itm:completed-injective-bipositive-maps} If $E$ and $F$ are complete and $\overline{E_+} = T^{-1}[\overline{G_+}]$ and $\overline{F_+} = S^{-1}[\overline{H_+}]$, then $\hatmaxwedge[\alpha]{E_+}{F_+} = (T \hattensor_{\alpha\to\beta} S)^{-1}[\hatmaxwedge[\beta]{G_+}{H_+}]$.
	\end{enumerate}
	In summary: the completed injective cone preserves continuous positive maps, and \textup(continuous\footnotemark\textup) approximately bipositive maps if $E$ and $F$ are complete.
\end{theorem}
\begin{proof}
	\ \par
	\begin{enumerate}[label=(\alph*)]
		\item All horizontal arrows in the diagram from \autoref{prop:linear-maps:completions-diagram} are bipositive (by \autoref{cor:injective-property:completed-compatible}), so the result follows from \myautoref{lem:injective-positive-linear-maps}{itm:injective-positive-maps}.
		
		\item Recall: in a locally convex space, the weak closure and original closure of a convex cone coincide. Moreover, note that we may assume without loss of generality that $G$ and $H$ are also complete. (Extend $T$ to the map $\tilde T : E \to \tilde G$, and let $\widetilde{G_+}$ denote the closure of $G_+$ in $\tilde G$. Then $\tilde T^{-1}[\widetilde{G_+}] = T^{-1}[\overline{G_+}]$, since $\ran(\tilde T) \subseteq G$.)
		
		We refer again to the diagram from \autoref{prop:linear-maps:completions-diagram}. All horizontal arrows in are bipositive, and the vertical arrow $T \SCBiltensor S = \tilde T \SCBiltensor \tilde S$ is bipositive by \myautoref{lem:injective-positive-linear-maps}{itm:injective-bipositive-maps}. The result is easily deduced. \qedhere
	\end{enumerate}
\end{proof}

\begin{remark}
	\label{rmk:injective-mapping-property}
	We get one of the characteristic properties of the injective topology for free: if $E$, $F$, $G$, $H$ are locally convex, $E$ and $F$ complete, and if $T \in \CL(E,G)$ and $S \in \CL(F,H)$ are injective, then so is $T \hattensor_\varepsilon S \in \CL(E \hattensor_\varepsilon F , G \hattensor_\varepsilon H)$. Indeed, equip all spaces with the trivial cone $\{0\}$, then every dual cone is the entire dual space, so $\Bil(E\topdual \times F\topdual)_+ = \{0\}$. Therefore $E \hattensor_\varepsilon F$ and $G \hattensor_\varepsilon H$ are also equipped with the zero cone (since $E \hattensor_\varepsilon F \to \Bil(E\topdual \times F\topdual)$ is bipositive and injective). Since $T \hattensor_\varepsilon S$ is bipositive, we have $(T \hattensor_\varepsilon S)^{-1}[\{0\}] = \{0\}$, so $T \hattensor_\varepsilon S$ is injective.
	
	This shows immediately that the completeness assumptions in \myautoref{thm:injective-positive-linear-maps:completed}{itm:completed-injective-bipositive-maps} cannot be omitted. (After all, $T \hattensor_\varepsilon \id_\R : E \hattensor_\varepsilon \R \to G \hattensor_\varepsilon \R$ is simply the completion $\tilde T : \tilde E \to \tilde G$, which may fail to be injective even if $T$ is injective.)
	
	A similar argument shows that the weak closures in \myautoref{lem:injective-positive-linear-maps}{itm:injective-bipositive-maps} and subsequent theorems cannot be omitted: the map $T \tensor_\varepsilon \id_\R : E \tensor \R \to G \tensor \R$ is simply $T$, but with the positive cones $E_+$, $G_+$ replaced by their weak closures. But one does not necessarily have $T^{-1}[\overline{G_+}^{\,\weak}] = \overline{E_+}^{\,\weak}$ whenever $T^{-1}[G_+] = E_+$. (Concrete example: let $G = \R^2$ with $G_+ = \{(x,y) \, : \, x > 0\} \cup \{(0,0)\}$, let $E := \spn\{(0,1)\} \subseteq G$ with $E_+ := G_+ \cap E$, and let $T$ be the inclusion $E \hookrightarrow G$.)
\end{remark}

\begin{remark}
	A topological order retract $G \subseteq E$ is given by two continuous positive linear maps $E \twoheadrightarrow G \hookrightarrow E$, so it follows at once that the injective cone (in all its incarnations) preserves all topological order retracts, without any assumptions on completeness or weak closures. The argument is analogous to that of \myautoref{prop:projective-positive-linear-maps}{itm:projective-retracts}.
\end{remark}

The following example shows that the injective cone does not preserve pushforwards, not even approximately.

\begin{example}[{Dual to \autoref{xmpl:projective-bipositive-fail-2}; cf.~\cite[Situation 4]{Dobben-extensions}}]
	\label{xmpl:injective-pushforward-fail-2}
	Let $E$ be a finite\-/dimensional space equipped with a proper, generating, polyhedral cone which is \emph{not} a simplex cone. Let $x_1,\ldots,x_m$ be representatives of the extremal rays of $E_+$, and let $\R^m$ be equipped with the standard cone $\R_{\geq 0}^m$. Then the map $T : \R^m \to E$, $(\lambda_1,\ldots,\lambda_m) \mapsto \lambda_1 x_1 + \ldots + \lambda_m x_m$ is a pushforward (i.e.~$T[\R_{\geq 0}^m] = E_+$).
	
	Since $E_+$ is not a simplex cone, it follows from \cite[Proposition 3.1]{Barker-Loewy} (see also \autoref{thm:min-equals-max} below) that $\minwedge{E_+}{E_+\algdual} \neq \maxwedge{E_+}{E_+\algdual}$. On the other hand, we have $\minwedge{\R_{\geq 0}^m}{E_+\algdual} = \maxwedge{\R_{\geq 0}^m}{E_+\algdual}$, and it follows from \myautoref{prop:projective-positive-linear-maps}{itm:projective-pushforwards} that $T \tensor \id_{E\algdual}$ is a pushforward for the projective cone. Therefore:
	\[ (T \tensor \id_{E\algdual})[\maxwedge{\R_{\geq 0}^m}{E_+\algdual}] = (T \tensor \id_{E\algdual})[\minwedge{\R_{\geq 0}^m}{E_+\algdual}] = \minwedge{E_+}{E_+\algdual} \neq \maxwedge{E_+}{E_+\algdual}. \]
	This shows that $T \tensor \id_{E\algdual}$ is not a pushforward for the injective cone.
	
	Note that all cones in this example are polyhedral, and therefore closed. In particular, the situation is not resolved by adding closures, which shows that the injective cone does not preserve approximate pushforwards.
\end{example}

The finite\-/dimensional techniques used in \autoref{xmpl:injective-pushforward-fail-2} will be discussed in more detail in \mychpref{chp:finite-dimensional} and \mychpref{chp:many-examples}.

\section{When is the injective cone proper?}
\label{sec:injective-proper-cone}
We determine the lineality space of $E \SCBiltensor F$, and we use this to give necessary and sufficient conditions for the injective cone (in all its incarnations) to be proper. Direct formulas for the lineality space (under certain topological assumptions) will be given in \mysecref{sec:injective-ideals}.

As before, let $\langle E,E'\rangle$, $\langle F,F'\rangle$ be dual pairs, equipped with convex cones $E_+ \subseteq E$, $F_+ \subseteq F$ in the primal spaces.

\begin{proposition}
	\label{prop:injective-lineality-space}
	The lineality space of $(E \SCBiltensor F)_+$ is the set of those bilinear forms in $E \SCBiltensor F$ that vanish on $\overline{\spn(E_+')}^{\,\weakstar} \times \overline{\spn(F_+')}^{\,\weakstar} = \lineal(\overline{E_+}^{\,\weak})^\perp \times \lineal(\overline{F_+}^{\,\weak})^\perp$.
\end{proposition}
\begin{proof}
	If $b \in E \SCBiltensor F$ vanishes on $\overline{\spn(E_+')}^{\,\weakstar} \times \overline{\spn(F_+')}^{\,\weakstar}$, then in particular it vanishes on $E_+' \times F_+'$, so evidently both $b$ and $-b$ define positive bilinear forms. Conversely, if $b \in \lineal((E \SCBiltensor F)_+)$, then both $b$ and $-b$ are positive on $E_+' \times F_+'$, so it follows that $b$ must vanish on $E_+' \times F_+'$. Therefore $b$ also vanishes on $\spn(E_+') \times \spn(F_+')$, and consequently on $\overline{\spn(E_+')}^{\,\weakstar} \times \overline{\spn(F_+')}^{\,\weakstar}$. (Use weak\nobreakdash-$*$ continuity in one variable at a time, as we did in the proof of \autoref{lem:tensor-product-of-approximate-pushforwards}.)
	
	Since $\lineal(\overline{E_+}^{\,\weak}) = {}^\perp (E_+')$ (see \mysecref{sec:convex-cones}), we have $\overline{\spn(E_+')}^{\,\weakstar} = \lineal(\overline{E_+}^{\,\weak})^\perp$.
\end{proof}

Direct formulas for the lineality space of the injective cone will be given in \myautoref{cor:tensor-subspace}{itm:ts:lin-space} (in $E \tensor F$) and \myautoref{cor:SCBiltensor-decomposition}{itm:SCBil-lin-space} (in $E \SCBiltensor F$).
For now, we focus on conditions for the injective cone to be proper.

\begin{theorem}
	\label{thm:injective-proper-cone}
	The following are equivalent:
	\begin{enumerate}[label=(\roman*)]
		\item\label{itm:injective-proper-cone} $\maxwedge{E_+}{F_+}$ is a proper cone;
		\item\label{itm:intermediate-subspace-proper-cone} For every subspace $E \tensor F \subseteq G \subseteq E \SCBiltensor F$, the cone $G_+ := G \cap (E \SCBiltensor F)_+$ is proper.
		\item\label{itm:SCBil-proper-cone} $(E \SCBiltensor F)_+$ is a proper cone;
		\item\label{itm:injective-criteria} $E = \{0\}$, or $F = \{0\}$, or both $\overline{E_+}^{\,\weak}$ and $\overline{F_+}^{\,\weak}$ are proper cones.
	\end{enumerate}
	In particular, the injective tensor product of weakly closed proper cones is a proper cone.
\end{theorem}
Note that the equivalence $\myref{itm:injective-proper-cone} \Longleftrightarrow \myref{itm:injective-criteria}$ is very similar to \autoref{thm:projective-proper-cone}. However, we should point out that the corner case is slightly different now. In \autoref{thm:projective-proper-cone}, the corner case is when one of the \emph{cones} is trivial; here the corner case is when one of the \emph{spaces} is trivial.

\begin{proof}[Proof of \autoref{thm:injective-proper-cone}]
	$\myref{itm:SCBil-proper-cone} \Longrightarrow \myref{itm:intermediate-subspace-proper-cone}$. Trivial.
	
	$\myref{itm:intermediate-subspace-proper-cone} \Longrightarrow \myref{itm:injective-proper-cone}$. Immediate, since $\maxwedge{E_+}{F_+} = (E \tensor F) \cap (E \SCBiltensor F)_+$.
	
	$\myref{itm:injective-criteria} \Longrightarrow \myref{itm:SCBil-proper-cone}$. If $E = \{0\}$, then clearly $E \SCBiltensor F = \{0\}$, so $(E \SCBiltensor F)_+$ is a proper cone regardless of any properties of $F_+$ (and similarly if $F = \{0\}$). If $\overline{E_+}^{\,\weak}$ and $\overline{F_+}^{\,\weak}$ are proper cones, then $\lineal(\overline{E_+}^{\,\weak}) = \lineal(\overline{F_+}^{\,\weak}) = \{0\}$, so it follows from \autoref{prop:injective-lineality-space} that $\lineal((E \SCBiltensor F)_+) = \{0\}$.
	
	$\myref{itm:injective-proper-cone} \Longrightarrow \myref{itm:injective-criteria}$. We prove the contrapositive: suppose that $E,F \neq \{0\}$ and that $\overline{E_+}^{\,\weak}$ is not a proper cone. Then we may choose $x \in E \setminus \{0\}$ with $\pm x \in \overline{E_+}^{\,\weak}$. Note that $(\overline{E_+}^{\,\weak})' = E_+'$, so for every $\varphi \in E_+'$ we have $\varphi(x),\varphi(-x) \geq 0$, and therefore $\varphi(x) = 0$. Now choose any $y \in F \setminus \{0\}$ (here we use that $F \neq \{0\}$), then for all $\varphi \in E_+'$, $\psi \in F_+'$ we have $\langle x \tensor y , \varphi \tensor \psi \rangle = \varphi(x)\psi(y) = 0\cdot \psi(y) = 0$, so we find $\pm x \tensor y \in \maxwedge{E_+}{F_+}$. Since $x$ and $y$ are non-zero, we have $x \tensor y \neq 0$, and we conclude that $\maxwedge{E_+}{F_+}$ fails to be proper.
\end{proof}

To tell whether $\hatmaxwedge[\alpha]{E_+}{F_+}$ is a proper cone, we need to assume that $E$ and $F$ are complete.
In the case where $E$ and $F$ are not complete, an answer can be found by first passing to the completions $\tilde E$, $\tilde F$.
See also \autoref{rmk:need-completeness} below.

\begin{corollary}
	\label{cor:completed-injective-proper-cone}
	Let $E,F$ be complete locally convex spaces, $E_+ \subseteq E$, $F_+ \subseteq F$ convex cones, and $\alpha$ a compatible locally convex topology on $E \tensor F$. Then the following are equivalent:
	\begin{enumerate}[label=(\roman*)]
		\item\label{itm:completed-injective-proper-cone} $\hatmaxwedge[\alpha]{E_+}{F_+} \subseteq E \hattensor_\alpha F$ is a proper cone;
		\item\label{itm:completed-injective-criteria} $E = \{0\}$, or $F = \{0\}$, or both $\overline{E_+}$ and $\overline{F_+}$ are proper cones and the natural map $E \hattensor_\alpha F \to E \hattensor_\varepsilon F$ is injective.
	\end{enumerate}
\end{corollary}
\begin{proof}
	First of all, recall that $\overline{E_+} = \overline{E_+}^{\,\weak}$, since $E_+$ is convex and $E$ is locally convex. Likewise, $\overline{F_+} = \overline{F_+}^{\,\weak}$.
	
	For the injective topology, recall from \eqref{eqn:bilinear-inclusions:completed-injective-i} that we have $E \tensor F \subseteq E \hattensor_\varepsilon F \subseteq  E \SCBiltensor F$, since $E$ and $F$ are complete. Hence for $\alpha = \varepsilon$ the result follows from \autoref{thm:injective-proper-cone}.
	
	For general $\alpha$, recall that $E \hattensor_\alpha F \to E \hattensor_\varepsilon F$ is bipositive. Therefore:
	
	$\myref{itm:completed-injective-proper-cone} \Longrightarrow \myref{itm:completed-injective-criteria}$. If $\hatmaxwedge[\alpha]{E_+}{F_+}$ is proper, then the bipositive map $E \hattensor_\alpha F \to E \hattensor_\varepsilon F$ is automatically injective (see \autoref{rmk:bipositive-injective}). Furthermore, the subcone $\maxwedge{E_+}{F_+} \subseteq \hatmaxwedge[\alpha]{E_+}{F_+}$ is also proper, so it follows from \autoref{thm:injective-proper-cone} that \myref{itm:completed-injective-criteria} holds.
	
	$\myref{itm:completed-injective-criteria} \Longrightarrow \myref{itm:completed-injective-proper-cone}$. It follows from the assumptions that $\hatmaxwedge[\varepsilon]{E_+}{F_+}$ is a proper cone and that $E \hattensor_\alpha F \to E \hattensor_\varepsilon F$ is injective. (The latter statement is trivially true if $E = \{0\}$ or $F = \{0\}$; otherwise it holds by assumption.) Since $E \hattensor_\alpha F \to E \hattensor_\varepsilon F$ is bipositive and injective, it follows that $\hatmaxwedge[\alpha]{E_+}{F_+}$ is also proper.
\end{proof}

\begin{remark}
	\label{rmk:need-completeness}
	In \autoref{cor:completed-injective-proper-cone}, the assumption that $E$ and $F$ are complete cannot be omitted. Under the natural isomorphism $E \hattensor_\alpha \R \cong \tilde E$, the injective cone $\hatmaxwedge[\alpha]{E_+}{\R_+}$ corresponds with $\widetilde{E_+}$ (the closure of $E_+$ in $\tilde E$). However, it can happen that $\overline{E_+}$ is proper but $\widetilde{E_+}$ is not (e.g.~\cite[Example 6.4]{Dobben-semisimplicity}).
\end{remark}

\begin{remark}
	The natural map $E \hattensor_\alpha F \to E \hattensor_\varepsilon F$ is not always injective; this is related to the approximation property. Further remarks along this line can be found in \mysecref{sec:semisimple-completed} below; see also \cite[Theorem 5.6]{Defant-Floret}.
\end{remark}

\section{Faces of the injective cone}
\label{sec:injective-faces}
In this section, we present a general way to construct faces of the space $E \SCBiltensor F = \SCBil(E_\weakstar' \times F_\weakstar')$ of separately weak\nobreakdash-$*$ continuous bilinear forms.
This will be used in \mysecref{sec:injective-ideals} to obtain ideals in for the injective cone.

Since the injective cone is characterized by bipositive maps $E \tensor F \to E \SCBiltensor F$ and $E \hattensor_\alpha F \to \tilde E \SCBiltensor \tilde F$ (see \mysecref{sec:injective-characteristic-property}, the inverse image of a face in $(E \SCBiltensor F)_+$ (resp.~$(\tilde E \SCBiltensor \tilde F)_+$) immediately gives us a face in $\maxwedge{E_+}{F_+}$ (resp.~$\hatmaxwedge[\alpha]{E_+}{F_+}$). Therefore we focus on faces in $E \SCBiltensor F$. For ideals in $E \tensor F$ and $E \hattensor_\alpha F$, see \mysecref{sec:injective-ideals}.

\begin{definition}
	\label{def:MN-face-set}
	Let $\langle E,E'\rangle$, $\langle F,F'\rangle$ be dual pairs, and let $E_+ \subseteq E$, $F_+ \subseteq F$ be convex cones. Given $b \in E \SCBiltensor F$ and subsets $M' \subseteq E'$, $N' \subseteq F'$, let us write
	\begin{align*}
		\glsadd{bMcdot}b(M' , \,\cdot\,) &:= \{b(\varphi , \,\cdot\,) \, : \, \varphi \in M'\} \subseteq (F_\weakstar')\topdual = F;\\\noalign{\smallskip}
		\glsadd{bcdotN}b(\,\cdot\, , N') &:= \{b(\,\cdot\, , \psi) \, : \, \psi \in N'\} \subseteq (E_\weakstar')\topdual = E.
	\end{align*}
	Given subsets $M \subseteq E$, $M' \subseteq E'$, $N \subseteq F$, $N' \subseteq F'$, we define
	\begin{align*}
		\glsadd{MsetNface}\MsetNface{M'}{N} &:= \big\{ b \in E \SCBiltensor F \, : \, b(M' , \,\cdot\,) \subseteq N\big\};\\\noalign{\smallskip}
		\glsadd{MfaceNset}\MfaceNset{M}{N'} &:= \big\{ b \in E \SCBiltensor F \, : \, b(\,\cdot\, , N') \subseteq M\big\}.
	\end{align*}
	Under the natural isomorphism $E \SCBiltensor F = \SCBil(E_\weakstar' \times F_\weakstar') \cong \CL(E_\weakstar' , F_\weak)$, the set $\MsetNface{M'}{N}$ is simply the set of operators $T : E_\weakstar' \to F_\weak$ satisfying $T[M'] \subseteq N$. Likewise, $\MfaceNset{M}{N'}$ corresponds with the set of operators $S : F_\weakstar' \to E_\weak$ satisfying $S[N'] \subseteq M$.
\end{definition}

Note that the positive cone can be described as $(E \SCBiltensor F)_+ = \MsetNface{E_+'}{\overline{F_+}^{\,\weak}} = \MfaceNset{\overline{E_+}^{\,\weak}}{F_+'}$.

The following lemma will be central to the remainder of this chapter.
The special case where $M'$ and $N'$ are also faces has already been studied in the finite-dimensional setting (e.g.~\cite[\S 4]{Barker-perfect} and \cite[\S 4]{Tam-faces}), but we will need the greater generality presented here.

\begin{lemma}
	\label{lem:injective-faces}
	If $M' \subseteq E_+'$, $N' \subseteq F_+'$ are subsets of the dual cones and if $M \subseteq \overline{E_+}^{\,\weak}$, $N \subseteq \overline{F_+}^{\,\weak}$ are faces, then $(\MsetNface{M'}{N}) \cap (E \SCBiltensor F)_+$ and $(\MfaceNset{M}{N'}) \cap (E \SCBiltensor F)_+$ are faces of $(E \SCBiltensor F)_+$.
\end{lemma}
\begin{proof}
	Given $\varphi \in E'$, let $L_\varphi : E \SCBiltensor F \to (F_\weakstar')\topdual = F$ denote the map $b \mapsto b(\varphi , \,\cdot\,)$. If $\varphi \in E_+'$, then $L_\varphi$ is a positive linear map in the sense that $L_\varphi[(E \SCBiltensor F)_+] \subseteq \overline{F_+}^{\,\weak}$. Therefore $L_\varphi^{-1}[N] \cap (E \SCBiltensor F)_+$ defines a face of $(E \SCBiltensor F)_+$. Since $(\MsetNface{M'}{N}) \cap (E \SCBiltensor F)_+$ can be written as an intersection of faces,
	\[ (\MsetNface{M'}{N}) \cap (E \SCBiltensor F)_+ \, = \, \bigcap_{\varphi \in M'} L_\varphi^{-1}[N] \cap (E \SCBiltensor F)_+, \]
	it also a face of $(E \SCBiltensor F)_+$. The conclusion for $(\MfaceNset{M}{N'}) \cap (E \SCBiltensor F)_+$ follows by symmetry.
\end{proof}

As a first application of \autoref{lem:injective-faces}, we study a construction of faces in the injective cone that is dual to the construction in the projective cone (see \mysecref{sec:projective-faces}).
A slightly different construction, based again on \autoref{lem:injective-faces}, will be used in \mysecref{sec:injective-ideals} below to construct ideals for the injective cone.

\begin{theorem}
	\label{thm:injective-faces}
	Let $M \subseteq \overline{E_+}^{\,\weak}$, $N \subseteq \overline{F_+}^{\,\weak}$ be faces, and define
	\begin{align*}
		\glsadd{SCorface}\SCorface{M}{N} &:= (\MfaceNset{M}{N^\dualface}) \cap (\MsetNface{M^\dualface}{N}) \cap (E \SCBiltensor F)_+;\\\noalign{\smallskip}
		\glsadd{SCandface}\SCandface{M}{N} &:=  (\MfaceNset{M}{F_+'}) \cap (\MsetNface{E_+'}{N}).
	\end{align*}
	Then:
	\begin{enumerate}[label=(\alph*),series=injective-faces]
		\item\label{itm:injective-faces} $\SCorface{M}{N}$ and $\SCandface{M}{N}$ are faces of $(E \SCBiltensor F)_+$.
		
		\item\label{itm:injective-sublattice}
		The face lattice of $(E \SCBiltensor F)_+$ contains the following partially ordered subset:
		\begin{center}
			\begin{tikzcd}
				& \SCorface{M}{N} \arrow[dl, no head] \arrow[dr, no head] & \\
				\hspace*{-10mm} \SCorface{M}{\lineal(\overline{F_+}^{\,\weak})} = \SCandface{M}{\overline{F_+}^{\,\weak}} = \MfaceNset{M}{F_+'} \arrow[dr, no head] \hspace*{-10mm} & & \hspace*{-10mm} \SCorface{\lineal(\overline{E_+}^{\,\weak})}{N} = \SCandface{\overline{E_+}^{\,\weak}}{N} = \MsetNface{E_+'}{N} \hspace*{-10mm} \arrow[dl, no head] \\
				& \SCandface{M}{N} &
			\end{tikzcd}
		\end{center}
		This subset respects meets from the face lattice:
		\[ \SCandface{M}{N} = (\SCorface{M}{\lineal(\overline{F_+}^{\,\weak})}) \cap (\SCorface{\lineal(\overline{E_+}^{\,\weak})}{N}) = (\SCandface{M}{\overline{F_+}^{\,\weak}}) \cap (\SCandface{\overline{E_+}^{\,\weak}}{N}). \]
		
		\item\label{itm:injective-dual-faces} If $M$ and $N$ are dual faces, then so are $\SCorface{M}{N}$ and $\SCandface{M}{N}$, and one has
		\begin{align*}
			\SCorface{M}{N} &= {}^\predualface (\andface{M^\dualface}{N^\dualface}) = (\MfaceNset{M}{N^\dualface}) \cap (E \SCBiltensor F)_+ = (\MsetNface{M^\dualface}{N}) \cap (E \SCBiltensor F)_+;\\\noalign{\smallskip}
			\SCandface{M}{N} &= {}^\predualface (\orface{M^\dualface}{N^\dualface}).
		\end{align*}
		If this is the case, then the subset from \ref{itm:injective-sublattice} respects meets and joins from the lattice of $\langle (E \SCBiltensor F)_+ \: , \, \minwedge{E_+'}{F_+'} \rangle$-dual faces \textup(as defined in \mysecref{app:dual-and-exposed}\textup).
		
		\item\label{itm:injective-exposed-or} If $M$ and $N$ are exposed faces, then so is $\SCorface{M}{N}$.
		
		\item\label{itm:injective-exposed-and} If $M$ and $N$ as well as $\lineal(\overline{E_+}^{\,\weak})$ and $\lineal(\overline{F_+}^{\,\weak})$ are exposed faces, then so is $\SCandface{M}{N}$.
	\end{enumerate}
\end{theorem}

Note: in the finite\-/dimensional case, the conclusion in \ref{itm:injective-dual-faces} is simply that the four-element subset from \ref{itm:injective-sublattice} respects the operations of the lattice of exposed faces. (Here we use that $(\maxwedge{E_+}{F_+})\algdual = \minwedge{E_+\algdual}{F_+\algdual}$ because $\minwedge{E_+\algdual}{F_+\algdual}$ is closed; see \myautoref{cor:closures}{itm:closed-duality}.)

\begin{proof}[Proof of \autoref{thm:injective-faces}]
	\ \par
	\begin{enumerate}[label=(\alph*)]
		\item Note that everything in $\MfaceNset{M}{F_+'}$ is automatically positive, for if $b(\,\cdot\, , F_+') \subseteq M$ then certainly $b(\,\cdot\, , F_+') \subseteq \overline{E_+}^{\,\weak}$. This shows that $\MfaceNset{M}{F_+'} = (\MfaceNset{M}{F_+'}) \cap (E \SCBiltensor F)_+$. Now the result follows from \autoref{lem:injective-faces}, since the intersection of two faces is again a face.
		
		\item If $b \in \MfaceNset{M}{F_+'}$, then $b(\,\cdot\, , F_+') \subseteq M$, so in particular $b$ vanishes on $M^\dualface \times F_+'$. Therefore $b(M^\dualface , \,\cdot\,) \subseteq {}^\perp (F_+') = \lineal(\overline{F_+}^{\,\weak})$, which shows that $\MfaceNset{M}{F_+'} \subseteq \MsetNface{M^\dualface}{\lineal(\overline{F_+}^{\,\weak})}$. Since we also have $\MfaceNset{M}{F_+'} \subseteq (E \SCBiltensor F)_+$ (see \ref{itm:injective-faces}), it follows from the definition that
		\[ \SCorface{M}{\lineal(\overline{F_+}^{\,\weak})} = (\MfaceNset{M}{F_+'}) \cap (\MsetNface{M^\dualface}{\lineal(\overline{F_+}^{\,\weak})}) \cap (E \SCBiltensor F)_+ = \MfaceNset{M}{F_+'}. \]
		Similarly, since $\MsetNface{E_+'}{\overline{F_+}^{\,\weak}} = (E \SCBiltensor F)_+$, it follows again from the definition that
		\[ \SCandface{M}{\overline{F_+}^{\,\weak}} = (\MfaceNset{M}{F_+'}) \cap (\MsetNface{E_+'}{\overline{F_+}^{\,\weak}}) \cap (E \SCBiltensor F)_+ = \MfaceNset{M}{F_+'}. \]
		The equality $\SCorface{\lineal(\overline{E_+}^{\,\weak})}{N} = \SCandface{\overline{E_+}^{\,\weak}}{N} = \MsetNface{E_+'}{N}$ follows analogously. As a consequence, the intersection formula follows immediately from the definition of $\SCandface{M}{N}$. Finally, the upwards inclusions follow by noting that if $M_1 \subseteq M_2 \subseteq \overline{E_+}^{\,\weak}$ and $N_1 \subseteq N_2 \subseteq \overline{F_+}^{\,\weak}$ are faces, then $\SCorface{M_1}{N_1} \subseteq \SCorface{M_2}{N_2}$.
		
		\item If $b \in \MfaceNset{M}{N^\dualface}$, then $b(\,\cdot\, , N^\dualface) \subseteq M$, so in particular $b$ vanishes on $M^\dualface \times N^\dualface$. Conversely, if $b \in (E \SCBiltensor F)_+$ vanishes on $M^\dualface \times N^\dualface$, then $b(\,\cdot\, , N^\dualface) \subseteq {}^\predualface (M^\dualface) = M$, so $b \in \MfaceNset{M}{N^\dualface}$. This proves that
		\begin{align*}
			(\MfaceNset{M}{N^\dualface}) \cap (E \SCBiltensor F)_+ &= \big\{ b \in (E \SCBiltensor F)_+ \, : \, b(\varphi,\psi) = 0\ \text{for all $\varphi \in M^\dualface$, $\psi \in N^\dualface$}\big\}\\\noalign{\smallskip}
			&= {}^\predualface (M^\dualface \settensor N^\dualface).
		\end{align*}
		By symmetry, the same is true of $\MsetNface{M^\dualface}{N}$, so we find
		\[ \SCorface{M}{N} = (\MfaceNset{M}{N^\dualface}) \cap (E \SCBiltensor F)_+ = (\MsetNface{M^\dualface}{N}) \cap (E \SCBiltensor F)_+ = {}^\predualface (M^\dualface \settensor N^\dualface). \]
		Since $\andface{M^\dualface}{N^\dualface}$ is the face (of $\minwedge{E_+'}{F_+'}$) generated by $M^\dualface \settensor N^\dualface$, it follows that $\SCorface{M}{N} = {}^\predualface (\andface{M^\dualface}{N^\dualface})$. This shows that $\SCorface{M}{N}$ is a dual face.
		
		Since $\lineal(\overline{E_+}^{\,\weak}) = {}^\predualface (E_+')$ and $\lineal(\overline{F_+}^{\,\weak}) = {}^\predualface (F_+')$ are dual faces, it follows from the intersection formula from \ref{itm:injective-sublattice} that $\SCandface{M}{N}$ is also a dual face. Furthermore, since $\lineal(\overline{E_+}^{\,\weak})^\dualface = E_+'$ and $\lineal(\overline{F_+}^{\,\weak})^\dualface = F_+'$, it follows that
		\begin{align*}
			\SCandface{M}{N} &= (\SCorface{M}{\lineal(\overline{F_+}^{\,\weak})}) \cap (\SCorface{\lineal(\overline{E_+}^{\,\weak})}{N}) \\\noalign{\smallskip}
			&= {}^\predualface (\andface{M^\dualface}{F_+'}) \, \cap \, {}^\predualface (\andface{E_+'}{N^\dualface}) \\\noalign{\smallskip}
			&= {}^\predualface \big((\andface{M^\dualface}{F_+'}) + (\andface{E_+'}{N^\dualface})\big) \\\noalign{\smallskip}
			&= {}^\predualface (\orface{M^\dualface}{N^\dualface}),
		\end{align*}
		where the last step uses that $\orface{M^\dualface}{N^\dualface} = (\andface{M^\dualface}{F_+'}) + (\andface{E_+'}{N^\dualface})$, by \myautoref{thm:projective-faces}{itm:projective-sublattice}.
		
		That the diagram from \ref{itm:injective-sublattice} respects joins from the lattice of $\langle (E \SCBiltensor F)_+ , \minwedge{E_+'}{F_+'} \rangle$-dual faces follows from duality. Indeed, by \myautoref{thm:projective-faces}{itm:projective-exposed-or} and \myautoref{thm:projective-faces}{itm:projective-exposed-and}, $\orface{M^\dualface}{N^\dualface}$ and $\andface{M^\dualface}{N^\dualface}$ are $\langle \minwedge{E_+'}{F_+'} , (E \SCBiltensor F)_+ \rangle$-dual faces (use that $\lineal(E_+')$ and $\lineal(F_+')$ are automatically dual faces, because $E_+'$ and $F_+'$ are weak\nobreakdash-$*$ closed; see \autoref{rmk:lineality-space-dual-exposed-i}), so it follows that
		\begin{align*}
			(\SCorface{M}{N})^\dualface &= \andface{M^\dualface}{N^\dualface};\\\noalign{\smallskip}
			(\SCandface{M}{N})^\dualface &= \orface{M^\dualface}{N^\dualface}.
		\end{align*}
		Therefore the join of $\SCorface{M}{\lineal(\overline{F_+}^{\,\weak})}$ and $\SCorface{\lineal(\overline{E_+}^{\,\weak})}{N}$ in the lattice of $\langle (E \SCBiltensor F)_+ , \minwedge{E_+'}{F_+'}\rangle$-dual faces is given by
		\begin{align*}
			\Big.{}\Big.^\predualface \Big(\big(\SCorface{M}{\lineal(\overline{F_+}^{\,\weak})}\big)^\dualface \, \cap \, \big(\SCorface{\lineal(\overline{E_+}^{\,\weak})}{N}\big)^\dualface \, \Big) \, &= \, \big.{}\big.^\predualface \big( (\andface{M^\dualface}{F_+'}) \, \cap \, (\andface{E_+'}{N^\dualface}) \big) \\
			&= \, \big.{}\big.^\predualface \big( \andface{M^\dualface}{N^\dualface} \big) \\\noalign{\smallskip}
			&= \, \SCorface{M}{N}.
		\end{align*}
		
		\item Suppose that $M = {}^\predualface \{\varphi_0\}$ and $N = {}^\predualface \{\psi_0\}$. Then in particular $M$ and $N$ are dual faces, so by \ref{itm:injective-dual-faces} we have
		\[ \SCorface{M}{N} = {}^\predualface (\andface{M^\dualface}{N^\dualface}) = (\MfaceNset{M}{N^\dualface}) \cap (E \SCBiltensor F)_+ = (\MsetNface{M^\dualface}{N}) \cap (E \SCBiltensor F)_+. \]
		We prove that $\SCorface{M}{N} = {}^\predualface \{\varphi_0 \tensor \psi_0\}$. Evidently one has $\{\varphi_0 \tensor \psi_0\} \subseteq \andface{M^\dualface}{N^\dualface}$, so ${}^\predualface\{\varphi_0 \tensor \psi_0\} \supseteq {}^\predualface (\andface{M^\dualface}{N^\dualface}) = \SCorface{M}{N}$. For the converse, suppose that $b \in (E \SCBiltensor F)_+$ is such that $b(\varphi_0 , \psi_0) = 0$. Then $b(\,\cdot\, , \psi_0) \in {}^\predualface \{\varphi_0\} = M$, so $b$ vanishes on $M^\dualface \times \{\psi_0\}$. It follows that $b(M^\dualface , \,\cdot\,) \subseteq {}^\predualface \{\psi_0\} = N$, so $b \in (\MsetNface{M^\dualface}{N}) \cap (E \SCBiltensor F)_+ = \SCorface{M}{N}$.
		
		\item This follows from \ref{itm:injective-exposed-or} and the intersection formula from \ref{itm:injective-sublattice}. \qedhere
	\end{enumerate}
\end{proof}

\begin{remark}
	\label{rmk:lineality-space-dual-exposed-ii}
	In \myautoref{thm:injective-faces}{itm:injective-exposed-and}, it is required that $\lineal(\overline{E_+}^{\,\weak})$ and $\lineal(\overline{F_+}^{\,\weak})$ are exposed. Recall that this is automatically the case if $E$ and $F$ are separable normed spaces; see \autoref{rmk:lineality-space-dual-exposed-i} and \autoref{cor:quasi-semisimple-cone-in-separable-normed-space}.
	
	Much as in the projective case, this assumption on $\lineal(\overline{E_+}^{\,\weak})$ and $\lineal(\overline{F_+}^{\,\weak})$ cannot be omitted. The example runs along the same lines as the example in \autoref{rmk:lineality-space-dual-exposed-i}, except we need a much larger space. Let $E_+$ be a weakly closed proper cone for which $\{0\}$ is not exposed (see \autoref{xmpl:non-exposed-i}, \autoref{xmpl:non-exposed-ii}), and let $F := \R$ with the standard cone, so that $E \SCBiltensor F \cong E$. Take some exposed face $M \subseteq E_+$, and let $N := \{0\} \subseteq \R$ be the minimal face. Then $\SCandface{M}{N} = \{0\}$, which is not exposed by assumption.
\end{remark}

\begin{remark}
	\label{rmk:not-the-sum}
	\myautoref{thm:injective-faces}{itm:injective-dual-faces} presents a duality between the four-element sublattices from \myautoref{thm:projective-faces}{itm:projective-sublattice} and \myautoref{thm:injective-faces}{itm:injective-sublattice}. In the projective diagram, the top face $\orface{M}{N}$ is not merely the join, but even the sum of the left and right faces $\andface{M}{F_+}$ and $\andface{E_+}{N}$. Given that the injective diagram is dual to the projective diagram, could the same be true here?
	
	Unfortunately, this is not the case, and it already fails for proper, generating, polyhedral cones in finite\-/dimensional spaces. In this setting, all faces are exposed, so by \myautoref{thm:injective-faces}{itm:injective-dual-faces} an equivalent question is the following: if $f : E\algdual \to F$ is positive with $f[M^\dualface] \subseteq N$, then can $f$ be written as $f = g + h$ with $g$ and $h$ positive and $g[M^\dualface] = \{0\}$ and $h[E_+\algdual] \subseteq N$?
	
	Counterexample: let $F_+$ be a proper, generating, polyhedral cone with a facet $N \subseteq F_+$ such that at least two extremal rays of $F_+$ are not contained in $N$. Furthermore, let $E_+ := F_+\algdual$ with $M := N^\dualface$, and let $f : E\algdual = F \to F$ be the identity. Then one has $f[M^\dualface] \subseteq N$. However, if $f = g + h$ is the desired decomposition, then $\rank(g) \leq 1$, because $\ker(g)$ contains a facet, so $g[F_+]$ is either a ray or $\{0\}$. But now every $x \in F_+$ can be written as $x = g(x) + h(x) \in g[F_+] + N$, contradicting our assumption that at least two extremal rays of $F_+$ are not contained in $N$.
\end{remark}

\section{Order ideals for the injective cone}
\label{sec:injective-ideals}
Recall that $I \mapsto I_+$ defines a surjective many-to-one correspondence between order ideals and faces (see \mysecref{app:faces-ideals}).
In order to get more convenient formulas for the faces of the injective cone, it is helpful to formulate these results in terms of ideals.
The main aim in this section is to provide sufficient conditions so that $I \tensor J$ and $(I \tensor F) + (E \tensor J)$ are ideals for the injective cone, given that $I \subseteq E$ and $J \subseteq F$ are ideals in the base spaces.
(Similar questions in $E \SCBiltensor F$ and $E \hattensor_\alpha F$ are also addressed.)

Recall from \mysecref{sec:injective-characteristic-property} that the injective cone is characterized by bipositive maps $E \tensor F \hookrightarrow E \SCBiltensor F$ and $E \hattensor_\alpha F \to \tilde E \SCBiltensor \tilde F$.
Given subsets $X \subseteq E \SCBiltensor F$ and $Y \subseteq \tilde E \SCBiltensor \tilde F$, we denote by $X \cap (E \tensor F)$ and $Y \cap (E \hattensor_\alpha F)$ the inverse images of $X$ and $Y$ under these maps. (This is a slight abuse of notation, for the map $E \hattensor_\alpha F \to \tilde E \SCBiltensor \tilde F$ might fail to be injective in the absence of the approximation property, but this will cause no confusion.)
It is not hard to see that the inverse image of an ideal (resp.~face) under a bipositive map is again an ideal (reps.~face) (see \myautoref{prop:ideals-obtained}{itm:ideal-pullback}), so $(E \tensor F) \cap X$ and $(E \hattensor_\alpha F) \cap Y$ are ideals (resp.~faces) whenever $X$ and $Y$ are ideals (resp.~faces).
This is the approach that we will take: we establish ideals in $E \SCBiltensor F$ and restrict these to ideals in the algebraic/completed tensor product.

In order to obtain ideals in $E \SCBiltensor F$, we note that the faces obtained in \autoref{lem:injective-faces} can sometimes be written as the positive part of a linear subspace.

\begin{lemma}
	\label{lem:SCBiltensor-subspaces}
	In the notation from \mysecref{sec:injective-faces}:
	\begin{enumerate}[label=(\alph*)]
		\item\label{itm:SCBil-subspace} If $M' \subseteq E'$ and $N' \subseteq F'$ are subsets and if $M \subseteq E$ and $N \subseteq F$ are linear subspaces, then $\MsetNface{M'}{N}$ and $\MfaceNset{M}{N'}$ are linear subspaces.
		
		\item\label{itm:SCBil-oneperp} If $I \subseteq E$ and $J \subseteq F$ are subspaces and if $I$ is weakly closed, then $\MsetNface{I^\perp}{J} \subseteq \MfaceNset{I}{J^\perp}$.
		
		\item\label{itm:SCBil-perp} If $I \subseteq E$ and $J \subseteq F$ are weakly closed subspaces, then $\MsetNface{I^\perp}{J} = \MfaceNset{I}{J^\perp} = {}^\perp (I^\perp \tensor J^\perp)$, where the orthogonal complement is taken with respect to the dual pair $\langle E \SCBiltensor F , E' \tensor F' \rangle$.
	\end{enumerate}
\end{lemma}
Note that ${}^\perp (I^\perp \tensor J^\perp) \subseteq E \SCBiltensor F$ is the set of separately weak\nobreakdash-$*$ continuous bilinear forms $E' \times F' \to \R$ that vanish on $I^\perp \times J^\perp$.
\begin{proof}[Proof of \autoref{lem:SCBiltensor-subspaces}]
	\leavevmode
	\begin{enumerate}[label=(\alph*)]
		\item If $T_1,T_2 : E_\weakstar' \to F_\weak$ map the subset $M' \subseteq E'$ in the subspace $N \subseteq F$, and if $\lambda,\mu\in\R$ are arbitrary, then $\lambda T_1 + \mu T_2$ also maps $M'$ in $N$.
		
		\item If $b(I^\perp , \,\cdot\, ) \subseteq J$, then $b(I^\perp , J^\perp) = \{0\}$, hence $b(\,\cdot\, , J^\perp) \subseteq {}^\perp (I^\perp) = I$, since $I$ is weakly closed.
		
		\item Since $J$ is weakly closed, one has $b(I^\perp , \,\cdot\,) \subseteq J$ if and only if $b(I^\perp , J^\perp) = \{0\}$, i.e.~$b$ vanishes on $I^\perp \times J^\perp$. Therefore $\MsetNface{I^\perp}{J} = {}^\perp (I^\perp \tensor J^\perp)$. The other equality follows analogously.
		\qedhere
	\end{enumerate}
\end{proof}

\smallskip\noindent
We can now formulate the following ``linearization'' of \autoref{lem:injective-faces}.

\begin{lemma}
	\label{lem:injective-ideals}
	Let $M' \subseteq E_+'$ be a set of positive linear functionals, and let $N \subseteq \overline{F_+}^{\,\weak}$ be a face.
	\begin{enumerate}[label=(\alph*)]
		\item\label{itm:inj-id:SCBiltensor} If $J \subseteq F$ is a weakly closed subspace such that $J \cap \overline{F_+}^{\,\weak} = N$, then
		\[ (\MsetNface{M'}{N}) \cap (E \SCBiltensor F)_+ \, = \, \big(\,\MsetNface{\overline{\spn(M')}^{\,\weakstar}}{J}\big) \cap (E \SCBiltensor F)_+. \]
		In particular, $\,\MsetNface{\overline{\spn(M')}^{\,\weakstar}}{J}$ is an ideal in $E \SCBiltensor F$.
		
		\item\label{itm:inj-id:tensor} If $J \subseteq F$ is a subspace such that $J \cap \overline{F_+}^{\,\weak} = N$, then
		\[ (\MsetNface{M'}{N}) \cap (E \SCBiltensor F)_+ \cap (E \tensor F) \, = \, \big(\,\MsetNface{\overline{\spn(M')}^{\,\weakstar}}{J}\big) \cap (E \SCBiltensor F)_+ \cap (E \tensor F). \]
		In particular, $\big(\,\MsetNface{\overline{\spn(M')}^{\,\weakstar}}{J}\big) \cap (E \tensor F)$ is an ideal in $E \tensor F$.
	\end{enumerate}
	Interchanging $E$ and $F$ yields corresponding statements for ideals of the form $\MfaceNset{I}{\overline{\spn(N')}^{\,\weakstar}}$ and $(\MfaceNset{I}{\overline{\spn(N')}^{\,\weakstar}}) \cap (E \tensor F)$.
\end{lemma}
\begin{proof}
	\leavevmode
	\begin{enumerate}[label=(\alph*)]
		\item ``$\subseteq$''. If $b \in \MsetNface{M'}{N}$, then we have $b(M' , \,\cdot\,) \subseteq N \subseteq J$, so it follows by linearity and continuity that $b(\overline{\spn(M')}^{\,\weakstar} , \,\cdot\, ) \subseteq J$. This shows that $\MsetNface{M'}{N} \subseteq \MsetNface{\overline{\spn(M')}^{\,\weakstar}}{J}$.
		
		``$\supseteq$''. If $b \in \big(\,\MsetNface{\overline{\spn(M')}^{\,\weakstar}}{J}\big) \cap (E \SCBiltensor F)_+$, then $b(M' , \,\cdot\,) \subseteq b(\overline{\spn(M')}^{\,\weakstar} , \,\cdot\,) \subseteq J$, but also $b(M' , \,\cdot\,) \subseteq b(E_+' , \,\cdot\,) \subseteq \overline{F_+}^{\,\weak}$ by positivity, so we find $b(M' , \,\cdot\,) \subseteq J \cap \overline{F_+}^{\,\weak} = N$.
		
		To conclude that $\,\MsetNface{\overline{\spn(M')}^{\,\weakstar}}{J}$ is an ideal, note that it is a linear subspace (by \myautoref{lem:SCBiltensor-subspaces}{itm:SCBil-subspace}) whose positive part is a face (by \autoref{lem:injective-faces}).
		
		\item ``$\subseteq$''. If $b \in (\MsetNface{M'}{N}) \cap (E \tensor F)$, then $b(M' , \,\cdot\,) \subseteq N \subseteq J$. But $b$ has finite rank, so there is a finite\-/dimensional (hence closed) subspace $Y \subseteq J$ such that $b(M' , \,\cdot\,) \subseteq Y$. By linearity and continuity, it follows that $b(\overline{\spn(M')}^{\,\weakstar} , \,\cdot\, ) \subseteq Y \subseteq J$, which shows that $(\MsetNface{M'}{N}) \cap (E \tensor F) \subseteq \MsetNface{\overline{\spn(M')}^{\,\weakstar}}{J}$.
		
		The reverse inclusion ``$\supseteq$'' and the conclusion follow as in \ref{itm:inj-id:SCBiltensor}.
		\qedhere
	\end{enumerate}
\end{proof}

Recall that we call a convex cone $E_+ \subseteq E$ in a topological vector space \emph{semisimple} if $\overline{E_+}^{\,\weak}$ is a proper cone, or equivalently, if $\spn(E_+')$ is weak\nobreakdash-$*$ dense in $E'$ (see \mysecref{sec:convex-cones} and \cite{Dobben-semisimplicity}).
Furthermore, if $I \subseteq E$ is a weakly closed subspace, then the quotient $E/I$ belongs to the dual pair $\langle E/I , I^\perp \rangle$, the weak topology of $E/I$ coincides with the quotient topology $E_\weak/I$, and the weak\nobreakdash-$*$ topology on $(E/I)\topdual = I^\perp \subseteq E'$ coincides with the relative $\sigma(E',E)$-topology (see \mysecref{sec:dual-pairs}).
In particular, we may unambiguously refer to this as the weak\nobreakdash-$*$ topology on $(E/I)\topdual \cong I^\perp$.

\begin{theorem}
	\label{thm:injective-ideals}
	Let $\langle E,E' \rangle$, $\langle F,F' \rangle$ be dual pairs, and let $E_+ \subseteq E$, $F_+ \subseteq F$ be convex cones. Given subspaces $I \subseteq E$ and $J \subseteq F$, we define
	\begin{align*}
		\glsadd{SCorideal}\SCorideal{I}{J} &:= (\MsetNface{I^\perp}{J}) \cap (\MfaceNset{I}{J^\perp});\\\noalign{\smallskip}
		\glsadd{SCandideal}\SCandideal{I}{J} &:= (\MsetNface{\lineal(\overline{E_+}^{\,\weak})^\perp}{J}) \cap (\MfaceNset{I}{\lineal(\overline{F_+}^{\,\weak})^\perp}).
	\end{align*}
	Suppose that $I$ and $J$ are ideals with respect to $\overline{E_+}^{\,\weak}$ and $\overline{F_+}^{\,\weak}$, respectively.%
		\hair\footnote{In other words, $I \cap \overline{E_+}^{\,\weak}$ and $J \cap \overline{F_+}^{\,\weak}$ are faces of $\overline{E_+}^{\,\weak}$ and $\overline{F_+}^{\,\weak}$, respectively.}
	Then:
	\begin{enumerate}[label=(\alph*)]
		\item\label{itm:SCandideal-tensor} $(\SCandideal{I}{J}) \cap (E \tensor F)$ is an ideal in $E \tensor F$ \textup(with respect to the injective cone\textup);
		\item\label{itm:SCandideal-SCBiltensor} If $I$ and $J$ are weakly closed, then $\SCandideal{I}{J}$ is an ideal in $E \SCBiltensor F$;
		\item\label{itm:SCorideal-tensor} If $I$ is weakly closed and $(E/I)_+$ is semisimple, or if $J$ is weakly closed and $(F/J)_+$ is semisimple, then $(\SCorideal{I}{J}) \cap (E \tensor F)$ is an ideal in $E \tensor F$ \textup(with respect to the injective cone\textup);
		\item\label{itm:SCorideal-SCBiltensor} If $I$ and $J$ are weakly closed, and if at least one of $(E/I)_+$ and $(F/J)_+$ is semisimple, then $\SCorideal{I}{J}$ is an ideal in $E \SCBiltensor F$.
	\end{enumerate}
\end{theorem}
\begin{proof}
	\leavevmode
	\begin{enumerate}[label=(\alph*)]
		\item Since $\lineal(\overline{E_+}^{\,\weak}) = {}^\perp (E_+')$ (see \mysecref{sec:convex-cones}), we have $\overline{\spn(E_+')}^{\,\weakstar} = \lineal(\overline{E_+}^{\,\weak})^\perp$. Hence it follows from \myautoref{lem:injective-ideals}{itm:inj-id:tensor} that $(\MsetNface{\lineal(\overline{E_+}^{\,\weak})^\perp}{J}) \cap (E \tensor F)$ is an ideal in $E \tensor F$. Analogously, $(\MfaceNset{I}{\lineal(\overline{F_+}^{\,\weak})^\perp}) \cap (E \tensor F)$ is an ideal in $E \tensor F$. The conclusions follows since the intersection of two ideals is an ideal.
		
		\item Analogous to \ref{itm:SCandideal-tensor}, using \myautoref{lem:injective-ideals}{itm:inj-id:SCBiltensor} instead of \myautoref{lem:injective-ideals}{itm:inj-id:tensor}.
		
		\item Assume that $I$ is weakly closed and $(E/I)_+$ is semisimple (the other case is analogous). Since $I$ is weakly closed, it follows from \myautoref{lem:SCBiltensor-subspaces}{itm:SCBil-oneperp} that $\SCorideal{I}{J} = \MsetNface{I^\perp}{J}$. Furthermore, by duality, the adjoint of the pushforward $E \to E/I$ is the pullback (bipositive map) $(E/I)\topdual \cong I^\perp \to E'$ (see \cite[Proposition 2]{Dobben-semisimplicity}), so we have $(E/I)_+\topdual = I^\perp \cap E_+'$. Since $(E/I)_+$ is semisimple, its dual cone $(E/I)_+\topdual$ separates points on $E/I$. Equivalently, the subspace $\spn((E/I)_+\topdual) = \spn(I^\perp \cap E_+')$ is weak\nobreakdash-$*$ dense in $I^\perp$. Hence it follows from \myautoref{lem:injective-ideals}{itm:inj-id:tensor} that $(\MsetNface{I^\perp}{J}) \cap (E \tensor F)$ is an ideal in $E \tensor F$.
		
		\item Analogous to \ref{itm:SCorideal-tensor}, using \myautoref{lem:injective-ideals}{itm:inj-id:SCBiltensor} instead of \myautoref{lem:injective-ideals}{itm:inj-id:tensor}.
		\qedhere
	\end{enumerate}
\end{proof}

\begin{remark}
	In terms of the mapping properties, it is not surprising that the semisimplicity of $(E/I)_+$ and $(F/J)_+$ plays a role in \autoref{thm:injective-ideals}. Let $\pi_I : E \to E/I$ and $\pi_J : F \to F/J$ denote the canonical maps. If both $(E/I)_+$ and $(F/J)_+$ are semisimple, then $(E/I) \SCBiltensor (F/J)$ is a proper cone (by \autoref{thm:injective-proper-cone}), so now evidently $\ker(\pi_I \SCBiltensor \pi_J) = {}^\perp (I^\perp \tensor J^\perp)$ is an ideal in $E \SCBiltensor F$.
	
	What is surprising in \autoref{thm:injective-ideals} is that it is sufficient for only one of $(E/I)_+$ and $(F/J)_+$ to be semisimple. This could not have been predicted solely on the basis of the mapping properties.
	The following example shows that we need at least one of the quotients to be semisimple, even in the finite\-/dimensional case.
\end{remark}
\begin{example}
	\label{xmpl:SCorface-ideal-fail}
	Let $E := \R^3$ and let $E_+ \subseteq E$ be the second-order cone $E_+ = \{(x_1,x_2,x_3) \, : \, \sqrt{x_1^2 + x_2^2} \leq x_3\}$. The injective cone $\maxwedge{E_+\algdual}{E_+}$ can be identified with the cone $\LL_+(E,E)$ of positive linear operators $E \to E$.
	If we identify $E\algdual$ with $\R^3$ via the standard inner product, then $E_+$ is self-dual.
	The vectors $(1,0,1),(-1,0,1) \in \R^3$ define extremal rays of $E_+$, so the subspaces $I := \spn\{(-1,0,1)\} \subseteq E\algdual$ and $J := \spn\{(1,0,1)\} \subseteq E$ are ideals (see \myautoref{prop:ideals-obtained}{itm:ideal-face}).
	It follows from \myautoref{lem:SCBiltensor-subspaces}{itm:SCBil-perp} that $\SCorideal{I}{J} = \MsetNface{I^\perp}{J}$. We show that this is not an ideal.
	
	Let $b_1 \in E\algdual \tensor E = \Bil(E,E\algdual) \cong L(E,E)$ correspond to the identity $E \to E$, and let $b_2 \in E\algdual \tensor E$ be the bilinear form $E \times E\algdual \to \R$ corresponding with the linear map $(x_1,x_2,x_3) \mapsto (x_1,-x_2,x_3)$. Clearly $b_1$ and $b_2$ are positive. However, since $\dim(I^\perp) = 2$ and $\dim(J) = 1$, maps in $\MsetNface{I^\perp}{J}$ cannot be invertible, so in particular we have $b_1,b_2 \notin \MsetNface{I^\perp}{J}$.
	
	It is not hard to see that $b_1 + b_2 \in \MsetNface{I^\perp}{J}$, and evidently we have $0 \leq b_1 \leq b_1 + b_2$. This shows that $\MsetNface{I^\perp}{J}$ is not an ideal.
\end{example}

We conclude this section by providing more convenient direct formulas for the ideals $\SCorideal{I}{J}$ and $\SCandideal{I}{J}$ and their restrictions to $E \tensor F$ or $E \hattensor_\alpha F$.
Roughly speaking, under certain topological assumptions we have $\SCorideal{I}{J} = (I \SCBiltensor F) + (E \SCBiltensor J)$ and $\SCandideal{I}{J} = I \SCBiltensor J$.

\subsection{Ideals in the algebraic tensor product}
We show that the ideals $(\SCandideal{I}{J}) \cap (E \tensor F)$ and $(\SCorideal{I}{J}) \cap (E \tensor F)$ from \autoref{thm:injective-ideals} are always equal to $(I \tensor J) + \lineal(\maxwedge{E_+}{F_+})$ and $(I \tensor F) + (E \tensor J)$, respectively

\begin{lemma}
	\label{lem:tensor-subspace}
	If $I \subseteq E$ and $J \subseteq F$ are subspaces, then
	\[ (\MsetNface{I^\perp}{J}) \cap (\MfaceNset{I}{J^\perp}) \cap (E \tensor F) \, = \, (I \tensor F) + (E \tensor J) + (\overline{I\,}^{\,\weak} \tensor \overline{J\,}^{\,\weak}). \]
\end{lemma}
\begin{proof}
	Choose an algebraic decomposition $E \cong E_1 \oplus E_2 \oplus E_3$ with $E_1 \cong I$ and $E_1 \oplus E_2 \cong \overline{I\,}^{\,\weak}$, and likewise for $F \cong F_1 \oplus F_2 \oplus F_3$. Then $E \tensor F \cong \bigoplus_{i=1}^3 \bigoplus_{j=1}^3 (E_i \tensor F_j)$. Under this identification, $(\MsetNface{I^\perp}{J}) \cap (E \tensor F)$ corresponds with those elements that are zero in the $E_3 \tensor F_2$ and $E_3 \tensor F_3$ components. Likewise, $(\MfaceNset{I}{J^\perp}) \cap (E \tensor F)$ corresponds with those elements that are zero in the $E_2 \tensor F_3$ and $E_3 \tensor F_3$ components, and the conclusion follows.
\end{proof}

\begin{corollary}
	\label{cor:tensor-subspace}
	Let $I \subseteq E$ and $J \subseteq F$ be subspaces.
	\begin{enumerate}[label=(\alph*)]
		\item\label{itm:ts:oneperp} If at least one of $I$ and $J$ is weakly closed, then
		\[ (\MsetNface{I^\perp}{J}) \cap (\MfaceNset{I}{J^\perp}) \cap (E \tensor F) \, = \, (I \tensor F) + (E \tensor J). \]
		\item\label{itm:ts:perp} If both $I$ and $J$ are weakly closed, then
		\[ ({}^\perp (I^\perp \tensor J^\perp)) \cap (E \tensor F) \, = \, (I \tensor F) + (E \tensor J). \]
		\item\label{itm:ts:lin-space} The lineality space of the injective cone \textup(in $E \tensor F$\textup) is
		\[ \lineal(\maxwedge{E_+}{F_+}) \, = \, (\lineal(\overline{E_+}^{\,\weak}) \tensor F) + (E \tensor \lineal(\overline{F_+}^{\,\weak})). \]
	\end{enumerate}
\end{corollary}
\begin{proof}
	\leavevmode
	\begin{enumerate}[label=(\alph*)]
		\item If $I$ is weakly closed, then $\overline{I\,}^{\,\weak} \tensor \overline{J\,}^{\,\weak} \subseteq I \tensor F$, so the result follows from \autoref{lem:tensor-subspace}.
		
		\item Immediate, for now we have ${}^\perp (I^\perp \tensor J^\perp) = \MsetNface{I^\perp}{J} = \MfaceNset{I}{J^\perp}$, by \myautoref{lem:SCBiltensor-subspaces}{itm:SCBil-perp}.
		
		\item By \autoref{prop:injective-lineality-space}, we have $\lineal(\maxwedge{E_+}{F_+}) = ({}^\perp (\lineal(\overline{E_+}^{\,\weak})^\perp \tensor \lineal(\overline{F_+}^{\,\weak})^\perp)) \cap (E \tensor F)$, where we note that $\lineal(\overline{E_+}^{\,\weak})$ and $\lineal(\overline{F_+}^{\,\weak})$ are weakly closed subspaces.
		\qedhere
	\end{enumerate}
\end{proof}

\begin{theorem}
	\label{thm:injective-ideals-algebraic}
	Let $I \subseteq E$ and $J \subseteq F$ be ideals with respect to $\overline{E_+}^{\,\weak}$ and $\overline{F_+}^{\,\weak}$.
	\begin{enumerate}[label=(\alph*)]
		\item One has $(\SCandideal{I}{J}) \cap (E \tensor F) = (I \tensor J) + \lineal(\maxwedge{E_+}{F_+})$, and this is an ideal in $E \tensor F$ \textup(with respect to the injective one\textup);
		
		\item If $I$ is weakly closed and $(E/I)_+$ is semisimple, or if $J$ is weakly closed and $(F/J)_+$ is semisimple, then one has $(\SCorideal{I}{J}) \cap (E \tensor F) = (I \tensor F) + (E \tensor J)$, and this is an ideal in $E \tensor F$ \textup(with respect to the injective cone\textup).
	\end{enumerate}
\end{theorem}
\begin{proof}
	\leavevmode
	\begin{enumerate}[label=(\alph*)]
		\item Every ideal contains the lineality space, so we may choose a decomposition $E \cong E_1 \oplus E_2 \oplus E_3$ with $E_1 \cong \lineal(\overline{E_+}^{\,\weak})$ and $E_1 \oplus E_2 \cong I$, and likewise for $F \cong F_1 \oplus F_2 \oplus F_3$. With respect to the decomposition $E \tensor F \cong \bigoplus_{i=1}^3 \bigoplus_{j=1}^3 (E_i \tensor F_j)$, the subspace $(\MsetNface{\lineal(\overline{E_+}^{\,\weak})^\perp}{J}) \cap (E \tensor F)$ corresponds with those elements that are zero in the $E_2 \tensor F_3$ and $E_3 \tensor F_3$ components, and $(\MfaceNset{I}{\lineal(\overline{F_+}^{\,\weak})^\perp}) \cap (E \tensor F)$ corresponds with those elements that are zero in the $E_3 \tensor F_2$ and $E_3 \tensor F_3$ components. Since $\lineal(\maxwedge{E_+}{F_+}) = (E_1 \tensor F) + (E \tensor F_1)$ (by \myautoref{cor:tensor-subspace}{itm:ts:lin-space}) and $I \tensor J = (E_1 \oplus E_2) \tensor (F_1 \oplus F_2)$, the conclusion follows. (This is an ideal by \myautoref{thm:injective-ideals}{itm:SCandideal-tensor}.)
		
		\item The formula follows from \myautoref{cor:tensor-subspace}{itm:ts:oneperp}, and this is an ideal by \myautoref{thm:injective-ideals}{itm:SCorideal-tensor}.
		\qedhere
	\end{enumerate}
\end{proof}

\subsection{Ideals in the space of separately \texorpdfstring{weak-$*$}{weak-*} continuous bilinear forms}
Let $I \subseteq E$ and $J \subseteq F$ be subspaces, and write $I_+ := I \cap \overline{E_+}^{\,\weak}$ and $J_+ := J \cap \overline{F_+}^{\,\weak}$.
If we let $I$ and $J$ belong to the dual pairs $\langle I, E' / I^\perp \rangle$, $\langle J , F' / J^\perp \rangle$, then the inclusions $T : I \hookrightarrow E$, $S : J \hookrightarrow F$ are weakly continuous (weak homomorphisms in fact; see \mysecref{sec:dual-pairs}) and approximately bipositive.
Therefore $T \SCBiltensor S : I \SCBiltensor J \to E \SCBiltensor F$ is injective and bipositive, by \myautoref{lem:injective-positive-linear-maps}{itm:injective-bipositive-maps}.%
	\hair\footnote{That $T \SCBiltensor S$ is injective follows from \autoref{rmk:injective-mapping-property}. This is a classical result; see also \cite[\S 44.4.(5)]{Kothe-II}.}
In other words, we may interpret $I \SCBiltensor J$ as a subspace of $E \SCBiltensor F$, and moreover $(I \SCBiltensor J)_+ = (I \SCBiltensor J) \cap (E \SCBiltensor F)_+$.

\begin{lemma}
	\label{lem:SCBiltensor-subspace}
	The image of $I \SCBiltensor J$ under the natural inclusion $T \SCBiltensor S : I \SCBiltensor J \hookrightarrow E \SCBiltensor F$ is equal to $(\MsetNface{E'}{J}) \cap (\MfaceNset{I}{F'})$.
\end{lemma}
\begin{proof}
	By definition (see \mysecref{sec:injective-positive-linear-maps}), $\id_E \SCBiltensor S : E \SCBiltensor J \hookrightarrow E \SCBiltensor F$ is given by $((\id_E \SCBiltensor S)b)(\varphi,\psi) = b(\varphi , S'\psi)$. Therefore the following diagram commutes:
	\begin{center}
		\begin{tikzcd}[row sep=huge,column sep=tiny]
			E \SCBiltensor J \arrow[r, equal, shorten <=.6pt, shorten >=.6pt] & \SCBil(E_\weakstar' \times J_\weakstar') \arrow[r, "\sim"]\arrow[d, "\id_E \SCBiltensor S"] &[2em] \CL(E_\weakstar' , J_\weak) \arrow[d, "R \mapsto SR"] \\
			E \SCBiltensor F \arrow[r, equal] & \SCBil(E_\weakstar' \times F_\weakstar') \arrow[r, "\sim"] &[2em] \CL(E_\weakstar' , F_\weak).
		\end{tikzcd}
	\end{center}
	An operator $T \in \CL(E_\weakstar' , F_\weak)$ lies in the image of $\CL(E_\weakstar' , J_\weak)$ if and only if $T[E'] \subseteq J$. Therefore a bilinear form $b \in E \SCBiltensor F$ is the extension of a bilinear form in $E \SCBiltensor J$ if and only if $b \in \MsetNface{E'}{J}$. By the same argument, $I \SCBiltensor J = (E \SCBiltensor J) \cap (\MfaceNset{I}{F'})$, and the conclusion follows.
\end{proof}

We will henceforth identify $I \SCBiltensor J$ with the subspace $(\MsetNface{E'}{J}) \cap (\MfaceNset{I}{F'}) \subseteq E \SCBiltensor F$.

Next, we turn to complementary decompositions. We say that a subspace $E_1 \subseteq E$ is \emph{weakly complemented} if it is complemented in the weak topology. (Recall that complemented subspaces and their complements are automatically closed: if $P : E \to E$ is a continuous projection, then $\ker(P)$ and $\ran(P) = \ker(\id_E - P)$ are closed.)

\begin{lemma}
	\label{lem:SCBiltensor-decomposition}
	If $E_1 \subseteq E$ and $F_1 \subseteq F$ are weakly complemented subspaces with complements $E_2 \subseteq E$ and $F_2 \subseteq F$, respectively, then $E \SCBiltensor F$ decomposes as the internal \textup(algebraic\textup) direct sum
	\[ E \SCBiltensor F = (E_1 \SCBiltensor F_1) \oplus (E_1 \SCBiltensor F_2) \oplus (E_2 \SCBiltensor F_1) \oplus (E_2 \SCBiltensor F_2). \]
\end{lemma}
\begin{proof}
	The complementary pairs give rise to weakly continuous complementary decompositions $E_\weak \cong (E_1)_\weak \times (E_2)_\weak$ and $F_\weak \cong (F_1)_\weak \times (F_2)_\weak$ (topological products). Dualizing the first of these, we obtain a weak\nobreakdash-$*$ continuous complementary decomposition $E_\weakstar' \cong (E_2^\perp)_\weakstar \oplus (E_1^\perp)_\weakstar$ (locally convex sum).%
		\hair\footnote{Note that the indices are reversed when passing to the dual: we have $(E_1)_\weakstar' \cong E_\weakstar'/E_1^\perp \cong (E_2^\perp)_\weakstar$ and vice versa.}
	Using the mapping properties of locally convex sums and topological products (see e.g.~\cite[\S 39.8]{Kothe-II}), we find
	\begin{align*}
		E \SCBiltensor F &= \SCBil(E_\weakstar' \times F_\weakstar') \\\noalign{\smallskip}
		&\cong \CL(E_\weakstar' , F_\weak) \\\noalign{\smallskip}
		&= \CL\big((E_2^\perp)_\weakstar \oplus (E_1^\perp)_\weakstar \, , \, (F_1)_\weak \times (F_2)_\weak\big) \\\noalign{\smallskip}
		&\cong \prod_{i\in\{2,1\}} \prod_{j\in\{1,2\}} \CL\big((E_i^\perp)_\weakstar , (F_j)_\weak\big) \\
		&\cong \prod_{i\in\{1,2\}} \prod_{j\in\{1,2\}} (E_i)_\weak \SCBiltensor (F_j)_\weak. \qedhere
	\end{align*}
\end{proof}
\begin{corollary}
	\label{cor:SCBiltensor-decomposition}
	\leavevmode
	\begin{enumerate}[label=(\alph*)]
		\item\label{itm:SCBil-compl-perp} If $E_1 \subseteq E$ and $F_1 \subseteq F$ are weakly complemented subspaces with complements $E_2 \subseteq E$ and $F_2 \subseteq F$, respectively, then
		\[ {}^\perp (E_1^\perp \tensor F_1^\perp) \, = \, (E_1 \SCBiltensor F_1) \oplus (E_1 \SCBiltensor F_2) \oplus (E_2 \SCBiltensor F_1), \]
		where the orthogonal complement is taken with respect to the dual pair $\langle E \SCBiltensor F , E' \tensor F' \rangle$.
		
		\item\label{itm:SCBil-lin-space} If $\lineal(\overline{E_+}^{\,\weak})$ and $\lineal(\overline{F_+}^{\,\weak})$ are weakly complemented with complements $X$ and $Y$, then
		\[ \lineal((E \SCBiltensor F)_+) \, = \, \big(\lineal(\overline{E_+}^{\,\weak}) \SCBiltensor \lineal(\overline{F_+}^{\,\weak})\big) \oplus \big(\lineal(\overline{E_+}^{\,\weak}) \SCBiltensor Y\big) \oplus \big(X \SCBiltensor \lineal(\overline{F_+}^{\,\weak})\big). \]
	\end{enumerate}
\end{corollary}

\noindent
Now we come to concrete descriptions of the subspaces $\SCorideal{I}{J}$ and $\SCandideal{I}{J}$ from \autoref{thm:injective-ideals}.

\begin{theorem}
	\label{thm:SCBiltensor-ideals}
	Let $I \subseteq E$ and $J \subseteq F$ be weakly closed ideals with respect to $\overline{E_+}^{\,\weak}$ and $\overline{F_+}^{\,\weak}$.
	\begin{enumerate}[label=(\alph*)]
		\item If $\lineal(\overline{E_+}^{\,\weak})$ and $\lineal(\overline{F_+}^{\,\weak})$ are weakly complemented, then $\SCandideal{I}{J} = (I \SCBiltensor J) + \lineal((E \SCBiltensor F)_+)$, and this is an ideal in $E \SCBiltensor F$.
		
		\item If $I$ and $J$ are weakly complemented, then $\SCorideal{I}{J} = (I \SCBiltensor F) + (E \SCBiltensor J)$. This is an ideal in $E \SCBiltensor F$ if at least one of $(E/I)_+$ and $(F/J)_+$ is semisimple.
	\end{enumerate}
\end{theorem}
\begin{proof}
	\leavevmode
	\begin{enumerate}[label=(\alph*)]
		\item ``$\supseteq$''. It follows from \myautoref{thm:injective-ideals}{itm:SCandideal-SCBiltensor} that $\SCandideal{I}{J}$ is an ideal. Since every ideal contains the lineality space, we have $\lineal((E \SCBiltensor F)_+) \subseteq \SCandideal{I}{J}$.
		Furthermore, we have $\MsetNface{E'}{J} \subseteq \MsetNface{\lineal(\overline{E_+}^{\,\weak})^\perp}{J}$ and $\MfaceNset{I}{F'} \subseteq \MfaceNset{I}{\lineal(\overline{F_+}^{\,\weak})^\perp}$, and therefore $I \SCBiltensor J \subseteq \SCandideal{I}{J}$ (by \autoref{lem:SCBiltensor-subspace}).
		
		``$\subseteq$''. The orthogonal complement of a weakly complemented subspace is weak\nobreakdash-$*$ complemented in the dual, so we may choose weak\nobreakdash-$*$ continuous projections $P : E' \to \lineal(\overline{E_+}^{\,\weak})^\perp \hookrightarrow E'$ and $Q : F' \to \lineal(\overline{F_+}^{\,\weak})^\perp \hookrightarrow F'$. Let $b_1 \in \SCandideal{I}{J}$ be given, and define $b_2(\varphi,\psi) = b_1(P\varphi,Q\psi)$. Evidently $b_2$ is separately weak\nobreakdash-$*$ continuous, so $b_2 \in E \SCBiltensor F$. Furthermore, $b_1$ and $b_2$ agree on $\lineal(\overline{E_+}^{\,\weak})^\perp \times \lineal(\overline{F_+}^{\,\weak})^\perp = \overline{\spn(E_+')}^{\,\weakstar} \times \overline{\spn(F_+')}^{\,\weakstar}$, so $b_2$ is positive and $b_1 - b_2 \in \lineal((E \SCBiltensor F)_+)$ (by \autoref{prop:injective-lineality-space}). Finally, since $b_1 \in (\MsetNface{\lineal(\overline{E_+}^{\,\weak})^\perp}{J}) \cap (\MfaceNset{I}{\lineal(\overline{F_+}^{\,\weak})^\perp})$, we have $b_2 \in (\MsetNface{E'}{J}) \cap (\MfaceNset{I}{F'}) = I \SCBiltensor J$. Therefore, $b_1 = b_2 + (b_1 - b_2) \in (I \SCBiltensor F) + \lineal((E \SCBiltensor F)_+)$.
		
		\item By \myautoref{lem:SCBiltensor-subspaces}{itm:SCBil-perp}, we have $\SCorideal{I}{J} = {}^\perp (I^\perp \tensor J^\perp)$, so the direct formula follows from \myautoref{cor:SCBiltensor-decomposition}{itm:SCBil-compl-perp}.
		The conditions for $\SCorideal{I}{J}$ to be an ideal follow from \myautoref{thm:injective-ideals}{itm:SCorideal-SCBiltensor}.
		\qedhere
	\end{enumerate}
\end{proof}

\begin{corollary}
	\label{cor:SCBiltensor-ideals}
	If $\overline{E_+}^{\,\weak}$ and $\overline{F_+}^{\,\weak}$ are proper cones, and if $I \subseteq E$ and $J \subseteq F$ are weakly closed ideals with respect to $\overline{E_+}^{\,\weak}$ and $\overline{F_+}^{\,\weak}$, then $I \SCBiltensor J$ is an ideal in $E \SCBiltensor F$.
\end{corollary}

\subsection{Ideals in completed locally convex tensor products}
Finally, we turn our attention to ideals in the completed tensor product $E \hattensor_\alpha F$.
The ideals $\SCorideal{I}{J}$ and $\SCandideal{I}{J}$ obtained in \autoref{thm:injective-ideals} can be restricted to ideals in $E \hattensor_\alpha F$ (with respect to the injective cone).
However, although we were able to find more convenient formulas for the intersections of $\SCorideal{I}{J}$ and $\SCandideal{I}{J}$ with the algebraic tensor product $E \tensor F$ (see \autoref{thm:injective-ideals-algebraic}),
there are no similar formulas for the intersections with $E \hattensor_\alpha F$.
To illustrate the obstruction, we first rephrase the problem in the more common terminology of normed tensor products.

Let $E$ and $F$ be Banach spaces, let $E_+ \subseteq E$ and $F_+ \subseteq F$ be closed proper cones, and let $J \subseteq F$ be a closed order ideal.
Then $E \SCBiltensor F \cong \CL(E_\weakstar\topdual , F_\weak) \subseteq \CL(E\topdual , F)$ is the subspace of those operators $T : E\topdual \to F$ for which the range of the adjoint $T\topdual : F\topdual \to E\topdualdual$ is contained in $E$.
By \autoref{thm:injective-ideals}, the subspace $\SCorideal{\{0\}}{J} = \SCandideal{E}{J} = \MsetNface{E\topdual}{J}$ is an ideal in $E \SCBiltensor F$.
The elements of this ideal are simply the weak-$*$-to-weak continuous operators $E\topdual \to F$ whose range is contained in $J$.
In particular, if $\alpha$ is a tensor norm, then $(\MsetNface{E\topdual}{J}) \cap (E \hattensor_\alpha F) = \CL(E\topdual,J) \cap (E \hattensor_\alpha F)$.
It is well-known that this can be different from $E \hattensor_\alpha J$.
We give two examples.

\begin{example}
	If $F$ has the approximation property but $J$ does not, then one has $E\topdual \hattensor_\varepsilon J \neq \CK(E , J)$ for some appropriate Banach space $E$, but also $E\topdual \hattensor_\varepsilon F = \CK(E, F)$ (see \cite[\S 5.3]{Defant-Floret}). Therefore $\CL(E , J) \cap (E\topdual \hattensor_\varepsilon F) = \CK(E,J)$ is strictly larger than $E\topdual \hattensor_\varepsilon F$.
\end{example}

\begin{example}
	It is well-known that the operator ideal of nuclear operators is not injective: if $J \subseteq F$ is a closed subspace and if $T : E \to J$ is nuclear as a map $E \to F$, then it does not necessarily follow that $T$ is also nuclear as a map $E \to J$ (see \cite[\S 9.7]{Defant-Floret}).
	Even if all spaces have the approximation property, so that $E\topdual \hattensor_\pi F = \CN(E,F)$ and $E\topdual \hattensor_\pi J = \CN(E,J)$, it can happen that $\CL(E,J) \cap \CN(E,F) \neq \CN(E,J)$, so that $\CL(E,J) \cap (E\topdual \hattensor_\pi F) \neq E\topdual \hattensor_\pi J$.
\end{example}

The obstruction is a purely topological one, and has nothing to do with cone-theoretic issues. Therefore we only sketch the proofs of the following special cases, where a convenient formula can be obtained.

\begin{theorem}[Injective topology; approximation property]
	\label{thm:injective-ideals-completed-i}
	Let $E$ and $F$ be complete locally convex spaces, let $E_+ \subseteq E$, $F_+ \subseteq F$ be closed proper cones, and let $I \subseteq E$, $J \subseteq F$ be closed ideals.
	If $I$ or $J$ has the approximation property, then $I \hattensor_\varepsilon J$ is an ideal in $E \hattensor_\varepsilon F$.
\end{theorem}
\begin{proof}[Proof sketch]
	The \emph{$\varepsilon$-product} $E \epsprod F$ is the subspace of $E \SCBiltensor F$ consisting of those operators $b \in \SCBil(E_\weakstar\topdual , F_\weakstar\topdual) \cong \CL(E_\weakstar\topdual , F_\weak) \cong \CL(F_\weakstar\topdual , E_\weak)$ that map equicontinuous subsets of $E\topdual$ in relatively compact sets in $F$, or equivalently, that map equicontinuous subsets of $F\topdual$ in relatively compact sets in $E$ (see \cite[\S 43.3.(2)]{Kothe-II}).
	This property is passed to subspaces, so $(E \epsprod F) \cap (I \SCBiltensor J) \subseteq I \epsprod J$.
	
	Since $E$ and $F$ are complete, we have $E \hattensor_\varepsilon F \subseteq E \epsprod F$ (see \cite[\S 43.3.(5)]{Kothe-II}). Furthermore, since $I$ and $J$ are complete and $I$ or $J$ has the approximation property, we have $I \hattensor_\varepsilon J = I \epsprod J$ (see \cite[\S 43.3.(7)]{Kothe-II}). It follows that
	\[ (I \SCBiltensor J) \cap (E \hattensor_\varepsilon F) \subseteq (I \SCBiltensor J) \cap (E \epsprod F) \subseteq I \epsprod J = I \hattensor_\varepsilon J. \]
	On the other hand, one clearly has $I \hattensor_\varepsilon J \subseteq (I \SCBiltensor J) \cap (E \hattensor_\varepsilon F)$, so we have equality.
	By \autoref{cor:SCBiltensor-ideals}, $I \SCBiltensor J$ is an ideal in $E \SCBiltensor F$, so it follows that $I \hattensor_\varepsilon J$ is an ideal in $E \hattensor_\varepsilon F$.
\end{proof}

If $E$ and $F$ are Banach spaces, then the $\varepsilon$-product in the proof of \autoref{thm:injective-ideals-completed-i} can be replaced by a suitable space of compact operators.

\bigskip
The second situation where a more convenient formula can be obtained is if the subspaces are complemented. Let us say that a \emph{locally convex tensor topology} is a locally convex topology $\alpha$ defined on $E \tensor F$ for every pair $(E,F)$ of locally convex spaces such that:
\begin{enumerate}[label=(\roman*)]
	\item $\alpha$ is a compatible topology on $E \tensor F$ for every pair $(E,F)$;
	\item $\alpha$ satisfies the \emph{continuous mapping property}: if $T : E \to G$ and $S : F \to H$ are continuous, then $T \tensor S : E \tensor_\alpha F \to G \tensor_\alpha H$ is also continuous.
\end{enumerate}
Examples of locally convex tensor topologies include the projective topology $\pi$ and the injective topology $\varepsilon$. More generally, every tensor norm gives rise to a locally convex tensor topology that even satisfies the equicontinuous mapping property; see \cite[\S 35.2]{Defant-Floret}.

\begin{theorem}[Arbitrary topology; complemented subspaces]
	\label{thm:injective-ideals-completed-ii}
	Let $E$ and $F$ be complete locally convex spaces, let $E_+ \subseteq E$, $F_+ \subseteq F$ be convex cones, let $\alpha$ be a locally convex tensor topology, and let $I \subseteq E$, $J \subseteq F$ be closed ideals with respect to $\overline{E_+}$ and $\overline{F_+}$.
	\begin{enumerate}[label=(\alph*)]
		\item\label{itm:completed-SCandideal} If $I$, $J$, $\lineal(\overline{E_+})$ and $\lineal(\overline{F_+})$ are complemented, then $(\SCandideal{I}{J}) \cap (E \hattensor_\alpha F) = (I \hattensor_\alpha F) + \lineal(\hatmaxwedge[\alpha]{E_+}{F_+})$, and this is an ideal in $E \hattensor_\alpha F$ \textup(with respect to the injective cone\textup).
		\item\label{itm:completed-SCorideal} If $I$ and $J$ are complemented, then $(\SCorideal{I}{J}) \cap (E \hattensor_\alpha F) = (I \hattensor_\alpha F) + (E \hattensor_\alpha J)$. This is an ideal in $E \hattensor_\alpha F$ \textup(with respect to the injective cone\textup) if at least one of $(E/I)_+$ and $(F/J)_+$ is semisimple.
	\end{enumerate}
\end{theorem}
\begin{proof}[Proof sketch]
	Given closed subspaces $E_1,\ldots,E_n \subseteq E$, we say that \emph{$E \cong \bigoplus_{i=1}^n E_i$ topologically} if the canonical map $\bigoplus_{i=1}^n E_i \to E$ is a topological isomorphism. Equivalently, if $E \cong \bigoplus_{i=1}^n E_i$ algebraically, then one has $E \cong \bigoplus_{i=1}^n E_i$ topologically if and only if every projection $E \to E_i$ is continuous (see \cite[Theorem 2.2]{Schaefer}).
	
	If $E \cong \bigoplus_{i=1}^n E_i$ topologically and $F \cong \bigoplus_{j=1}^m F_j$ topologically, then $E \hattensor_\alpha F \cong \bigoplus_{i,j} (E_i \hattensor_\alpha F_j)$ topologically. Analogously, \autoref{lem:SCBiltensor-decomposition} can be extended to prove that $E \SCBiltensor F \cong \bigoplus_{i,j} (E_i \SCBiltensor F_j)$, and the following diagram commutes:
	\begin{center}
		\begin{tikzcd}[row sep=large]
			E \hattensor_\alpha F \arrow[r] \arrow[d, "\rotatebox{90}{$\sim$}"] & E \hattensor_\varepsilon F \arrow[r] \arrow[d, "\rotatebox{90}{$\sim$}"] & E \SCBiltensor F \arrow[d, "\rotatebox{90}{$\sim$}"] \\
			\bigoplus_{i,j} (E_i \hattensor_\alpha F_j) \arrow[r] & \bigoplus_{i,j} (E_i \hattensor_\varepsilon F_j) \arrow[r] & \bigoplus_{i,j} (E_i \SCBiltensor F_j).
		\end{tikzcd}
	\end{center}
	In particular, for every subset $A \subseteq [n] \times [m]$ we have
	\[ \Bigg(\bigoplus_{(i,j) \in A} (E_i \SCBiltensor F_j)\Bigg) \cap (E \hattensor_\alpha F) = \bigoplus_{(i,j) \in A} (E_i \hattensor_\alpha F_j). \]
	Therefore the result follows from \autoref{thm:SCBiltensor-ideals}, using the following decompositions:
	\begin{enumerate}[label=(\alph*)]
		\item $E \cong E_1 \oplus E_2 \oplus E_3$ and $F \cong F_1 \oplus F_2 \oplus F_3$ (topologically), where $E_1 = \lineal(\overline{E_+})$, $E_1 \oplus E_2 = I$, $F_1 = \lineal(\overline{F_+})$, $F_1 \oplus F_2 = J$; and $A = \{(1,1),(1,2),(1,3),(2,1),(2,2),(3,1)\}$.
		
		\item $E \cong E_1 \oplus E_2$ and $F \cong F_1 \oplus F_2$, where $E_1 = I$, $F_1 = J$; and $A = \{(1,1),(1,2),(2,1)\}$.\!\qedhere
	\end{enumerate}
\end{proof}

\section{Extremal rays of the injective cone}
\label{sec:injective-extremal-rays}
As an application of the results from \mysecref{sec:injective-ideals}, we show that the tensor product of two extremal rays defines an extremal ray of the injective cone. In \mysecref{sec:rank-one} we will prove that all rank one extremal directions of the injective cone are of this form, but \autoref{xmpl:injective-extremal-rays} will show that there might be extremal directions of larger rank.

\begin{proposition}
	\label{prop:SCBiltensor-extremal-rays}
	If $x_0 \in \overline{E_+}^{\,\weak} \setminus \{0\}$ and $y_0 \in \overline{F_+}^{\,\weak} \setminus \{0\}$ define extremal rays of $\overline{E_+}^{\,\weak}$ and $\overline{F_+}^{\,\weak}$, then $x_0 \tensor y_0 \in E \tensor F \subseteq E \SCBiltensor F$ defines an extremal ray of $(E \SCBiltensor F)_+$. In other words:
	\[ \rext(\overline{E_+}^{\,\weak}) \settensor \rext(\overline{F_+}^{\,\weak}) \subseteq \rext((E \SCBiltensor F)_+). \]
\end{proposition}
\begin{proof}
	Let $M := \{\lambda x_0 \, : \, \lambda \geq 0\}$ denote the ray generated by $x_0$. Then $M$ is an extremal ray, so in particular a face. Every face contains the minimal face $\lineal(\overline{E_+}^{\,\weak})$, but $M$ does not contain a non-trivial subspace, so $\overline{E_+}^{\,\weak}$ is a proper cone. Furthermore, $I := \spn(M) = \spn(x_0)$ is an ideal by \myautoref{prop:ideals-obtained}{itm:ideal-face}, and is weakly closed because it is finite\-/dimensional. Analogously, $J := \spn(y_0)$ is a weakly closed ideal in $F$, so it follows from \autoref{cor:SCBiltensor-ideals} that $I \tensor J$ defines an ideal in $E \SCBiltensor F$. To complete the proof, note that $x_0 \tensor y_0 \in (E \SCBiltensor F)_+$, and that $-x_0 \tensor y_0 \notin (E \SCBiltensor F)_+$ because $(E \SCBiltensor F)_+$ is a proper cone. In other words, $(I \tensor J)_+$ is the ray generated by $x_0 \tensor y_0$.
\end{proof}

\begin{corollary}
	\label{cor:injective-extremal-rays}
	If $\langle E,E'\rangle$, $\langle F,F'\rangle$ are dual pairs and if $E_+ \subseteq E$, $F_+ \subseteq F$ are convex cones, then
	\[ \rext(\overline{E_+}^{\,\weak}) \settensor \rext(\overline{F_+}^{\,\weak}) \subseteq \rext(\maxwedge{E_+}{F_+}). \]
\end{corollary}
\begin{corollary}
	\label{cor:injective-completed-extremal-rays}
	If $E$, $F$ are complete locally convex spaces, if $E_+ \subseteq E$, $F_+ \subseteq F$ are convex cones, and if $\alpha$ is a compatible locally convex topology on $E \tensor F$ for which the natural map $E \hattensor_\alpha F \to E \hattensor_\varepsilon F$ is injective, then
	\[ \rext(\overline{E_+}^{\,\weak}) \settensor \rext(\overline{F_+}^{\,\weak}) \subseteq \rext(\hatmaxwedge[\alpha]{E_+}{F_+}). \]
\end{corollary}
Note: if $E \hattensor_\alpha F \to E \hattensor_\varepsilon F$ is not injective, then $\hatmaxwedge[\alpha]{E_+}{F_+}$ does not have extremal rays, since it is not a proper cone (see \autoref{cor:completed-injective-proper-cone}).

In \autoref{thm:projective-extremal-rays}, we found that the extremal rays of the projective cone are precisely the tensor products of the extremal rays of the base cones. This is not true for the injective cone; the following example shows that the inclusion from \autoref{cor:injective-extremal-rays} can be strict.

\begin{example}[{cf.~\autoref{xmpl:projective-bipositive-fail-2}, \autoref{xmpl:injective-pushforward-fail-2}}]
	\label{xmpl:injective-extremal-rays}
	Let $E$ be finite\-/dimensional, and let $E_+ \subseteq E$ be a proper, generating, polyhedral cone which is \emph{not} a simplex cone.
	Then both $\minwedge{E_+\algdual}{E_+}$ and $\maxwedge{E_+\algdual}{E_+}$ are proper, generating, polyhedral cones (use that the class of proper, generating, polyhedral cones is closed under taking duals and projective tensor products). As such, they are generated by their extremal rays.
	However, it follows from \cite[Proposition 3.1]{Barker-Loewy} (see also \autoref{thm:min-equals-max} below) that $\minwedge{E_+\algdual}{E_+} \neq \maxwedge{E_+\algdual}{E_+}$, so in particular $\rext(\maxwedge{E_+\algdual}{E_+}) \neq \rext(\minwedge{E_+\algdual}{E_+}) = \rext(E_+\algdual) \settensor \rext(E_+)$.
	
	It will follow from \myautoref{cor:rank-one}{itm:injective-rank-one} below that the additional extremal directions of the injective cone must necessarily have rank $\geq 2$.
\end{example}

\begin{remark}
	\label{rmk:injective-norm-extreme-points}
	It is somewhat remarkable that the injective cone preserves extremal rays, because the injective norm does not preserve extreme points (of the closed unit ball). Indeed, if $E = F = \R^2$ with the Euclidean norm, then $E \tensor_\varepsilon F \cong \R^{2\times 2}$ with the operator norm (i.e.~the Schatten $\infty$-norm). But the extreme points of the unit ball for the operator norm are the orthogonal matrices, which in particular have full rank. In other words, no rank $1$ operator is an extreme point, so in this case $\ext(B_{E \tensor_\varepsilon F})$ is \emph{disjoint} from $\ext(B_E) \settensor \ext(B_F)$.
	
	This discrepancy can be explained as follows. In \mysecref{sec:application-to-convex-sets}, we proved that the projective unit ball preserves extreme points (at least in the finite\-/dimensional case).
	The proof used homogenization: given finite\-/dimensional normed spaces $E$ and $F$, we considered the respective ``ice cream cones'' in $E \oplus \R$ and $F \oplus \R$.
	However, the tensor product $(E \oplus \R) \tensor (F \oplus \R) \cong (E \tensor F) \oplus E \oplus F \oplus \R$ is larger than $(E \tensor F) \oplus \R$, so the projective cone is larger than the homogenization of the projective unit ball.
	In order to recover an extreme point of the projective unit ball, we had to work with a two-dimensional face of the projective cone.
	Thus, extremal rays of the projective cone do not correspond directly with extreme points of the projective unit ball.
	Apparently, the two-dimensional face used in this argument has no analogue in the injective cone.
\end{remark}

\chapter{Reasonable cross-cones}
\label{chp:reasonable}

Apart from the projective and injective cone, there are many other reasonable cones in the tensor product.
Depending on the application, other choices may be appropriate as well.
For instance, the tensor product of two spaces of hermitian matrices is again a space of hermitian matrices, but if the spaces are equipped with the positive semidefinite cone, then neither the projective nor the injective cone is equal to the positive semidefinite cone in the tensor product (see e.g.{} \cite[\S 2]{Ando}).

In this chapter, mirroring an analogous definition in the normed theory, we study the broader class of `reasonable crosscones' in the tensor product.

\begin{definition}
	\label{def:reasonable}
	Let $\langle E,E'\rangle$ and $\langle F,F'\rangle$ be dual pairs, and let $E_+ \subseteq E$ and $F_+ \subseteq F$ be convex cones.
	We say that a convex cone $\mywedge \subseteq E \tensor F$ is a \emph{reasonable cross-cone} if it satisfies the following criteria:
	\begin{enumerate}[label=(\roman*)]
		\item\label{itm:reasonable:primal} For all $x \in E_+$ and $y \in F_+$ one has $x \tensor y \in \mywedge$;
		\item\label{itm:reasonable:dual} For all $\varphi \in E_+'$ and $\psi \in F_+'$, one has $\varphi \tensor \psi \in \mywedge'$.
	\end{enumerate}
	Here $\mywedge'$ denotes the dual cone of $\mywedge$ with respect to any reasonable dual $G$ of $E \tensor F$ (that is, $E' \tensor F' \subseteq G \subseteq \SCBil(E \times F)$; see \mysecref{sec:dual-pairs}). The definition does not depend on the choice of reasonable dual, because $\varphi \tensor \psi \in E' \tensor F'$.
	
	Reasonable cross-cones in the completed tensor product $E \hattensor_\alpha F$ ($E$ and $F$ locally convex, $\alpha$ a compatible locally convex topology on $E \tensor F$) are defined analogously.
\end{definition}

The following proposition shows that a cone in $E \tensor F$ is a reasonable crosscone if and only if it lies somewhere between the projective and injective cones.

\begin{proposition}
	\label{prop:min-max-cone}
	Let $\langle E,E'\rangle$, $\langle F,F'\rangle$ be dual pairs and let $E_+ \subseteq E$, $F_+ \subseteq F$ be convex cones. Then $\minwedge{E_+}{F_+}$ and $\maxwedge{E_+}{F_+}$ are reasonable cross-cones, and $\minwedge{E_+}{F_+} \subseteq \maxwedge{E_+}{F_+}$.
	Furthermore, a convex cone $\mywedge \subseteq E \tensor F$ is a reasonable cross-cone if and only if $\minwedge{E_+}{F_+} \subseteq \mywedge \subseteq \maxwedge{E_+}{F_+}$.
\end{proposition}
\begin{proof}
	For $x \in E_+$, $y \in F_+$, $\varphi \in E_+'$, $\psi \in F_+'$ we have $\langle x , \varphi\rangle \geq 0$ and $\langle y , \psi \rangle \geq 0$, and therefore $\langle x \tensor y , \varphi \tensor \psi \rangle = \langle x , \varphi\rangle \cdot \langle y , \psi \rangle \geq 0$. It follows that $\minwedge{E_+}{F_+}$ and $\maxwedge{E_+}{F_+}$ are reasonable cross-cones and that $\minwedge{E_+}{F_+} \subseteq \maxwedge{E_+}{F_+}$.
	
	For a general convex cone $\mywedge \subseteq E \tensor F$, clearly \myautoref{def:reasonable}{itm:reasonable:primal} is equivalent to $\minwedge{E_+}{F_+} \subseteq \mywedge$, and \myautoref{def:reasonable}{itm:reasonable:dual} is equivalent to $\mywedge \subseteq \maxwedge{E_+}{F_+}$.
\end{proof}

Likewise, a convex cone $\mywedge \subseteq E \hattensor_\alpha F$ is a reasonable cross-cone if and only if $\hatminwedge[\alpha]{E_+}{F_+} \subseteq \mywedge \subseteq \hatmaxwedge[\alpha]{E_+}{F_+}$. If this is the case, then in particular $\mywedge \cap (E \tensor F)$ is a reasonable cross-cone in the algebraic tensor product $E \tensor F$.

In this chapter, we give three applications of the results from the previous chapters to arbitrary reasonable crosscones.
We show that all reasonable crosscones have the same rank one tensors whenever $E_+$ and $F_+$ are weakly closed and proper (\mysecref{sec:rank-one}), we show that ideals and extremal rays are preserved by reasonable crosscones (\mysecref{sec:reasonable-faces-ideals}), and we show that every reasonable crosscone in $E \tensor F$ is semisimple with respect to any reasonable dual space if $E_+$ and $F_+$ are semisimple (\mysecref{sec:semisimple-algebraic}).
Finally, in \mysecref{sec:semisimple-completed}, we study the related problem of semisimplicity in completed locally convex tensor products, but there things are a bit more complicated.

\section{Rank one tensors of reasonable crosscones}
\label{sec:rank-one}
The definition of reasonable crosscones is based on two criteria regarding rank one tensors in $E \tensor F$ and $E' \tensor F'$.
We show that, if $E_+$ and $F_+$ are sufficiently nice, then all reasonable crosscones contain the same rank one tensors (\autoref{cor:rank-one}).
Using this, we will classify all rank one tensors in the projective and injective cones (\autoref{prop:rank-one-proj-inj}).

If $\langle E,E'\rangle$ is a dual pair, then we say that a convex cone $E_+ \subseteq E$ is \index{convex cone!approximately generating}\emph{approximately generating} (or \emph{total}) if $\spn(E_+)$ is weakly dense in $E$.

If $\mywedge \subseteq E \tensor F$ is a convex cone, then we understand $\mywedge'$ to be the dual cone with respect to some reasonable dual $G$ of $E \tensor F$. The choice of $G$ does not matter, for we will restrict our attention to $\mywedge' \cap (E' \tensor F')$.

The following result is an extension of \cite[Theorem 3.3]{Barker-monotone}.

\begin{proposition}
	\label{prop:rank-one}
	Let $\langle E, E' \rangle$, $\langle F, F' \rangle$ be dual systems, let $E_+ \subseteq E$, $F_+ \subseteq F$ be convex cones, and let $\mywedge \subseteq E \tensor F$ be a reasonable crosscone.
	\begin{enumerate}[label=(\alph*)]
		\item\label{itm:rank-one-primal} If $E_+$ and $F_+$ are weakly closed proper cones, then a rank one tensor $x_0 \tensor y_0 \in E \tensor F$ belongs to $\mywedge$ if and only if either $x_0 \in E_+$ and $y_0 \in F_+$ or $-x_0 \in E_+$ and $-y_0 \in F_+$.
		
		\item\label{itm:rank-one-dual} If $E_+$ and $F_+$ are approximately generating, then a rank one tensor $\varphi_0 \tensor \psi_0 \in E' \tensor F'$ defines a positive linear functional on $\mywedge$ if and only if either $\varphi_0 \in E_+'$ and $\psi_0 \in F_+'$ or $-\varphi_0 \in E_+'$ and $-\psi_0 \in F_+'$.
	\end{enumerate}
\end{proposition}
\begin{proof}
	\leavevmode
	\begin{enumerate}[label=(\alph*)]
		\item ``$\Longleftarrow$''. If $x_0 \in E_+$ and $y_0 \in F_+$, then $x_0 \tensor y_0 \in \mywedge$ by definition. If $-x_0 \in E_+$ and $-y_0 \in F_+$, note that $x_0 \tensor y_0 = -x_0 \tensor -y_0 \in \mywedge$.
	
		``$\Longrightarrow$''. Let $x_0 \tensor y_0 \in \mywedge$ be of rank one (i.e.~with $x_0,y_0 \neq 0$). Weakly closed proper cones are semisimple, so the dual cones $E_+'$ and $F_+'$ separate points on $E_+$ and $F_+$, respectively. Choose $\varphi_0 \in E_+'$, $\psi_0 \in F_+'$ such that $\langle x_0, \varphi_0\rangle \neq 0$ and $\langle y_0 , \psi_0 \rangle \neq 0$. Then $\varphi_0 \tensor \psi_0$ defines a positive linear functional on $\mywedge$, so we have $\langle x_0, \varphi_0 \rangle \langle y_0 , \psi_0 \rangle  = \langle x_0 \tensor y_0 , \varphi_0 \tensor \psi_0 \rangle \geq 0$. It follows that $\langle x_0 , \varphi_0\rangle$ and $\langle y_0 , \psi_0\rangle$ have the same sign. Since $-x_0 \tensor -y_0 = x_0 \tensor y_0$, we may assume without loss of generality that $\langle x_0 , \varphi_0 \rangle , \langle y_0 , \psi_0 \rangle > 0$.
	
		If $\varphi \in E_+'$ is arbitrary, then $\varphi \tensor \psi_0$ is a positive linear functional on $\mywedge$, so we have $\langle x_0 , \varphi\rangle \langle y_0 , \psi_0 \rangle = \langle x_0 \tensor y_0 , \varphi \tensor \psi_0 \rangle \geq 0$. Since $\langle y_0 , \psi_0 \rangle > 0$, it follows that $\langle x_0 , \varphi\rangle \geq 0$ for all $\varphi \in E_+'$, which shows that $x_0 \in (E_+')' = \overline{E_+}^{\,\weak} = E_+$. Analogously, we find $y_0 \in F_+$.
		
		\item In this case $E_+$ and $F_+$ separate points on $E'$ and $F'$, so the same proof can be carried out.
		(If $\varphi_0 \tensor \psi_0 \in \mywedge'$ has rank one, then we may choose $x_0\in E_+$, $y_0 \in F_+$ such that $\langle x_0 , \varphi_0 \rangle \langle y_0 , \psi_0 \rangle > 0$, and use these to show that $\varphi_0 \in E_+'$ and $\psi_0 \in F_+'$ or $-\varphi_0 \in E_+'$ and $-\psi_0 \in F_+'$.)
		\qedhere
	\end{enumerate}
\end{proof}

\begin{corollary}
	\label{cor:rank-one}
	Let $\langle E, E' \rangle$, $\langle F, F' \rangle$ be dual systems, and let $E_+ \subseteq E$, $F_+ \subseteq F$ be convex cones.
	\begin{enumerate}[label=(\alph*)]
		\item\label{itm:rank-one-agree} If $E_+$ and $F_+$ are weakly closed proper cones, then all reasonable crosscones in $E \tensor F$ agree on the rank one tensors.
		
		\item\label{itm:injective-rank-one} The set of rank one extremal directions of the injective cone $\maxwedge{E_+}{F_+} \subseteq E \tensor F$ is given by $\rext(\overline{E_+}^{\,\weak}) \settensor \rext(\overline{F_+}^{\,\weak})$.

		\item If $E_+$ and $F_+$ are weakly closed proper cones, and if $\mywedge \subseteq E \tensor F$ is a reasonable crosscone, then the set of rank one extremal directions of $\mywedge$ is given by $\rext(E_+) \settensor \rext(F_+)$.
	\end{enumerate}
\end{corollary}
\begin{proof}
	\leavevmode
	\begin{enumerate}[label=(\alph*)]
		\item Immediate from \myautoref{prop:rank-one}{itm:rank-one-primal}.
		
		\item If $x_0 \in \overline{E_+}^{\,\weak}$ and $y_0 \in \overline{F_+}^{\,\weak}$ are extremal directions, then $x_0 \tensor y_0$ is an extremal direction of $\maxwedge{E_+}{F_+}$, by \autoref{cor:injective-extremal-rays}. For the converse, suppose that $x_0 \tensor y_0$ is a rank one extremal direction of $\maxwedge{E_+}{F_+}$. Then $E \tensor F \neq \{0\}$ (since there exist rank one tensors), so $E \neq \{0\}$ and $F \neq \{0\}$. Similarly, $\maxwedge{E_+}{F_+}$ is a proper cone (since it has extremal directions), so now it follows from \autoref{thm:injective-proper-cone} that $\overline{E_+}^{\,\weak}$ and $\overline{F_+}^{\,\weak}$ are proper cones. Since $\maxwedge{E_+}{F_+} = \maxwedge{\overline{E_+}^{\,\weak}}{\overline{F_+}^{\,\weak}}$, it follows from \ref{itm:rank-one-agree} that $x_0 \tensor y_0 \in \minwedge{\overline{E_+}^{\,\weak}}{\overline{F_+}^{\,\weak}}$. Clearly $x_0 \tensor y_0$ is automatically extremal in this (smaller) cone, so it follows from \autoref{thm:projective-extremal-rays} that $x_0$ and $y_0$ (or $-x_0$ and $-y_0$) are extremal directions of $\overline{E_+}^{\,\weak}$ and $\overline{F_+}^{\,\weak}$.
		
		\item By \ref{itm:rank-one-agree}, every rank one extremal direction of a reasonable crosscone is also an extremal direction of every smaller reasonable crosscone. By \ref{itm:injective-rank-one} and \autoref{thm:projective-extremal-rays}, the projective and injective cones have the same rank one extremal directions.
		\qedhere
	\end{enumerate}
\end{proof}

\begin{remark}
	In general $\minwedge{E_+}{F_+}$ and $\maxwedge{E_+}{F_+}$ do not agree on the rank one tensors.
	For example, if $E_+$ is not weakly closed, then all non-zero tensors in $E \tensor \R \cong E$ have rank one, but $\minwedge{E_+}{\R_{\geq 0}} = E_+$ whereas $\maxwedge{E_+}{\R_{\geq 0}} = \overline{E_+}^{\,\weak}$.
	As a more extreme example, let $E_+ = E$ and $F_+ = \{0\}$; then $\minwedge{E_+}{F_+} = \{0\}$, whereas $\maxwedge{E_+}{F_+} = E \tensor F$.
\end{remark}

Using \autoref{prop:rank-one}, we can determine exactly which rank one tensors belong to the projective and injective cones (without additional assumptions on $E_+$ and $F_+$).

\begin{proposition}
	\label{prop:rank-one-proj-inj}
	Let $\langle E, E'\rangle$, $\langle F, F' \rangle$ be dual pairs and let $E_+\subseteq E$, $F_+ \subseteq F$ be convex cones.
	\begin{enumerate}[label=(\alph*)]
		\item A rank one tensor $x_0 \tensor y_0 \in E \tensor F$ belongs to the projective cone $\minwedge{E_+}{F_+}$ if and only if at least one of the following applies:
		\begin{enumerate}[label=(\roman*)]
			\item $x_0 \in \lineal(E_+)$ and $y_0 \in \spn(F_+)$;
			\item $x_0 \in \spn(E_+)$ and $y_0 \in \lineal(F_+)$;
			\item $x_0 \in E_+$ and $y_0 \in F_+$;
			\item $-x_0 \in E_+$ and $-y_0 \in F_+$.
		\end{enumerate}
		
		\item A rank one tensor $x_0 \tensor y_0 \in E \tensor F$ belongs to the injective cone $\maxwedge{E_+}{F_+}$ if and only if at least one of the following applies:
		\begin{enumerate}[label=(\roman*)]
			\item $x_0 \in \lineal(\overline{E_+}^{\,\weak})$;
			\item $y_0 \in \lineal(\overline{F_+}^{\,\weak})$;
			\item $x_0 \in \overline{E_+}^{\,\weak}$ and $y_0 \in \overline{F_+}^{\,\weak}$;
			\item $-x_0 \in \overline{F_+}^{\,\weak}$ and $-y_0 \in \overline{F_+}^{\,\weak}$.
		\end{enumerate}
	\end{enumerate}
\end{proposition}
\begin{proof}
	\leavevmode
	\begin{enumerate}[label=(\alph*)]
		\item ``$\Longleftarrow$''. If $x_0 \in E_+$ and $y_0 \in F_+$, then clearly $x_0 \tensor y_0 \in \minwedge{E_+}{F_+}$. If $x_0 \in \lineal(E_+)$ and $y_0 \in \spn(F_+)$, then it follows from \autoref{cor:projective-lineality-space} that $x_0 \tensor y_0 \in \lineal(\minwedge{E_+}{F_+}) \subseteq \minwedge{E_+}{F_+}$. The other two cases are analogous.
		
		``$\Longrightarrow$''. Let $x_0 \tensor y_0 \in \minwedge{E_+}{F_+}$ be of rank one (i.e.~with $x_0,y_0 \neq 0$), and write $x_0 \tensor y_0 = \sum_{i=1}^k x_i \tensor y_i$ with $x_1,\ldots,x_k \in E_+$, $y_1,\ldots,y_k \in F_+$. Note that we must have $y_0 \in \spn(F_+)$: choose $\varphi \in E'$ such that $\varphi(x_0) = 1$, then
		\[ y_0 = (\varphi \tensor \id_F)(x_0 \tensor y_0) = (\varphi \tensor \id_F)\left(\sum_{i=1}^k x_i \tensor y_i\right) = \sum_{i=1}^k \varphi(x_i)y_i \in \spn(F_+). \]
		Analogously, $x_0 \in \spn(E_+)$.
		
		Let $\pi_{\lineal(E_+)} : E \to E/\lineal(E_+)$ and $\pi_{\lineal(F_+)} : F \to F/\lineal(F_+)$ be the canonical maps. Since $\lineal(E_+)$ and $\lineal(F_+)$ are ideals, the quotient cones are proper (see \mysecref{app:faces-ideals}). For notational convenience, let $x_0',\ldots,x_k'$ and $y_0',\ldots,y_k'$ denote the images of $x_0,\ldots,x_k$ and $y_0,\ldots,y_k$ in the respective quotients. Now $x_0' \tensor y_0' \in \minwedge{(E/\lineal(E_+))_+}{(F/\lineal(F_+))_+}$ has rank at most one. If it has rank zero, then $x_0 \in \lineal(E_+)$ or $y_0 \in \lineal(F_+)$, so we are done. Assume therefore that $x_0' \tensor y_0'$ has rank one.
		
		Define $X := \spn\{x_0',\ldots,x_k'\} \subseteq E/\lineal(E_+)$, and let $X_+ \subseteq X \cap (E/\lineal(E_+))_+$ be the convex cone generated by $x_1',\ldots,x_k'$.
		Then $X$ is finite\-/dimensional and $X_+$ is closed (because it is finitely generated) and proper (since it is contained in the proper cone $(E/\lineal(E_+))_+$).
		Define $Y_+ \subseteq F/\lineal(F_+)$ and $Y \subseteq F/\lineal(F_+)$ analogously.
		
		Since $x_0',\ldots,x_k'$ and $y_0',\ldots,y_k'$ belong to $X$ and $Y$, it follows that $x_0' \tensor y_0' = \sum_{i=1}^k x_i' \tensor y_i'$ holds in $X \tensor Y$, so we have $x_0' \tensor y_0' \in \minwedge{X_+}{Y_+}$. Since $X_+$ and $Y_+$ are closed and proper, it follows from \myautoref{prop:rank-one}{itm:rank-one-primal} that $x_0' \in X_+$ and $y_0' \in Y_+$ or $-x_0' \in X_+$ and $-y_0' \in Y_+$. Since the quotient maps $\pi_{\lineal(E_+)}$ and $\pi_{\lineal(F_+)}$ are bipositive (see \autoref{prop:quotient-bipositive}), it follows that $x_0 \in E_+$ and $y_0 \in F_+$ or $-x_0 \in E_+$ and $-y_0 \in F_+$.
		
		\item ``$\Longleftarrow$''. If $x_0 \in \overline{E_+}^{\,\weak}$ and $y_0 \in \overline{F_+}^{\,\weak}$, then
		\[ x_0 \tensor y_0 \in \minwedge{\overline{E_+}^{\,\weak}}{\overline{F_+}^{\,\weak}} \subseteq \maxwedge{\overline{E_+}^{\,\weak}}{\overline{F_+}^{\,\weak}} = \maxwedge{E_+}{F_+}. \]
		If $x_0 \in \lineal(\overline{E_+}^{\,\weak})$, $y_0 \in F$, then $x_0 \tensor y_0 \in \lineal(\maxwedge{E_+}{F_+}) \subseteq \maxwedge{E_+}{F_+}$, by \myautoref{cor:tensor-subspace}{itm:ts:lin-space}. The other two cases are analogous.
		
		``$\Longrightarrow$''. Let $x_0 \tensor y_0 \in \maxwedge{E_+}{F_+}$ be of rank one (i.e.~with $x_0,y_0 \neq 0$). If $x_0 \in \lineal(\overline{E_+}^{\,\weak})$ or $y_0 \in \lineal(\overline{F_+}^{\,\weak})$, then we are done, so assume $x_0 \notin \lineal(\overline{E_+}^{\,\weak})$ and $y_0 \notin \lineal(\overline{F_+}^{\,\weak})$. Since $\lineal(\overline{E_+}^{\,\weak}) = {}^\perp (E_+')$, this means that we may choose $\varphi_0 \in E_+'$, $\psi_0 \in F_+'$ such that $\langle x_0 , \varphi_0 \rangle \neq 0$ and $\langle y_0 , \psi_0 \rangle \neq 0$. Now it follows from the argument of \autoref{prop:rank-one} that either $x_0 \in \overline{E_+}^{\,\weak}$ and $y_0 \in \overline{F_+}^{\,\weak}$, or $-x_0 \in \overline{E_+}^{\,\weak}$ and $-y_0 \in \overline{F_+}^{\,\weak}$.
		\qedhere
	\end{enumerate}
\end{proof}

\autoref{prop:rank-one-proj-inj} can be paraphrased as follows: every rank one tensor in the projective or injective cone is either positive for obvious reasons (conditions \textit{(iii)} and \textit{(iv)}) or belongs to the lineality space (conditions \textit{(i)} and \textit{(ii)}).

\section{Ideals and faces of reasonable crosscones}
\label{sec:reasonable-faces-ideals}
An ideal with respect to the injective cone is also an ideal with respect to every smaller cone, and a face of the injective cone is also a face of every smaller cone containing that face.
Therefore the results from \mychpref{chp:injective} immediately give rise to the following consequences (among others).

\begin{proposition}
	\label{prop:reasonable-faces-ideals}
	Let $\langle E,E'\rangle$, $\langle F,F'\rangle$ be dual pairs, let $E_+ \subseteq E$, $F_+ \subseteq F$ be convex cones, and let $\mywedge \subseteq E \tensor F$ be a reasonable crosscone. Then:
	\begin{enumerate}[label=(\alph*)]
		\item\label{itm:reasonable-ideals} If $E_+$ and $F_+$ are weakly closed and if $I \subseteq E$, $J \subseteq F$ are ideals, then $(I \tensor J) + \lineal(\maxwedge{E_+}{F_+})$ is an ideal with respect to $\mywedge$. Additionally, if $I$ is weakly closed and $(E/I)_+$ is semisimple, or if $J$ is weakly closed and $(F/J)_+$ is semisimple, then $(I \tensor F) + (E \tensor J)$ is an ideal with respect to $\mywedge$.
		
		\item The lineality space of $\mywedge$ satisfies
		\begin{align*}
			(\lineal(E_+) \tensor \spn(F_+)) &+ (\spn(E_+) \tensor \lineal(F_+)) \subseteq  \lineal(\mywedge)\\
			& \hspace*{3em} \subseteq (\lineal(\overline{E_+}^{\,\weak}) \tensor F) + (E \tensor \lineal(\overline{F_+}^{\,\weak})).
		\end{align*}
		
		\item If $E_+$ and $F_+$ are weakly closed and if $x_0 \in E_+$, $y_0 \in F_+$ define extremal rays, then $x_0 \tensor y_0$ defines an extremal ray of $\mywedge$.
	\end{enumerate}
\end{proposition}

\section[Semisimplicity in the algebraic tensor product]{Semisimplicity of reasonable crosscones in the algebraic tensor product}
\label{sec:semisimple-algebraic}
Recall that a convex cone $E_+$ is \emph{semisimple} if it is contained in a weakly closed proper cone, or equivalently, if $E_+'$ separates points on $E$. In this section, we prove that every reasonable crosscone in $E \tensor F$ is semisimple if $E_+$ and $F_+$ are semisimple, and we determine necessary and sufficient criteria for the projective and injective cones to be semisimple. Similar results in completed locally convex tensor products will be discussed in \mysecref{sec:semisimple-completed}.

We start by setting up a partial converse, using the following proposition.

\begin{proposition}
	\label{prop:minwedge-weak-closure}
	Let $\langle E,E' \rangle$, $\langle F, F' \rangle$ be dual pairs, and let $E_+ \subseteq E$, $F_+ \subseteq F$ be convex cones. If $G$ is a reasonable dual of $E \tensor F$ and if $\mywedge \subseteq E \tensor F$ is a reasonable crosscone, then
	\[ \minwedge{\overline{E_+}^{\,\weak}}{\overline{F_+}^{\,\weak}} \subseteq \overline{\mywedge}^{\,\weak}, \]
	where $\overline{\mywedge}^{\,\weak}$ denotes the $\sigma(E \tensor F , G)$-closure of $\mywedge$.
\end{proposition}
\begin{proof}
	Let $\lintop_\ind$ denote the finest compatible topology on the tensor product $E_\weak \tensor F_\weak$ (known as the \index{topology!inductive}\emph{inductive} topology, not to be confused with the injective topology). Then the natural map $E \times F \to E \tensor F$ is separately continuous as a map $E_\weak \times F_\weak \to (E \tensor F,\lintop_\ind)$, and the dual of $(E \tensor F,\lintop_\ind)$ is $\SCBil(E_\weak \times F_\weak) = \SCBil(E \times F)$; see \cite[\S 44.1.(5)]{Kothe-II}. In particular, since $G \subseteq \SCBil(E \times F)$, it follows that $\weak = \sigma(E \tensor F , G)$ is weaker than $\lintop_\ind$. Therefore: $\overline{\mywedge}^{\,\ind} \subseteq \overline{\mywedge}^{\,\weak}$.
	
	To complete the proof, we show that $\minwedge{\overline{E_+}^{\,\weak}}{\overline{F_+}^{\,\weak}} \subseteq \overline{\mywedge}^{\,\ind}$. Since $\mywedge$ is a reasonable crosscone, we have $\minwedge{E_+}{F_+} \subseteq \mywedge \subseteq \overline{\mywedge}^{\,\ind}$. Since $E_\weak \times F_\weak \to (E \tensor F,\lintop_\ind)$ is separately continuous, for every $x_0 \in E_+$ one has $x_0 \tensor \overline{F_+}^{\,\weak} \subseteq \overline{\mywedge}^{\,\ind}$. (The inverse image of $\overline{\mywedge}^{\,\ind}$ under the map $y \mapsto x_0 \tensor y$ contains $F_+$, and therefore $\overline{F_+}^{\,\weak}$.) Then, by the same argument, for every $y_0 \in \overline{F_+}^{\,\weak}$ we have $\overline{E_+}^{\,\weak} \tensor y_0 \subseteq \overline{\mywedge}^{\,\ind}$. It follows that $\overline{E_+}^{\,\weak} \settensor \overline{F_+}^{\,\weak} \subseteq \overline{\mywedge}^{\,\ind}$, and the result follows by taking positive combinations.
\end{proof}

For clarity, let us say that $\mywedge$ is \index{convex cone!G-semisimple@$G$-semisimple}\emph{$G$-semisimple} if it is semisimple for the dual pair $\langle E \tensor F , G \rangle$ (i.e.~if $\overline{\mywedge}^{\,\sigma(E \tensor F , G)}$ is a proper cone), where $G$ is a reasonable dual of $E \tensor F$.

\begin{theorem}
	\label{thm:reasonable-semisimple}
	Let $\langle E, E' \rangle$, $\langle F, F' \rangle$ be dual pairs, let $E_+ \subseteq E$, $F_+ \subseteq F$ be convex cones, let $G$ be a reasonable dual of $E \tensor F$, and let $\mywedge \subseteq E \tensor F$ be a reasonable crosscone.
	\begin{enumerate}[label=(\alph*)]
		\item If $E_+$ and $F_+$ are semisimple, then $\mywedge$ is $G$-semisimple.
		\item\label{itm:rs:partial-converse} If $E_+ \neq \{0\}$ and $F_+ \neq \{0\}$, and if $\mywedge$ is $G$-semisimple, then $E_+$ and $F_+$ are semisimple.
	\end{enumerate}
\end{theorem}
\begin{proof}\ \par
	\begin{enumerate}[label=(\alph*)]
		\item Semisimplicity means that $\overline{E_+}^{\,\weak}$ and $\overline{F_+}^{\,\weak}$ are proper cones, so it follows from \autoref{thm:injective-proper-cone} that $\maxwedge{E_+}{F_+}$ is a proper cone. Furthermore, $\maxwedge{E_+}{F_+}$ is weakly closed (see \autoref{rmk:max-is-dual-of-min}), so it follows that $\mywedge$ is contained in a weakly closed proper cone.
		
		\item It follows from \autoref{prop:minwedge-weak-closure} that $\minwedge{\overline{E_+}^{\,\weak}}{\overline{F_+}^{\,\weak}} \subseteq \overline{\mywedge}^{\,\weak}$, where $\overline{\mywedge}^{\,\weak}$ is a proper cone (by semisimplicity). In particular, $\minwedge{\overline{E_+}^{\,\weak}}{\overline{F_+}^{\,\weak}}$ is a proper cone. By assumption, we have $\overline{E_+}^{\,\weak},\overline{F_+}^{\,\weak} \neq \{0\}$, so it follows from \autoref{thm:projective-proper-cone} that $\overline{E_+}^{\,\weak}$ and $\overline{F_+}^{\,\weak}$ must be proper cones as well. Equivalenty: $E_+$ and $F_+$ are semisimple. \qedhere
	\end{enumerate}
\end{proof}

\begin{remark}
	\label{rmk:partial-converse}
	We note that the partial converse given in \myautoref{thm:reasonable-semisimple}{itm:rs:partial-converse} is the best we can do. If one of the cones is trivial, then the outcome depends on the other cone. Indeed, let $E,F \neq \{0\}$ with convex cones $E_+ \subseteq E$, $F_+ \subseteq F$, such that $E_+ = \{0\}$ and $F_+$ is not semisimple. Then $\minwedge{E_+}{F_+} = \{0\}$, which is semisimple, but $\maxwedge{E_+}{F_+}$ is not semisimple by \autoref{thm:injective-proper-cone}.

	More can be said if we choose the cone beforehand. The injective cone is already weakly closed with respect to any reasonable dual, so \autoref{thm:injective-proper-cone} tells us exactly when $\maxwedge{E_+}{F_+}$ is semisimple. For the projective cone, we obtain necessary and sufficient criteria very similar to those in \autoref{thm:projective-proper-cone}.
\end{remark}

\begin{corollary}
	\label{cor:projective-semisimple}
	Let $\langle E , E' \rangle$, $\langle F , F' \rangle$ be dual pairs, let $E_+ \subseteq E$, $F_+ \subseteq F$ be convex cones, and let $G$ be a reasonable dual of $E \tensor F$. Then $\minwedge{E_+}{F_+}$ is $G$-semisimple if and only if $E_+ = \{0\}$, or $F_+ = \{0\}$, or both $E_+$ and $F_+$ are semisimple.
\end{corollary}
\begin{proof}
	If $E_+ = \{0\}$ or $F_+ = \{0\}$, then $\minwedge{E_+}{F_+} = \{0\}$, which is semisimple. The rest follows from \autoref{thm:reasonable-semisimple}.
\end{proof}

\begin{remark}
	\label{rmk:min-not-dense-in-max}
	Barring corner cases, we find that $\minwedge{E_+}{F_+}$ is semisimple if and only if $\maxwedge{E_+}{F_+}$ is a proper cone. It is tempting to conjecture that the projective cone is always dense in the injective cone. For locally convex lattices, Birnbaum \cite[Proposition 3]{Birnbaum} found a positive answer, but in general this is far from being true. Counterexamples have been known for a long time (e.g.~\cite[Example following Proposition 3]{Birnbaum}; \cite[Proposition 3.1]{Barker-Loewy}). Very recently, Aubrun, Lami, Palazuelos and Pl\'avala \cite{Aubrun-et-al-ii} proved that this fails for all closed, proper, generating cones in finite\-/dimensional spaces, unless at least one of the cones is a simplex cone.
	We will prove a large class of special cases of this result in \mychpref{chp:many-examples}.
\end{remark}

\begin{remark}
	Fremlin \cite{Fremlin} developed a theory of tensor products of Archimedean Riesz spaces, which was further developed by Grobler and Labuschagne \cite{Grobler-Labuschagne}, and van Gaans and Kalauch \cite{vanGaans-Kalauch} to a theory of tensor products of Archimedean cones. In this setting, the challenge is to extend the projective cone to a proper Archimedean cone. In \cite{vanGaans-Kalauch}, van Gaans and Kalauch showed that the projective tensor product of two generating Archimedean cones is always contained in a proper Archimedean cone (see \cite[Lemma 4.2]{vanGaans-Kalauch}).
	
	Our results are parallel to this. If the given cones $E_+ \subseteq E$ and $F_+ \subseteq F$ are not only Archimedean but also closed in some locally convex topology (this is a stronger assumption), then their projective tensor product is contained in a closed (hence Archimedean) proper cone. In other words, we start with a stronger assumption, and end up with a stronger conclusion.
	
	The preceding results are no substitute for the methods developed in \cite{vanGaans-Kalauch}. For example, the space $L^p[0,1]$ with $p\in (0,1)$ does not admit a non-trivial positive linear functional, so here we have an Archimedean cone which fails to be semisimple in a rather dramatic way. Consequently, our results fail to prove that the projective tensor product $\minwedge{L_+^p[0,1]}{L_+^p[0,1]}$ is contained in a proper Archimedean cone, which we know to be true by the results of \cite{vanGaans-Kalauch}. (In fact, since $L_+^p[0,1]$ is a lattice cone, this follows already from Fremlin's original result \cite[Theorem 4.2]{Fremlin}).
\end{remark}

\section[Semisimplicity in completed locally convex tensor products]{Semisimplicity of reasonable crosscones in completed locally convex tensor products}
\label{sec:semisimple-completed}
In the completed setting, semisimplicity turns out to be more subtle. This is because there is one additional requirement for the injective cone to be proper: not only do $\overline{E_+}^{\,\weak}$ and $\overline{F_+}^{\,\weak}$ need to be proper, but the natural map $E \hattensor_\alpha F \to E \hattensor_\varepsilon F$ must be injective. (See \autoref{cor:completed-injective-proper-cone}.) This leads to the following analogue of \autoref{thm:reasonable-semisimple}.

\begin{theorem}
	\label{thm:completed-semisimple}
	Let $E$, $F$ be complete locally convex spaces, $E_+ \subseteq E$, $F_+ \subseteq F$ convex cones, $\alpha$ a compatible locally convex topology on $E \tensor F$, and $\mywedge \subseteq E \hattensor_\alpha F$ a reasonable crosscone.
	\begin{enumerate}[label=(\alph*)]
		\item If $E_+$ and $F_+$ are semisimple and if $E \hattensor_\alpha F \to E \hattensor_\varepsilon F$ is injective, then $\mywedge$ is semisimple.
		
		\item If $E_+ \neq \{0\}$ and $F_+ \neq \{0\}$, and if $\mywedge$ is semisimple, then $E_+$ and $F_+$ are semisimple.
	\end{enumerate}
\end{theorem}
\begin{proof}\ \par
	\begin{enumerate}[label=(\alph*)]
		\item It follows from the assumptions and \autoref{cor:completed-injective-proper-cone} that the injective cone $\hatmaxwedge[\alpha]{E_+}{F_+}$ is proper, so $\mywedge$ is contained in a closed proper cone.
		
		\item If $\mywedge$ is semisimple, then in particular $\mywedge \cap (E \tensor F)$ is semisimple, so the result follows from \myautoref{thm:reasonable-semisimple}{itm:rs:partial-converse}. \qedhere
	\end{enumerate}
\end{proof}

\noindent
The gap between the necessary and sufficient conditions in \autoref{thm:completed-semisimple} is even larger than it was in \autoref{thm:reasonable-semisimple}.
We show that this gap is related to the approximation property. For simplicity, we restrict our attention to Banach spaces.

We recall some generalities. Let $\alpha$ be a finitely generated tensor norm, then we say (following \cite[\S 21.7]{Defant-Floret}) that a Banach space $E$ has the \index{alpha-approximation property@$\alpha$-approximation property}\emph{$\alpha$-approximation property} if for all Banach spaces $F$ the natural map $E \hattensor_\alpha F \to E \hattensor_\varepsilon F$ is injective. The $\pi$-approximation property (where $\pi$ denotes the projective tensor norm) is simply called the \index{approximation property}\emph{approximation property}. If a Banach space $E$ has the approximation property, then $E$ also has the $\alpha$-approximation property for every finitely generated tensor norm $\alpha$ (see \cite[Proposition 17.20]{Defant-Floret}).

Some tensor norms $\alpha$ have the property that every Banach space has the $\alpha$\-/approximation property (and therefore $E \hattensor_\alpha F \to E \hattensor_\varepsilon F$ is always injective). One of these is the injective tensor norm $\varepsilon$, for obvious reasons. More generally, this is true for every totally accessible tensor norm $\alpha$; see \cite[Proposition 21.7(2)]{Defant-Floret}. This includes all tensor norms which are (left and right) injective; see \cite[Proposition 21.1(3)]{Defant-Floret}.

\begin{corollary}
	\label{cor:projective-completed-semisimple}
	Let $E$ and $F$ be Banach spaces, let $E_+ \subseteq E$, $F_+ \subseteq F$ be convex cones, and let $\alpha$ be a finitely generated tensor norm. If $E$ or $F$ has the $\alpha$-approximation property, then the projective cone $\hatminwedge[\alpha]{E_+}{F_+} \subseteq E \hattensor_\alpha F$ is semisimple if and only if $E_+ = \{0\}$, or $F_+ = \{0\}$, or both $E_+$ and $F_+$ are semisimple.
\end{corollary}
\begin{proof}
	The $\alpha$-approximation property guarantees that $E \hattensor_\alpha F \to E \hattensor_\varepsilon F$ is injective. If $E_+ = \{0\}$ or $F_+ = \{0\}$, then $\hatminwedge[\pi]{E_+}{F_+} = \{0\}$. The other cases follow from \autoref{thm:completed-semisimple}.
\end{proof}

The proofs of \autoref{cor:projective-semisimple} and \autoref{cor:projective-completed-semisimple} rely on the injective cone to draw conclusions about the projective cone. However, in general these two can be far apart (see \autoref{rmk:min-not-dense-in-max}). If the map $E \hattensor_\alpha F \to E \hattensor_\varepsilon F$ is not injective, then the injective cone $\hatmaxwedge[\alpha]{E_+}{F_+}$ is not proper, but that does not mean that the projective cone $\hatminwedge[\alpha]{E_+}{F_+}$ cannot be semisimple. This leaves open the following interesting question, to which we do not know the answer:

\begin{question}
	\label{q:completed-semisimple}
	Let $E,F$ be real Banach spaces, and let $E_+ \subseteq E$, $F_+ \subseteq F$ be closed proper cones. Is the projective cone $\hatminwedge[\pi]{E_+}{F_+}$ in the completed projective tensor product $E \hattensor_\pi F$ necessarily contained in a closed proper cone?
\end{question}

Equivalently: if the positive continuous linear functionals separate points on $E$ and $F$, then do the positive continuous bilinear forms $E \times F \to \R$ separate points on $E \hattensor_\pi F$?

By \autoref{cor:projective-semisimple}, the positive continuous bilinear forms separate points on $E \tensor_\pi F$, but that is not enough. Furthermore, if $E$ or $F$ has the approximation property, then the positive bilinear forms of rank one already separate points on $E \hattensor_\pi F$, but this technique does not work in the absence of the approximation property.

\chapter{Basic additional properties in the finite-dimensional case}
\label{chp:finite-dimensional}

In the previous chapters, we studied tensor products of convex cones in general (possibly infinite-dimensional) real vector spaces.
In that setting, not many results had been known in the literature, and several basic questions had been unanswered.
In the finite-dimensional setting, the situation is very different.
In a different part of the literature, completely separate from the functional analysis literature, questions around tensor products of closed, proper and generating cones have been studied by many authors in a variety of different fields, such as linear algebra, operator theory, geometry, approximation theory, and theoretical physics.
(For a comprehensive overview of these connections, see \mysecref{sec:intro:background}.)

In this chapter, we give a brief overview of the most important additional properties in the finite-dimensional case.
We give new, streamlined proofs of several known results, and we extend them to general convex cones in finite-dimensional spaces (i.e.{} cones which are not necessarily closed, proper, or generating).
In \mysecref{sec:separable}, we show that the projective and injective cones can be interpreted as certain cones of positive operators, at least when $E_+$ and $F_+$ are closed.
In \mysecref{sec:closed-projective-cone}, we show that the closure $\overline{\minwedge{E_+}{F_+}}$ of the projective cone $\minwedge{E_+}{F_+}$ is equal to the projective cone $\minwedge{\overline{E_+}}{\overline{F_+}}$.
Finally, in \mysecref{sec:retracts}, we look more closely at the concept of retracts, which was already covered briefly in \mysecref{sec:convex-cones}.
Here we prove some basic properties of retracts, and we provide many examples of retracts in standard cones, which we will use in the next chapter.

The results from this chapter will be used extensively in the next chapter, where we give many examples where the projective cone is closed and properly contained in the injective cone.

\section{Additional notation}
\label{sec:notation-ii}

We follow notation from \mychpref{chp:preliminaries}; see also the glossary of notation on \autopageref{symbols}.

In this chapter (and the next), all vector spaces will be finite\-/dimensional.
Recall that the \index{lineality space}\emph{lineality space} of a convex cone $\mywedge \subseteq E$ is the subspace \glsadd{lineality space}$\lineal(\mywedge) = \mywedge \cap -\mywedge$.
We say that $\mywedge$ is \index{convex cone!proper}\emph{proper} if $\lineal(\mywedge) = \{0\}$, and \index{convex cone!generating}\emph{generating} if $\mywedge - \mywedge = E$.

The \index{dual cone}\index{dual cone!algebraic}\emph{\textup(algebraic\textup) dual cone} \glsadd{algebraic dual cone}$\mywedge\algdual \subseteq E\algdual$ is the set of positive linear functionals:
\[ \mywedge\algdual := \big\{\varphi \in E\algdual \, : \, \langle x,\varphi\rangle \geq 0 \ \text{for all $x \in \mywedge$}\big\}. \]
This is a closed cone in the (finite\-/dimensional) space $E\algdual$, and the natural isomorphism $E\algdualdual \cong E$ identifies the double dual cone $\mywedge\algdualdual$ with the closure $\overline{\mywedge}$.

If $\mywedge$ is a convex cone, then a \index{base of a convex cone}\emph{base of $\mywedge$} is a convex subset $\mathcal B \subseteq \mywedge \setminus \{0\}$ such that each $x \in \mywedge \setminus \{0\}$ can be written uniquely as $x = \lambda b$ with $\lambda > 0$ and $b \in \mathcal B$.
If $\mywedge$ is generating, then the bases of $\mywedge$ are in bijective correspondence with the strictly $\mywedge$-positive linear functionals on $E$ (see \cite[Theorem 1.47]{Aliprantis-Tourky}).
Furthermore, every closed proper cone in a finite\-/dimensional space admits a compact base (e.g.~\cite[Corollary 3.8]{Aliprantis-Tourky}).

We say that a convex cone $E_+ \subseteq E$ is a \index{simplex cone}\emph{simplex cone} (or \index{Yudin cone|see {simplex cone}}\emph{Yudin cone}) if it is generated by a basis of $E$, or equivalently, if every base of $E_+$ is a simplex.
A simplex cone turns $E$ into a Dedekind complete Riesz space (see \cite[Theorem 3.17]{Aliprantis-Tourky}).
Furthermore, a cone in a finite\-/dimensional space is a simplex cone if and only if it is a closed lattice cone (see \cite[Theorem 3.21]{Aliprantis-Tourky}).

We fix notation for a number of standard cones.
For $n \geq 1$, we let \glsadd{Lorentz}$\Lorentz{n} \subseteq \R^n$ denote the $n$\nobreakdash-dimensional \index{second-order cone}\emph{second-order cone} (or \index{Lorentz cone|see {second-order cone}}\emph{Lorentz cone}, or \index{ice cream cone|see {second-order cone}}\emph{ice cream cone}),
\[ \Lorentz{n} := \big\{ (x_1,\ldots,x_n) \in \R^n \, : \, \sqrt{x_1^2 + \cdots + x_{n-1}^2} \leq x_n \big\}. \]
(By convention, $\Lorentz{1}$ is just the standard cone $\R_+ \subseteq \R$.)
Furthermore, let \glsadd{symm}$\symm{n} \subseteq \R^{n\times n}$ and \glsadd{herm}$\herm{n} \subseteq \C^{n \times n}$ denote the spaces of real symmetric and complex hermitian $n \times n$ matrices, respectively.
We denote the respective \index{positive semidefinite cone}positive semidefinite cones by \glsadd{symm+}$\symm{n}_+$ and \glsadd{herm+}$\herm{n}_+$:
\begin{align*}
	\symm{n}_+ &:= \{\text{$n\times n$ real positive semidefinite matrices}\};\\\noalign{\smallskip}
	\herm{n}_+ &:= \{\text{$n \times n$ complex positive semidefinite matrices}\}.
\end{align*}
Recall that $\symm{2}_+$ is isomorphic with the Lorentz cone $\Lorentz{3}$, for instance via the isomorphism
\[ \symm{2} \to \R^3,\qquad \begin{pmatrix} a & b \\ b & c \end{pmatrix} \mapsto (a - c , 2b , a + c). \]
(Use that $A \in \symm{2}$ is positive semidefinite if and only if $\tr(A) \geq 0$ and $\det(A) \geq 0$.)

\section{Simplex-factorable positive linear maps}
\label{sec:separable}
For the remainder of this chapter (and the next), it will be convenient to reformulate questions regarding the projective and injective cones in terms of positive linear operators.

If $E$ and $F$ are preordered by convex cones $E_+ \subseteq E$, $F_+ \subseteq F$, then a positive linear operator $T : E \to F$ is called \index{positive linear map!separable}\emph{separable}%
	\hair\footnote{The terminology was introduced by other authors in connection with quantum theory (e.g.~\cite{Hildebrand-descriptions, Aubrun-et-al-i}).}
if it can be written as $T = \sum_{i=1}^k \varphi_i \tensor y_i$, where $\varphi_1,\ldots,\varphi_k \in E_+\algdual$ are positive linear functionals and $y_1,\ldots,y_k \in F_+$ are positive elements.

If $G$ is another finite\-/dimensional real vector space, preordered by a convex cone $G_+ \subseteq G$, then we say that a positive linear map $T : E \to F$ \emph{factors through} $G$ is there exist positive linear maps $R : E \to G$ and $S : G \to F$ such that $T = S \circ R$.
We say that $T$ \emph{factors through a simplex cone}%
	\hair\footnote{More accurately, we should say that $T$ \emph{factors through a finite\-/dimensional Archimedean Riesz space}, but this is too wordy.}
if it factors through some $\R^n$, ordered by the standard cone $\R_{\geq 0}^n$.

\begin{proposition}
	\label{prop:separable-maps-factor}
	A positive linear map $T : E \to F$ is separable if and only if it factors through a simplex cone $\R_{\geq 0}^n$. If this is the case, one may take $n \leq \dim(E) \times \dim(F)$.
\end{proposition}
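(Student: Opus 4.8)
The plan is to prove the two directions of the equivalence separately, then control the dimension $n$ in the backward construction.

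\textbf{Separable implies factoring through a simplex cone.}
Suppose $T : E \to F$ is separable, say $T = \sum_{i=1}^k \varphi_i \tensor y_i$ with $\varphi_i \in E_+\algdual$ and $y_i \in F_+$. The natural candidate for the intermediate simplex cone is $\R^k$ with the standard cone $\R_{\geq 0}^k$. First I would define $R : E \to \R^k$ by $R(x) := (\langle x, \varphi_1\rangle, \ldots, \langle x, \varphi_k\rangle)$ and $S : \R^k \to F$ by $S(e_j) := y_j$ (extended linearly), where $e_1,\ldots,e_k$ is the standard basis. Positivity of $R$ is immediate because each $\varphi_i$ is $E_+$-positive, so $R[E_+] \subseteq \R_{\geq 0}^k$; positivity of $S$ holds because $S$ sends each generator $e_j$ of $\R_{\geq 0}^k$ to $y_j \in F_+$, and $\R_{\geq 0}^k$ is generated by these. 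A direct computation gives $S(R(x)) = \sum_{j=1}^k \langle x, \varphi_j\rangle\, y_j = T(x)$, so $T = S \circ R$ factors through $\R_{\geq 0}^k$.

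\textbf{Factoring through a simplex cone implies separable.}
Conversely, suppose $T = S \circ R$ with positive maps $R : E \to \R^n$ and $S : \R^n \to F$, where $\R^n$ carries $\R_{\geq 0}^n$. Writing $R$ in coordinates as $R(x) = (\langle x, \varphi_1\rangle, \ldots, \langle x, \varphi_n\rangle)$ defines functionals $\varphi_j \in E\algdual$, and positivity of $R$ forces each $\varphi_j \in E_+\algdual$ (since the $j$-th coordinate functional is positive on $\R_{\geq 0}^n$). Setting $y_j := S(e_j)$, positivity of $S$ gives $y_j \in F_+$. Then for every $x \in E$,
\[ T(x) = S(R(x)) = S\Bigl(\sum_{j=1}^n \langle x,\varphi_j\rangle\, e_j\Bigr) = \sum_{j=1}^n \langle x,\varphi_j\rangle\, y_j, \]
so $T = \sum_{j=1}^n \varphi_j \tensor y_j$ is separable. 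This shows the two conditions are equivalent.

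\textbf{The dimension bound.}
For the final clause I would argue that the number of terms in a separable representation can be reduced to at most $\dim(E)\cdot\dim(F)$. The key observation is that $T$, viewed as an element of $\L(E,F) \cong E\algdual \tensor F$, lives in a space of dimension $\dim(E)\cdot\dim(F)$, and a separable representation $T = \sum_i \varphi_i \tensor y_i$ expresses $T$ as a nonnegative combination of the rank-one elements $\varphi_i \tensor y_i$, each of which is a generator of the projective cone $\minwedge{E_+\algdual}{F_+}$. By Carathéodory's theorem applied to the convex cone generated by these rank-one tensors inside the finite-dimensional space $\L(E,F)$, any element of the cone is already a nonnegative combination of at most $\dim(\L(E,F)) = \dim(E)\cdot\dim(F)$ of the generators. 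Thus $T$ admits a separable representation with $k \leq \dim(E)\cdot\dim(F)$ terms, and the construction in the first direction then yields a factorization through $\R_{\geq 0}^n$ with $n \leq \dim(E)\cdot\dim(F)$. The main obstacle here is invoking Carathéodory's theorem in the correct (conical) form — one must apply it to the cone rather than the convex hull, which gives the bound $\dim$ rather than $\dim + 1$; the rest of the proof is a routine unwinding of definitions.
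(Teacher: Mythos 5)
Your proof is correct and takes essentially the same approach as the paper's: the same constructions $R(x) = (\varphi_1(x),\ldots,\varphi_k(x))$ and $S(e_j) = y_j$ in both directions, and the same appeal to Carath\'eodory's theorem in its conical form inside $E\algdual \tensor F$ to get $n \leq \dim(E)\cdot\dim(F)$. The only difference is organizational: the paper applies Carath\'eodory at the start of the forward direction (reducing $k$ before building the factorization), whereas you prove the equivalence first and handle the bound as a separate final step.
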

\begin{proof}
	``$\Longrightarrow$''. Write $T = \sum_{i=1}^k \varphi_i \tensor y_i$ with $\varphi_1,\ldots,\varphi_k \in E_+\algdual$, $y_1,\ldots,y_k \in F_+$.
	By Carath\'eodory's theorem for cones (see e.g.{} \cite[Corollary 17.1.2]{Rockafellar}), we may assume without loss of generality that $k \leq \dim(E\algdual \tensor F) = \dim(E) \times \dim(F)$.
	Now define $R : E \to \R^k$ and $S : \R^k \to F$ by
	\begin{align*}
		R(x) &:= (\varphi_1(x),\ldots,\varphi_k(x));\\\noalign{\smallskip}
		S(\lambda_1,\ldots,\lambda_k) &:= \lambda_1 y_1 + \cdots + \lambda_k y_k.
	\end{align*}
	Then $R$ and $S$ are positive ($\R^k$ equipped with the standard cone $\R_{\geq 0}^k$), and $T = S \circ R$.
	
	``$\Longleftarrow$''. Suppose that $T$ factors as
	\begin{center}
		\begin{tikzcd}
			E \arrow[r, "R"] & \R^n \arrow[r, "S"] & F,
		\end{tikzcd}
	\end{center}
	with $R$ and $S$ positive ($\R^n$ equipped with the standard cone $\R_{\geq 0}^n$). Let $e_1,\ldots,e_n \in \R^n$ denote the standard basis of $\R^n$, and $e_1\algdual,\ldots,e_n\algdual$ the corresponding dual basis. Define $\varphi_1,\ldots,\varphi_n \in E\algdual$ and $y_1,\ldots,y_n \in F$ by setting $\varphi_i := e_i\algdual \circ R$ and $y_i := S(e_i)$. Then $\varphi_1,\ldots,\varphi_n \in E_+\algdual$, $y_1,\ldots,y_n \in F_+$, and $T = \sum_{i=1}^n \varphi_i \tensor y_i$.
\end{proof}

\begin{corollary}
	\label{cor:separable-maps-factor}
	If $E_+$ and $F_+$ are closed, then:
	\begin{enumerate}[label=(\alph*)]
		\item $\maxwedgeFIN{E_+\algdual}{F_+}$ is the set of all positive linear maps $E \to F$;
		\item $\minwedgeFIN{E_+\algdual}{F_+}$ is the set of all positive linear maps $E \to F$ that factor through a simplex cone.
	\end{enumerate}
\end{corollary}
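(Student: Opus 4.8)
The plan is to work throughout under the natural isomorphism $E\algdual \tensor F \cong \L(E\algdualdual,F) \cong \L(E,F)$ (using $E\algdualdual \cong E$ in finite dimensions), in which an elementary tensor $\varphi \tensor y$ with $\varphi \in E\algdual$, $y \in F$ becomes the rank-one operator $x \mapsto \langle x,\varphi\rangle\, y$, and a general element $u$ becomes the operator $T$ characterized by $\langle T(x),\psi\rangle = \langle u, x \tensor \psi\rangle$ for $x \in E$ and $\psi \in F\algdual$. Both parts then reduce to translating the defining conditions of the injective and projective cones into statements about $T$; the only genuine input beyond this bookkeeping is \autoref{prop:separable-maps-factor}, which is already proved.

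For part (a), I would simply unwind the definition of $\maxwedge{E_+\algdual}{F_+}$. Its defining inequalities range over $\varphi \in (E_+\algdual)\algdual$ and $\psi \in F_+\algdual$. Since $E_+$ is closed, the identity $\mywedge\algdualdual = \overline{\mywedge}$ recorded in the Notation section gives $(E_+\algdual)\algdual = E_+\algdualdual = \overline{E_+} = E_+$, so $\varphi$ ranges exactly over $E_+$ (viewed inside $E\algdualdual \cong E$). Thus $u \in \maxwedge{E_+\algdual}{F_+}$ translates to $\langle T(x),\psi\rangle \geq 0$ for all $x \in E_+$ and all $\psi \in F_+\algdual$. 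Fixing such an $x$, the inner condition says precisely $T(x) \in (F_+\algdual)\algdual = \overline{F_+} = F_+$, again by closedness. Hence membership in the injective cone is equivalent to positivity of $T$, which proves (a).

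For part (b), I would observe that the projective cone $\minwedge{E_+\algdual}{F_+}$ consists exactly of the finite sums $\sum_{i} \varphi_i \tensor y_i$ with $\varphi_i \in E_+\algdual$ and $y_i \in F_+$, which under the identification are the operators $x \mapsto \sum_i \langle x,\varphi_i\rangle\, y_i$. By definition these are the separable positive maps $E \to F$; note that each such sum is automatically positive, since $\langle x,\varphi_i\rangle \geq 0$ and $y_i \in F_+$ whenever $x \in E_+$. Applying \autoref{prop:separable-maps-factor} then identifies the separable operators with those that factor through a simplex cone $\R_{\geq 0}^n$, yielding (b).

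Since \autoref{prop:separable-maps-factor} carries all the substantive content, I do not expect a real obstacle here. The one point to handle carefully is the repeated use of the double-dual identity $\mywedge\algdualdual = \overline{\mywedge}$ — this is exactly where the closedness hypotheses on $E_+$ and $F_+$ are needed — together with keeping the natural pairing and the direction of the isomorphism $E\algdual \tensor F \cong \L(E,F)$ consistent throughout.
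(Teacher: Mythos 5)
Your proposal is correct and takes essentially the same route as the paper: the only difference is that in part (a) you verify the identification of $\maxwedge{E_+\algdual}{F_+}$ with the positive maps directly via the double-dual identity $\mywedge\algdualdual = \overline{\mywedge}$, where the paper simply cites this as a known fact from Part I. Part (b) is identical to the paper's argument: the projective cone is by definition the set of separable maps, and \autoref{prop:separable-maps-factor} converts separability into factorization through a simplex cone.
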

\begin{proof}
	It is well known that $\maxwedgeFIN{E_+\algdual}{F_+}$ can be identified with the cone of linear maps $T : E \to F$ satisfying $T[\overline{E_+}] \subseteq \overline{F_+}$ (see \autoref{rmk:injective-property}). Since $E_+$ and $F_+$ are closed, these are simply the positive linear maps $E \to F$.
	
	Under this identification, it is clear from the definition that $\minwedgeFIN{E_+\algdual}{F_+}$ is the subset of separable positive linear maps $E \to F$. By \autoref{prop:separable-maps-factor} these are precisely the positive linear maps $E \to F$ that factor through a simplex cone.
\end{proof}

\section{The closure of the projective cone}
\label{sec:closed-projective-cone}
In this section, we prove that the closure of the projective cone $\minwedgeFIN{E_+}{F_+}$ is equal to the projective tensor product of $\overline{E_+}$ and $\overline{F_+}$. In particular, the projective tensor product of closed convex cones is closed. In the case where $E_+$ and $F_+$ are closed, proper, and generating, this was already established by Tam \cite{Tam-projective-closed}.

The proof is carried out in three steps. First we prove the result for closed proper cones, thereby giving another proof of the aforementioned result by Tam. Secondly, we extend this to all closed convex cones by decomposing a closed convex cone as the sum of a closed proper cone and a subspace. After this, it will be relatively simple to deduce the general formula for the closure of the projective cone.

\subsection{The projective tensor product of closed proper cones}
Recall that a \index{base of a convex cone}\emph{base} (of $E_+$) is a convex subset $\mathcal B \subseteq E_+$ with $0 \notin \mathcal B$ such that every $x \in E_+ \setminus \{0\}$ can be written uniquely as $x = \lambda b$ with $\lambda > 0$ and $b \in \mathcal B$. A key property is that a subset $\mathcal B \subseteq E_+$ is a base if and only if there is a strictly positive linear functional $f : E \to \R$ such that $\mathcal B = \{x \in E_+ \, : \, f(x) = 1\}$; see e.g.~\cite[Theorem 1.47]{Aliprantis-Tourky}.

Not every proper cone has a base. However, every closed proper cone in a finite\-/dimensional space has a compact base (e.g.~\cite[Corollary 3.8]{Aliprantis-Tourky}), and conversely the convex cone generated by a compact convex set $S \subseteq \R^n \setminus \{0\}$ is a closed proper cone (e.g.~\cite[Lemma 3.12]{Aliprantis-Tourky}).

\begin{proposition}
	\label{prop:bases}
	Let $E,F$ be real vector spaces, and let $E_+ \subseteq E$, $F_+ \subseteq F$ be convex cones having bases $\mathcal B_{E_+} \subseteq E_+$, $\mathcal B_{F_+} \subseteq F_+$. Then $\conv(\mathcal B_{E_+} \tensor \mathcal B_{F_+})$ is a base of $\minwedgeFIN{E_+}{F_+}$.
\end{proposition}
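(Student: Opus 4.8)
The plan is to use the characterization of bases by strictly positive linear functionals recalled just above the statement. Since $\mathcal B_{E_+}$ and $\mathcal B_{F_+}$ are bases, there are linear functionals $f \colon E \to \R$ and $g \colon F \to \R$ that are strictly positive (i.e.\ $f(x) > 0$ for every $x \in E_+ \setminus \{0\}$, and likewise for $g$) such that $\mathcal B_{E_+} = \{x \in E_+ : f(x) = 1\}$ and $\mathcal B_{F_+} = \{y \in F_+ : g(y) = 1\}$. I would then consider the linear functional $f \tensor g \colon E \tensor F \to \R$ determined by $(f \tensor g)(x \tensor y) = f(x) g(y)$, and show that the base of $\minwedge{E_+}{F_+}$ it induces is exactly $\conv(\mathcal B_{E_+} \tensor \mathcal B_{F_+})$.

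The first key step is to verify that $f \tensor g$ is strictly positive on $\minwedge{E_+}{F_+}$. Any nonzero $u \in \minwedge{E_+}{F_+}$ admits a representation $u = \sum_{i=1}^k x_i \tensor y_i$ with $x_i \in E_+$, $y_i \in F_+$; discarding the vanishing terms I may assume every $x_i$ and $y_i$ is nonzero, and $k \geq 1$ because $u \neq 0$. Then $(f \tensor g)(u) = \sum_{i=1}^k f(x_i) g(y_i) > 0$, since each factor is strictly positive. By the cited characterization, $\mathcal B := \{u \in \minwedge{E_+}{F_+} : (f \tensor g)(u) = 1\}$ is therefore a base of $\minwedge{E_+}{F_+}$, and it remains to identify $\mathcal B$ with $\conv(\mathcal B_{E_+} \tensor \mathcal B_{F_+})$.

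For the inclusion $\conv(\mathcal B_{E_+} \tensor \mathcal B_{F_+}) \subseteq \mathcal B$, I note that every generator $b \tensor b'$ with $b \in \mathcal B_{E_+}$, $b' \in \mathcal B_{F_+}$ lies in $\minwedge{E_+}{F_+}$ and satisfies $(f \tensor g)(b \tensor b') = 1$; since $\mathcal B$ is the intersection of the convex cone $\minwedge{E_+}{F_+}$ with the affine hyperplane $(f \tensor g)^{-1}(1)$, it is convex, and the inclusion follows. For the reverse inclusion, I take $u \in \mathcal B$ and write it as above with all terms nonzero. Setting $b_i := x_i / f(x_i) \in \mathcal B_{E_+}$, $b_i' := y_i / g(y_i) \in \mathcal B_{F_+}$, and $\mu_i := f(x_i) g(y_i) > 0$, one obtains $u = \sum_i \mu_i\, b_i \tensor b_i'$ with $\sum_i \mu_i = (f \tensor g)(u) = 1$, exhibiting $u$ as a convex combination of elements of $\mathcal B_{E_+} \tensor \mathcal B_{F_+}$.

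I expect the only genuinely delicate point to be the strict positivity of $f \tensor g$ on the projective cone, and in particular the observation that discarding zero terms from a representation is harmless: the value $(f \tensor g)(u)$ is computed intrinsically from the linear functional and is independent of the chosen representation, so it suffices to exhibit a single representation with strictly positive value. Everything else is bookkeeping, and the argument uses no topological or finite\nobreakdash-dimensionality hypotheses, matching the generality of the statement.
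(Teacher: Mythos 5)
Your proposal is correct and follows essentially the same route as the paper: both take the strictly positive functionals $f$ and $g$ defining the bases, verify that $f \tensor g$ is strictly positive on $\minwedge{E_+}{F_+}$, and identify $\conv(\mathcal B_{E_+} \tensor \mathcal B_{F_+})$ with the slice $\{z \in \minwedge{E_+}{F_+} : (f \tensor g)(z) = 1\}$. Your write-up merely makes explicit (via the rescaling $b_i = x_i/f(x_i)$, $\mu_i = f(x_i)g(y_i)$) the step the paper summarizes as ``every non-zero element of $\minwedge{E_+}{F_+}$ is a positive multiple of an element of $\conv(\mathcal B_{E_+} \tensor \mathcal B_{F_+})$.''
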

\begin{proof}
	Let $f : E \to \R$ and $g : F \to \R$ be strictly positive linear functionals such that $\mathcal B_{E_+} = \{x \in E_+ \, : \, f(x) = 1\}$ and $\mathcal B_{F_+} = \{y \in F_+ \, : \, g(y) = 1\}$. Then $f \tensor g$ is a strictly positive linear functional on $E \tensor F$ (with respect to the projective cone), and we have
	\[ \mathcal B_{E_+} \tensor \mathcal B_{F_+} \subseteq \{z \in \minwedgeFIN{E_+}{F_+} \, : \, (f \tensor g)(z) = 1\}. \]
	Since the right-hand side is convex, it follows that
	\begin{equation}
		\conv(\mathcal B_{E_+} \tensor \mathcal B_{F_+}) \subseteq \{z \in \minwedgeFIN{E_+}{F_+} \, : \, (f \tensor g)(z) = 1\}. \label{eqn:bases}
	\end{equation}
	On the other hand, every non-zero element of $\minwedgeFIN{E_+}{F_+}$ can be written as a positive multiple of an element in $\conv(\mathcal B_{E_+} \tensor \mathcal B_{F_+})$, so we must have equality in \eqref{eqn:bases}.
\end{proof}

\begin{corollary}[{\cite{Tam-projective-closed}}]
	\label{cor:projective-proper-closed}
	If $E_+\subseteq E$, $F_+ \subseteq F$ are closed proper cones, then $\minwedgeFIN{E_+}{F_+}$ is also a closed proper cone.
\end{corollary}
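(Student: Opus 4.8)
The plan is to exhibit a compact convex base for $\minwedge{E_+}{F_+}$ and then invoke the converse fact stated just before the proposition: the convex cone generated by a compact convex set avoiding the origin is closed and proper. Everything is set up so that \autoref{prop:bases} does the combinatorial work, and only a compactness check remains.

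First I would note that, since $E_+$ and $F_+$ are closed proper cones in finite-dimensional spaces, each admits a compact base; call these $\mathcal B_{E_+} \subseteq E_+$ and $\mathcal B_{F_+} \subseteq F_+$. By \autoref{prop:bases}, the set $\mathcal B := \conv(\mathcal B_{E_+} \tensor \mathcal B_{F_+})$ is a base of $\minwedge{E_+}{F_+}$; in particular $0 \notin \mathcal B$. This last point is also directly visible from \eqref{eqn:bases}, since the strictly positive functional $f \tensor g$ vanishes at $0$ but takes the value $1$ everywhere on $\mathcal B$.

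Next I would verify that $\mathcal B$ is compact, which is the only step requiring genuine care (and the one place finite-dimensionality is essential). The Cartesian product $\mathcal B_{E_+} \times \mathcal B_{F_+}$ is compact, and the tensor map $(x,y) \mapsto x \tensor y$ is continuous, being bilinear on finite-dimensional spaces; hence the set of elementary tensors $\mathcal B_{E_+} \tensor \mathcal B_{F_+}$ is a compact subset of $E \tensor F$. To pass to the convex hull, I would appeal to the standard fact that the convex hull of a compact set in a finite-dimensional space is again compact: by Carath\'eodory's theorem, $\conv(\mathcal B_{E_+} \tensor \mathcal B_{F_+})$ is the continuous image of the compact set of $(\dim(E \tensor F)+1)$-tuples of points of $\mathcal B_{E_+} \tensor \mathcal B_{F_+}$ together with a probability simplex of coefficients, and is therefore compact.

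Finally, since $\mathcal B$ is a compact convex set with $0 \notin \mathcal B$, the convex cone it generates is closed and proper by the stated converse. As $\mathcal B$ is a base of $\minwedge{E_+}{F_+}$, every nonzero element of the projective cone is a unique positive multiple of a point of $\mathcal B$, so the cone generated by $\mathcal B$ is precisely $\minwedge{E_+}{F_+}$. Hence $\minwedge{E_+}{F_+}$ is closed and proper, as claimed. I do not expect any serious obstacle; the argument is essentially a bookkeeping reduction to the compactness of the convex hull.
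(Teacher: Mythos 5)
Your proof is correct and takes essentially the same route as the paper's own: choose compact bases, invoke \autoref{prop:bases} to see that $\conv(\mathcal B_{E_+} \tensor \mathcal B_{F_+})$ is a base (hence omits $0$), deduce its compactness from continuity of the tensor map together with the fact that convex hulls of compact sets in finite dimensions are compact, and conclude via the fact that the cone generated by a compact convex set avoiding the origin is closed and proper. The only cosmetic difference is that you justify the convex-hull compactness by a Carath\'eodory argument where the paper cites Rudin.
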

\begin{proof}
	Choose compact bases $\mathcal B_{E_+} \subseteq E_+$, $\mathcal B_{F_+} \subseteq F_+$. By \autoref{prop:bases}, $\conv(\mathcal B_{E_+} \tensor \mathcal B_{F_+})$ is a base for $\minwedgeFIN{E_+}{F_+}$. In particular, $0 \notin \conv(\mathcal B_{E_+} \tensor \mathcal B_{F_+})$.
	
	The natural map $E \times F \to E \tensor F$ is continuous, so $\mathcal B_{E_+} \tensor \mathcal B_{F_+}$ is compact in $E \tensor F$. Since the convex hull of a compact set in $\R^n$ is also compact (e.g.~\cite[Theorem 3.20(d)]{Rudin}), it follows that $\conv(\mathcal B_{E_+} \tensor \mathcal B_{F_+})$ is compact. Thus, $\minwedgeFIN{E_+}{F_+}$ is generated by a compact convex set not containing $0$, so it follows that $\minwedgeFIN{E_+}{F_+}$ is a closed proper cone.
\end{proof}

\begin{remark}
	\label{rmk:another-way}
	The proof of \autoref{cor:projective-proper-closed} shows directly that $\minwedgeFIN{E_+}{F_+}$ is proper whenever $E_+$ and $F_+$ are closed proper cones. As such, this provides yet another way to prove that the projective tensor product of proper cones is always proper, in addition to the different ways discussed in \autoref{rmk:alternative-proofs}.
\end{remark}

\subsection{The projective tensor product of closed convex cones}
In order to extend \autoref{cor:projective-proper-closed} to all closed convex cones, we decompose each of $E_+$ and $F_+$ as the sum of a closed proper cone and a subspace.
The (straightforward) proof of the following classical result is omitted.
\begin{proposition}
	\label{prop:wedge-complement}
	Let $E$ be a finite\-/dimensional, and let $E_+ \subseteq E$ be a closed convex cone. Let $\lineal(E_+) := E_+ \cap -E_+$ be the lineality space of $E_+$, and let $\lineal(E_+)^\perp$ be any complementary subspace of $\lineal(E_+)$. Then $\lineal(E_+)^\perp_+ := \lineal(E_+)^\perp \cap E_+$ is a closed proper cone, and one has
	\[ E_+ = \lineal(E_+) + \lineal(E_+)^\perp_+. \]
\end{proposition}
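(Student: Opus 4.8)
The plan is to decompose $E_+$ directly in terms of its lineality space and show that each non-zero element has a canonical representation.

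First I would fix the subspace $L := \lineal(E_+) = E_+ \cap -E_+$ and a complementary subspace $M := \lineal(E_+)^\perp$, so that $E = L \oplus M$ as a direct sum. Write $\pi_M : E \to M$ for the associated projection onto $M$ along $L$. I would set $C := M \cap E_+$ and verify the easy inclusion $L + C \subseteq E_+$: since $0 \in L$ and $C \subseteq E_+$ we have $C \subseteq E_+$, and since $L \subseteq E_+$ with $E_+$ convex and closed under addition, the sum $L + C$ lands in $E_+$.

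The content is the reverse inclusion $E_+ \subseteq L + C$. Given $x \in E_+$, write $x = \ell + m$ with $\ell \in L$, $m \in M$ uniquely. I want to show $m \in C$, i.e. $m \in E_+$. The key observation is that $m = x - \ell = x + (-\ell)$, and $-\ell \in L \subseteq E_+$ because $L$ is a subspace contained in $E_+$; hence $m = x + (-\ell) \in E_+ + E_+ \subseteq E_+$. Since $m \in M$ as well, we get $m \in M \cap E_+ = C$, and therefore $x = \ell + m \in L + C$. This gives $E_+ = L + C$.

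It remains to check that $C = \lineal(E_+)^\perp_+$ is a closed proper cone. That $C$ is a convex cone is immediate, being the intersection of the convex cone $E_+$ with the subspace $M$. Closedness follows because $E_+$ is closed and $M$ is closed, so their intersection is closed. For properness I would compute $\lineal(C) = C \cap -C = (M \cap E_+) \cap (M \cap -E_+) = M \cap (E_+ \cap -E_+) = M \cap L$; since $M$ and $L$ are complementary subspaces their intersection is $\{0\}$, so $\lineal(C) = \{0\}$ and $C$ is proper. I do not anticipate a serious obstacle here: the only point requiring a little care is keeping the direct-sum decomposition and the two intersections straight, but the argument is entirely formal once one notices that $-\ell \in E_+$ is what pushes the $M$-component back into the cone.
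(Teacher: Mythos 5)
Your proof is correct; the paper itself omits the argument (calling it a straightforward classical result), and what you give is precisely the standard argument it has in mind. The one essential observation---that $-\ell \in L \subseteq E_+$ forces the $M$-component $m = x - \ell$ back into $E_+$---is exactly the right point, and your verifications of closedness and properness (via $\lineal(C) = M \cap L = \{0\}$) are complete.
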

\noindent
Conversely, the sum of a closed proper cone and a subspace need not be closed. (Example: let $E_+$ be the cone generated by $\{(x,y,1) \in \R^3 \, : \, (x - 1)^2 + y^2 \leq 1\}$, and $X := \spn\{(0,0,1)\} \subseteq \R^3$.) However, we have the following partial converse of \autoref{prop:wedge-complement}.

\begin{proposition}
	\label{prop:sum-complementary-subspace}
	Let $E$ be a finite\-/dimensional, let $E_+ \subseteq E$ be a closed convex cone, and let $X \subseteq E$ be a subspace. If $\spn(E_+) \cap X = \{0\}$, then $E_+ + X$ is a closed convex cone.
\end{proposition}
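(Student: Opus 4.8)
The plan is to realise $E_+ + X$ as the preimage of $E_+$ under a continuous projection, which makes closedness immediate. That $E_+ + X$ is a convex cone is clear, being the sum of two convex cones, so the entire content of the statement is the closedness.

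First I would write $V := \spn(E_+)$. The hypothesis $V \cap X = \{0\}$ says precisely that the sum $W := V + X$ is direct, $W = V \oplus X$, so there is a well-defined linear projection $\pi \colon W \to W$ onto $V$ along $X$, sending $v + x \mapsto v$ for $v \in V$, $x \in X$. The key identity is
\[
	E_+ + X \; = \; \pi^{-1}(E_+).
\]
Indeed, writing an arbitrary $w \in W$ uniquely as $w = v + x$ with $v \in V$ and $x \in X$, and using $E_+ \subseteq V$, one sees that $w \in E_+ + X$ holds if and only if its $V$-component $v = \pi(w)$ lies in $E_+$. Since $\pi$ is a linear map on a finite-dimensional space it is continuous, and $E_+$ is closed in $W$ (it is closed in $E$ and satisfies $E_+ \cap W = E_+$); hence $\pi^{-1}(E_+)$ is closed in $W$. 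Finally $W$ is a linear subspace of the finite-dimensional space $E$, so it is closed in $E$, and therefore $E_+ + X$ is closed in $E$, as required.

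The only step that genuinely uses the hypothesis, and the one I would flag as the crux, is the passage from $\spn(E_+) \cap X = \{0\}$ to the direct-sum decomposition $W = V \oplus X$ that makes the projection $\pi$ available; without disjointness of $\spn(E_+)$ and $X$ there is no such projection and the conclusion can fail, as the example preceding the statement shows. Everything after that is routine topology. A more computational alternative would take a sequence $w_n = e_n + x_n \to w$ with $e_n \in E_+$ and $x_n \in X$, and argue that the decomposition passes to the limit, but this merely reproves the continuity of $\pi$ by hand, so the projection formulation is the cleaner route.
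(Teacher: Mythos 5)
Your proof is correct and is essentially the paper's own argument: both realise $E_+ + X$ as the preimage of $E_+$ under the continuous projection along $X$, which exists precisely because $\spn(E_+) \cap X = \{0\}$. The only (cosmetic) difference is that the paper extends $\spn(E_+)$ to a full complement of $X$ in $E$ so the projection is defined on all of $E$, whereas you project within $W = \spn(E_+) \oplus X$ and then use that $W$ is closed in $E$.
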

\begin{proof}
	Extend $\spn(E_+)$ to a complementary subspace $X^\perp$ of $X$. Let $P : E \to X^\perp$ be the projection $x + x^\perp \mapsto x^\perp$. Then $P$ is continuous, and $E_+ + X = P^{-1}[E_+]$, so $E_+ + X$ is closed.
\end{proof}

The preceding propositions give us a way to decompose the cones and later put them back together. To see what happens when we lift the pieces separately, we use the following observation.

\begin{proposition}
	\label{prop:wedge-subspace}
	Let $E,F$ be real vector spaces, and let $E_+ \subseteq E$, $F_+ \subseteq F$ be convex cones. If at least one of $E_+$ and $F_+$ is a subspace, then $\minwedgeFIN{E_+}{F_+}$ is a subspace as well.
\end{proposition}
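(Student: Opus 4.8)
The plan is to use the elementary fact that a convex cone is a linear subspace precisely when it is symmetric, i.e.\ invariant under $z \mapsto -z$. First I would note that $\minwedge{E_+}{F_+}$ is by construction a convex cone: it is closed under addition, and for $\lambda \geq 0$ one has $\lambda \sum_i x_i \tensor y_i = \sum_i (\lambda x_i) \tensor y_i$ with $\lambda x_i \in E_+$, so it is closed under multiplication by nonnegative scalars. If it is moreover closed under negation, then for $\lambda < 0$ we get $\lambda C = -\big((-\lambda) C\big) \subseteq -C \subseteq C$, so $C := \minwedge{E_+}{F_+}$ is closed under multiplication by \emph{every} real scalar; being also closed under addition and nonempty, it is then a linear subspace. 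Hence it suffices to prove $-z \in \minwedge{E_+}{F_+}$ for every $z \in \minwedge{E_+}{F_+}$.

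By symmetry I may assume without loss of generality that $E_+$ is a subspace; the case where $F_+$ is a subspace is entirely analogous, replacing $(-x)\tensor y$ by $x \tensor (-y)$ throughout. The core of the argument is to treat a single generator: for $x \in E_+$ and $y \in F_+$, the fact that $E_+$ is a subspace gives $-x \in E_+$, whence $-(x \tensor y) = (-x) \tensor y \in \minwedge{E_+}{F_+}$. Passing to a general element $z = \sum_{i=1}^k x_i \tensor y_i$ with $x_i \in E_+$, $y_i \in F_+$, I would write $-z = \sum_{i=1}^k (-x_i) \tensor y_i$; each summand lies in $\minwedge{E_+}{F_+}$ by the previous line, so $-z \in \minwedge{E_+}{F_+}$. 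This establishes symmetry and, by the first paragraph, the proposition.

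There is no genuine obstacle here: the argument is purely formal, resting only on the bilinearity of the tensor product and the definition of the projective cone. The two points worth stating explicitly are the reduction \emph{symmetric convex cone $\Rightarrow$ subspace}, and the observation that the two tensor factors play interchangeable roles, so that assuming $E_+$ (rather than $F_+$) to be the subspace costs nothing.
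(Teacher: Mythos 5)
Your proof is correct and follows essentially the same route as the paper's: the paper's (very terse) proof likewise observes that a convex cone is a subspace precisely when it is closed under negation, and that this symmetry property is preserved by the projective tensor product. You have simply spelled out the two steps the paper leaves implicit, namely the reduction from \emph{subspace} to \emph{symmetric cone} and the computation $-\sum_i x_i \tensor y_i = \sum_i (-x_i) \tensor y_i$.
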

\begin{proof}
	A convex cone $G_+$ is a subspace precisely when one has $s \in G_+$ if and only if $-s \in G_+$. This property is preserved by the projective tensor product.
\end{proof}

We can now extend \autoref{cor:projective-proper-closed} to all closed convex cones.

\begin{theorem}
	\label{thm:projective-cone-closed}
	Let $E,F$ be finite\-/dimensional, and let $E_+ \subseteq E$, $F_+ \subseteq F$ be closed convex cones. Then $\minwedgeFIN{E_+}{F_+}$ is closed as well.
\end{theorem}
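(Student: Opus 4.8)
The plan is to reduce the general statement to the already-settled case of closed proper cones (\autoref{cor:projective-proper-closed}) by peeling off the lineality spaces. First I would invoke \autoref{prop:wedge-complement} to write $E_+ = L_E + P_E$ and $F_+ = L_F + P_F$, where $L_E := \lineal(E_+)$ and $L_F := \lineal(F_+)$ are the lineality subspaces, and where $P_E := M_E \cap E_+$, $P_F := M_F \cap F_+$ are closed proper cones living inside complementary subspaces $M_E$, $M_F$ (so that $E = L_E \oplus M_E$ and $F = L_F \oplus M_F$).

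The engine of the reduction is the fact that the projective cone is additive in each argument with respect to sums of cones: for any convex cones $A$, $B$, $C$ one has $\minwedge{(A+B)}{C} = \minwedge{A}{C} + \minwedge{B}{C}$, the inclusion ``$\subseteq$'' by distributing $(a+b) \tensor c = a \tensor c + b \tensor c$, and ``$\supseteq$'' because $A, B \subseteq A+B$ (each cone contains $0$) and the projective cone is closed under addition. Applying this in both slots gives
\[
	\minwedge{E_+}{F_+} = \minwedge{P_E}{P_F} + X, \qquad X := \minwedge{L_E}{L_F} + \minwedge{L_E}{P_F} + \minwedge{P_E}{L_F}.
\]
By \autoref{prop:wedge-subspace} each of the three summands defining $X$ is a subspace (each has a subspace as one of its factors), so $X$ is a subspace; and by \autoref{cor:projective-proper-closed} the cone $\minwedge{P_E}{P_F}$ is a closed proper cone.

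It then remains to show that the sum of this closed proper cone with the subspace $X$ is closed, for which I would appeal to \autoref{prop:sum-complementary-subspace}. Its transversality hypothesis $\spn(\minwedge{P_E}{P_F}) \cap X = \{0\}$ drops out of the induced direct-sum decomposition
\[
	E \tensor F = (L_E \tensor L_F) \oplus (L_E \tensor M_F) \oplus (M_E \tensor L_F) \oplus (M_E \tensor M_F).
\]
Indeed $\minwedge{P_E}{P_F} \subseteq M_E \tensor M_F$ (hence its span lies in the last summand), whereas the three pieces of $X$ land respectively in the first three summands. The proper part and the subspace part therefore occupy complementary direct summands and meet only in $0$, so \autoref{prop:sum-complementary-subspace} yields that $\minwedge{P_E}{P_F} + X = \minwedge{E_+}{F_+}$ is closed.

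The genuine content sits in \autoref{cor:projective-proper-closed}, which is already in hand, so the remaining work is essentially bookkeeping. The one step demanding real care — and the point I would regard as the crux — is verifying the transversality condition for \autoref{prop:sum-complementary-subspace}; but choosing $M_E$, $M_F$ as complements of the lineality spaces makes the block decomposition of $E \tensor F$ automatic, and the separation of the four tensor blocks renders the intersection argument immediate.
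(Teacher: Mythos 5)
Your proof is correct and follows essentially the same route as the paper's: decompose each cone via \autoref{prop:wedge-complement}, expand the projective cone bilinearly into a closed proper part (\autoref{cor:projective-proper-closed}) plus subspace parts (\autoref{prop:wedge-subspace}), and conclude with \autoref{prop:sum-complementary-subspace} using the complementarity of the tensor blocks. The only cosmetic difference is that you state and prove the additivity $\minwedge{(A+B)}{C} = \minwedge{A}{C} + \minwedge{B}{C}$ explicitly, which the paper leaves implicit.
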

\begin{proof}
	Choose complementary subspaces $\lineal(E_+)^\perp \subseteq E$ and $\lineal(F_+)^\perp \subseteq F$ of $\lineal(E_+)$ and $\lineal(F_+)$. Then, by \autoref{prop:wedge-complement}, we have $E_+ = \lineal(E_+) + \lineal(E_+)^\perp_+$ and $F_+ = \lineal(F_+) + \lineal(F_+)^\perp_+$, with $\lineal(E_+)^\perp_+$ and $\lineal(F_+)^\perp_+$ closed proper cones. It follows that
	\begin{align*}
		\minwedgeFIN{E_+}{F_+} \, =& \, \overbrace{\big(\minwedgeFIN{\lineal(E_+)}{\lineal(F_+)}\big)}^{\text{subspace}} + \overbrace{\big(\minwedgeFIN{\lineal(E_+)}{\lineal(F_+)^\perp_+}\big)}^{\text{subspace}} + \overbrace{\big(\minwedgeFIN{\lineal(E_+)^\perp_+}{\lineal(F_+)}\big)}^{\text{subspace}} \\\noalign{\smallskip}
		&+ \underbrace{\big(\minwedgeFIN{\lineal(E_+)^\perp_+}{\lineal(F_+)^\perp_+}\big)}_{\text{closed proper cone}}.
	\end{align*}
	(The first three terms are subspaces by \autoref{prop:wedge-subspace}; the fourth is a closed proper cone by \autoref{cor:projective-proper-closed}.)
	The three subspaces in the preceding formula are contained in the subspace $(\lineal(E_+) \tensor \lineal(F_+)) + (\lineal(E_+) \tensor \lineal(F_+)^\perp) + (\lineal(E_+)^\perp \tensor \lineal(F_+))$, whereas $\minwedgeFIN{\lineal(E_+)^\perp_+}{\lineal(F_+)^\perp_+}$ is a closed proper cone contained within $\lineal(E_+)^\perp \tensor \lineal(F_+)^\perp$. These containing subspaces are complementary, so it follows from \autoref{prop:sum-complementary-subspace} that $\minwedgeFIN{E_+}{F_+}$ is closed.
\end{proof}

\begin{remark}
	We should point out that nothing like \autoref{thm:projective-cone-closed} is true in the infinite\-/dimensional setting.
	In fact, the projective tensor product of closed proper cones in Banach spaces might not even be Archimedean (see e.g.~\cite[Remark 3.12]{Paulsen-Todorov-Tomforde}).
\end{remark}

\subsection{The closure of the projective cone; duality}
Using \autoref{thm:projective-cone-closed}, it is now relatively easy to prove the following.

\begin{theorem}
	\label{thm:projective-closure}
	Let $E$ and $F$ be finite\-/dimensional real vector spaces, and let $E_+ \subseteq E$, $F_+ \subseteq F$ be convex cones. Then the closure of the projective cone $\minwedgeFIN{E_+}{F_+}$ is the projective cone $\minwedgeFIN{\overline{E_+}}{\overline{F_+}}$.
\end{theorem}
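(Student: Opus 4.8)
The plan is to establish the two inclusions $\minwedge{\overline{E_+}}{\overline{F_+}} \subseteq \overline{\minwedge{E_+}{F_+}}$ and $\overline{\minwedge{E_+}{F_+}} \subseteq \minwedge{\overline{E_+}}{\overline{F_+}}$ separately, as their union forces equality. The second of these is where the real work has already been done: since $\overline{E_+}$ and $\overline{F_+}$ are closed convex cones, \autoref{thm:projective-cone-closed} tells us that $\minwedge{\overline{E_+}}{\overline{F_+}}$ is itself closed. The projective cone is visibly monotone in each of its arguments, so from $E_+ \subseteq \overline{E_+}$ and $F_+ \subseteq \overline{F_+}$ we obtain $\minwedge{E_+}{F_+} \subseteq \minwedge{\overline{E_+}}{\overline{F_+}}$. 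Passing to closures and using that the right-hand side is already closed then yields $\overline{\minwedge{E_+}{F_+}} \subseteq \minwedge{\overline{E_+}}{\overline{F_+}}$ with no further effort.

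For the reverse inclusion, I would argue by direct approximation. Let $u \in \minwedge{\overline{E_+}}{\overline{F_+}}$ and write $u = \sum_{i=1}^k x_i \tensor y_i$ with $x_i \in \overline{E_+}$ and $y_i \in \overline{F_+}$. For each $i$, choose sequences $(x_i^{(n)})_n$ in $E_+$ and $(y_i^{(n)})_n$ in $F_+$ with $x_i^{(n)} \to x_i$ and $y_i^{(n)} \to y_i$. Because $E$ and $F$ are finite-dimensional, the canonical bilinear map $E \times F \to E \tensor F$ is continuous, so each summand $x_i^{(n)} \tensor y_i^{(n)}$ converges to $x_i \tensor y_i$, and hence $\sum_{i=1}^k x_i^{(n)} \tensor y_i^{(n)} \to u$. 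Every approximant lies in $\minwedge{E_+}{F_+}$, whence $u \in \overline{\minwedge{E_+}{F_+}}$.

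Combining the two inclusions gives $\overline{\minwedge{E_+}{F_+}} = \minwedge{\overline{E_+}}{\overline{F_+}}$. I expect essentially all of the difficulty to be absorbed into \autoref{thm:projective-cone-closed}: without the closedness of the projective tensor product of closed cones, the first inclusion would itself demand that very statement. The approximation argument supplying the second inclusion, by contrast, is entirely routine and uses only continuity of the tensor map in finite dimensions, so no genuine obstacle remains once the earlier theorem is in hand.
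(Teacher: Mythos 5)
Your proposal is correct and takes essentially the same approach as the paper: the inclusion $\overline{\minwedge{E_+}{F_+}} \subseteq \minwedge{\overline{E_+}}{\overline{F_+}}$ follows from monotonicity together with the closedness supplied by \autoref{thm:projective-cone-closed}, and the reverse inclusion is the same sequence-approximation argument using continuity of the tensor map in finite dimensions. The only cosmetic difference is that the paper runs the approximation on elementary tensors $x \tensor y$ alone (which suffices since these generate $\minwedge{\overline{E_+}}{\overline{F_+}}$ and the closure of a convex cone is a convex cone), whereas you approximate a general finite sum directly.
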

\begin{proof}
	``$\supseteq$''. Given $x \in \overline{E_+}$, $y \in \overline{F_+}$, choose sequences $\{x_n\}_{n=1}^\infty$ and $\{y_n\}_{n=1}^\infty$ in $E_+$ and $F_+$ converging to $x$ and $y$, respectively. Then $x \tensor y = \lim_{n\to\infty} x_n \tensor y_n \in \overline{\minwedgeFIN{E_+}{F_+}}$.
	(Alternatively, use \autoref{prop:minwedge-weak-closure}.)
	
	``$\subseteq$''. Evidently, $\minwedgeFIN{E_+}{F_+} \subseteq \minwedgeFIN{\overline{E_+}}{\overline{F_+}}$.
	By \autoref{thm:projective-cone-closed}, $\minwedgeFIN{\overline{E_+}}{\overline{F_+}}$ is closed, so we also have $\overline{\minwedgeFIN{E_+}{F_+}} \subseteq \minwedgeFIN{\overline{E_+}}{\overline{F_+}}$.
\end{proof}

Other consequences of \autoref{thm:projective-cone-closed} include the following.

\begin{corollary}
	\label{cor:closures}
	Let $E,F$ be finite\-/dimensional, and let $E_+ \subseteq E$, $F_+ \subseteq F$ be convex cones. Then:
	\begin{enumerate}[label=(\alph*)]
		\item $\minwedgeFIN{E_+}{F_+}$ is dense in $\maxwedgeFIN{E_+}{F_+}$ if and only if $\minwedgeFIN{\overline{E_+}}{\overline{F_+}} = \maxwedgeFIN{\overline{E_+}}{\overline{F_+}}$;
		\item\label{itm:closed-duality} $(\maxwedgeFIN{E_+}{F_+})\algdual = \minwedgeFIN{E_+\algdual}{F_+\algdual}$.
	\end{enumerate}
\end{corollary}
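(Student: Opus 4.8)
The plan is to derive both parts from \autoref{thm:projective-cone-closed} and \autoref{thm:projective-closure}, supplemented by two elementary facts that I would record first. (i) The injective cone $\maxwedge{E_+}{F_+}$ is defined as an intersection of closed half-spaces, so it is automatically closed; moreover its definition refers to $E_+$ and $F_+$ only through the dual cones $E_+\algdual$ and $F_+\algdual$, and since a linear functional is nonnegative on a cone exactly when it is nonnegative on its closure, we have $(\overline{E_+})\algdual = E_+\algdual$ and $(\overline{F_+})\algdual = F_+\algdual$; hence $\maxwedge{E_+}{F_+} = \maxwedge{\overline{E_+}}{\overline{F_+}}$. (ii) I would use the general duality identity $(\minwedge{A_+}{B_+})\algdual = \maxwedge{A_+\algdual}{B_+\algdual}$ from Part~I \cite[\S 3.1]{ordered-tensor-products-i}.

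For part~(a), I would argue as follows. The inclusion $\minwedge{E_+}{F_+} \subseteq \maxwedge{E_+}{F_+}$ always holds, and the larger cone is closed by (i), so $\minwedge{E_+}{F_+}$ is dense in $\maxwedge{E_+}{F_+}$ if and only if $\overline{\minwedge{E_+}{F_+}} = \maxwedge{E_+}{F_+}$. By \autoref{thm:projective-closure} the left-hand side equals $\minwedge{\overline{E_+}}{\overline{F_+}}$, while by (i) the right-hand side equals $\maxwedge{\overline{E_+}}{\overline{F_+}}$. This is precisely the stated equivalence.

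For part~(b), I would first turn $\maxwedge{E_+}{F_+}$ into a dual cone. Applying the identity from (ii) with $A_+ = E_+\algdual$ and $B_+ = F_+\algdual$, and using that $E_+\algdualdual = \overline{E_+}$ and $F_+\algdualdual = \overline{F_+}$ (double dual equals closure in finite dimensions), I obtain $(\minwedge{E_+\algdual}{F_+\algdual})\algdual = \maxwedge{\overline{E_+}}{\overline{F_+}} = \maxwedge{E_+}{F_+}$, the last equality again by (i). Dualizing once more yields $(\maxwedge{E_+}{F_+})\algdual = (\minwedge{E_+\algdual}{F_+\algdual})\algdualdual = \overline{\minwedge{E_+\algdual}{F_+\algdual}}$. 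Finally, the dual cones $E_+\algdual$ and $F_+\algdual$ are closed, so \autoref{thm:projective-cone-closed} guarantees that $\minwedge{E_+\algdual}{F_+\algdual}$ is already closed, and the closure may be discarded, giving the claim.

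The real content sits entirely in \autoref{thm:projective-cone-closed}; once that is available, both parts reduce to bookkeeping with closures and dual cones. The only thing that needs care is to keep precise track of where closures enter and to invoke, at the right moments, that the injective cone is closed and is unaffected when its factors are replaced by their closures. I do not expect any individual step to present a genuine obstacle.
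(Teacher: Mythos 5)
Your proposal is correct and follows essentially the same route as the paper's own proof: both rest on the observations that $\maxwedge{E_+}{F_+} = \maxwedge{\overline{E_+}}{\overline{F_+}}$ is closed and that the projective cone of closed cones is closed (\autoref{thm:projective-cone-closed}, via \autoref{thm:projective-closure} for part~(a)). For part~(b) the paper likewise computes $(\minwedge{E_+\algdual}{F_+\algdual})\algdual = \maxwedge{E_+\algdualdual}{F_+\algdualdual} = \maxwedge{E_+}{F_+}$ and concludes by dualizing once more, exactly as you do.
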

\begin{proof}\ \par
	\begin{enumerate}[label=(\alph*)]
		\item It is clear from the definition that $\maxwedgeFIN{E_+}{F_+} = \maxwedgeFIN{\overline{E_+}}{\overline{F_+}}$, and that this cone is always closed.
		Thus, the conclusion follows immediately from \autoref{thm:projective-closure}.
		
		\item We have $(\minwedgeFIN{E_+\algdual}{F_+\algdual})\algdual = \maxwedgeFIN{E_+\algdualdual}{F_+\algdualdual} = \maxwedgeFIN{\overline{E_+}}{\overline{F_+}}$. Using again that $\maxwedgeFIN{E_+}{F_+} = \maxwedgeFIN{\overline{E_+}}{\overline{F_+}}$, we find that $(\minwedgeFIN{E_+\algdual}{F_+\algdual})\algdual = \maxwedgeFIN{E_+}{F_+}$. By \autoref{thm:projective-cone-closed}, $\minwedgeFIN{E_+\algdual}{F_+\algdual}$ is closed, so the result follows by duality.
		\qedhere
	\end{enumerate}
\end{proof}

In other words, if the spaces are finite\-/dimensional and the cones are closed, then we have full duality between the projective and injective cones.

\section{Retracts}
\label{sec:retracts}
In \mychpref{chp:many-examples}, we will look at the problem of determining whether or not $\minwedgeFIN{E_+}{F_+}$ and $\maxwedgeFIN{E_+}{F_+}$ coincide.
This problem can sometimes be reduced to lower dimensional spaces by using retracts.

Let $(F,F_+)$ be a finite\-/dimensional preordered vector space. Then a subspace $E \subseteq F$ is called an \index{order retract}\emph{order retract} if there exists a positive projection $F \to E$.
More generally, another preordered space $(G,G_+)$ is \emph{isomorphically an order retract} if there exist positive linear maps $T : G \to F$ and $S : F \to G$ such that $S \circ T = \id_G$.
Note that in this case $T$ is automatically bipositive (i.e.~a pullback) and $S$ is automatically a pushforward, and $\ran(T) \subseteq F$ is a retract of $F$ which is order isomorphic to $G$.

For simplicity, we shall omit the word \emph{order} when talking about retracts, for there is minimal chance of confusion with other types of retracts (e.g.~from topology).

Although retracts do not appear to be a very common notion in the theory of ordered vector spaces, some of the results from this section were discovered independently by Aubrun, Lami and Palazuelos \cite{Aubrun-et-al-i}.
	
\begin{remark}
	\label{rmk:retracts}
	Some basic properties of retracts:
	\begin{enumerate}[label=(\alph*)]
		\item if $E$ is a retract of $(F,F_+)$ and $F_+$ is a proper cone, then $E_+ := E \cap F_+$ is a proper cone (after all, $E_+$ is a subcone);
		\item\label{itm:retract:generating} if $E$ is a retract of $(F,F_+)$ and $F_+$ is generating, then $E_+$ is generating in $E$ (after all, there exists a surjective positive operator $F \to E$); 
		\item if $(E,E_+)$ is isomorphically a retract of $(F,F_+)$, and if $(F,F_+)$ is isomorphically a retract of $(G,G_+)$, then $(E,E_+)$ is isomorphically a retract of $(G,G_+)$.
		\item\label{itm:retract:dual} If $(E,E_+)$ is isomorphically a retract of $(F.F_+)$, then $(E\algdual,E_+\algdual)$ is isomorphically a retract of $(F\algdual,F_+\algdual)$. After all, if $T : E \to F$ and $S : F \to E$ are positive linear maps with $\id_E = S \circ T$, then $S\algdual : E\algdual \to F\algdual$ and $T\algdual : F\algdual \to E\algdual$ are positive linear maps with
		\[ \id_{E\algdual} = (\id_E)\algdual = (S \circ T)\algdual = T\algdual \circ S\algdual. \]
	\end{enumerate}
\end{remark}

By \myautoref{rmk:retracts}{itm:retract:generating}, if $F_+ \subseteq F$ is generating, then a retract $E \subseteq F$ is uniquely determined by its positive part $E_+ := E \cap F_+$, so instead of saying that \emph{$E$ is a retract of $(F,F_+)$} we will simply say that \emph{$E_+$ is a retract of $F_+$}.

\begin{example}
	\label{xmpl:retracts}
	We present some examples of retracts.
	\begin{enumerate}[label=(\alph*)]
		\item\label{itm:retract:ray} If $E$ is finite\-/dimensional and $E_+$ is a closed and proper convex cone, then every (not necessarily extremal) ray in $E_+$ is a retract. Indeed, let $x_0 \in E_+ \setminus \{0\}$ be arbitrary, and let $\varphi_0 \in E_+\algdual$ be a strictly positive linear functional. Then $\frac{1}{\varphi_0(x_0)}\cdot \varphi_0 \tensor x_0$ defines a positive projection onto $\spn(x_0)$.
		
		\item If $n \leq m$, then $\R_{\geq 0}^n$ is a retract of $\R_{\geq 0}^m$, for instance via the maps $\R^n \to \R^m$ and $\R^m \to \R^n$ given respectively by padding with zeroes and projecting onto the first $n$ coordinates. Although \ref{itm:retract:ray} shows that these are not the only retracts, we will show in \autoref{lem:simplex} that every retract of a simplex cone is once again a simplex cone.
		
		\item\label{itm:retracts:PSD} In the same manner, if $n \leq m$, then $\symm{n}_+$ is a retract of $\symm{m}_+$, and $\herm{n}_+$ is a retract of $\herm{m}_+$.
		
		\item\label{itm:retracts:Lorentz} If $n \leq m$, then $\Lorentz{n}$ is a retract of $\Lorentz{m}$ via the maps $T : \R^n \to \R^m$, $S : \R^m \to \R^n$ given by
		\begin{align*}
			T(x_1,\ldots,x_n) &= (x_1,\ldots,x_{n-1},0,\ldots,0,x_n);\\\noalign{\smallskip}
			S(y_1,\ldots,y_m) &= (y_1,\ldots,y_{n-1},y_m).
		\end{align*}

		\item $\R_{\geq 0}^n$ is a retract of $\symm{n}_+$ via the map $T : \R^n \to \symm{n}$ that maps $x$ to the diagonal matrix whose entries are specified by $x$, and the map $S : \symm{n} \to \R^n$ that maps $A$ to the diagonal of $A$.
		
		\item\label{itm:retracts:symm-herm} $\symm{n}_+$ is a retract of $\herm{n}_+$, via the maps $T : \symm{n} \to \herm{n}$, $A \mapsto A$ and $S : \herm{n} \to \symm{n}$, $A \mapsto \tfrac{1}{2}(A + \overline{A})$.
		
		\item $\herm{n}_+$ is a retract of $\symm{2n}_+$, via the maps $T : \herm{n} \to \symm{2n}$ and $S : \symm{2n} \to \herm{n}$ given by
		\[ T(A + iB) = \begin{pmatrix}
			A & -B \\
			B & A
		\end{pmatrix},\qquad S\begin{pmatrix}
			A_1 & A_2 \\
			A_3 & A_4
		\end{pmatrix} = \frac{1}{2}(A_1 + A_4) + \frac{i}{2}(A_3 - A_2). \qedhere \]
	\end{enumerate}
\end{example}

A more advanced example occurs in polyhedral cones. If $E_+$ is a proper and generating polyhedral cone with extremal directions $\{x_0,\ldots,x_k\}$, then a \index{vertex figure}\emph{vertex figure at $x_0$} is a subcone of the form $E_+ \cap \ker(\varphi_0)$, where $\varphi_0 \in E\algdual$ is a linear form such that $\varphi_0(x_0) < 0$ and $\varphi_0(x_i) > 0$ for all $i > 0$. Vertex figures are combinatorially dual to facets (e.g.~\cite[Theorem 11.5]{Brondsted}).

\begin{proposition}
	\label{prop:polyhedral-retracts}
	Let $E$ be finite\-/dimensional and let $E_+ \subseteq E$ be a proper and generating polyhedral cone. Then every vertex figure and every facet of $E_+$ is a retract.
\end{proposition}
\begin{proof}
	Let $x_0,\ldots,x_k$ be the extremal directions of $E_+$, and suppose that $\varphi_0 \in E\algdual$ defines a vertex figure at $x_0$ (i.e.~$\varphi_0(x_0) < 0$ and $\varphi_0(x_i) > 0$ for all $i > 0$).
	We show that $E_+ \cap \ker(\varphi_0)$ is a retract.
	
	By scaling, we may assume without loss of generality that $\varphi_0(x_0) = -1$.
	Define $P_{\varphi_0} : E \to E$ by $y \mapsto y + \varphi_0(y)x_0$. We show that $P_{\varphi_0}$ is a positive projection onto $\ker(\varphi_0)$. For all $y \in E$ we have
	\[ \varphi_0(P_{\varphi_0}(y)) = \varphi_0(y) + \varphi_0(y)\varphi_0(x_0) = \varphi_0(y) - \varphi_0(y) = 0, \]
	which shows that $\ran(P_{\varphi_0}) \subseteq \ker(\varphi_0)$. Furthermore, if $y \in \ker(\varphi_0)$, then $P_{\varphi_0}(y) = y + 0 = y$, so $P_{\varphi_0}$ is a projection onto $\ker(\varphi_0)$.
	To prove positivity, it suffices to show that $P_{\varphi_0}(x_i) \in E_+$ for all $i$. We distinguish two cases:
	\begin{itemize}
		\item For $i = 0$, we have $P_{\varphi_0}(x_0) = x_0 + \varphi_0(x_0)x_0 = x_0 - x_0 = 0 \in E_+$.
		\item For $i > 0$, we have $\varphi_0(x_i) > 0$, hence $P_{\varphi_0}(x_i) = x_i + \varphi_0(x_i)x_0 \in E_+$.
	\end{itemize}
	This shows that every vertex figure is a retract. Additionally, note that $\ker(P_{\varphi_0}) = \spn(x_0)$; this will be used in the second part of the proof.
	
	Now let $M \subseteq E_+$ be a facet. Then $M$ corresponds with an extremal direction $\psi_0 \in E_+\algdual \setminus \{0\}$ of the dual cone, in such a way that $M^\perp = \spn(\psi_0)$. Choose a vertex figure $N \subseteq E_+\algdual$ at $\psi_0$. The preceding argument shows that there are positive linear maps $T : \spn(N) \hookrightarrow E\algdual$ and $S : E\algdual \twoheadrightarrow \spn(N)$ such that $\id_{\spn(N)} = ST$. Furthermore, the construction gives us the additional property that $\ker(S) = \spn(\psi_0)$. Dualizing the retract (see \myautoref{rmk:retracts}{itm:retract:dual}) shows that $\spn(N)\algdual$ is isomorphically a retract of $E$, by means of the maps $S\algdual : \spn(N)\algdual \hookrightarrow E$ and $T\algdual : E \twoheadrightarrow \spn(N)\algdual$. Since $\ran(S\algdual) = {}^\perp \ker(S) = {}^\perp \{\psi_0\} = \spn(M)$, this shows that $M$ is a retract of $E_+$.
\end{proof}

\begin{remark}
	In fact, retractions give a \emph{geometric} duality between facets and vertex figures (in addition to the well-known \emph{combinatorial} duality). If $M \subseteq E_+$ is a facet corresponding to the extremal direction $\psi_0 \in E_+\algdual \setminus \{0\}$ of the dual cone, then one can show that:
	\begin{itemize}
		\item every vertex figure at $\psi_0$ admits a \emph{unique} positive projection (namely, the one from the proof of \autoref{prop:polyhedral-retracts});
		\item there is a bijective correspondence between vertex figures at $\psi_0$ and positive projections $E \to \spn(M)$ that map every element of $E_+ \setminus M$ in the relative interior of $M$.
	\end{itemize}
	As we have no use for this, the proof is omitted.
\end{remark}

Retracts can be useful in the theory of ordered tensor products. For instance, in \autoref{xmpl:projective-bipositive-fail-2} and \autoref{xmpl:injective-pushforward-fail-2} we proved that the projective cone does not preserve subspaces and the injective cone does not preserve quotients, but retracts are sufficiently rigid to be preserved by both (a similar role is played by complemented subspaces in the theory of normed tensor products).
The following result shows that retracts can also be useful for comparing the projective and injective cones.

\begin{proposition}[{cf.~\cite[Proposition 8]{Aubrun-et-al-i}}]
	\label{prop:retract}
	Let $G$ and $H$ be finite\-/dimensional real vector spaces, let $G_+ \subseteq G$, $H_+ \subseteq H$ be closed convex cones, and let $E \subseteq G$ and $F \subseteq H$ be retracts.
	If $\minwedgeFIN{E_+}{F_+} \neq \maxwedgeFIN{E_+}{F_+}$, then $\minwedgeFIN{G_+}{H_+} \neq \maxwedgeFIN{G_+}{H_+}$.
\end{proposition}
\begin{proof}
	We prove the contrapositive: assuming that $\minwedgeFIN{G_+}{H_+} = \maxwedgeFIN{G_+}{H_+}$, we prove that $\minwedgeFIN{E_+}{F_+} = \maxwedgeFIN{E_+}{F_+}$.
	By \myautoref{rmk:retracts}{itm:retract:dual}, we may identify $E\algdual$ with a retract of $G\algdual$.
	Choose positive projections $\pi_{E\algdual} : G\algdual \twoheadrightarrow E\algdual$ and $\pi_F : H \twoheadrightarrow F$.
	
	Let $T : E\algdual \to F$ be a positive linear map.
	Since every positive operator $G\algdual \to H$ factors through a simplex cone, we may choose positive operators $S_1 : G\algdual \to \R^m$ and $S_2 : \R^m \to H$ (where $\R^m$ carries the standard cone $\R_{\geq 0}^m$) so that the following diagram commutes:
	\begin{center}
		\begin{tikzcd}
			& G\algdual \arrow[dr, two heads, "\pi_{E\algdual}"] \arrow[rr, dashed, "S_1"] & & \R^m \arrow[rr, dashed, "S_2"] & &  H \arrow[dr, two heads, "\pi_F"] \\
			E\algdual \arrow[ur, hook] \arrow[rr, "\id_{E\algdual}"] \arrow[rrrrrr, bend right, "T"', rounded corners, to path={ (\tikztostart.south) |- ++(2,-.45) -- ++(4.8,0) \tikztonodes -| (\tikztotarget.south)}] & & E\algdual \arrow[rr, "T"] & & F \arrow[ur, hook] \arrow[rr, "\id_F"] & & F.
		\end{tikzcd}
	\end{center}
	Now $T : E\algdual \to F$ factors through a simplex cone, so $T \in \minwedgeFIN{E_+}{F_+}$.
\end{proof}

In \mychpref{chp:many-examples}, we will show that $\minwedgeFIN{G_+}{H_+} \neq \maxwedgeFIN{G_+}{H_+}$ for a large class of cones.
For combinations of standard cones, we will use \autoref{prop:retract} to reduce the problem to 3-dimensional cones (see \autoref{thm:many-examples-ii}).
However, it was shown in \cite[Lemma S14]{Aubrun-et-al-i} that most convex cones do not have retracts%
		\hair\footnote{More precisely, for $n \geq 4$, the set of $(n-1)$-dimensional convex bodies whose homogenizations have an $(n-1)$-dimensional retract is meagre with respect to the Hausdorff measure.}%
, so having retracts is an exceptional property.
Hence, for non-standard cones $G_+$ and $H_+$, different strategies are needed to show that $\minwedgeFIN{G_+}{H_+} \neq \maxwedgeFIN{G_+}{H_+}$.
Several such techniques will be discussed in the next chapter.

\chapter{Many examples where the projective and injective cone differ}
\label{chp:many-examples}

A question which has attracted considerable attention is under which circumstances the projective cone $\minwedge{E_+}{F_+}$ is dense in the injective cone $\maxwedge{E_+}{F_+}$.
Birnbaum \cite[Prop.{} 3]{Birnbaum} showed that this is always the case if $E$ and $F$ are locally convex lattices, and gave an example which shows that it is not true in general.
However, the infinite-dimensional version of this problem does not appear to be well understood.

A lot more is known in the finite-dimensional case (with closed, proper and generating cones).
In this setting, the projective cone is automatically closed, so the question is now whether or not the two cones coincide.
In \cite[p.{} 197]{Barker-monotone}, Barker asked to find precise necessary and sufficient conditions for the two cones to coincide, and later formulated the following conjecture:

\begin{conjecture}[{Barker, \cite[p.{} 277]{Barker-survey}}]
	\label{conj:Barker}
	Let $E,F$ be finite\-/dimensional, and let $E_+ \subseteq E$, $F_+ \subseteq F$ be closed, proper and generating convex cones.
	Then $\minwedge{E_+}{F_+} \neq \maxwedge{E_+}{F_+}$ unless at least one of $E_+$ and $F_+$ is a simplex cone.
\end{conjecture}

Various partial results in this direction have been obtained over the years.
Before Barker formulated his conjecture, it had already been proved in the case that $F_+ = E_+\algdual$ by Barker and Loewy \cite{Barker-Loewy}, and in the case that $E_+$ and $F_+$ are polyhedral by Poole \cite[Thm.{} 5.15]{Poole-dissertation}.
More recently, Huber and Netzer \cite{Huber-Netzer} proved it in the case that $E_+$ is polyhedral and $F_+$ is a positive semidefinite cone (or vice versa).

In this chapter, we prove \autoref{conj:Barker} for nearly all%
	\hair\footnote{The term `nearly all' has a precise meaning; namely, up to a $\sigma$-porous set. Since the set of closed, proper and generating convex cones which are not smooth or not strictly convex form a $\sigma$-porous set \cite{Zamfirescu-smooth-strictly-convex}, the results in this chapter prove \autoref{conj:Barker} for nearly all $E_+$ and $F_+$.}
convex cones, thereby also providing new proofs for each of the aforementioned known cases.
First, in \mysecref{sec:primal-tensor-dual}, we give another proof for the case $F_+ = E_+\algdual$, and we extend the aforementioned result of Barker and Loewy \cite{Barker-Loewy} to non-proper closed cones.
Then, in \mysecref{sec:polyhedral}, we give another proof for the polyhedral case, originally due to Poole \cite[Thm.{} 5.15]{Poole-dissertation}.
Our main contribution comes in \mysecref{sec:smooth-strictly-convex}, where we prove the conjecture when $\dim(E) \geq \dim(F)$ and $E_+$ is smooth or strictly convex.
Finally, in \mysecref{sec:standard-cones}, we prove it for all possible combinations of standard cones (polyhedral cones, second-order cones, and positive semidefinite cones), thereby also providing a new proof of the mixed polyhedral/positive semidefinite case settled by Huber and Netzer \cite{Huber-Netzer}.

Although some of the cases we prove had been known before, all proofs in this chapter are original.
However, as this manuscript was being written, the results from this chapter were superseded by independent work of Aubrun, Lami, Palazuelos and Pl\'avala \cite{Aubrun-et-al-ii}, who were able to prove \autoref{conj:Barker} in full generality.
Our results were obtained independently around the same time, and the proofs are completely different.

\section{The tensor product of a closed convex cone with its dual}
\label{sec:primal-tensor-dual}

The simplest case where the projective and injective cone are different occurs when considering the tensor product of a closed convex cone with its dual.
The main result of this section, \autoref{thm:min-equals-max}, is a slight extension of a well-known result of Barker and Loewy \cite[Prop.~3.1]{Barker-Loewy} (see also \cite[Thm.~4]{Tam-projective-closed}), who proved it for convex cones that are closed, proper and generating.
The proof below is much simpler than the original proof in \cite{Barker-Loewy}, since it was not realized at the time that the projective tensor product of closed convex cones is automatically closed, but comparable in size with Tam's alternative proof \cite[Thm.{} 4]{Tam-projective-closed}.

If $E_+$ and $F_+$ are closed, then by \autoref{cor:separable-maps-factor} one has $\minwedgeFIN{E_+\algdual}{F_+} = \maxwedgeFIN{E_+\algdual}{F_+}$ if and only if every positive linear map $E \to F$ factors through a simplex cone. This language makes it much easier to think about the difference between projective and injective cones.

If $T$ or $S$ factors through a simplex cone, then so does the composition $S \circ T$.
This shows that the separable positive operators form an \emph{ideal} in the \emph{semiring} of positive operators.
(Ideals of operators also play an important role in the theory of normed tensor products; e.g.~\cite{Defant-Floret}. We won't make much use of this terminology.)

\begin{theorem}[{cf.~\cite[Prop.{} 3.1]{Barker-Loewy}, \cite[Thm.{} 4]{Tam-projective-closed}}]
	\label{thm:min-equals-max}
	Let $E$ be finite\-/dimensional and let $E_+ \subseteq E$ be a closed convex cone. Then the following are equivalent:
	\begin{enumerate}[label=(\roman*)]
		\item\label{itm:minmax:simplex} $E_+$ is a simplex cone;
		\item\label{itm:minmax:id} $\id_E : E \to E$ is separable \textup(i.e.~factors through a simplex cone\textup);
		\item\label{itm:minmax:trace} for every positive linear map $T : E \to E$, one has $\tr(T) \geq 0$;
		\item\label{itm:minmax:EdxF} for every finite\-/dimensional real vector space $F$ and every closed convex cone $F_+ \subseteq F$, one has $\minwedgeFIN{E_+\algdual}{F_+} = \maxwedgeFIN{E_+\algdual}{F_+}$;
		\item\label{itm:minmax:FxE} for every finite\-/dimensional real vector space $F$ and every closed convex cone $F_+ \subseteq F$, one has $\minwedgeFIN{F_+}{E_+} = \maxwedgeFIN{F_+}{E_+}$;
		\item\label{itm:minmax:EdxE} $\minwedgeFIN{E_+\algdual}{E_+} = \maxwedgeFIN{E_+\algdual}{E_+}$.
	\end{enumerate}
\end{theorem}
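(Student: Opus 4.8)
The plan is to route every one of the six conditions through the single statement that $\id_E$ lies in $\minwedge{E_+\algdual}{E_+}$, that is, that $\id_E$ factors through a simplex cone (condition (ii)). The two main tools are \autoref{cor:separable-maps-factor}, which for closed cones identifies $\maxwedge{E_+\algdual}{F_+}$ with \emph{all} positive maps $E \to F$ and $\minwedge{E_+\algdual}{F_+}$ with those that factor through a simplex cone, and the ideal property recorded just before the theorem: if one of two composable positive maps factors through a simplex cone, then so does their composite. With these in hand most of the implications become bookkeeping, and a single genuinely geometric lemma remains.

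First I would dispatch the block (ii), (iv), (v), (vi). The implication (i) $\Rightarrow$ (ii) is trivial, since a linear isomorphism $E \cong \R^n$ carrying $E_+$ to $\R_{\geq 0}^n$ exhibits $\id_E$ as a composite of two positive maps through that simplex cone. For (ii) $\Rightarrow$ (iv) one writes an arbitrary positive $T : E \to F$ as $T = T \circ \id_E$ and invokes the ideal property; dually, (ii) $\Rightarrow$ (v) follows by writing a positive map into $E$ as $\id_E \circ (\,\cdot\,)$, after using \autoref{cor:separable-maps-factor} to read $\maxwedge{F_+}{E_+}$ as the positive maps $F\algdual \to E$. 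Conversely, specialising $F := E$, $F_+ := E_+$ in (iv) (respectively $F := E\algdual$, $F_+ := E_+\algdual$ in (v)) yields exactly (vi), and (vi) contains (ii) as the special case $T = \id_E$. This closes the loop (ii) $\Leftrightarrow$ (iv) $\Leftrightarrow$ (v) $\Leftrightarrow$ (vi).

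Next I would treat (iii). It says precisely that $\tr$ is nonnegative on $\maxwedge{E_+\algdual}{E_+}$, the cone of positive operators $E \to E$; equivalently $\tr \in (\maxwedge{E_+\algdual}{E_+})\algdual$, which by \myautoref{cor:closures}{itm:closed-duality} equals $\minwedge{E_+}{E_+\algdual} \subseteq E \tensor E\algdual$. The key computation is that, under the identification of $(E\algdual \tensor E)\algdual$ with $E \tensor E\algdual$, the trace functional is exactly the canonical element $\sum_i e_i \tensor e_i\algdual$ (one checks $\langle \varphi \tensor y, \sum_i e_i \tensor e_i\algdual\rangle = \varphi(y) = \tr(\varphi \tensor y)$). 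Applying the swap isomorphism $E \tensor E\algdual \cong E\algdual \tensor E$, which carries $\minwedge{E_+}{E_+\algdual}$ onto $\minwedge{E_+\algdual}{E_+}$, sends this element to $\id_E \in E\algdual \tensor E \cong \L(E,E)$. Hence (iii) is equivalent to $\id_E \in \minwedge{E_+\algdual}{E_+}$, which by \autoref{cor:separable-maps-factor} is once more (ii).

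It remains to prove (ii) $\Rightarrow$ (i), which I expect to be the main obstacle, since it is the one place where an abstract factorisation must be turned into genuine lattice structure. Write $\id_E = S \circ R$ with $R : E \to \R^n$, $S : \R^n \to E$ positive for the standard cone. The idea is that $P := R \circ S$ is a positive idempotent on $\R^n$ (indeed $P^2 = R(SR)S = RS = P$) with range $M := R[E]$, and that $R$ is an order isomorphism from $(E, E_+)$ onto $(M, M \cap \R_{\geq 0}^n)$ with inverse $S|_M$. The crux is then to see that the closed cone $M \cap \R_{\geq 0}^n$ is a \emph{lattice} cone: for $a, b \in M$ the element $P(a \vee b)$ is their supremum in the induced order, since it dominates $a$ and $b$ ($P$ being positive and fixing $M$ pointwise) while lying below any common upper bound $c \in M$ (as $c = P(c) \geq P(a \vee b)$). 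By the characterisation of simplex cones as the closed lattice cones recalled in \mysecref{sec:notation}, $M \cap \R_{\geq 0}^n$, and therefore $E_+$, is a simplex cone, giving (i) and completing the equivalences.
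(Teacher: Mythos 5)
Your proof is correct, and for most of the equivalences it follows the same route as the paper: (i)$\Rightarrow$(ii) via the dual basis, the loop (ii)$\Rightarrow$(iv),(v)$\Rightarrow$(vi)$\Rightarrow$(ii) via \autoref{cor:separable-maps-factor} and the ideal property of simplex-factorable maps, and (ii)$\Leftrightarrow$(iii) by recognizing $\tr$ as the transpose of $\id_E$ and dualizing with \myautoref{cor:closures}{itm:closed-duality}. Where you genuinely diverge is the key implication (ii)$\Rightarrow$(i). The paper writes $\id_E = \sum_i \varphi_i \tensor x_i$ with $\varphi_i \in E_+\algdual$, $x_i \in E_+$, deduces that $E_+$ is a proper, generating, \emph{polyhedral} cone, and then counts: expanding $\id_E$ over extremal directions $\psi_i$ of $E_+\algdual$ and $y_j$ of $E_+$ and using extremality of the $y_j$, it shows $\dim(E) = \tr(\id_E)$ equals the number of extremal rays of $E_+$, forcing those rays to form a basis. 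You instead observe that $P := R \circ S$ is a positive idempotent on $\R^n$ whose range $M$ is order-isomorphic to $(E,E_+)$ via $R$ and $S|_M$, and that $M \cap \R_{\geq 0}^n$ is a closed lattice cone (the supremum of $a,b \in M$ being $P(a \vee b)$, by positivity of $P$ and the fact that $P$ fixes $M$ pointwise), whence $E_+$ is a simplex cone by the characterization of simplex cones as closed lattice cones, \cite[Theorem 3.21]{Cones&Duality}, recalled in \mysecref{sec:notation}. Both arguments are sound; the trade-off is that the paper's counting argument is essentially self-contained (and exhibits polyhedrality of $E_+$ along the way), whereas yours outsources the geometric content to the cited lattice-theoretic theorem but is shorter and more conceptual: it in effect proves directly that every order retract of a simplex cone is again a simplex cone, a statement the paper only obtains later (\autoref{cor:simplex}) as a corollary of this theorem combined with \autoref{prop:retract}.
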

\begin{proof}
	$\myref{itm:minmax:simplex} \implies \myref{itm:minmax:id}$. Let $x_1,\ldots,x_d$ be a basis of $E$ which generates $E_+$, and let $x_1\algdual,\ldots,x_d\algdual$ be the corresponding dual basis. Then $x_1\algdual,\ldots,x_d\algdual \in E_+\algdual$, and $\id_E = \sum_{i=1}^d x_i\algdual \tensor x_i$.
	
	$\myref{itm:minmax:id} \Longleftrightarrow \myref{itm:minmax:trace}$. The trace $\tr \in L(E,E)\algdual = (E\algdual \tensor E)\algdual = E \tensor E\algdual$ is the transpose of the identity $\id_E \in L(E,E) = E\algdual \tensor E$. Since $(\maxwedgeFIN{E_+\algdual}{E_+})\algdual = \minwedgeFIN{E_+}{E_+\algdual}$, we see that the trace defines a positive linear functional (i.e.~$\tr \in (\maxwedgeFIN{E_+\algdual}{E_+})\algdual$) if and only if $\id_E$ is separable (i.e.~$\id_E \in \minwedgeFIN{E_+\algdual}{E_+}$).
	
	$\myref{itm:minmax:id} \implies \myref{itm:minmax:EdxF}$ and $\myref{itm:minmax:id} \implies \myref{itm:minmax:FxE}$. Since $\id_E : E \to E$ factors through a simplex cone, so do all positive linear maps to or from $E$:
	\begin{center}
		\begin{tikzcd}[column sep=small, row sep=tiny]
			& & & \R^m \arrow[ddr] & & & & \R^m \arrow[ddr] & & \\
			& & & & & \quad\text{or}\quad & & & & & \\
			F\algdual \arrow[rr] & & E \arrow[uur] \arrow[rr, "\id_E"] & & E & & E \arrow[uur] \arrow[rr, "\id_E"] & & E \arrow[rr] & & F.
		\end{tikzcd}
	\end{center}
	Therefore $\minwedgeFIN{F_+}{E_+} = \maxwedgeFIN{F_+}{E_+}$ and $\minwedgeFIN{E_+\algdual}{F_+} = \maxwedgeFIN{E_+\algdual}{F_+}$.
	
	$\myref{itm:minmax:EdxF} \implies \myref{itm:minmax:EdxE}$ and $\myref{itm:minmax:FxE} \implies \myref{itm:minmax:EdxE}$. Clear.
	
	$\myref{itm:minmax:EdxE} \implies \myref{itm:minmax:id}$. Note that $\id_E : E \to E$ is positive.
	
	$\myref{itm:minmax:id} \implies \myref{itm:minmax:simplex}$. Write $\id_E = \sum_{i=1}^k \varphi_i \tensor x_i$ with $\varphi_1,\ldots,\varphi_k \in E_+\algdual$, $x_1,\ldots,x_k \in E_+$. For arbitrary $x \in E_+$ we have $x = \id_E(x) = \sum_{i=1}^k \varphi_i(x)x_i$ with $\varphi_1(x),\ldots,\varphi_k(x) \geq 0$, so we see that $E_+$ is generated by $x_1,\ldots,x_k$. In particular, it follows that $E_+$ is a polyhedral cone.
	Furthermore, since we have $E = \ran(\id_E) \subseteq \spn(x_1,\ldots,x_k)$, it follows that $x_1,\ldots,x_k$ must span $E$, so $E_+$ is generating. Dually, if $x,-x \in E_+$, then $\varphi_1(x) = \ldots = \varphi_k(x) = 0$, hence $x = \id_E(x) = \sum_{i=1}^k \varphi_i(x)x_i = 0$, which shows that $E_+$ is a proper cone.
	
	Since both $E_+$ and $E_+\algdual$ are proper polyhedral cones, each has a finite number of extremal rays generating the cone. Let $\{\psi_i\}_{i=1}^n$ and $\{y_j\}_{j=1}^m$ be (representatives of) the extremal directions of $E_+\algdual$ and $E_+$, respectively. Writing every $\varphi_i$ and every $x_j$ as a positive combination of the extremal rays, we can expand our expression of $\id_E$ to
	\[ \id_E = \sum_{i=1}^n \sum_{j=1}^m \lambda_{ij} \psi_i \tensor y_j,\quad\text{with $\lambda_{ij} \geq 0$ for all $i$ and all $j$}. \]
	For every $j$ we have $y_j = \id_E(y_j) = \sum_{i=1}^n \sum_{k=1}^m \lambda_{ik} \psi_i(y_j) y_k = \sum_{i=1}^n \lambda_{ij} \psi_i(y_j) y_j$, for by extremality of $y_j$ the terms $\lambda_{ik} \psi_i(y_j) y_k$ with $k \neq j$ must be zero. It follows that $\sum_{i=1}^n \lambda_{ij} \psi_i(y_j) = 1$ for all $j$. Therefore:
	\[ \dim(E) = \tr(\id_E) = \tr\left(\sum_{i=1}^n \sum_{j=1}^m \lambda_{ij} \psi_i \tensor y_j \right) = \sum_{j=1}^m \sum_{i=1}^n \lambda_{ij} \psi_i(y_j) = \sum_{j=1}^m 1 = m. \]
	Since $\spn(y_1,\ldots,y_m) = \spn(E_+) = E$, it follows that $y_1,\ldots,y_m$ is a basis of $E$. This proves that $E_+$ is a simplex cone.
\end{proof}

\begin{remark}
	Taking the tensor product of a space with its dual is also a common technique in the theory of normed tensor products.
	For instance, \autoref{thm:min-equals-max} is very similar to a result about the approximation property; see \cite[Theorem 5.6]{Defant-Floret}.
\end{remark}

The following corollary is immediate.

\begin{corollary}
	\label{cor:min-equals-max}
	Let $E_+ \subseteq \R^n$ be a self-dual cone. Then $\minwedgeFIN{E_+}{E_+} = \maxwedgeFIN{E_+}{E_+}$ if and only if $E_+$ is a simplex cone.
\end{corollary}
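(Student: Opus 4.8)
The plan is to reduce the statement directly to the equivalence $\myref{itm:minmax:simplex} \Longleftrightarrow \myref{itm:minmax:EdxE}$ of \autoref{thm:min-equals-max}. By definition, a self-dual cone $E_+ \subseteq \R^n$ is one for which the inner-product identification $\R^n \cong (\R^n)\algdual$ carries $E_+$ onto its own dual cone; equivalently, there is a linear isomorphism $J : E \to E\algdual$ with $J[E_+] = E_+\algdual$. So the only real task is to transport the tensor cones along $J$ and then quote the theorem.

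First I would record that $J$ respects the tensor constructions. The map $J \tensor \id_E : E \tensor E \to E\algdual \tensor E$, sending $x \tensor y \mapsto Jx \tensor y$, is a linear isomorphism, and since $J[E_+] = E_+\algdual$ it maps the generators $x \tensor y$ ($x,y \in E_+$) of $\minwedge{E_+}{E_+}$ bijectively onto the generators of $\minwedge{E_+\algdual}{E_+}$; hence $(J \tensor \id_E)[\minwedge{E_+}{E_+}] = \minwedge{E_+\algdual}{E_+}$. The same isomorphism carries $\maxwedge{E_+}{E_+}$ onto $\maxwedge{E_+\algdual}{E_+}$, by functoriality of the injective cone under positive isomorphisms in each factor (from its defining description via the dual functionals, or from the functoriality established in Part~I). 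Consequently $\minwedge{E_+}{E_+} = \maxwedge{E_+}{E_+}$ if and only if $\minwedge{E_+\algdual}{E_+} = \maxwedge{E_+\algdual}{E_+}$.

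Finally I would invoke \autoref{thm:min-equals-max}: the latter equality is exactly condition $\myref{itm:minmax:EdxE}$, which is equivalent to $E_+$ being a simplex cone, condition $\myref{itm:minmax:simplex}$. Chaining these two equivalences gives the corollary. There is essentially no obstacle here — this is why the result is labelled immediate — and the one point that warrants a line of verification is that $J \tensor \id_E$ preserves both tensor cones; for the projective cone this is transparent from the generating description, and for the injective cone it is the standard functoriality under a positive linear isomorphism.
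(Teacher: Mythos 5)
Your proposal is correct and is essentially the paper's own argument: the paper declares the corollary ``immediate'' from \autoref{thm:min-equals-max}, the implicit step being exactly the identification $E \cong E\algdual$ carrying $E_+$ onto $E_+\algdual$ that you spell out, which turns condition \myref{itm:minmax:EdxE} into the equality $\minwedge{E_+}{E_+} = \maxwedge{E_+}{E_+}$. Your explicit verification that $J \tensor \id_E$ transports both cones (and the tacit fact that self-dual cones are automatically closed, so the theorem applies) is just the detail the paper leaves to the reader.
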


In particular, it follows that $\minwedgeFIN{\symm{n}_+}{\symm{n}_+} \neq \maxwedgeFIN{\symm{n}_+}{\symm{n}_+}$ and $\minwedgeFIN{\herm{n}_+}{\herm{n}_+} \neq \maxwedgeFIN{\herm{n}_+}{\herm{n}_+}$ whenever $n \geq 2$.
This has been known for a long time in relation to quantum theory and $C^*$-algebras, and is related to the difference between positive and \emph{completely positive} operators.
The interested reader is encouraged to refer to the expository article by Ando \cite[\S 2]{Ando}.

\section{Tensor products of polyhedral cones}
\label{sec:polyhedral}

In this section, we prove \autoref{conj:Barker} for polyhedral cones (see \autoref{thm:polyhedral} below).
As this manuscript was being written, the same result was proved independently in simultaneous work by Aubrun, Lami, and Palazuelos \cite[Result 2]{Aubrun-et-al-i}, and we later discovered that it had already been proved by M. Poole in the 1970s \cite[Thm.{} 5.15]{Poole-dissertation}.
Our proof is different from the proofs of Poole and of Aubrun, Lami, and Palazuelos, and uses a simple combinatorial argument in terms of the face lattice.

First, we use retracts to reduce the problem to the 3-dimensional case.
Using the results from \mysecref{sec:retracts} and \mysecref{sec:primal-tensor-dual}, we can prove the following lemmas.

\begin{lemma}
	\label{lem:simplex}
	Every retract of a finite\-/dimensional simplex cone is a simplex cone.
\end{lemma}
\begin{proof}
	Let $F_+$ be a finite\-/dimensional simplex cone and let $E_+$ be a retract of $F_+$. By \autoref{thm:min-equals-max}, we have $\minwedgeFIN{E_+\algdual}{F_+} = \maxwedgeFIN{E_+\algdual}{F_+}$, so it follows from \autoref{prop:retract} that $\minwedgeFIN{E_+\algdual}{E_+} = \maxwedgeFIN{E_+\algdual}{E_+}$. Another application of \autoref{thm:min-equals-max} shows that $E_+$ is a simplex cone.
\end{proof}

\begin{lemma}
	\label{lem:polyhedral-retracts}
	Let $E$ be finite\-/dimensional and let $E_+ \subseteq E$ be a proper and generating polyhedral cone. Then $E_+$ is a simplex cone if and only if every $3$-dimensional retract of $E_+$ is a simplex cone.
\end{lemma}
\begin{proof}
	If $\dim(E) \leq 2$, then $E_+$ is automatically a simplex cone, and there are no $3$-dimensional retracts, so the statement is vacuously true.
	
	Assume $\dim(E) \geq 3$. If $E_+$ is a simplex cone, then every retract of $E_+$ is a simplex cone, by \autoref{lem:simplex}. Conversely, if $E_+$ is not a simplex cone, then one of the following must be true (use \cite[Theorem 12.19]{Brondsted} and homogenization):
	\begin{itemize}
		\item $\dim(E_+) = 3$;
		\item $E_+$ has a facet that is not a simplex cone;
		\item $E_+$ has a vertex figure that is not a simplex cone.
	\end{itemize}
	Since facets and vertex figures are retracts (\autoref{prop:polyhedral-retracts}), it follows by induction that $E_+$ has a $3$-dimensional retract that is not a simplex cone.
\end{proof}

\newcommand{\facet}[2]{\mathcal F_{#1,#2}}
\newcommand{\myfacetformula}[6]{%
	(#5 == #3) || (mod(#5, #1) == mod(#3 + 1, #1)) || (#6 == #4) || (mod(#6, #2) == mod(#4 + 1, #2)) %
}
\newcommand{\myintersectionformulaA}[6]{%
	(((#5 == 1) || (#5 == 2)) && ((#6 == 3) || (#6 == 4))) || (((#5 == 3) || (#5 == 4)) && ((#6 == 1) || (#6 == 2))) %
}
\newcommand{\myintersectionformulaB}[6]{%
	(((#5 == 1) || (#5 == 2)) && ((#6 == 4) || (#6 == 5))) || (((#5 == 3) || (#5 == 4)) && ((#6 == 2) || (#6 == 3))) %
}
\newcommand{\myintersectionformulaAB}[6]{%
	( \myintersectionformulaA{#1}{#2}{#3}{#4}{#5}{#6} ) && ( \myintersectionformulaB{#1}{#2}{#3}{#4}{#5}{#6} ) %
}
\newcommand{\drawface}[7]{
	\let\myformula=#6
	\begin{scope}[scale=.25,yscale=-1]
		\draw (0,0) grid (#2,#1);
		\begin{scope}[xshift=-5mm,yshift=-5mm]
			\foreach \x in {1,...,#1} {
				\foreach \y in {1,...,#2} {
					\pgfmathtruncatemacro{\res}{ \myformula{#1}{#2}{#3}{#4}{\x}{\y} }
					\ifnum\res=1
						\draw (\y,\x) ++(-.5,-.5) -- ++(1,1) ++(-1,0) -- ++(1,-1);
					\fi
				}
			}
		\end{scope}
		\pgfmathsetmacro{\xhalf}{#2 / 2}
		\pgfmathsetmacro{\yhalf}{#1 / 2}
		\coordinate (P#3#4-L) at (0,\yhalf);
		\coordinate (P#3#4-R) at (#2,\yhalf);
		\def\tmpA{#7}
		\def\tmpB{axes}
		\ifx\tmpA\tmpB
			\draw[->,>=stealth'] (-.75,0) -- (-.75,#1) node[left] {$m$};
			\draw[->,>=stealth'] (0,-.75) -- (#2,-.75) node[above] {$n$};
		\fi
		\def\tmpB{nolabel}
		\ifx\tmpA\tmpB
		\else
			\node[below,inner sep=3mm] at (\xhalf,#1) {$#5$};
		\fi
	\end{scope}
}

All that remains is to prove that the projective and injective tensor products of two $3$-dimensional polyhedral cones are different, unless one of the two is a simplex cone.
For this we use a combinatorial argument, based on the results from \mychpref{chp:projective}.
(For a different proof, see \cite{Aubrun-et-al-i}.)

The proof essentially boils down to finding a combinatorial obstruction.
The high-level idea behind the proof is that, if $\minwedgeFIN{E_+}{F_+} = \maxwedgeFIN{E_+}{F_+}$, then $(\minwedgeFIN{E_+}{F_+})\algdual = \minwedgeFIN{E_+\algdual}{F_+\algdual}$, so by \autoref{thm:projective-extremal-rays} we known exactly what the extremal rays of $\minwedgeFIN{E_+}{F_+}$ and $(\minwedgeFIN{E_+}{F_+})\algdual$ are.
This gives us enough information to determine the face lattice of $\minwedgeFIN{E_+}{F_+}$.
However, if both $E_+$ and $F_+$ have at least $4$ extremal rays, then it turns out that the lattice thus obtained is not \emph{graded} (in other words, it contains maximal chains of different lengths), which contradicts a well-known property of polyhedral cones.

The proof below uses slightly different terminology than the preceding high-level idea, and does not proceed by contradiction.

\begin{lemma}
	\label{lem:polyhedral-3x3}
	Let $E_+$ and $F_+$ be proper and generating polyhedral cones in $\R^3$. Then $\minwedgeFIN{E_+}{F_+} = \maxwedgeFIN{E_+}{F_+}$ if and only if at least one of $E_+$ and $F_+$ is a simplex cone.
\end{lemma}
\begin{proof}
	A proper and generating polyhedral cone in $\R^3$ is the homogenization of a polygon. Let $v_1,\ldots,v_m \in \R^3$ be (representatives of) the extremal directions of $E_+$ in such a way that the neighbours of $v_i$ are $v_{i-1}$ and $v_{i+1}$ (modulo $m$). In the same way, let  $w_1,\ldots,w_n \in F_+$ be the extremal directions of $F_+$ (in cyclic order). Furthermore, let $\varphi_1,\ldots,\varphi_m \in E_+\algdual$ and $\psi_1,\ldots,\psi_n \in F_+\algdual$ be the extremal directions of $E_+\algdual$ and $F_+\algdual$, in such a way that $\varphi_i$ (resp.~$\psi_j$) represents the facet of $E_+$ (resp.~$F_+$) that contains $v_i$ and $v_{i+1}$ (resp.~$w_j$ and $w_{j+1}$).
	
	If $E_+$ or $F_+$ is a simplex cone, then $\minwedgeFIN{E_+}{F_+} = \maxwedgeFIN{E_+}{F_+}$ by \autoref{thm:min-equals-max}. So assume that neither $E_+$ nor $F_+$ is a simplex cone, i.e.~$m,n \geq 4$. We show by a combinatorial argument that $(\minwedgeFIN{E_+}{F_+})\algdual$ must be larger than $\minwedgeFIN{E_+\algdual}{F_+\algdual} = (\maxwedgeFIN{E_+}{F_+})\algdual$.
	
	By \autoref{thm:projective-extremal-rays}, the extremal directions of the projective cone $\minwedgeFIN{E_+}{F_+}$ are given by $\{v_i \tensor w_j \, : \, i\in [m], j \in [n]\}$, and the extremal directions of $\minwedgeFIN{E_+\algdual}{F_+\algdual}$ are given by $\{\varphi_i \tensor \psi_j \, : \, i\in [m], j \in [n]\}$. Furthermore, by \myautoref{cor:rank-one}{itm:injective-rank-one}, the extremal directions of $\minwedgeFIN{E_+\algdual}{F_+\algdual}$ are also extremal for the (larger) cone $(\minwedgeFIN{E_+}{F_+})\algdual = \maxwedgeFIN{E_+\algdual}{F_+\algdual}$. To complete the proof, we show that this larger cone must have extremal directions which are not of the form $\varphi_i \tensor \psi_j$. (By \myautoref{cor:rank-one}{itm:injective-rank-one}, these must have rank $\geq 2$.) Equivalently, the projective cone $\minwedgeFIN{E_+}{F_+}$ must have facets which cannot be written as the tensor product of a facet in $E_+$ and a facet in $F_+$.
	
	Given $k \in [m]$ and $\ell \in [n]$, let $\facet{k}{\ell}$ denote the facet of $\minwedgeFIN{E_+}{F_+}$ corresponding to the extremal direction $\varphi_k \tensor \psi_\ell \in (\minwedgeFIN{E_+}{F_+)}\algdual = \maxwedgeFIN{E_+\algdual}{F_+\algdual}$.
	The extremal directions in $\facet{k}{\ell}$ are those $v_i \tensor w_j$ with $i \in \{k,k+1\}$ or $j \in \{\ell,\ell+1\}$, for one has $v_i \tensor w_j \in \facet{k}{\ell}$ if and only if $0 = \langle v_i \tensor w_j , \varphi_k \tensor \psi_\ell \rangle = \langle v_i , \varphi_k \rangle \langle w_j , \psi_\ell \rangle$.
	In particular, $\facet{k}{\ell}$ contains $2m + 2n - 4$ extremal rays.
	
	We proceed to consider a specific intersection of the $\facet{k}{\ell}$, namely
	\[ C := \facet{1}{1} \cap \facet{1}{2} \cap \facet{3}{3} \cap \facet{3}{4}. \]
	Clearly $C$ is a face of $\minwedgeFIN{E_+}{F_+}$. We claim that $C$ contains only $4$ extremal rays. To that end, note that $\facet{1}{1} \cap \facet{3}{3}$ contains exactly $8$ extremal rays, namely $v_i \tensor w_j$ with $(i,j) \in {(\{1,2\} \times \{3,4\})} \cup {(\{3,4\} \times \{1,2\})}$. This is illustrated in the figure below (with $m = 6$ and $n = 8$).
	\begin{center}
		\begin{tikzpicture}
			\begin{scope}[xshift=0cm]
				\drawface{6}{8}{1}{1}{\facet{1}{1}}{\myfacetformula}{axes}
			\end{scope}
			\begin{scope}[xshift=3cm]
				\drawface{6}{8}{3}{3}{\facet{3}{3}}{\myfacetformula}{}
			\end{scope}
			\begin{scope}[xshift=6cm]
				\drawface{6}{8}{0}{0}{\facet{1}{1} \cap \facet{3}{3}}{\myintersectionformulaA}{}
			\end{scope}
			\node at ($(P11-R)!.5!(P33-L)$) {$\cap$};
			\node at ($(P33-R)!.5!(P00-L)$) {$=$};
		\end{tikzpicture}
	\end{center}
	Similarly, $\facet{1}{2} \cap \facet{3}{4}$ has the same pattern, but shifted one step in the second coordinate, so we see that the intersection of $\facet{1}{1} \cap \facet{3}{3}$ and $\facet{1}{2} \cap \facet{3}{4}$ contains $4$ extremal rays:
	\begin{center}
		\begin{tikzpicture}
			\begin{scope}[xshift=0cm]
				\drawface{6}{8}{1}{1}{\facet{1}{1} \cap \facet{3}{3}}{\myintersectionformulaA}{axes}
			\end{scope}
			\begin{scope}[xshift=3cm]
				\drawface{6}{8}{3}{3}{\facet{1}{2} \cap \facet{3}{4}}{\myintersectionformulaB}{}
			\end{scope}
			\begin{scope}[xshift=6cm]
				\drawface{6}{8}{0}{0}{C}{\myintersectionformulaAB}{}
			\end{scope}
			\node at ($(P11-R)!.5!(P33-L)$) {$\cap$};
			\node at ($(P33-R)!.5!(P00-L)$) {$=$};
		\end{tikzpicture}
	\end{center}
	(We have to be aware of a subtlety here: if $n = 4$, then the pattern of $\facet{1}{2} \cap \facet{3}{4}$ is ``wrapped around'' from right to left, but this does not affect the conclusion.)
	
	Since $C$ contains $4$ extremal rays, we have $\dim(C) \leq 4$. However, note that the only $\facet{k}{\ell}$ containing $C$ are the four facets defining $C$. Since we know from classical polyhedral geometry that $C$ must be contained in at least $9 - \dim(C) \geq 5$ facets, this shows that $\minwedgeFIN{E_+}{F_+}$ has facets which are not of the form $\facet{k}{\ell}$. Equivalently, the dual cone $(\minwedgeFIN{E_+}{F_+})\algdual = \maxwedgeFIN{E_+\algdual}{F_+\algdual}$ has extremal directions which are not of the form $\varphi_i \tensor \psi_j$, so we have $\maxwedgeFIN{E_+\algdual}{F_+\algdual} \neq \minwedgeFIN{E_+\algdual}{F_+\algdual}$. By duality, it follows that $\minwedgeFIN{E_+}{F_+} \neq \maxwedgeFIN{E_+}{F_+}$.
\end{proof}

\begin{theorem}[{\cite[Thm.{} 5.15]{Poole-dissertation}}]
	\label{thm:polyhedral}
	Let $G,H$ be finite\-/dimensional and let $G_+ \subseteq G$, $H_+ \subseteq H$ be proper and generating polyhedral cones.
	Then $\minwedgeFIN{G_+}{H_+} = \maxwedgeFIN{G_+}{H_+}$ if and only if at least one of $G_+$ and $H_+$ is a simplex cone.
\end{theorem}
\begin{proof}
	If $G_+$ or $H_+$ is a simplex cone, then it follows from \autoref{thm:min-equals-max} that $\minwedgeFIN{G_+}{H_+} = \maxwedgeFIN{G_+}{H_+}$.
	So assume that neither $G_+$ nor $H_+$ is a simplex cone.
	By \autoref{lem:polyhedral-retracts}, we may choose $3$-dimensional retracts $E_+$ (resp.{} $F_+$) of $G_+$ (resp.{} $H_+$) such that neither $E_+$ nor $F_+$ is a simplex cone.
	It follows from \autoref{lem:polyhedral-3x3} that $\minwedgeFIN{E_+}{F_+} \neq \maxwedgeFIN{E_+}{F_+}$, hence it follows from \autoref{prop:retract} that $\minwedgeFIN{G_+}{H_+} \neq \maxwedgeFIN{G_+}{H_+}$.
\end{proof}

\section{Tensor product with a smooth or strictly convex cone}
\label{sec:smooth-strictly-convex}

In this section, we prove \autoref{conj:Barker} in the case that $\dim(E) \geq \dim(F)$ and $E_+$ is smooth or strictly convex.
(This is \autoref{thm:intro:many-examples-i} from \mychpref{chp:introduction}.)
The argument is based on a generalized John's decomposition of the identity.

We recall some common terminology.
A \index{convex body}\emph{convex body} is a compact convex set $C \subseteq \R^n$ with non-empty interior.
The \glsadd{polar}\emph{\textup(one-sided\textup) polar} of $C$ is the set $C\polar = \{y \in (\R^n)\algdual \, : \, \langle x,y\rangle \leq 1\ \text{for all $x\in C$}\}$.
An \index{affine transformation}\emph{affine transformation} is an invertible affine map $\R^n \to \R^n$; that is, a map of the form $x \mapsto T_0x + y_0$ with $T_0 \in \GL{n}$ and $y_0 \in \R^n$ fixed.

If $C_1,C_2 \subseteq \R^n$ are convex bodies, then a compactness argument shows that there is an affine transformation $T$ such that $T[C_1] \subseteq C_2$ and $\vol(T[C_1])$ is maximal among all affine transformations $T'$ for which $T'[C_1] \subseteq C_2$.
If the maximum is attained for $T = I_n$ (the identity transformation), then we say that $C_1$ is \emph{in a maximum volume position inside $C_2$}.
Furthermore, we say that $C_1$ is \index{John decomposition}\emph{in John's position inside $C_2$} if $C_1 \subseteq C_2$ and there exist $m \in \N$, $x_1,\ldots,x_m \in \boundary C_1 \cap \boundary C_2$, $y_1,\ldots,y_m \in \boundary C_1\polar \cap \boundary C_2\polar$ and $\lambda_1,\ldots,\lambda_m > 0$ such that $\langle x_i,y_i \rangle = 1$ for all $i$, and
\begin{align*}
	I_n \, &= \, \sum_{i=1}^m \lambda_i x_i \tensor y_i \qquad \text{and} \qquad 0 \, = \, \sum_{i=1}^m \lambda_i x_i \, = \, \sum_{i=1}^m \lambda_i y_i.
\end{align*}
Gordon, Litvak, Meyer and Pajor \cite{Gordon-et-al} proved the following result, building on earlier extensions (\cite{Giannopoulos-et-al, Bastero-Romance}) of Fritz John's classical theorem (\cite{John}).

\begin{theorem}[{\cite[Theorem 3.8]{Gordon-et-al}}]
	\label{thm:generalized-John}
	Let $C_1,C_2 \subseteq \R^n$ be convex bodies such that $C_1$ is in a maximum volume position inside $C_2$. Then there exists $z \in \interior(C_1)$ such that $C_1 - z$ is in John's position inside $C_2 - z$.
\end{theorem}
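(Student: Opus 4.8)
The plan is to read off the desired decomposition of the identity from the first-order optimality conditions of the volume-maximization problem at $T = I_n$, and then to fix the translate $z$ by a barycentre (fixed-point) argument. I would write an affine perturbation of the identity as $T_t : x \mapsto (I_n + tA)x + tb$, so that $\vol(T_t[C_1]) = \det(I_n + tA)\,\vol(C_1)$ grows at rate $\tr(A)$ at $t = 0$. The inclusion $C_1 \subseteq C_2$ is tight exactly at the \emph{contact pairs} $(p,\eta)$, where $p \in \boundary C_1 \cap \boundary C_2$ and $\eta$ is a common outer normal of $C_1$ and $C_2$ at $p$; to first order, $T_t[C_1]$ stays inside $C_2$ in the direction $\eta$ precisely when $\langle Ap + b, \eta\rangle \leq 0$. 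Maximality of the identity then forces $\tr(A) \leq 0$ for every such admissible direction $(A,b)$.

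The core step is to dualize this implication. The set of contact pairs is compact, and the admissible directions form the polar of the closed convex cone that the contact gradients generate, so a Farkas/Hahn--Banach separation argument turns ``$\tr(A) \leq 0$ whenever $\langle Ap_i + b, \eta_i\rangle \leq 0$ for all contacts'' into the existence of weights $\theta_i \geq 0$ with
\[ I_n = \sum_i \theta_i\, p_i \tensor \eta_i \qquad\text{and}\qquad 0 = \sum_i \theta_i\, \eta_i, \]
the two identities being the components of the separating functional paired with $A$ and with $b$, respectively. Carathéodory's theorem caps the number of contacts used at $n^2 + n$, the dimension of the ambient space of pairs $(A,b)$. (Here I use that $I_n$ is symmetric in order to pass freely between $p_i \tensor \eta_i$ and $\eta_i \tensor p_i$.)

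It then remains to translate by a suitable $z$. For any $z$ the matrix identity is unchanged under $p_i \mapsto x_i := p_i - z$, because $\sum_i \theta_i \eta_i = 0$; and taking traces shows $\sum_i \theta_i \langle x_i, \eta_i\rangle = \tr(I_n) = n$ for \emph{every} $z$. Setting $s_i := \langle x_i, \eta_i\rangle$, $y_i := \eta_i / s_i$ and $\lambda_i := \theta_i s_i$ enforces the normalization $\langle x_i, y_i\rangle = 1$ while preserving both $I_n = \sum_i \lambda_i\, x_i \tensor y_i$ and $\sum_i \lambda_i y_i = 0$. The only outstanding requirement, $\sum_i \lambda_i x_i = 0$, rearranges to the fixed-point equation $z = \tfrac{1}{n}\sum_i \theta_i s_i\, p_i$, whose right-hand side is a genuine convex combination of the contact points $p_i$ (for $z \in C_2$ the coefficients $\theta_i s_i / n$ are nonnegative and, by the trace computation, sum to $1$). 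Hence it defines a continuous self-map of the compact convex set $\conv\{p_i\} \subseteq C_1$, and Brouwer's fixed-point theorem supplies a solution $z \in \conv\{p_i\}$.

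I expect the genuine obstacles to lie at the two ends of this argument. At the dualization step one must handle the semi-infinite, nonsmooth nature of the inclusion constraint carefully: support functions are only one-sidedly differentiable, so the clean statement ``$\tr(A) \leq 0$ on admissible directions'' needs the tangent/normal cone of the feasible set of affine maps to be described correctly before Farkas applies. The more serious obstacle is the endgame: one must upgrade $z \in \conv\{p_i\}$ to $z \in \interior(C_1)$ and simultaneously guarantee that every $s_i = \langle p_i - z, \eta_i\rangle$ stays \emph{strictly} positive, since a vanishing $s_i$ both destroys the normalization $y_i = \eta_i/s_i$ and deletes a nonzero term $\theta_i (p_i - z)\tensor\eta_i$ from the matrix identity. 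I would try to extract this from the fact that $\sum_i\theta_i\, p_i\tensor\eta_i = I_n$ has full rank, forcing the contact data to be spread out enough that a maximum-volume position cannot confine all contacts to a single face; making that precise is the crux.
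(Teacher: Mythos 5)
First, a point of order: the paper does not prove this statement at all --- it is quoted as a black box from Gordon, Litvak, Meyer and Pajor \cite{Gordon-et-al}, so your proposal can only be compared with the argument in that reference, which it in fact parallels: first-order optimality at the maximum volume position, dualization over the set of contact pairs, then a recentering step. Your first worry (making the dualization rigorous) is fillable by routine arguments: if $\langle Ap+b,\eta\rangle<0$ strictly for all contact pairs, a compactness/limit-of-separating-normals argument shows $(I_n+tA)x+tb\in C_2$ for all $x\in C_1$ and small $t>0$, so such $(A,b)$ are genuinely feasible and maximality gives $\tr(A)\le 0$; since the contact pairs with unit normals form a compact set not containing $0$, the cone generated by the gradients $(\eta p^{T},\eta)$ is closed, and the bipolar theorem plus Carath\'eodory yield exactly your identities $I_n=\sum_i\theta_i\,p_i\tensor\eta_i$ and $0=\sum_i\theta_i\eta_i$.

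The genuine gap is the one you flag at the end, and it does not resolve itself: Brouwer only places $z$ in $\conv\{p_i\}$, a set consisting of combinations of boundary points, and nothing in your argument prevents some $s_i=\langle p_i-z,\eta_i\rangle$ from vanishing; you cannot simply discard such a term, since $\theta_i(p_i-z)\tensor\eta_i$ need not be zero and deleting it also breaks $\sum_i\lambda_iy_i=0$. The missing idea is a normalization that makes positivity automatic. Place the origin at any point of $\interior(C_1)$; then $\langle p_i,\eta_i\rangle>0$ for every contact pair, so you may rescale $v_i:=\eta_i/\langle p_i,\eta_i\rangle$ and $c_i:=\theta_i\langle p_i,\eta_i\rangle$, giving $I_n=\sum_i c_i\,p_i\tensor v_i$, $\sum_i c_iv_i=0$, $\langle p_i,v_i\rangle=1$, $\sum_i c_i=n$, and --- crucially --- $C_1\subseteq C_2\subseteq\{x:\langle x,v_i\rangle\le 1\}$ for every $i$, because each $v_i$ is a common support functional normalized at level $1$. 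Moreover Brouwer is unnecessary: since $\sum_i\theta_i\,p_i\eta_i^{T}=I_n$, your map $f(z)=\tfrac1n\sum_i\theta_i\langle p_i-z,\eta_i\rangle p_i$ is affine with linear part $-\tfrac1n I_n$, so its fixed point is unique and explicit, $z=\tfrac{1}{n+1}\sum_i c_ip_i$. Both remaining requirements now fall out: for every $j$, $\langle z,v_j\rangle\le\tfrac{1}{n+1}\sum_i c_i=\tfrac{n}{n+1}<1$, so every $s_j\ge\tfrac{1}{n+1}\langle p_j,\eta_j\rangle>0$; and $z=\tfrac{n}{n+1}\bigl(\tfrac1n\sum_i c_ip_i\bigr)+\tfrac{1}{n+1}\cdot 0$ is a convex combination of a point of $C_1$ and the interior point $0$ with positive weight on the latter, hence $z\in\interior(C_1)$. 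With this in place your renormalization $x_i=p_i-z$, $y_i=\eta_i/s_i$, $\lambda_i=\theta_i s_i$ goes through verbatim, and the proof is complete.
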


For our purposes, we will only need the following (much weaker) corollary.

\begin{corollary}
	\label{cor:John-affine}
	Let $C_1,C_2 \subseteq \R^n$ be convex bodies. Then there is an affine transformation $T : \R^n \to \R^n$ such that $T[C_1] \subseteq C_2$ and $\boundary T[C_1] \cap \boundary C_2$ contains an affine basis of $\R^n$.
\end{corollary}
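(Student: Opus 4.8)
The plan is to reduce \autoref{cor:John-affine} to \autoref{thm:generalized-John} by first moving $C_1$ into a maximum volume position inside $C_2$ and then reading off the required affine basis from the contact points appearing in John's decomposition of the identity. First I would invoke the compactness argument recalled just before \autoref{thm:generalized-John} to fix an affine transformation $T$ for which $T[C_1] \subseteq C_2$ and $\vol(T[C_1])$ is maximal; then $T[C_1]$ is in a maximum volume position inside $C_2$, and it suffices to produce the affine basis for this particular $T$. Applying \autoref{thm:generalized-John} to the convex bodies $T[C_1]$ and $C_2$ yields a point $z \in \interior(T[C_1])$ such that $T[C_1] - z$ is in John's position inside $C_2 - z$. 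By definition this provides contact points $x_1,\ldots,x_m \in \boundary(T[C_1] - z) \cap \boundary(C_2 - z)$, together with $y_1,\ldots,y_m$ and $\lambda_1,\ldots,\lambda_m > 0$, satisfying
\[ I_n = \sum_{i=1}^m \lambda_i\, x_i \tensor y_i \qquad\text{and}\qquad 0 = \sum_{i=1}^m \lambda_i\, x_i. \]

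The crux is to show that $x_1,\ldots,x_m$ affinely span $\R^n$. First, they span $\R^n$ linearly: if they all lay in a proper subspace $V \subsetneq \R^n$, then the range of $\sum_{i} \lambda_i\, x_i \tensor y_i$ would be contained in $\spn(x_1,\ldots,x_m) \subseteq V$, contradicting that this operator equals the identity $I_n$. Second, setting $\Lambda := \sum_i \lambda_i > 0$, the relation $0 = \sum_i \lambda_i\, x_i$ rewrites as $0 = \sum_i (\lambda_i/\Lambda)\, x_i$, an affine combination of the $x_i$ with coefficients summing to $1$; hence $0 \in \aff(x_1,\ldots,x_m)$. An affine subspace containing the origin is a linear subspace, so $\aff(x_1,\ldots,x_m)$ is a linear subspace containing every $x_i$, and therefore containing $\spn(x_1,\ldots,x_m) = \R^n$. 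Thus $\aff(x_1,\ldots,x_m) = \R^n$, and some $n+1$ of the points $x_i$ form an affine basis of $\R^n$.

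Finally I would translate back by $z$: the points $x_i + z$ lie in $\boundary(T[C_1]) \cap \boundary(C_2)$, and since translation preserves affine independence, the corresponding $n+1$ points $x_i + z$ form an affine basis of $\R^n$ contained in $\boundary T[C_1] \cap \boundary C_2$. This establishes the claim for the maximum volume transformation $T$. The only genuinely delicate input is \autoref{thm:generalized-John} itself; granting it, the main point is the elementary observation that John's decomposition forces the contact points not merely to span $\R^n$ linearly (from the identity decomposition) but to span it \emph{affinely}, using in addition the centering condition $\sum_i \lambda_i x_i = 0$.
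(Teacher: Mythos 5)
Your proposal is correct and follows essentially the same route as the paper: fix $T$ placing $C_1$ in a maximum volume position inside $C_2$, apply \autoref{thm:generalized-John}, use the identity decomposition to get linear spanning of the contact points and the centering condition $\sum_i \lambda_i x_i = 0$ to upgrade this to affine spanning, then translate back by $z$. The only difference is cosmetic: you spell out the range argument for $\spn(x_1,\ldots,x_m) = \R^n$, which the paper leaves as an immediate consequence of the first formula.
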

(Equivalently, there is a $T$ such that $T[C_1] \subseteq C_2$ and the set of points where $T[C_1]$ and $C_2$ touch is not contained in an affine hyperplane.)
\begin{proof}[Proof of \autoref{cor:John-affine}]
	Let $T : \R^n \to \R^n$ be an affine transformation such that $T[C_1]$ is in a maximum volume position inside $C_2$. By \autoref{thm:generalized-John}, we may choose $z \in \interior(T[C_1])$, $m \in \N$, $x_1,\ldots,x_m \in \boundary (T[C_1] - z) \cap \boundary (C_2 - z)$, $y_1,\ldots,y_m \in \boundary (T[C_1] - z)\polar \cap \boundary (C_2 - z)\polar$ and $\lambda_1,\ldots,\lambda_m > 0$ such that $\langle x_i,y_i \rangle = 1$ for all $i$, and
	\[ I_n \, = \, \sum_{i=1}^m \lambda_i x_i \tensor y_i \qquad \text{and} \qquad 0 \, = \, \sum_{i=1}^m \lambda_i x_i \, = \, \sum_{i=1}^m \lambda_i y_i. \tag*{$(*)$} \]
	After an appropriate rescaling of the $\lambda_i$, the second formula in $(*)$ shows that $0 \in \aff(x_1,\ldots,x_m)$, so it follows that $\aff(x_1,\ldots,x_m) = \spn(x_1,\ldots,x_m)$.
	Moreover, it follows immediately from the first formula in $(*)$ that $\spn(x_1,\ldots,x_m) = \R^n$, so we conclude that $\{x_1,\ldots,x_m\}$ contains an affine basis, say $x_1,\ldots,x_{n+1}$.
	Consequently, $x_1 + z , \ldots, x_{n+1} + z$ is an affine basis in $\boundary T[C_1] \cap \boundary C_2$.
\end{proof}

Since every closed and proper convex cone has a compact base, the following homogenization of \autoref{cor:John-affine} follows immediately.

\begin{corollary}
	\label{cor:John-linear}
	Let $E$ and $F$ be finite\-/dimensional real vector spaces with $\dim(E) = \dim(F)$, and let $E_+ \subseteq E$, $F_+ \subseteq F$ be closed, proper, and generating convex cones. Then there exists a positive linear transformation $T : E \to F$ such that $\boundary T[E_+] \cap \boundary F_+$ contains a \textup(linear\textup) basis of $F$.
\end{corollary}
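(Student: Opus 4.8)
The plan is to reduce the statement to \autoref{cor:John-affine} by passing to compact bases and then homogenizing. Write $n := \dim(E) = \dim(F)$. Since $E_+$ is closed and proper, it admits a compact base $\mathcal B_{E_+} = \{x \in E_+ : f(x) = 1\}$ for some strictly positive functional $f \in E\algdual$; likewise $F_+$ has a compact base $\mathcal B_{F_+} = \{y \in F_+ : g(y) = 1\}$ with $g \in F\algdual$ strictly positive. Because $E_+$ and $F_+$ are generating, these bases affinely span the hyperplanes $H_E := \{f = 1\}$ and $H_F := \{g = 1\}$, so they are convex bodies inside the $(n-1)$-dimensional affine spaces $H_E$ and $H_F$. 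After fixing affine identifications $H_E \cong \R^{n-1} \cong H_F$, \autoref{cor:John-affine} yields an affine transformation $\tilde T : H_E \to H_F$ with $\tilde T[\mathcal B_{E_+}] \subseteq \mathcal B_{F_+}$ such that $\boundary \tilde T[\mathcal B_{E_+}] \cap \boundary \mathcal B_{F_+}$ (relative boundaries within the hyperplanes) contains an affine basis $b_1,\ldots,b_n$ of $H_F$ (note that an affine basis of an $(n-1)$-dimensional affine space has $n$ points).

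Next I would homogenize $\tilde T$ to a linear map. Fixing $h_0 \in H_E$ and writing $E = \spn(h_0) \oplus \ker(f)$, the affine map $\tilde T$ has the form $\tilde T(h_0 + v) = \tilde T(h_0) + L(v)$ for a linear $L : \ker(f) \to \ker(g)$; setting $T(\lambda h_0 + v) := \lambda\hair\tilde T(h_0) + L(v)$ defines a linear map $T : E \to F$ extending $\tilde T$, and since $\tilde T$ is invertible, $T$ is a linear isomorphism. This $T$ is positive: any nonzero $x \in E_+$ has $f(x) > 0$, hence $x = f(x)\hair b$ with $b := x/f(x) \in \mathcal B_{E_+}$, so $T(x) = f(x)\hair\tilde T(b) \in F_+$ because $\tilde T(b) \in \mathcal B_{F_+}$. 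Moreover $\tilde T[\mathcal B_{E_+}]$ lies in $\{g = 1\}$ and serves as a compact base of the closed proper generating cone $T[E_+]$.

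It then remains to transfer the boundary data. For a closed, proper, generating cone with base $\mathcal B$ in a hyperplane $H = \{h = 1\}$ (with $h$ strictly positive), the relative boundary of $\mathcal B$ in $H$ coincides with the intersection of the boundary of the cone with $H$; applying this to both $T[E_+]$ (with base $\tilde T[\mathcal B_{E_+}]$) and $F_+$ (with base $\mathcal B_{F_+}$), each touching point $b_i \in \boundary \tilde T[\mathcal B_{E_+}] \cap \boundary \mathcal B_{F_+}$ satisfies $b_i \in \boundary T[E_+] \cap \boundary F_+$. Finally, since $b_1,\ldots,b_n$ lie on the hyperplane $\{g = 1\}$, which avoids the origin, their affine independence upgrades to linear independence: if $\sum_i c_i b_i = 0$, then applying $g$ gives $\sum_i c_i = 0$, and affine independence forces all $c_i = 0$. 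Thus $b_1,\ldots,b_n$ is a linear basis of $F$ contained in $\boundary T[E_+] \cap \boundary F_+$, as required.

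Every step is elementary, and the only points demanding care are the two dictionary facts invoked in the homogenization: that an affine isomorphism between the base hyperplanes extends to a positive linear isomorphism respecting the cones, and that the relative boundary of a compact base corresponds exactly to the boundary of its cone. Both rely on $E_+$ and $F_+$ being closed, proper, and generating — so that the bases are genuine convex bodies in their hyperplanes and the homogenized map is well defined — which is precisely why these hypotheses appear. The passage from an affine basis of a hyperplane off the origin to a linear basis of the ambient space is then immediate, which is why the author states that the result follows at once from \autoref{cor:John-affine}.
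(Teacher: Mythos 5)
Your proposal is correct and is exactly the argument the paper intends: the paper's proof consists of the single remark that the corollary is the ``homogenization'' of \autoref{cor:John-affine} via compact bases, and you have simply spelled out that homogenization (bases as convex bodies in the hyperplanes $\{f=1\}$, $\{g=1\}$, linear extension of the affine map, identification of relative boundaries of bases with cone boundaries, and the upgrade from affine to linear independence off the origin). All the details you fill in are sound.
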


Using the preceding results, we can prove the main result of this section.

\begin{theorem}
	\label{thm:many-examples-i}
	Let $E$, $F$ be finite\-/dimensional real vector spaces, and let $E_+ \subseteq E$, $F_+ \subseteq F$ be closed, proper, and generating convex cones. If $\dim(E) \geq \dim(F)$, and if $E_+$ is strictly convex or smooth, then one has $\minwedgeFIN{E_+}{F_+} = \maxwedgeFIN{E_+}{F_+}$ if and only if $F_+$ is a simplex cone.
\end{theorem}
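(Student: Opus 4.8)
The plan is to prove the two implications separately, with the ``only if'' direction carrying all the weight. The ``if'' direction is immediate: if $F_+$ is a simplex cone, then \autoref{thm:min-equals-max} applied with $F_+$ in the role of the distinguished cone gives, via $\myref{itm:minmax:simplex}\Longleftrightarrow\myref{itm:minmax:FxE}$, that $\minwedge{G_+}{F_+}=\maxwedge{G_+}{F_+}$ for \emph{every} closed convex cone $G_+$; in particular $\minwedge{E_+}{F_+}=\maxwedge{E_+}{F_+}$.

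For the converse I would first reduce to the case where $E_+$ is strictly convex. By \myautoref{cor:closures}{itm:closed-duality} we have $(\maxwedge{E_+}{F_+})\algdual=\minwedge{E_+\algdual}{F_+\algdual}$, while $(\minwedge{E_+}{F_+})\algdual=\maxwedge{E_+\algdual}{F_+\algdual}$; since $\minwedge{E_+}{F_+}$ is closed (\autoref{thm:projective-cone-closed}) and the injective cone is always closed, two such cones coincide exactly when their duals do. Hence $\minwedge{E_+}{F_+}=\maxwedge{E_+}{F_+}$ if and only if $\minwedge{E_+\algdual}{F_+\algdual}=\maxwedge{E_+\algdual}{F_+\algdual}$. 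As $E_+$ is smooth precisely when $E_+\algdual$ is strictly convex, and $F_+$ is a simplex cone precisely when $F_+\algdual$ is, the smooth case for $(E_+,F_+)$ is the strictly convex case for $(E_+\algdual,F_+\algdual)$, so I may assume $E_+$ strictly convex.

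Set $d:=\dim(F)$ and argue by contraposition: assume $\minwedge{E_+}{F_+}=\maxwedge{E_+}{F_+}$ and deduce that $F_+$ is a simplex cone. Under $E\tensor F\cong\L(F\algdual,E)$, \autoref{cor:separable-maps-factor} identifies $\maxwedge{E_+}{F_+}$ with the positive maps $S:F\algdual\to E$ and $\minwedge{E_+}{F_+}$ with the separable ones, so the hypothesis says every positive $S:F\algdual\to E$ factors through a simplex cone. To manufacture a rigid such map, I pass to a $d$-dimensional subspace $W\subseteq E$ meeting $\interior(E_+)$; then $E'_+:=W\cap E_+$ is closed, proper, and generating in $W$, and it is again strictly convex, since a boundary point of the section lies in $\boundary E_+$, hence is an extremal direction of $E_+$, and extremality is inherited by $E'_+$. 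As $\dim(F\algdual)=\dim(W)=d$, I apply \autoref{cor:John-linear} to $(F_+\algdual,E'_+)$ to get a positive linear \emph{isomorphism} $T:F\algdual\to W\subseteq E$ with $\boundary T[F_+\algdual]\cap\boundary E'_+$ containing a basis $v_1,\ldots,v_d$ of $W$; writing $v_j=Tu_j$, the $u_j$ form a basis of $F\algdual$ inside $\boundary F_+\algdual$, while each $v_j\in\boundary E_+$ is an extremal direction of $E_+$ by strict convexity. Now expand $T=\sum_i\psi_i\tensor x_i$ with $\psi_i\in F_+$, $x_i\in E_+$. For each $j$, the relation $v_j=\sum_i\langle\psi_i,u_j\rangle x_i$ is a nonnegative combination of elements of $E_+$ equal to an extremal direction, so every active summand is a nonnegative multiple of $v_j$; since the $v_j$ are pairwise non-proportional, each $x_i$ is active for at most one index, and summands active for none vanish (their $\psi_i$ kills the basis $\{u_j\}$). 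Regrouping yields $T=\sum_{j=1}^d\Psi_j\tensor v_j$ with $\Psi_j\in F_+$, and comparing $Tu_{j'}=v_{j'}$ against the basis $\{v_j\}$ forces $\langle\Psi_j,u_{j'}\rangle=\delta_{jj'}$. Thus $\{\Psi_j\}$ and $\{u_{j'}\}$ are dual bases with $\Psi_j\in F_+$ and $u_{j'}\in F_+\algdual$, so the simplex cone $\Sigma:=\mathrm{cone}(\Psi_1,\ldots,\Psi_d)$ satisfies $\Sigma\subseteq F_+$ and $\Sigma\algdual=\mathrm{cone}(u_1,\ldots,u_d)\subseteq F_+\algdual$, whence $F_+=F_+\algdualdual\subseteq\Sigma\algdualdual=\Sigma$, i.e.\ $F_+=\Sigma$ is a simplex cone.

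The two delicate points are where I expect the work to concentrate. The first is the descent from $\dim(E)\geq\dim(F)$ to the equal-dimensional John configuration: this is handled by sectioning $E_+$ with $W$ and checking carefully that $E'_+$ is strictly convex, proper, and generating. The second, and genuinely decisive use of strict convexity, is the regrouping step, where I must argue rigorously that a separable expansion of an isomorphism touching $\boundary E_+$ along a full basis collapses to the ``diagonal'' dual-basis form; keeping track of the proportionality constants when absorbing $x_i$ into $v_{j(i)}$ and justifying the discard of inactive summands is the main place routine verification is required.
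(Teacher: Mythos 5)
Your proposal is correct and follows essentially the same route as the paper's proof: reduce the smooth case to the strictly convex case via the duality $(\maxwedge{E_+}{F_+})\algdual = \minwedge{E_+\algdual}{F_+\algdual}$, section $E_+$ by a $\dim(F)$-dimensional subspace through an interior point, apply \autoref{cor:John-linear} to get a positive isomorphism $T$ whose image touches the boundary of the section along a basis, and use strict convexity plus extremality of the touching points to collapse any separable decomposition of $T$ onto that basis. The only difference is the endgame, and it is minor: the paper concludes that $T$ (hence $\id_{F\algdual} = T^{-1}\circ T$, by the ideal property) is separable and then invokes \autoref{thm:min-equals-max}, whereas you extract the dual-basis relations $\langle \Psi_j, u_{j'}\rangle = \delta_{jj'}$ and sandwich $F_+$ between the simplex cone $\Sigma$ and $\Sigma\algdualdual = \Sigma$ directly --- both finishes are valid.
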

\begin{proof}
	First assume that $\minwedgeFIN{E_+}{F_+} = \maxwedgeFIN{E_+}{F_+}$, with $E_+$ strictly convex and $\dim(E) \geq \dim(F)$.
	Choose an interior point $x_0 \in E_+$ and a linear subspace $G \subseteq E$ of dimension $\dim(F)$ through $x_0$.
	Then $G_+ := G \cap E_+$ is closed, proper, and generating, so by \autoref{cor:John-linear} we may choose a positive linear isomorphism $T : F\algdual \to G$ such that $\boundary T[F_+\algdual] \cap \boundary G_+$ contains a basis $\{b_1,\ldots,b_m\}$ of $G$.
	Note that every $b_i$ is also a boundary point of $E_+$ (this a basic property of topological boundaries), and therefore an extremal direction of $E_+$ (since $E_+$ is strictly convex).
	For all $i$, write $a_i := T^{-1}(b_i) \in \boundary F_+\algdual$; then $\{a_1,\ldots,a_m\}$ is a basis of $F\algdual$.

	If $\iota : G \hookrightarrow E$ denotes the inclusion, then $\iota \circ T : F\algdual \to E$ is positive, so by assumption we may write $\iota \circ T = \sum_{i=1}^k y_i \tensor x_i$ with $x_1,\ldots,x_k \in E_+$ and $y_1,\ldots,y_k \in F_+$ (where the $y_i$ act as linear functionals on $F\algdual$). Since $b_i$ is extremal and
	\[ b_i = T(a_i) = \sum_{j=1}^k \langle y_j,a_i \rangle x_j, \]
	it follows that at least one of the $x_j$ is a positive multiple of $b_i$, and $\langle y_j , a_i\rangle = 0$ whenever $x_j$ is not a positive multiple of $b_i$.
	In particular, if $x_j$ is not a positive multiple of any one of the $b_i$, then $\langle y_j , a_i \rangle = 0$ for all $i$, so $y_j = 0$ (since $\{a_1,\ldots,a_m\}$ is a basis of $F\algdual$).
	Thus, after removing the zero terms, every $x_j$ is a positive multiple of some $b_i$, and so in particular belongs to $T[F_+\algdual]$.
	This shows that not only $\iota \circ T$, but also $T$ is separable, and not only with respect to the cones $F_+\algdual$ and $G_+$, but even with respect to the cones $F_+\algdual$ and $T[F_+\algdual]$.
	Since $\id_{F\algdual} = T^{-1} \circ T$, it follows from the ideal property of separable operators that $\id_{F\algdual}$ is also separable.
	Hence, by \autoref{thm:min-equals-max}, $F_+\algdual$ is a simplex cone. Since $F_+$ is closed, it follows that $F_+ = F_+\algdualdual$ is also a simplex cone.

	Now assume that $\minwedgeFIN{E_+}{F_+} = \maxwedgeFIN{E_+}{F_+}$ with $E_+$ smooth and $\dim(E) \geq \dim(F)$.
	By duality (see \myautoref{cor:closures}{itm:closed-duality}), it follows that
	\[ \minwedgeFIN{E_+\algdual}{F_+\algdual} = (\maxwedgeFIN{E_+}{F_+})\algdual = (\minwedgeFIN{E_+}{F_+})\algdual = \maxwedgeFIN{E_+\algdual}{F_+\algdual}. \]
	Since $E_+$ is smooth, the dual cone $E_+\algdual$ is strictly convex, so it follows from the first part of the proof that $F_+\algdual$ must be a simplex cone. Since $F_+$ is closed, it follows that $F_+ = F_+\algdualdual$ is also a simplex cone.
\end{proof}

\section{Tensor products of standard cones; applications to operator systems}
\label{sec:standard-cones}

In this section, we prove \autoref{conj:Barker} for all combinations of standard cones, thereby proving \autoref{thm:intro:many-examples-ii} and \autoref{cor:intro:operator-systems} from \mychpref{chp:introduction}.
Just as in \mysecref{sec:polyhedral}, we use retracts (see \mysecref{sec:retracts}) to reduce the problem to the three-dimensional case.

\subsection*{Standard cones}

By combining the results obtained thus far, we can easily prove \autoref{conj:Barker} for all combinations of standard cones.

\begin{theorem}[{cf.~\cite{Poole-dissertation,Huber-Netzer}}]
	\label{thm:many-examples-ii}
	Let $G$, $H$ be finite\-/dimensional real vector spaces, and let $G_+ \subseteq G$, $H_+ \subseteq H$ be closed, proper, and generating convex cones. Assume that each of $G_+$ and $H_+$ is one of the following \textup(all combinations allowed\textup):
	\begin{enumerate}[label=(\roman*)]
		\item a polyhedral cone;
		\item a second-order cone $\Lorentz{n}$;
		\item a \textup(real or complex\textup) positive semidefinite cone $\symm{n}_+$ or $\herm{n}_+$.
	\end{enumerate}
	Then one has $\minwedgeFIN{G_+}{H_+} = \maxwedgeFIN{G_+}{H_+}$ if and only if at least one of $G_+$ and $H_+$ is a simplex cone.
\end{theorem}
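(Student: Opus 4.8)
The plan is to prove the two implications separately, with essentially all of the work going into the ``only if'' direction. For the ``if'' direction I would simply invoke \autoref{thm:min-equals-max}: if $H_+$ is a simplex cone, then condition \myref{itm:minmax:FxE} of that theorem, applied with $E_+ := H_+$ and $F_+ := G_+$, gives $\minwedge{G_+}{H_+} = \maxwedge{G_+}{H_+}$ immediately; the case where $G_+$ is a simplex cone is symmetric, using that the flip isomorphism $G \tensor H \cong H \tensor G$ carries the projective (respectively injective) cone of $(G_+,H_+)$ onto that of $(H_+,G_+)$.

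For the ``only if'' direction I would argue by contraposition: assuming that neither $G_+$ nor $H_+$ is a simplex cone, I would produce three-dimensional retracts to which \autoref{prop:retract} applies. The key reduction is the claim that \emph{every non-simplex standard cone admits a three-dimensional non-simplex retract that is either $\Lorentz{3}$ or a polyhedral cone}. A non-simplex polyhedral cone has a three-dimensional non-simplex retract by \autoref{cor:polyhedral-retracts}, and that retract is again polyhedral since it is an intersection of a polyhedral cone with a subspace. A second-order cone $\Lorentz{n}$ fails to be a simplex cone exactly when $n \geq 3$, and then $\Lorentz{3}$ is a retract by \autoref{xmpl:retracts}. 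Finally, $\symm{n}_+$ and $\herm{n}_+$ fail to be simplex cones exactly when $n \geq 2$, and then $\symm{2}_+$ is a retract (by \autoref{xmpl:retracts} together with the transitivity of retracts recorded in \autoref{rmk:retracts}); and $\symm{2}_+ \cong \Lorentz{3}$.

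Applying this reduction to both $G_+$ and $H_+$ yields retracts $E_+ \subseteq G_+$ and $F_+ \subseteq H_+$, each order-isomorphic either to $\Lorentz{3}$ or to a three-dimensional non-simplex polyhedral cone, and by \autoref{prop:retract} it then suffices to show $\minwedge{E_+}{F_+} \neq \maxwedge{E_+}{F_+}$ in each of the three resulting combinations. When both factors are $\Lorentz{3}$, the cone $\Lorentz{3} \cong \symm{2}_+$ is self-dual and not a simplex cone, so \autoref{cor:min-equals-max} gives the strict inequality. When one factor is $\Lorentz{3}$ and the other is a non-simplex polyhedral cone $P_+$, both ambient spaces are three-dimensional and $\Lorentz{3}$ is strictly convex (indeed smooth), so \autoref{thm:many-examples-i} applies (after the flip, if $\Lorentz{3}$ is the second factor) and yields equality if and only if $P_+$ is a simplex cone, which it is not.

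The main obstacle will be the remaining case, in which both reduced factors are three-dimensional non-simplex polyhedral cones: here none of the preceding tools is available, since neither factor is strictly convex or smooth and self-duality may fail. This is precisely the case isolated as \autoref{thm:polyhedral-3x3}, to which I would defer it. The difficulty there is to exhibit, for an arbitrary such pair, a single tensor lying in the injective cone but not in the projective cone; I expect to handle this by a further retract reduction to a canonical obstruction (such as the cone over a square), combined with duality (\autoref{cor:closures}) to pass freely between a polyhedral cone and its dual, together with an explicit construction of a positive map that factors through no simplex cone.
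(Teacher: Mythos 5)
Your proposal is correct and follows essentially the same route as the paper: reduce both factors to three-dimensional retracts (either $\Lorentz{3} \cong \symm{2}_+$ or a non-simplex polyhedral cone) via \autoref{cor:polyhedral-retracts} and \autoref{xmpl:retracts}, dispose of the cases involving $\Lorentz{3}$ by \autoref{cor:min-equals-max} and \autoref{thm:many-examples-i}, defer the polyhedral--polyhedral case to \autoref{thm:polyhedral-3x3}, and lift the strict inequality via \autoref{prop:retract}. The only divergence is your anticipated strategy for \autoref{thm:polyhedral-3x3} itself (retract reduction to a canonical obstruction such as the cone over a square), which is not how the paper proceeds there --- it instead gives a direct combinatorial argument showing that $\maxwedge{E_+\algdual}{F_+\algdual}$ must have extremal directions not of the form $\varphi_i \tensor \psi_j$ --- but since both you and the paper treat that case as a separate theorem to be invoked, this does not affect the proof of the present statement.
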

\begin{proof}
	Suppose that neither $G_+$ nor $H_+$ is a simplex cone.
	We claim that $G_+$ (resp.~$H_+$) has a three-dimensional retract $E_+$ (resp.~$F_+$) which is isomorphic with one of the following:
	\begin{itemize}
		\item the three-dimensional Lorentz cone $\Lorentz{3}$ (which is isomorphic to $\symm{2}_+$);
		\item a proper and generating polyhedral cone $P \subseteq \R^3$ that is not a simplex cone.
	\end{itemize}
	To prove the claim, we distinguish three cases:
	\begin{itemize}
		\item If $G_+$ is polyhedral, then this follows from \autoref{lem:polyhedral-retracts}.
		
		\item If $G_+ = \Lorentz{n}$, then the assumption that $G_+$ is not a simplex cone forces $n \geq 3$. Hence it follows from \myautoref{xmpl:retracts}{itm:retracts:Lorentz} that $\Lorentz{3}$ is a retract of $G_+$.
		
		\item If $G_+ = \symm{n}_+$ or $\herm{n}_+$, then the assumption that $G_+$ is not a simplex cone forces $n \geq 2$, so it follows from \myautoref{xmpl:retracts}{itm:retracts:PSD} and \myautoref{xmpl:retracts}{itm:retracts:symm-herm} that $\symm{2}_+$ ($\cong \Lorentz{3}$) is a retract of $G_+$.
	\end{itemize}
	Next, we show that $\minwedgeFIN{E_+}{F_+} \neq \maxwedgeFIN{E_+}{F_+}$, again distinguishing three cases.
	\begin{itemize}
		\item If $E_+ = F_+ = \Lorentz{3}$, then this follows from \autoref{cor:min-equals-max}, since the Lorentz cone is self-dual.
		
		\item If $E_+ = \Lorentz{3}$ and $F_+$ is polyhedral (or vice versa), then this follows from \autoref{thm:many-examples-i}, since $\Lorentz{3}$ is strictly convex and $\dim(E) = \dim(F)$.
		
		\item The case where both $E_+$ and $F_+$ are polyhedral follows from \autoref{lem:polyhedral-3x3}.
	\end{itemize}
	Since $E_+$ and $F_+$ are retracts of $G_+$ and $H_+$ satisfying $\minwedgeFIN{E_+}{F_+} \neq \maxwedgeFIN{E_+}{F_+}$, it follows from \autoref{prop:retract} that $\minwedgeFIN{G_+}{H_+} \neq \maxwedgeFIN{G_+}{H_+}$.
\end{proof}

\subsection*{Operator systems}

Some of our results can be reformulated in terms of operator systems.
Let $C \subseteq \R^d$ be a closed, proper, and generating convex cone, and let $n \in \N_1$ be a positive integer.
Following notation from \cite{Fritz-Netzer-Thom}, we denote the projective and injective tensor products $\minwedgeFIN{\herm{n}_+}{C}$ and $\maxwedgeFIN{\herm{n}_+}{C}$ by $\minopsys[n]{C}$ and $\maxopsys[n]{C}$, respectively, and we write $\minopsys{C} = \{\minopsys[n]{C}\}_{n=1}^\infty$ and  $\maxopsys{C} = \{\maxopsys[n]{C}\}_{n=1}^\infty$.

\begin{corollary}[{Special case of \cite[Corollary 2]{Aubrun-et-al-ii}}]
	\label{cor:operator-systems}
	Let $C \subseteq \R^d$ be a closed, proper, and generating convex cone. If $d \leq 4$, or if $C$ is strictly convex, or smooth, or polyhedral, or \textup(real or complex\textup) positive semidefinite, then the following are equivalent:
	\begin{enumerate}[label=(\roman*)]
		\item\label{itm:opsys:simplex} $C$ is a simplex cone;
		\item\label{itm:opsys:systems-equal} the minimal and maximal operator systems $\minopsys{C}$ and $\maxopsys{C}$ are equal;
		\item\label{itm:opsys:levels-equal} there exists $n \geq 2$ for which $\minopsys[n]{C} = \maxopsys[n]{C}$;
		\item\label{itm:opsys:level-two-equal} one has $\minopsys[2]{C} = \maxopsys[2]{C}$.
	\end{enumerate}
\end{corollary}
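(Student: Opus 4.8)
The plan is to prove the cycle of implications $\myref{itm:opsys:simplex} \implies \myref{itm:opsys:systems-equal} \implies \myref{itm:opsys:levels-equal} \implies \myref{itm:opsys:level-two-equal} \implies \myref{itm:opsys:simplex}$, where the hypotheses on $C$ are needed only in the final step. For $\myref{itm:opsys:simplex} \implies \myref{itm:opsys:systems-equal}$, if $C$ is a simplex cone then \myautoref{thm:min-equals-max}{itm:minmax:FxE} (applied with $E_+ = C$) gives $\minwedge{F_+}{C} = \maxwedge{F_+}{C}$ for \emph{every} closed convex cone $F_+$; specializing to $F_+ = \herm{n}_+$ yields $\minopsys[n]{C} = \maxopsys[n]{C}$ for all $n$, i.e.\ $\minopsys{C} = \maxopsys{C}$. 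The implication $\myref{itm:opsys:systems-equal} \implies \myref{itm:opsys:levels-equal}$ is immediate, since equality of the two operator systems in particular forces equality at every level $n \geq 2$.

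For $\myref{itm:opsys:levels-equal} \implies \myref{itm:opsys:level-two-equal}$ I would use retracts to descend to level $n = 2$. By \myautoref{xmpl:retracts}{itm:retracts:PSD}, $\herm{2}_+$ is a retract of $\herm{n}_+$ for every $n \geq 2$. Hence \autoref{prop:retract}, applied with $G_+ = \herm{n}_+$, $E_+ = \herm{2}_+$ and $H_+ = F_+ = C$, shows that $\minopsys[2]{C} \neq \maxopsys[2]{C}$ would force $\minopsys[n]{C} \neq \maxopsys[n]{C}$; the contrapositive is exactly the desired implication.

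The crux is $\myref{itm:opsys:level-two-equal} \implies \myref{itm:opsys:simplex}$. The key observation is that $\herm{2}_+$ is isomorphic with the Lorentz cone $\Lorentz{4}$ (a $2 \times 2$ Hermitian matrix is positive semidefinite if and only if its trace and determinant are nonnegative), so $\herm{2}_+$ is a strictly convex and smooth cone of dimension $4$ that is \emph{not} a simplex cone. Assuming $\minopsys[2]{C} = \maxopsys[2]{C}$, i.e.\ $\minwedge{\herm{2}_+}{C} = \maxwedge{\herm{2}_+}{C}$, I would distinguish cases according to the hypothesis on $C$. If $C$ is polyhedral, then both tensor factors are standard cones and \autoref{thm:many-examples-ii} forces at least one of them to be a simplex cone; since $\herm{2}_+$ is not, $C$ is. If $\dim(C) \leq 4 = \dim(\herm{2}_+)$, then \autoref{thm:many-examples-i} applies with the larger, strictly convex cone $\herm{2}_+$ and again forces $C$ to be a simplex cone. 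Finally, if $C$ is strictly convex or smooth with $\dim(C) \geq 5 > \dim(\herm{2}_+)$, then \autoref{thm:many-examples-i} applies with $C$ as the larger cone and would force $\herm{2}_+ \cong \Lorentz{4}$ to be a simplex cone, a contradiction; hence this case cannot occur under $\myref{itm:opsys:level-two-equal}$, and every remaining strictly convex or smooth cone falls under the dimension bound already treated.

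The main obstacle is the dimension bookkeeping in this last step. \autoref{thm:many-examples-i} requires the \emph{larger} of the two cones to be strictly convex or smooth, so one must track carefully whether $\herm{2}_+$ or $C$ plays that role; this is precisely why the hypothesis on $C$ separates into the dimension bound $d \leq 4$ on the one hand and the geometric conditions (strict convexity, smoothness, polyhedrality) on the other. The preliminary reduction to $n = 2$ via \autoref{prop:retract} is what makes the fixed low-dimensional strictly convex cone $\herm{2}_+ \cong \Lorentz{4}$ available as a companion factor, keeping the dimension comparison finite and tractable.
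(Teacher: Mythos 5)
Your proposal is correct and takes essentially the same route as the paper's own proof: the same cycle of implications, using \autoref{thm:min-equals-max} for (i)$\Rightarrow$(ii), the retract argument (\autoref{prop:retract} with \myautoref{xmpl:retracts}{itm:retracts:PSD}) for (iii)$\Rightarrow$(iv), and \autoref{thm:many-examples-i} together with \autoref{thm:many-examples-ii} for (iv)$\Rightarrow$(i). The only cosmetic difference is that you justify the needed properties of $\herm{2}_+$ (strictly convex, not a simplex cone) via the isomorphism $\herm{2}_+ \cong \Lorentz{4}$, whereas the paper argues directly from rank\nobreakdash-one extremality of singular positive semidefinite matrices; your explicit bookkeeping of which cone is the ``larger'' one in \autoref{thm:many-examples-i} just spells out what the paper leaves implicit.
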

\begin{proof}
	$\myref{itm:opsys:simplex} \Longrightarrow \myref{itm:opsys:systems-equal}$. This follows from \autoref{thm:min-equals-max}.
	
	$\myref{itm:opsys:systems-equal} \Longrightarrow \myref{itm:opsys:levels-equal}$. Trivial.
	
	$\myref{itm:opsys:levels-equal} \Longrightarrow \myref{itm:opsys:level-two-equal}$. If $\minwedgeFIN{\herm{n}_+}{C} = \maxwedgeFIN{\herm{n}_+}{C}$ for some $n \geq 2$, then it follows from \autoref{prop:retract} that $\minwedgeFIN{\herm{2}_+}{C} = \maxwedgeFIN{\herm{2}_+}{C}$, since $\herm{2}_+$ is a retract of $\herm{n}_+$, by \myautoref{xmpl:retracts}{itm:retracts:PSD}.
	
	$\myref{itm:opsys:level-two-equal} \Longrightarrow \myref{itm:opsys:simplex}$. First we prove that $\herm{2}_+$ is strictly convex. Indeed, the interior points of $\herm{2}_+$ are the positive definite matrices, so the boundary points are the singular $2 \times 2$ positive semidefinite matrices. Consequently, a non-zero boundary point of $\herm{2}_+$ must be a rank one positive semidefinite matrix, which is known to be extremal (it is a positive multiple of a rank one orthogonal projection).
	
	Now suppose that $C$ is of one of the forms described in the theorem, but not a simplex cone.
	If $d \leq \dim(\herm{2}) = 4$, or if $C$ is smooth or strictly convex, then it follows from \autoref{thm:many-examples-i} that $\minwedgeFIN{\herm{2}_+}{C} \neq \maxwedgeFIN{\herm{2}_+}{C}$.
	If $C$ is positive semidefinite or polyhedral (but not a simplex cone), then it follows from \autoref{thm:many-examples-ii} that $\minwedgeFIN{\herm{2}_+}{C} \neq \maxwedgeFIN{\herm{2}_+}{C}$.
	Either way, we have $\minopsys[2]{C} \neq \maxopsys[2]{C}$.
\end{proof}

\section{Closing remarks}
\label{sec:Aubrun-et-al}

As mentioned before, Aubrun, Lami, Palazuelos and Pl\'avala \cite{Aubrun-et-al-ii} independently proved \autoref{conj:Barker} in full generality.

\begin{theorem}[{\cite[Theorem A]{Aubrun-et-al-ii}}]
	\label{thm:Aubrun-et-al}
	Let $E$, $F$ be finite\-/dimensional real vector spaces, and let $E_+ \subseteq E$, $F_+ \subseteq F$ be closed, proper, and generating convex cones.
	Then one has $\minwedgeFIN{E_+}{F_+} = \maxwedgeFIN{E_+}{F_+}$ if and only if at least one of $E_+$ and $F_+$ is a simplex cone.
\end{theorem}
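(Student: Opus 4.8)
The plan is to prove the two implications separately, disposing of the easy direction first. If one of the cones, say $E_+$, is a simplex cone, then by \autoref{thm:min-equals-max} (the equivalence of being a simplex cone with the condition that $\minwedge{F_+}{E_+} = \maxwedge{F_+}{E_+}$ for \emph{every} closed $F_+$) one immediately gets $\minwedge{F_+}{E_+} = \maxwedge{F_+}{E_+}$; since the natural isomorphism $E \tensor F \cong F \tensor E$ carries $\minwedge{E_+}{F_+}$ onto $\minwedge{F_+}{E_+}$ and $\maxwedge{E_+}{F_+}$ onto $\maxwedge{F_+}{E_+}$, this yields $\minwedge{E_+}{F_+} = \maxwedge{E_+}{F_+}$. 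The whole content of the theorem is therefore the converse, which I would prove in contrapositive form: if \emph{neither} $E_+$ nor $F_+$ is a simplex cone, then $\minwedge{E_+}{F_+} \neq \maxwedge{E_+}{F_+}$.

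To attack the converse I would first translate it into the language of positive maps. By \autoref{cor:separable-maps-factor}, the identification $E \tensor F \cong \L(E\algdual,F)$ turns the equality $\minwedge{E_+}{F_+} = \maxwedge{E_+}{F_+}$ into the assertion that \emph{every} positive linear map $E\algdual \to F$ is separable, i.e.\ factors through some $\R^n$ with its standard cone. Thus the goal becomes: assuming neither $E_+\algdual$ nor $F_+$ is a simplex cone, construct a positive map $E\algdual \to F$ that factors through no simplex cone. It is worth stressing at the outset why the retract technique of \mysecref{sec:retracts}, which reduced \autoref{thm:many-examples-ii} and \autoref{thm:polyhedral-3x3} to three dimensions, cannot be used here: as noted after \autoref{cor:simplex}, a generic cone of dimension $\geq 4$ possesses no lower-dimensional retract \cite{Aubrun-et-al-i}, so there is no hope of pulling a single fixed low-dimensional witness back along a retraction. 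A genuinely global construction is required.

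I would then pass to compact bases $K \subseteq E_+$ and $L \subseteq F_+$, available since both cones are closed and proper, and phrase everything for convex bodies. By \autoref{prop:bases} the projective cone has the base $\conv(K \tensor L)$, whereas, taking $f,g$ the strictly positive functionals defining $K$ and $L$, the injective cone has the base consisting of those $u \in \maxwedge{E_+}{F_+}$ with $(f \tensor g)(u) = 1$ — the \emph{maximal tensor product} of $K$ and $L$. Since a cone is a simplex cone exactly when it is a closed lattice cone (equivalently when its base is a simplex), the theorem becomes the convex-geometric statement that $\conv(K \tensor L)$ exhausts the maximal tensor product precisely when $K$ or $L$ is a simplex. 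The natural route is to exploit the failure of the Riesz decomposition property that characterises non-simplex cones: each of $E_+$ and $F_+$ then admits a witnessing configuration (in the polyhedral model of \autoref{thm:polyhedral-3x3}, a ``diamond'' of at least four extremal rays and facets). From one such local witness in each factor I would assemble a bilinear form on $E\algdual \times F\algdual$ that pairs nonnegatively with every product functional $\varphi \tensor \psi$, and hence lies in $\maxwedge{E_+}{F_+}$, yet is separated from $\conv(K \tensor L)$.

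The main obstacle is exactly this assembly step: manufacturing a single \emph{entangled} element out of two independent non-simpliciality witnesses and \emph{certifying} that it escapes the projective cone. Nonnegativity against all product functionals is a constraint over the (infinite) product $E_+\algdual \times F_+\algdual$, while non-separability must be checked against the closed cone $\minwedge{E_+}{F_+}$; reconciling the two demands a careful supporting-hyperplane or dimension-counting argument, and this is precisely where the proof of Aubrun, Lami, Palazuelos and Pl\'avala \cite{Aubrun-et-al-ii} does its real work. I would expect to need a compactness argument to extract the witnesses uniformly, together with an extremality analysis in the spirit of \autoref{thm:polyhedral-3x3} — where a face lattice failing to be graded supplied the obstruction — to guarantee that the candidate element genuinely cannot be rewritten as a finite sum of product tensors.
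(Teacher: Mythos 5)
Your proposal handles the easy direction correctly: if $E_+$ is a simplex cone, then \myautoref{thm:min-equals-max}{itm:minmax:FxE} gives $\minwedge{F_+}{E_+} = \maxwedge{F_+}{E_+}$ for every closed $F_+$, and the flip isomorphism $E \tensor F \cong F \tensor E$ transports this to $\minwedge{E_+}{F_+} = \maxwedge{E_+}{F_+}$. But for the converse --- which is the entire content of the theorem --- what you offer is a plan, not a proof. You translate the problem into positive maps via \autoref{cor:separable-maps-factor}, correctly observe that the retract reduction of \mysecref{sec:retracts} cannot work in general (by \cite[Lemma S14]{Aubrun-et-al-i}), and then describe the construction you would like to have (``assemble a bilinear form \ldots{} yet is separated from $\conv(K \tensor L)$'') without carrying out any step of it. You concede this yourself: the assembly step ``is precisely where the proof of Aubrun, Lami, Palazuelos and Pl\'avala does its real work.'' No candidate element of $\maxwedge{E_+}{F_+} \setminus \minwedge{E_+}{F_+}$ is exhibited, no separation argument is given, and the appeal to the failure of Riesz decomposition in a non-simplex cone is never converted into usable geometric data outside the polyhedral case. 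So the proposal has a genuine gap, and the gap is exactly the theorem.

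For context, you should know that the paper does not prove this statement either: it is quoted as an external result (\cite[Theorem A]{Aubrun-et-al-ii}), and the paper's own techniques --- the John-position argument for strictly convex or smooth cones (\autoref{thm:many-examples-i}) and the retract-plus-face-lattice argument for standard cones (\autoref{thm:many-examples-ii}, \autoref{thm:polyhedral-3x3}) --- yield only special cases, as the author acknowledges in \autoref{rmk:intro:Aubrun-et-al}. In particular, both obstructions you would need to overcome (genericity of cones without retracts, and the absence of extremal-ray combinatorics for non-polyhedral cones) are precisely the reasons the paper stops short of the full result. Your sketch, if completed, would constitute a new proof; as it stands, the hard implication rests entirely on the citation.
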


The following example shows that this is no longer true if we omit the requirement that $E_+$ or $F_+$ is proper or generating.

\begin{example}
	\label{xmpl:Aubrun-et-al}
	Let $F_+ \subseteq F$ be a ``partial simplex cone''; that is, a cone generated by $m < \dim(F)$ linearly independent vectors $x_1,\ldots,x_m \in F$.
	Furthermore, let $E$ be another finite\-/dimensional space, and let $E_+ \subseteq E$ be an arbitrary closed, proper, and generating cone.
	
	Since $E_+$ is generating, every positive linear map $T : E \to F$ has its range contained in $\spn(F_+)$.
	Since $\spn(F_+)$ is ordered by a simplex cone, this shows that every positive linear map $E \to F$ is simplex-factorable, hence $\minwedgeFIN{E_+\algdual}{F_+} = \maxwedgeFIN{E_+\algdual}{F_+}$.
\end{example}

In the preceding example, $E_+\algdual$ and $F_+$ are closed and proper and $E_+\algdual$ is generating, so the requirement that $F_+$ is generating cannot be omitted from \autoref{thm:Aubrun-et-al}.
Furthermore, by duality, we also have $\minwedgeFIN{E_+}{F_+\algdual} = \maxwedgeFIN{E_+}{F_+\algdual}$, which shows that the requirement that $F_+$ is proper cannot be omitted either.

In a sense, a partial simplex cone (or its dual) is almost a simplex cone. In fact, we can extend \autoref{thm:Aubrun-et-al} to show that all examples must be of this form.
If $E_+$ is a closed convex cone, then we define the \emph{proper reduction} $\pred(E_+)$ of $E_+$ as the positive cone of $\spn(E_+) / \lineal(E_+)$.
Equivalently, choose subspaces $E_1,E_2,E_3 \subseteq E$ such that $E_1 = \lineal(E_+)$, $E_1 \oplus E_2 = \spn(E_+)$, and $E_1 \oplus E_2 \oplus E_3 = E$; then the proper reduction of $E_+$ is the positive cone $(E_2)_+ := E_2 \cap E_+$ of $E_2$, viewed as a closed, proper, and generating cone in $E_2$.
It is readily verified that the projection $E \to E_2$, $(e_1,e_2,e_3) \mapsto e_2$ is positive (every projection onto $\spn(E_+)$ is positive, and adding or subtracting elements of the lineality space does not affect positivity), so $\pred(E_+)$ is a retract of $E_+$.
Therefore \autoref{thm:Aubrun-et-al} has the following extension.

\begin{corollary}
	Let $E$, $F$ be finite\-/dimensional real vector spaces, and let $E_+ \subseteq E$, $F_+ \subseteq F$ be closed convex cones. If $\minwedgeFIN{E_+}{F_+} = \maxwedgeFIN{E_+}{F_+}$, then at least one of $\pred(E_+)$ and $\pred(F_+)$ is a simplex cone.
\end{corollary}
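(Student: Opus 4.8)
The plan is to combine the retract machinery of \mysecref{sec:retracts} with \autoref{thm:Aubrun-et-al}. The essential observation, already recorded in the discussion preceding the statement, is that the proper reduction of a closed cone is a retract of that cone. This lets the hypothesis $\minwedge{E_+}{F_+} = \maxwedge{E_+}{F_+}$ descend to the proper reductions, where the full strength of \autoref{thm:Aubrun-et-al} becomes available precisely because proper reductions are, by construction, closed, proper, and generating in their ambient spaces.

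First I would recall that $\pred(E_+)$ is a retract of $E_+$ and, symmetrically, $\pred(F_+)$ is a retract of $F_+$; the relevant positive projections are the coordinate projections onto the summands $E_2$ and its analogue for $F$, as verified just above the statement. Next I would invoke the contrapositive of \autoref{prop:retract}, applied with ambient cones $E_+$ and $F_+$ and with the retracts $\pred(E_+)$ and $\pred(F_+)$: the assumption $\minwedge{E_+}{F_+} = \maxwedge{E_+}{F_+}$ then forces $\minwedge{\pred(E_+)}{\pred(F_+)} = \maxwedge{\pred(E_+)}{\pred(F_+)}$. Finally, since $\pred(E_+)$ and $\pred(F_+)$ are closed, proper, and generating, \autoref{thm:Aubrun-et-al} immediately gives that at least one of them is a simplex cone, which is exactly the desired conclusion.

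There is no genuine obstacle here: the statement is essentially a formal consequence of the two quoted results once the retract structure of the proper reduction is in place. The only points requiring care are bookkeeping in nature—confirming that \autoref{prop:retract} is applied in the correct direction (via its contrapositive, with $\pred(E_+)$ and $\pred(F_+)$ playing the role of the retracts and $E_+$, $F_+$ the larger cones), and checking that the proper reductions genuinely satisfy all three hypotheses of \autoref{thm:Aubrun-et-al}. Both are immediate from the definition of $\pred$ and the remarks already in the excerpt, so the argument reduces to a few lines.
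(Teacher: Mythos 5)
Your proposal is correct and follows exactly the paper's own argument: the proper reductions are retracts, so \autoref{prop:retract} (in contrapositive form) transfers the equality $\minwedge{E_+}{F_+} = \maxwedge{E_+}{F_+}$ down to $\minwedge{\pred(E_+)}{\pred(F_+)} = \maxwedge{\pred(E_+)}{\pred(F_+)}$, and \autoref{thm:Aubrun-et-al} then applies since the reductions are closed, proper, and generating. No gaps; this matches the paper's proof step for step.
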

\begin{proof}
	Since $\pred(E_+)$ and $\pred(F_+)$ are retracts of $E_+$ and $F_+$, it follows from \autoref{prop:retract} that $\minwedgeFIN{\pred(E_+)}{\pred(F_+)} = \maxwedgeFIN{\pred(E_+)}{\pred(F_+)}$.
	But $\pred(E_+)$ and $\pred(F_+)$ are closed, proper, and generating, so it follows from \autoref{thm:Aubrun-et-al} that at least one of $\pred(E_+)$ and $\pred(F_+)$ must be a simplex cone.
\end{proof}

The converse is not true; it can happen that $\pred(E_+)$ and $\pred(F_+)$ are simplex cones but $\minwedgeFIN{E_+}{F_+} \neq \maxwedgeFIN{E_+}{F_+}$.
This is because $\minwedgeFIN{\pred(E_+)}{\pred(F_+)} = \maxwedgeFIN{\pred(E_+)}{\pred(F_+)}$ does not necessarily imply $\minwedgeFIN{E_+}{F_+} = \maxwedgeFIN{E_+}{F_+}$; the implication of \autoref{prop:retract} only runs in the other direction.
As an extreme example, consider the case where $E_+ = \{0\}$ and $F_+ = F$; then one has $\minwedgeFIN{E_+}{F_+} = \{0\}$ but $\maxwedgeFIN{E_+}{F_+} = E \tensor F$.
More generally, \autoref{thm:min-equals-max} shows that $\minwedgeFIN{E_+\algdual}{E_+} \neq \maxwedgeFIN{E_+\algdual}{E_+}$ whenever $E_+$ is not proper or not generating, regardless of whether or not $\pred(E_+)$ is a simplex cone.

\chapter{Open problems}
\label{chp:open-problems}

We conclude this memoir with a few open problems.

\paragraph{Does the closure of the projective cone preserve higher faces?}
We showed in \mysecref{sec:projective-faces} that the projective cone preserves faces.
Furthermore, it follows from \autoref{prop:reasonable-faces-ideals} that the closure of the projective cone preserves extremal rays (provided that $E_+$ and $F_+$ are weakly closed).
However, we suspect that this result is of limited use in practice, because infinite\-/dimensional cones often do not have sufficiently many extremal rays.
Does the closure of the projective cone also preserve higher faces, in a sense similar to \autoref{thm:projective-faces}?
In particular, if $E_+$ and $F_+$ are weakly closed proper cones and $M \subseteq E_+$ and $M \subseteq F_+$ are faces, then is $\overline{\minwedge{M}{N}}^{\,\weak}$ a face of $\overline{\minwedge{E_+}{F_+}}^{\,\weak}$?

As a partial result, it follows from \myautoref{prop:reasonable-faces-ideals}{itm:reasonable-ideals} that $(\spn(M) \tensor \spn(N)) \cap \overline{\minwedge{E_+}{F_+}}^{\,\weak}$ is a face of $\overline{\minwedge{E_+}{F_+}}^{\,\weak}$, but this can in principle be larger than $\overline{\minwedge{M}{N}}^{\,\weak}$.

\paragraph{Does the projective norm preserve extreme points?}
We showed in \mysecref{sec:application-to-convex-sets} that the algebraic tensor product $\conv(C \settensor D)$ of symmetric convex sets $C$ and $D$ preserves proper faces.
Is this still true if we pass to the closure of $\conv(C \settensor D)$?
In particular, we do not known whether the projective norm preserves extreme points of the closed unit ball.
See also \autoref{rmk:projective-norm-extreme-points}.

\paragraph{Is the projective cone still semisimple in the completed projective tensor product?}
We showed in \mysecref{sec:semisimple-algebraic} that every reasonable crosscone in the algebraic tensor product $E \tensor F$ is semisimple whenever the base cones $E_+$ and $F_+$ are semisimple.
This is no longer true if we pass to the completed locally convex tensor product $E \hattensor_\alpha F$, because the injective cone $\hatmaxwedge[\alpha]{E_+}{F_+}$ is not proper if the natural map $E \hattensor_\alpha F \to E \hattensor_\varepsilon F$ fails to be injective (see \autoref{cor:completed-injective-proper-cone}).
However, we don't know the answer if we match the projective cone with the projective norm; see \autoref{q:completed-semisimple}.

\paragraph{Is there a way to determine all extremal rays of the injective cone?}
In \mysecref{sec:injective-extremal-rays}, we showed that the injective cone preserves extremal rays.
\myautoref{cor:rank-one}{itm:injective-rank-one} shows that all extremal rays of tensor rank one are of this form, but \autoref{xmpl:injective-extremal-rays} shows that there may be extremal rays of higher rank.
Is there a way to determine all extremal rays of the injective cone?

We note that it was already pointed out earlier by Tam \cite[p.~75]{Tam-faces} that this appears to be a difficult problem.
In fact, it is already difficult for proper and generating polyhedral cones in finite\-/dimensional spaces; see \cite{Bogart-Contois-Gubeladze}.
Sufficient conditions for some $z \in E \tensor F$ to be an extremal direction of the injective cone were studied by various authors in the context of positive operators; see for instance \cite{Tam-survey} and the references contained therein.

\paragraph{Are there other interesting tensor cones?}
In this paper, we have drawn parallels between normed and ordered tensor products.
Taking this a step further, we may define a \emph{tensor cone} as a way of choosing for each pair of preordered vector spaces $(E,E_+)$ and $(F,F_+)$ a reasonable crosscone $\tensorwedge{E_+}{F_+}$ in the tensor product $E \tensor F$, in such a way that positive linear maps are preserved; that is, $(T \tensor S)[\tensorwedge{E_+}{F_+}] \subseteq \tensorwedge{G_+}{H_+}$ whenever $T[E_+] \subseteq G_+$ and $S[F_+] \subseteq H_+$.
The projective/injective cone defines a tensor cone which behaves similarly to its normed counterpart.
Many more tensor \emph{norms} are known, but to our knowledge no other tensor \emph{cones} have been studied in the literature.%
	\hair\footnote{A few other cones have been defined in the tensor product of specific types of ordered vector spaces (e.g.{} for Archimedean Riesz spaces \cite{Fremlin}, or for Archimdean ordered vector spaces \cite{Grobler-Labuschagne,vanGaans-Kalauch}), but not for arbitrary ordered vector spaces, and rarely in connection with a positive mapping property.}
Are there other interesting and/or natural tensor cones?

Every tensor cone defines a tensor norm via a construction similar to \autoref{prop:absolutely-convex-tensor-product}.
Conversely, can every tensor norm be extended to a tensor cone in a natural way?
Are there cone-theoretic analogues of Grothendieck's 14 natural tensor norms?

An important difference between cones and norms is that there is no notion of two cones being equivalent.
So even if this programme would succeed, the resulting theory might not be as nice as the normed theory.
It is unclear if a cone-theoretic analogue of, say, Grothendieck's inequality, can exist.
Therefore it is also conceivable that there are more than 14 natural tensor cones.

Our findings about faces and extremal rays of the injective cone already show that ordered tensor products are not completely analogous to normed tensor products (see \autoref{rmk:injective-norm-extreme-points}), so perhaps there are limits to the analogy.

\paragraph{Is there a full proof of \autoref*{thm:Aubrun-et-al} without relying on computer algebra?}
In \cite{Aubrun-et-al-ii}, Aubrun, Lami, Palazuelos and Pl\'avala proved a very general result about the difference between the projective and injective cone (stated as \autoref{thm:Aubrun-et-al} above), which contains all our results from \mychpref{chp:many-examples} as a special case.
Their proof uses an ingenious geometric argument, but one step in their proof relies on a couple of large computations, which the authors verified using computer algebra.
On the other hand, the special cases that we proved in this paper only rely on relatively simple geometric arguments.
Can we find an alternative proof of \autoref{thm:Aubrun-et-al} which does not rely on computer algebra?

\paragraph{Is there an infinite-dimensional version of \autoref*{thm:Aubrun-et-al}?}
The theorem of Aubrun, Lami, Palazuelos and Pl\'avala (stated as \autoref{thm:Aubrun-et-al} above) provides precise necessary and sufficient conditions for the projective cone to be equal to the injective cone, when the base cones are closed, proper and generating in finite-dimensional spaces.
Can this be extended to infinite-dimensional spaces?
Are there examples where the projective cone $\minwedge{E_+}{F_+}$ is dense in the injective cone $\maxwedge{E_+}{F_+}$ (with respect to some compatible topology on $E \tensor F$) where $E_+$ and $F_+$ are weakly closed but neither is a lattice cone?
How about in the \emph{completed} locally convex tensor product?

Apart from \autoref{thm:Aubrun-et-al}, very few results in this direction are known.
A small positive result is \cite[Prop.{} 3]{Birnbaum}, which shows that $\minwedge{E_+}{F_+}$ is dense in $\maxwedge{E_+}{F_+}$ with respect to the projective topology whenever $E$ and $F$ are locally convex lattices.

\appendix
\chapter{Ideals, faces, and duality}
\label{app:faces}

Certain special subsets of a convex set, the so-called \emph{faces}, play an important role in convex geometry.
For instance, convex polytopes are commonly studied in terms of their face lattice, and the extreme points and extremal rays of convex sets play an important role in convex analysis and optimization.
The face structure of a convex cone has also been studied extensively; see for instance \cite[\S{}2.13]{Stoer-Witzgall} and the works of Barker and Tam \cite{Barker-faces,Barker-duality,Tam-duality,Tam-faces}.

Similarly, certain special subspaces, the so-called \emph{order ideals}, play a special role in the theory of ordered vector spaces.
In \cite{Kadison-representation}, Kadison used maximal order ideals in the proof of his celebrated representation theorem, and Bonsall continued the study of order ideals in \cite{Bonsall}.
However, as attention shifted from general ordered vector spaces to lattice-ordered (i.e.~Riesz) spaces, order ideals appear to have been forgotten in favour of lattice ideals (sometimes also called order ideals).
As a result, the theory of order ideals is not so well-known.

In this appendix, we develop/recall the basics of order ideals in a general preordered vector space.
In \mysecref{sec:faces-ideals}, we give several different equivalent definitions of an order ideal, and we show that the order ideals of a preordered vector space $(E,E_+)$ are closely related to the faces of the positive cone $E_+$.
This is very useful, as it allows us to quotient out a face, which is one of the main tools in the construction of faces of the projective cone in \mysecref{sec:projective-faces}.

In \mysecref{sec:isomorphism-theorems}, we outline the homomorphism and isomorphism theorems for ideals in ordered vector spaces.
As an application, we show that the maximal order ideals are precisely the supporting hyperplanes of the positive cone.
This shows that general (non-maximal) order ideals can be thought of as being the ``supporting subspaces'' of the positive cone.

Finally, in \mysecref{sec:dual-and-exposed}, we extend the theory of dual faces (see \cite{Barker-duality,Tam-duality}) to cones in infinite-dimensional spaces.
We show that it is now necessary to make a distinction between \emph{dual} and \emph{exposed} faces, although the two notions coincide if the ambient space is a separable normed space.

\section{Faces and ideals}
\label{app:faces-ideals} 
\label{sec:faces-ideals} 
Let $E$ be a preordered vector space with positive cone $E_+ \subseteq E$, and let $\leq$ be the vector preorder corresponding with $E_+$.
A subset $M \subseteq E$ is \index{full|see {order-convex}}\emph{full} (or \index{order-convex}\emph{order-convex}) if $x \leq y \leq z$ with $x,z \in M$ implies $y \in M$.
A non-empty subset $M \subseteq E_+$ that is a convex cone in its own right is called a \index{subcone}\emph{subcone}.
Recall that a \index{face}\emph{face} (or \emph{extremal set}) of $E_+$ is a (possibly empty) convex subset $M \subseteq E_+$ such that, if $M$ intersects the relative interior of a line segment in $E_+$, then $M$ contains both endpoints of that segment.

\begin{proposition}
	\label{prop:face-full-subcone}
	A non-empty subset $M \subseteq E_+$ is a face if and only if it is a full subcone.
\end{proposition}
\begin{proof}
	``$\Longrightarrow$''. Suppose that $M$ is a face. First we show that $M$ is a convex cone. If $x\in M$ and $\lambda > 1$, then the line segment from $0$ to $\lambda x$ contains $x$ in its relative interior, so the endpoints $0$ and $\lambda x$ must also belong to $M$. Then, since $M$ is convex, for all $\lambda \in [0,1]$ we also have $\lambda x \in M$. Since a face is convex by assumption, we conclude that $M$ is a convex cone.
	
	To see that $M$ is full, suppose that $x \leq y \leq z$ with $x,z \in M$. Then $x,z \in E_+$, so in particular we have $y \geq x \geq 0$, or in other words, $y \in E_+$. Furthermore, we have $z - y \in E_+$ (since $y \leq z$), so it follows that $y + 2(z - y) \in E_+$. Since $z = y + (z - y)$ is in the relative interior of the line segment from $y$ to $y + 2(z - y)$, we must have $y \in M$, which proves that $M$ is full.
	
	``$\Longleftarrow$''. Suppose that $M \subseteq E_+$ is a full subcone, and suppose that $x,z \in E_+$ and $\lambda \in (0,1)$ are such that $y := \lambda x + (1 - \lambda)z$ belongs to $M$. If $x = z$, then evidently $x = z = y \in M$, so assume $x \neq z$. Then $y$ lies in the relative interior of the line segment between $x$ and $z$, so for small enough $\mu < 0$ the point $\mu x + (1 - \mu) y$ also lies on this line segment. In particular, $\mu x + (1 - \mu) y \geq 0$, or equivalently, $y \geq \frac{-\mu}{1 - \mu} x$. But we have $\frac{-\mu}{1 - \mu} > 0$ and $x \geq 0$, so we find $0 \leq \frac{-\mu}{1 - \mu} x \leq y$. Since $M$ is full, it follows that $\frac{-\mu}{1 - \mu} x \in M$, and therefore $x \in M$. Analogously, $z \in M$.
\end{proof}

Note that the lineality space $\lineal(E_+) = E_+ \cap -E_+ = \{x \in E_+ \, : \, 0 \leq x \leq 0\}$ and the cone $E_+$ itself are full subcones, and therefore faces of $E_+$. Furthermore, clearly every face contains $\lineal(E_+)$ and is contained in $E_+$, so these are the unique \index{face!minimal}minimal and \index{face!maximal}maximal faces of $E_+$.\hair\footnote{In order-theoretic terms, these are the \emph{least} and the \emph{greatest} element in the set of faces (ordered by inclusion).}

Next we come to the subject of ideals. If $I \subseteq E$ is a subspace, then we define the \index{quotient cone}\emph{quotient cone} $(E/I)_+$ to be the image of $E_+$ under the canonical map $E \to E/I$.

\begin{proposition}[Equivalent definitions of an order ideal]
	\label{prop:ideal-definitions}
	Let $(E,E_+)$ be a preordered vector space.
	For a linear subspace $I \subseteq E$, the following are equivalent:
	\begin{enumerate}[label=(\roman*)]
		\item\label{itm:ideal:full} $I$ is full;
		\item\label{itm:ideal:-yxy} if $-y \leq x \leq y$ and $y \in I$, then $x \in I$;
		\item\label{itm:ideal:0xy} if $0 \leq x \leq y$ and $y \in I$, then $x \in I$;
		\item\label{itm:ideal:face} $I_+ := I \cap E_+$ is a face of $E_+$;
		\item\label{itm:ideal:quotient} the quotient cone $(E/I)_+$ is proper.
	\end{enumerate}
\end{proposition}
\begin{proof}
	$\myref{itm:ideal:full} \Longrightarrow \myref{itm:ideal:-yxy}$. Clear.
	
	$\myref{itm:ideal:-yxy} \Longrightarrow \myref{itm:ideal:0xy}$. Suppose that $0 \leq x \leq y$ and $y \in I$. Then we also have $-y \leq 0 \leq x$, so we find $-y \leq x \leq y$. It follows that $x \in I$.
	
	$\myref{itm:ideal:0xy} \Longrightarrow \myref{itm:ideal:face}$. Every linear subspace is a convex cone, and the intersection of two convex cones is a convex cone, so $I_+ \subseteq E_+$ is a subcone. If $x \leq y \leq z$ with $x,z \in I_+$, then in particular $0 \leq y \leq z$ with $z \in I$, so we have $y \in I$. Furthermore, we have $y \geq x \geq 0$, so $y \in I_+$, which shows that $I_+$ is full. By \autoref{prop:face-full-subcone}, $I_+$ is a face of $E_+$.
	
	$\myref{itm:ideal:face} \Longrightarrow \myref{itm:ideal:quotient}$. Let $z \in (E/I)_+ \cap -(E/I)_+$ be given, then we may choose $x,y \in E_+$ such that $z = \pi(x) = \pi(-y)$. It follows that $\pi(x + y) = 0$, so $x + y \in I$. As such, we have $0 \leq x \leq x + y$ and $0 \leq y \leq x + y$ with $0,x + y \in I_+$, so we find $x,y \in I_+$ (since $I_+$ is full). It follows that $z = 0$, which shows that $(E/I)_+$ is a proper cone.
	
	$\myref{itm:ideal:quotient} \Longrightarrow \myref{itm:ideal:full}$. Clearly the natural map $\pi : E \to E/I$ is positive. Suppose that $x \leq y \leq z$ with $x,z \in I$, then $0 = \pi(x) \leq \pi(y) \leq \pi(z) = 0$, so it follows that $\pi(y) = 0$ (since $(E/I)_+$ is a proper cone). Therefore: $y \in I$.
\end{proof}

A subspace $I$ satisfying any one (and therefore all) of the conditions of \autoref{prop:ideal-definitions} is called an \index{order ideal}\emph{order ideal}, or simply \emph{ideal} if no ambiguity can arise (i.e.~if the space does not have additional algebraic structure). Order ideals have been studied since the 1950s (e.g.~\cite{Kadison-representation}, \cite{Bonsall}), but the link between ideals and faces does not appear to be well-known.

We give a few useful ways to obtain ideals or faces:
\begin{proposition}
	\label{prop:ideals-obtained}
	Let $E,F$ be vector spaces and let $E_+ \subseteq E$, $F_+ \subseteq F$ be convex cones.
	\begin{enumerate}[label=(\alph*)]
		\item\label{itm:ideal-face} If $M \subseteq E_+$ is a non-empty face, then $\spn(M)$ is an ideal satisfying $M = \spn(M) \cap E_+$.
		
		\item\label{itm:ideal-pullback} If $T : E \to F$ is a positive linear map and if $J \subseteq F$ is an ideal, then $T^{-1}[J] \subseteq E$ is an ideal.
	\end{enumerate}
\end{proposition}
\begin{proof}
	\leavevmode
	\begin{enumerate}[label=(\alph*)]
		\item Clearly $M \subseteq \spn(M) \cap E_+$. Moreover, since $M$ is a convex cone, every $x \in \spn(M)$ can be written as $x = m_1 - m_2$ with $m_1,m_2 \in M$. If furthermore $x \in E_+$, then we find $0 \leq x \leq m_1$ (because $m_1 - x = m_2 \geq 0$), and therefore $x \in M$ (because $M$ is full). This shows that $M = \spn(M) \cap E_+$. It follows from \myautoref{prop:ideal-definitions}{itm:ideal:face} that $\spn(M)$ is an ideal.
		
		\item If $x \leq y \leq z$ and $x,z\in T^{-1}[J]$, then $T(x) \leq T(y) \leq T(z)$ with $T(x),T(z) \in J$. Since $J$ is full, it follows at once that $T(y) \in J$, which shows that $T^{-1}[J]$ is also full. \qedhere
	\end{enumerate}
\end{proof}

It follows from \myautoref{prop:ideal-definitions}{itm:ideal:face} and \myautoref{prop:ideals-obtained}{itm:ideal-face} that the map $I \mapsto I_+$ defines a surjective many-to-one correspondence between the ideals and the non-empty faces.

A first (and rather important) application of this correspondence is given in \myautoref{prop:ideal-face-kernel}{itm:face-kernel} below. If $\varphi : E \to \R$ is a positive linear functional, then $\ker(\varphi) \cap E_+$ is easily seen to be a face, and faces of this type are called \index{face!exposed}\emph{exposed}. This can be generalized in the following way: if $F$ is any vector space with a proper cone $F_+ \subseteq F$, and if $T : E \to F$ is a positive linear map, then it is still relatively easy to see that $\ker(T) \cap E_+$ is a face. (It is crucial that $F_+$ is proper!) Although not every face is exposed, the following result shows that this slight extension already captures all faces.

\begin{proposition}
	\label{prop:ideal-face-kernel}
	Let $E$ be a vector space and let $E_+ \subseteq E$ be a convex cone.
	\begin{enumerate}[label=(\alph*)]
		\item\label{itm:ideal-kernel} (cf.~\cite[\S 2, p.~403]{Bonsall}) A subspace $I \subseteq E$ is an ideal if and only if it occurs as the kernel of a positive linear map $T : E \to F$ with $F_+$ proper.
		
		\item\label{itm:face-kernel} A non-empty subset $M \subseteq E_+$ is a face if and only if it can be written as $M = \ker(T) \cap E_+$ with $T : E \to F$ positive and $F_+$ proper.
	\end{enumerate}
\end{proposition}
\begin{proof}\leavevmode
	\begin{enumerate}[label=(\alph*)]	
		\item If $I \subseteq E$ is an ideal, then $(E/I)_+$ is a proper cone (by \myautoref{prop:ideal-definitions}{itm:ideal:quotient}), the map $T : E \to E/I$ is positive, and $I = \ker(T)$.
		
		Conversely, if $T : E \to F$ is a positive linear map with $F_+ \subseteq F$ a proper cone, then $\{0\} \subseteq F_+$ is an ideal (because $F_+$ is proper), so it follows from \myautoref{prop:ideals-obtained}{itm:ideal-pullback} that $\ker(T)$ is an ideal in $E$.
		
		\item If $M \subseteq E_+$ is a face, then $I := \spn(M)$ is an ideal with $M = I \cap E_+$ (by \myautoref{prop:ideals-obtained}{itm:ideal-face}), so $(E/I)_+$ is a proper cone, the map $T : E \to E/I$ is positive, and $M = \ker(T) \cap E_+$.
		
		Conversely, if $T : E \to F$ is a positive linear map with $F_+ \subseteq F$ a proper cone, then it follows from \ref{itm:ideal-kernel} that $\ker(T)$ is an ideal, so $\ker(T) \cap E_+$ is a face.
		\qedhere
	\end{enumerate}
\end{proof}

\begin{remark}
	Just as $\lineal(E_+)$ and $E_+$ are the smallest and the largest face of $E_+$, the smallest and the largest ideals of $E$ are $\lineal(E_+)$ and $E$. Apart from this, the maximal ideals $\neq E$ are of some interest; see \autoref{cor:maximal-ideals} below.
	
	For now, we show that the smallest ideal has the following special property.
\end{remark}

\begin{proposition}
	\label{prop:quotient-bipositive}
	Let $E$ be a vector space, $E_+ \subseteq E$ a convex cone, and $I \subseteq E$ a subspace. Then the quotient $\pi_I : E \to E/I$ is bipositive if and only if $I \subseteq \lineal(E_+)$.
	
	In particular, the only ideal $I \subseteq E$ for which the quotient $\pi_I : E \to E/I$ is bipositive is the minimal ideal $I = \lineal(E_+)$.
\end{proposition}
\begin{proof}
	Bipositivity of the quotient $E \to E/I$ means that, if $x \in E_+$ and $x + I = y + I$, then $y \in E_+$. Equivalently: if $x \in E_+$ and $z \in I$, then $x + z \in E_+$. Evidently this is the case if and only if $I \subseteq E_+$ (use that $0 \in E_+$). But $I$ is a subspace, so we have $I \subseteq E_+$ if and only if $I \subseteq \lineal(E_+)$.
	
	If $I$ is an ideal, then we have $\lineal(E_+) \subseteq I$ (every ideal contains the minimal ideal), so the second conclusion follows immediately.
\end{proof}

\begin{remark}
	\label{rmk:bipositive-injective}
	If $E_+$ is proper and if $F_+$ is arbitrary, then every bipositive map $T : E \to F$ is automatically injective, since $\ker(T) = T^{-1}[\{0\}] \subseteq T^{-1}[F_+] = E_+$ is a subspace contained in $E_+$, which must therefore be $\{0\}$. The preceding proposition shows that this is no longer true if $E_+$ is not proper.
\end{remark}

\section{The homomorphism and isomorphism theorems}
\label{app:isomorphism-theorems} 
\label{sec:isomorphism-theorems} 
In connection with the ideal theory, we investigate to which extent the homomorphism and isomorphism theorems hold for ordered vector spaces.

The homomorphism theorem and the third isomorphism theorem hold true for ordered vector spaces.

\begin{proposition}[Homomorphism theorem]
	\label{prop:homomorphism-theorem}
	Let $E$, $F$ be vector spaces, $E_+ \subseteq E$, $F_+ \subseteq F$ convex cones, $T : E \to F$ a positive linear map, and $I \subseteq E$ a subspace with $I \subseteq \ker(T)$. Then there is a unique positive linear map $\tilde T : E/I \to F$ for which the following diagram commutes:
	\begin{center}
		\begin{tikzcd}[column sep=tiny]
			E \arrow[rr, "T"] \arrow[dr, swap, "\pi_I"] & & F.\\
			& E/I \arrow[ur, dashed, swap, "\tilde T"] & 
		\end{tikzcd}
	\end{center}
\end{proposition}
\begin{proof}
	Since $I \subseteq \ker(T)$, there is a unique linear map $\tilde T : E/I \to F$ for which the diagram commutes. This map is automatically positive: if $y \in (E/I)_+$, then there is some $x \in E_+$ such that $y = \pi_I(x)$, and it follows that $\tilde T(y) = T(x) \in T[E_+] \subseteq F_+$.
\end{proof}
\begin{proposition}[Third isomorphism theorem]
	\label{prop:third-isomorphism-theorem}
	Let $E$ be a vector space, $E_+ \subseteq E$ a convex cone, and $I \subseteq J \subseteq E$ subspaces. Then the natural isomorphism $(E/I)/(J/I) \cong E/J$ is bipositive for the respective quotient cones. Furthermore, the bijective correspondence $J \mapsto J/I$ between the subspaces $I \subseteq J \subseteq E$ and the subspaces of $E/I$ restricts to a bijective correspondence of order ideals \textup(in other words, $J$ is an ideal in $E$ if and only if $J/I$ is an ideal in $E/I$\textup).
\end{proposition}
\begin{proof}
	We have the following commutative diagram of linear maps:
	\begin{center}
		\begin{tikzcd}[column sep=tiny]
			E \arrow[rr, "\pi_J"] \arrow[dr, swap, "\pi_I"] & & E/J.\\
			& E/I \arrow[ur, swap, "\pi_{J/I}"] & 
		\end{tikzcd}
	\end{center}
	To see that the natural isomorphism $(E/I)/(J/I) \cong E/J$ is bipositive, note that pushforwards commute: an element of $E/J$ belongs to either one of the pushforward cones $(E/J)_+$ and $((E/I)/(J/I))_+$ if and only if it has a positive element of $E$ in its preimage.
	
	Since a subspace is an ideal if and only if the quotient cone is proper, it follows immediately that $J$ is an ideal in $E$ if and only if $J/I$ is an ideal in $E/I$.
\end{proof}

Analogous results hold for \emph{closed} ideals in ordered \emph{topological} vector spaces. (We assume no compatibility between the positive cone and the topology, so questions of continuity and positivity are completely separate from one another.)

Contrary to the preceding results, the first and second isomorphism theorems fail for ordered vector spaces. We only have the following weaker statements, of which the (simple) proofs are omitted.

\begin{proposition}[Partial first isomorphism theorem]
	Let $E$, $F$ be vector spaces, $E_+ \subseteq E$, $F_+ \subseteq F$ convex cones, and $T : E \to F$ a positive linear map. Then the natural linear isomorphism $E/\ker(T) \stackrel{\sim}{\longrightarrow} \ran(T)$ is positive, but not necessarily bipositive.
\end{proposition}
Counterexample against bipositivity: $E = F$ and $T = \id_E$, but $E_+$ strictly contained in $F_+$.
\begin{proposition}[Partial second isomorphism theorem]
	Let $F$ be a vector space, $F_+ \subseteq F$ a convex cone, $E \subseteq F$ a subspace, and $I \subseteq F$ an order ideal. Then $E + I$ is a subspace of $F$, $E \cap I$ is an order ideal of $E$, and the natural linear isomorphism $E/(E \cap I) \stackrel{\sim}{\longrightarrow} (E + I)/I$, $x + (E \cap I) \mapsto x + I$ is positive, but not necessarily bipositive.
\end{proposition}
Counterexample against bipositivity: $F = \R^2$ with standard cone, and $E,I \subseteq F$ two different one-dimensional subspaces, each of which meets $F_+$ only in $0$. Then $E_+ := E \cap F_+ = \{0\}$, so the cone of $E/(E \cap I)$ is $\{0\}$, whereas the cone of $(E + I)/I = \R^2/I$ is generating.

\subsection{Classification of maximal order ideals}
As an application of the preceding results, we show that the third isomorphism theorem gives a geometric characterization of the maximal order ideals.

Following common terminology from algebra, we say that an order ideal $I \subseteq E$ is \index{order ideal!proper}\emph{proper} if $I \neq E$, and \index{order ideal!maximal}\emph{maximal} if it is proper and not contained in another proper ideal.
Furthermore, we say that a preordered vector space $E$ is \index{ordered (topological) vector space!simple}\emph{simple} if $E_+$ is proper and if $E$ has exactly two order ideals (namely, the trivial ideals $\{0\}$ and $E$).
Bonsall \cite[Theorem 2]{Bonsall} proved that an ordered vector space is simple if and only if it is one-dimensional (with either the standard cone or the zero cone).%
	\hair\footnote{Bonsall also includes $\{0\}$ among the simple ordered spaces, but we require \emph{exactly} two ideals. (Similarly, we believe that $1$ is not prime, the empty topological space is not connected, etc.) This is just a matter of convention.}
Combining this with \autoref{prop:third-isomorphism-theorem}, we find:

\begin{corollary}
	\label{cor:maximal-ideals}
	The maximal order ideals of $E$ are precisely the supporting hyperplanes of $E_+$.
\end{corollary}
\begin{proof}
	It is easy to see that the supporting hyperplanes of $E_+$ are precisely the kernels of the non-zero positive linear functionals. (For a proof, see e.g.~\cite[Proposition 4.1]{Dobben-semisimplicity}.) Furthermore, it follows from \autoref{prop:third-isomorphism-theorem} that an ideal $I \subseteq E$ is maximal if and only if $E/I$ is simple.
	
	If $\varphi : E \to \R$ is a non-zero positive linear functional, then $\ker(\varphi)$ is an ideal, which is maximal since $E/\ker(\varphi)$ is one-dimensional and therefore simple.
	
	Conversely, if $I \subseteq E$ is a maximal ideal, then $E/I$ is simple, so $\dim(E/I) = 1$ and the quotient cone $(E/I)_+$ is either $\{0\}$ or isomorphic to the standard cone $\R_{\geq 0}$. Either way, we can choose a linear isomorphism $E/I \stackrel{\sim}{\longrightarrow} \R$ which is positive (but not necessarily bipositive), so that the composition $\varphi : E \to E/I \to \R$ is a non-zero positive linear functional with $I = \ker(\varphi)$.
\end{proof}

For more on maximal ideals, see \cite[\S 4]{Bonsall}.

\section{Dual and exposed faces}
\label{app:dual-and-exposed} 
\label{sec:dual-and-exposed} 
In the finite\-/dimensional setting (with closed cones), dual faces are well studied in the literature (see e.g.~\cite{Barker-duality}, \cite{Weis}). We outline a theory of face duality in dual pairs.

A \index{positive pairing}\emph{positive pairing} is a dual pair $\langle E , F \rangle$ of (real) preordered vector spaces such that $\langle x , y \rangle \geq 0$ whenever $x \in E_+$, $y \in F_+$. In this case we say $\langle E_+ , F_+ \rangle$ is a \emph{positive pair}.\hair\footnote{There is a slight abuse of notation here, for if $E_+$ and $F_+$ are not generating, then the positive pair depends not only on $E_+$ and $F_+$, but also on $E$ and $F$ (but this will cause no confusion).}

Given a positive pair $\langle E_+ , F_+ \rangle$ and a non-empty subset $N \subseteq F_+$, we define the \index{face!predual}\emph{\textup(pre\textup)dual face}
\[ \glsadd{predualface}{}^\predualface N := E_+ \cap {}^\perp N = \big\{x \in E_+ \, : \, \langle x , y \rangle = 0\ \text{for all $y \in N$}\big\}. \]
Analogously, for a non-empty subset $M \subseteq E_+$ we define the \index{face!dual}\emph{dual face}
\[ \glsadd{dualface}M^\dualface := F_+ \cap M^\perp = \big\{y \in F_+ \, : \, \langle x , y\rangle = 0\ \text{for all $x \in M$}\big\}. \]
Note that ${}^\predualface N$ (resp.~$M^\dualface$) depends not only on $N$ (resp.~$M$), but also implicitly on $E_+$ (resp.~$F_+$).

\begin{proposition}
	\label{prop:df}
	Let $\langle E_+ , F_+ \rangle$ be a positive pair.
	\begin{enumerate}[label=(\alph*)]
		\item\label{itm:df:face} If $N \subseteq F_+$ is non-empty, then ${}^\predualface N$ is a face of $E_+$.
		
		\item\label{itm:df:reverse-inclusion} If $N_1 \subseteq N_2 \subseteq F_+$ are non-empty, then ${}^\predualface N_1 \supseteq {}^\predualface N_2$.
		
		\item\label{itm:df:second-dual} If $N \subseteq F_+$ is non-empty, then $N \subseteq ({}^\predualface N)^\dualface$ and ${}^\predualface N = {}^\predualface (({}^\predualface N)^\dualface)$.
	\end{enumerate}
	Similar statements hold with $N$ and ${}^\predualface N$ replaced by $M$ and $M^\dualface$.
\end{proposition}
\begin{proof}
	\ \par
	\begin{enumerate}[label=(\alph*)]
		\item Every $y \in F_+$ defines a positive linear functional $\varphi_y : E \to \R$, $x \mapsto \langle x , y\rangle$. As such, the set $E_+ \cap \ker(\varphi_y)$ is a face, by \myautoref{prop:ideals-obtained}{itm:ideal-pullback}. Since we can write
		\[ {}^\predualface N = \bigcap_{y \in N} (E_+ \cap \ker(\varphi_y)), \]
		it follows that ${}^\predualface N$ is also a face of $E_+$.
		
		\item This follows from the definition, since ${}^\perp N_1 \supseteq {}^\perp N_2$.
		
		\item If $y \in N$, then by definition one has $\langle x , y \rangle = 0$ for all $x \in {}^\predualface N$, so it follows that $y \in ({}^\predualface N)^\dualface$. This proves the inclusion $N \subseteq ({}^\predualface N)^\dualface$.
		
		Write $M := {}^\predualface N$. It follows from the preceding argument that $M \subseteq {}^\predualface (M^\dualface) = {}^\predualface (({}^\predualface N)^\dualface)$. On the other hand, combining the inclusion $N \subseteq ({}^\predualface N)^\dualface$ with \ref{itm:df:reverse-inclusion}, we find $M = {}^\predualface N \supseteq {}^\predualface (({}^\predualface N)^\dualface)$, so we conclude that equality holds.
		\qedhere
	\end{enumerate}
\end{proof}

A face $M \subseteq E_+$ is said to be an \index{face!dual!with respect to a positive pairing}\emph{$\langle E_+ , F_+ \rangle$-dual face} if $M = {}^\predualface N$ for some non-empty subset $N \subseteq F_+$, and an \index{face!exposed!with respect to a positive pairing}\emph{$\langle E_+ , F_+ \rangle$-exposed face} if there is some $y_0 \in F_+$ such that $M = E_+ \cap \ker(\varphi_{y_0})$, or equivalently, if $M$ is the $\langle E_+ , F_+ \rangle$-dual face of a singleton. Likewise, the faces $N \subseteq F_+$ of the form $N = M^\dualface$ (resp.~$N = \{x_0\}^\dualface$) are the $\langle F_+ , E_+ \rangle$-dual (resp.~$\langle F_+ , E_+ \rangle$-exposed) faces of $F_+$.

The operations $M \mapsto M^\dualface$ and $N \mapsto {}^\predualface N$ define a so-called \emph{Galois connection} (see e.g.~\cite[\S 6.5]{Bergman} for the definition). It follows that the set of $\langle E_+ , F_+ \rangle$-dual faces, ordered by inclusion, forms a complete lattice, which we denote as $\dualfacelattice{E_+}{F_+}$.

If $\langle E_+ , G_+ \rangle$ is a positive pair and if $F \subseteq G$ and $F_+ \subseteq F \cap G_+$, then evidently one has $\dualfacelattice{E_+}{F_+} \subseteq \dualfacelattice{E_+}{G_+}$, but the inclusion $\dualfacelattice{E_+}{F_+} \hookrightarrow \dualfacelattice{E_+}{G_+}$ should not be expected to be a lattice homomorphism.

Given a dual pair $\langle E , E' \rangle$ and a convex cone $E_+ \subseteq E$, the most natural lattice of dual faces in $E_+$ is the lattice $\dualfacelattice{E_+}{E_+'}$, where $E_+' \subseteq E'$ is the dual cone of $E_+$. (This is the largest of all lattices $\dualfacelattice{E_+}{F_+}$ with $F_+ \subseteq E'$.)
The $\langle E_+ , E_+' \rangle$-dual (resp.~$\langle E_+ , E_+' \rangle$-exposed) faces will simply be called the \emph{dual} (resp.~\emph{exposed}) faces of $E_+$.

\subsection{The difference between dual and exposed faces}
If $E$ is finite\-/dimensional and if $E_+$ is closed, then every dual face is exposed, so $\dualfacelattice{E_+}{E_+\algdual}$ is simply the lattice of exposed faces (see e.g.~\cite{Barker-duality}). We intend to show that things become more complicated in the infinite\-/dimensional case. We illustrate these subtleties by establishing various equivalent definitions of dual and exposed faces.

For notational simplicity, we formulate the results in the remainder of this appendix not for dual pairs but for locally convex spaces. We recall some basic theory. If $\lintop$ is a locally convex topology on $E$ that is compatible with the dual pair $\langle E,E'\rangle$, then a subspace $I \subseteq E$ is $\lintop$-closed if and only if it is weakly closed. If this is the case, then the quotient $E/I$ is once again a (Hausdorff) locally convex space, and $(E/I)\topdual \cong I^\perp$ as vector spaces. Furthermore, if $\lintop$ is the weak topology $\sigma(E,E')$, then $E/I$ carries the weak topology $\sigma(E/I,I^\perp)$ (see e.g.~\cite[\S V.2]{Conway} or \cite[\S 22]{Kothe-I}).

We shall say that a convex cone $E_+ \subseteq E$ is \index{convex cone!quasi-semisimple}\emph{quasi-semisimple} if $E_+'$ separates points on $E_+$; that is, if for every $x \in E_+$ there is some $\varphi_x \in E_+'$ such that $\langle x,\varphi_x\rangle > 0$. This is equivalent to the (geometric) requirement that $E_+ \cap \lineal(\,\overline{E_+}\,) = \{0\}$, since $\lineal(\,\overline{E_+}\,) = {}^\perp (E_+')$. It follows that a quasi-semisimple cone is automatically proper. Clearly every semisimple cone (in particular, every closed proper cone in a locally convex space) is quasi-semisimple.

\begin{proposition}
	\label{prop:exposed-face}
	Let $E$ be locally convex. For a face $M \subseteq E_+$ the following are equivalent:
	\begin{enumerate}[label=(\roman*)]
		\item\label{itm:exposed-face} $M$ is exposed.
		\item\label{itm:is-exposed} There exists some $\varphi_0 \in M^\dualface$ such that for all $x \in E_+ \setminus M$ one has $\langle x,\varphi_0\rangle > 0$.
		\item\label{itm:quotient-admits-strictly-positive-functional} $M = E_+ \cap \overline\spn(M)$, and the quotient $(E/\,\overline\spn(M))_+$ admits a strictly positive continuous linear functional.
	\end{enumerate}
\end{proposition}
\begin{proof}
	$\myref{itm:exposed-face} \Longrightarrow \myref{itm:quotient-admits-strictly-positive-functional}$. Choose $\varphi_0 \in E_+'$ such that $M = E_+ \cap \ker(\varphi_0)$. Then $\overline\spn(M) \subseteq \ker(\varphi_0)$, so it follows that $M = E_+ \cap \overline\spn(M)$ and that $\varphi_0$ factors through $E/\overline\spn(M)$:
	\begin{center}
		\begin{tikzcd}[column sep=tiny]
			E \arrow[rr, "\varphi_0"] \arrow[dr, two heads, "\pi"] & & \R \\
			& E/\overline\spn(M) \arrow[ur, swap, "\psi_0"] & 
		\end{tikzcd}
	\end{center}
	If $E/\overline\spn(M)$ is equipped with the quotient cone, then $\psi_0 : E/\overline\spn(M) \to \R$ is strictly positive.
	
	$\myref{itm:quotient-admits-strictly-positive-functional} \Longrightarrow \myref{itm:is-exposed}$. We have $(E/\overline\spn(M))\topdual \cong \overline\spn(M)^\perp = M^\perp$. Consequently, if $\psi_0 : E/\overline\spn(M) \to \R$ is continuous and strictly positive, then the composition $\varphi_0 : E \stackrel{\pi}{\longrightarrow} E/\overline\spn(M) \stackrel{\psi_0}{\longrightarrow} \R$ is continuous and positive, and belongs to $M^\perp$. It follows that $\varphi_0 \in E_+' \cap M^\perp = M^\dualface$. Furthermore, every $x \in E_+ \setminus M$ satisfies $\langle x , \varphi_0 \rangle = \langle \pi x , \psi_0 \rangle > 0$, since $\psi_0$ is strictly positive.
	
	$\myref{itm:is-exposed} \Longrightarrow \myref{itm:exposed-face}$. The requirement $\{\varphi_0\} \subseteq M^\dualface$ ensures that $M \subseteq {}^\predualface (M^\dualface) \subseteq {}^\predualface \{\varphi_0\}$, and the assumption that $\langle x , \varphi_0 \rangle > 0$ for all $x \in E_+ \setminus M$ guarantees that ${}^\predualface \{\varphi_0\} \subseteq M$.
\end{proof}

\begin{proposition}
	\label{prop:dual-face}
	Let $E$ be locally convex. For a face $M \subseteq E_+$ the following are equivalent:
	\begin{enumerate}[label=(\roman*)]
		\item\label{itm:dual-face} $M$ is a dual face.
		\item\label{itm:separates-points} For every $x \in E_+ \setminus M$ there is some $\varphi_x \in M^\dualface$ such that $\langle x,\varphi_x\rangle > 0$.
		\item\label{itm:quotient-is-quasi-semisimple} $M = E_+ \cap \overline\spn(M)$, and the quotient $(E/\,\overline\spn(M))_+$ is quasi-semisimple.
	\end{enumerate}
\end{proposition}
\begin{proof}
	$\myref{itm:dual-face} \Longrightarrow \myref{itm:quotient-is-quasi-semisimple}$. Choose some non-empty $N \subseteq E_+'$ such that $M = {}^\predualface N = E_+ \cap {}^\perp N$. Then $\overline\spn(M) \subseteq {}^\perp N$, so it follows that $M = E_+ \cap \overline\spn(M)$ and that every $\varphi \in N$ factors through $E / \overline\spn(M)$:
	\begin{center}
		\begin{tikzcd}[column sep=tiny]
			E \arrow[rr, "\varphi"] \arrow[dr, "\pi"] & & \R \\
			& E / \overline\spn(M) \arrow[ur, swap, "\psi"] &
		\end{tikzcd}
	\end{center}
	Write $\mywedge$ for the positive cone of $E / \overline\spn(M)$, and let $y \in \mywedge$ be such that $\langle y , \psi \rangle = 0$ for all $\psi \in \mywedge\topdual$. Choose $x \in E_+$ such that $y = \pi(x)$, then for every $\varphi \in N$ we may choose some $\psi \in \mywedge\topdual$ such that $\varphi = \psi \circ \pi$, and therefore $\langle x , \varphi \rangle = \langle y , \psi \rangle = 0$. It follows that $x \in M$, and therefore $y = 0$, showing that $\mywedge$ is quasi-semisimple.
	
	$\myref{itm:quotient-is-quasi-semisimple} \Longrightarrow \myref{itm:separates-points}$. Let $x \in E_+ \setminus M$ be given. Then $x$ is mapped to a non-zero positive vector in $E / \overline\spn(M)$, so there is a positive continuous linear functional $\psi_x : E/\overline\spn(M) \to \R$ such that $\langle \pi x , \psi_x \rangle > 0$ (by quasi-semisimplicity). Now the composition $\varphi_x : E \stackrel{\pi}{\longrightarrow} E/\overline\spn(M) \stackrel{\psi_x}{\longrightarrow} \R$ is continuous and positive, and $\langle x , \varphi_x \rangle = \langle \pi x , \psi_x \rangle > 0$.
	
	$\myref{itm:separates-points} \Longrightarrow \myref{itm:dual-face}$. We have $M \subseteq {}^\predualface (M^\dualface)$, and the assumption that every $x \in E_+ \setminus M$ admits some $\varphi_x \in M^\dualface$ such that $\langle x , \varphi_x \rangle > 0$ guarantees that ${}^\predualface (M^\dualface) \subseteq M$.
\end{proof}

The subtle difference between exposed and dual faces becomes apparent by comparing \myautoref{prop:exposed-face}{itm:is-exposed} with \myautoref{prop:dual-face}{itm:separates-points}: the only difference is the order of the quantifiers!

In the finite\-/dimensional setting, it is well-known that the dual faces are precisely the exposed faces (see e.g.~\cite{Barker-duality}). We extend this to separable normed spaces. Counterexamples in other settings will be given below.

\begin{theorem}[{Compare \cite[Proposition 15.2]{Schaefer-III}}]
	\label{thm:dual-face-is-exposed}
	Let $E$ be locally convex, and let $M \subseteq E_+$ be a face of the form $E_+ \cap I$, where $I \subseteq E$ is a closed subspace. If $E/I$ admits a separable norm compatible with the dual pair $\langle E/I , (E/I)\topdual \rangle$ \textup($\, =\langle E/I,I^\perp\rangle\,$\textup), then $M$ is a dual face if and only if $M$ is exposed.
\end{theorem}
\begin{proof}
	Every exposed face is dual. For the converse, suppose that $M$ is a dual face, and let $\lVert\:\cdot\:\rVert$ be a separable norm compatible with the dual pair $\langle E/I,I^\perp\rangle$. We shall understand $E/I$ and $(E/I)\topdual$ to be equipped with the respective norm topologies. Since $E/I$ is separable, its dual $(E/I)\topdual$ and every subset thereof is weak\nobreakdash-$*$ separable (this is because it can be written as the union of a countable family of separable metrizable spaces; see \cite[\S 21.3.(5)]{Kothe-I}). As such, we may choose a weak\nobreakdash-$*$ dense countable subset $N = \{\varphi_k\}_{k=1}^\infty$ in the dual cone $(E/I)_+\topdual$.
	Define $\psi := \sum_{k=1}^\infty \frac{\varphi_k}{2^k \lVert \varphi_k\rVert}$; this is well-defined because $(E/I)\topdual$ is a Banach space.\footnote{Technically this is not entirely well-defined; if $\varphi_k = 0$, then we must replace $\frac{\varphi_k}{2^k \lVert \varphi_k\rVert}$ by $0$.} Since $(E/I)_+\topdual$ is a closed convex cone, we have $\psi \in (E/I)_+\topdual$.
	
	We claim that $\psi$ is a strictly positive functional. To that end, let $x \in (E/I)_+$ be such that $\psi(x) = 0$. For all $k$ we have $\varphi_k(x) \geq 0$, but $\psi(x) = \sum_{k=1}^\infty \frac{\varphi_k(x)}{2^k \lVert \varphi_k\rVert} = 0$, so we must have $\varphi_k(x) = 0$. It follows that $x \in {}^\perp N = {}^\perp ((E/I)_+\topdual) = \lineal(\,\overline{(E/I)_+}\,)$. Since $M$ is a dual face, the quotient face $(E/I)_+$ is quasi-semisimple, so $(E/I)_+ \cap \lineal(\,\overline{(E/I)_+}\,) = \{0\}$. It follows that $x = 0$, which shows that $\psi$ is a strictly positive functional. We conclude that $M$ is exposed.
\end{proof}

\begin{corollary}
	A face of finite codimension is dual if and only if it is exposed.
\end{corollary}

\begin{corollary}
	In a separable normed space, the dual faces are precisely the exposed faces.
\end{corollary}

\begin{corollary}
	\label{cor:closed-lineality-space-exposed}
	If $E$ is a separable normed space and if $E_+$ is closed, then $\lineal(E_+)$ is exposed.
\end{corollary}
After all, $\lineal(E_+) = \lineal(\,\overline{E_+}\,) = {}^\perp (E_+\topdual) = {}^\predualface (E_+\topdual)$ is a dual face.

\begin{corollary}
	\label{cor:quasi-semisimple-cone-in-separable-normed-space}
	Every quasi-semisimple cone \textup(in particular, every closed proper cone\textup) in a separable normed space admits a strictly positive continuous linear functional.
\end{corollary}

In general, not every dual face is exposed. As a generic example, let $E_+$ be a cone that is semisimple but does not admit a strictly positive functional. Then $\{0\}$ is a dual face, for by semisimplicity, $E_+'$ separates points, so ${}^\predualface (E_+') = {}^\perp (E_+') = \{0\}$. However, $\{0\}$ is not exposed, since there is no strictly positive functional.

We give two concrete realizations of this generic example: one in an inseparable Hilbert space, and one in a separable Fr\'echet space. These examples show that the preceding corollaries cannot easily be extended beyond the setting of separable normed spaces.

\begin{example}
	\label{xmpl:non-exposed-i}
	Let $\Omega$ be an uncountable set, and consider the Hilbert space $E = \ell_\R^2(\Omega)$ with the non-negative cone $E_+ = \{x \in \ell_\R^2(\Omega) \, : \, x_\omega \geq 0\ \text{for all $\omega \in \Omega$}\}$. Then $E_+$ is semisimple, so $\{0\}$ is a dual face. However, $E_+$ does not admit a strictly positive functional, since every vector in $E\topdual = \ell_\R^2(\Omega)$ is zero in all but at most countably many coordinates.
\end{example}

\begin{example}
	\label{xmpl:non-exposed-ii}
	Let $s$ be the space of all (real) sequences with the topology of pointwise convergence. Then $s$ is a separable Fr\'echet space with topological dual $s\topdual = c_{00}$, the space of sequences of finite support. (The last statement is a special case of duality between products and locally convex direct sums; see \cite[\S 22.5.(2)]{Kothe-I}.) The non-negative cone in $s$ is closed and proper, so $\{0\}$ is a dual face. However, there is no strictly positive functional.
\end{example}

As a final remark, we point out that certain faces stand no chance of being either exposed or a dual face.
We know from \myautoref{prop:ideals-obtained}{itm:ideal-face} that every face is the positive part of an ideal, but a dual or exposed face must be the positive part of a \emph{closed} ideal.
It may happen that $E_+ \cap \overline\spn(M)$ is larger than $M$, in which case \myautoref{prop:dual-face}{itm:quotient-is-quasi-semisimple} (resp.~\myautoref{prop:exposed-face}{itm:quotient-admits-strictly-positive-functional}) shows that $M$ cannot be a dual (resp.~exposed) face.

As a concrete example, let $E = \ell_\R^\infty$ with its usual cone and norm, and let $M = E_+ \cap c_{00}$ be the set of all non-negative sequences with finite support. Then $M$ is a face, but $E_+ \cap \,\overline\spn(M) = E_+ \cap \, c_0$ is the (larger) set of all non-negative sequences converging to $0$.
Therefore $M$ is not exposed or a dual face.

\glsadd[format=glsignore]{Mperp}
\glsadd[format=glsignore]{perpN}
\glsadd[format=glsignore]{tensor product}
\nocite{*} 

\backmatter
\small

\cleardoublepage
\setglossarystyle{long3col}%
\myglsFindWidestNameAndPagenum%
\renewenvironment{theglossary}{%
	\renewcommand{\arraystretch}{1.33}%
	\begin{longtable}{p{\myglsWidestName}p{\myglsdescwidth}p{\myglsWidestPagenum}}%
}{%
	\end{longtable}%
}%
\printnoidxglossary[type=symbols,title=\myglossaryname]

\cleardoublepage
\printindex

\bibliographystyle{my-alphaurl}
\bibliography{convex_cones}

\end{document}